\documentclass[a4paper,twoside]{article}
\usepackage{amsmath,amsfonts,amsthm}
\usepackage[nottoc]{tocbibind}
\usepackage{mathrsfs}
\usepackage{enumitem}
\usepackage{stmaryrd}
\usepackage{amssymb}
\usepackage{tikz-cd}
\usepackage{indentfirst}
\usepackage{microtype}
\usepackage{lmodern}
\usepackage[a4paper,top=3cm, bottom=2.5cm, left=2cm, right=2cm]{geometry}

\usepackage{color}
\definecolor{dkgreen}{rgb}{0,0.6,0}
\definecolor{mblue}{rgb}{0,0.5,1}
\definecolor{gray}{rgb}{0.5,0.5,0.5}
\definecolor{mauve}{rgb}{0.58,0,0.82}

\usepackage{hyperref}
\usepackage{cleveref}
\hypersetup{
  linktoc=page,
	colorlinks = true,
	linkcolor = mblue, 
	filecolor = cyan,
	urlcolor = mauve, 
	citecolor = dkgreen, 
}

\setcounter{tocdepth}{1}
\theoremstyle{plain}
\newtheorem{thm}{Theorem}[section] 

\newtheorem*{thm*}{Theorem}
\newtheorem{cor}[thm]{Corollary}
\newtheorem{lem}[thm]{Lemma}  
\newtheorem{prop}[thm]{Proposition}
 
\theoremstyle{definition}
\newtheorem{defn}[thm]{Definition}

\newtheorem{rek}[thm]{Remark}
\newtheorem{exe}[thm]{Example}

\makeatletter
    \let\c@equation\c@thm
\makeatother
\numberwithin{equation}{section}

\newcommand{\q}[1]{\left( #1 \right)}


\newcommand{\scr}[1]{\mathscr{#1}}
\newcommand{\bb}[1]{\mathbb{#1}}
\newcommand{\rr}[1]{\mathrm{#1}}
\newcommand{\cc}[1]{\mathcal{#1}}
\newcommand{\Hdr}[1]{\mathrm{H}^{#1}_{\mathrm{dR}}}

\newcommand{\inv}{^{-1}}
\newcommand{\cros}{^{\times}}

\let\emptyset\varnothing
\let\bar\overline
\let\tilde\widetilde
\let\hat\widehat


\newcommand{\dR}{\mathrm{dR}}
\newcommand{\frob}{\mathrm{Frob}}

\newcommand{\Hom}{\mathrm{Hom}}

\newcommand{\im}{\mathrm{im}}
\newcommand{\id}{\mathrm{id}}

\newcommand{\Kl}{\mathrm{Kl}}

\newcommand{\pr}{\mathrm{pr}}
\newcommand{\rk}{\mathrm{rk}}

\newcommand{\Sym}{\mathrm{Sym}}
\newcommand{\SL}{\mathrm{SL}}

\newcommand{\tor}{\mathrm{tor}}
\newcommand{\tr}{\mathrm{Tr}}

\newcommand{\ord}{\mathrm{ord}}

\newcommand{\bql}{\bar{\mathbb{Q}}_\ell}

\newcommand{\fpp}{\mathbb{F}_p}
\newcommand{\fqq}{\mathbb{F}_q}

\newcommand{\gr}{\mathrm{gr}}
\newcommand{\coker}{\mathrm{coker}\,}

\begin{document}

\title{Hodge numbers of motives attached to Kloosterman and Airy moments}

\author{Yichen Qin}
\date{}
\maketitle
\begin{abstract}
    Fres\'an, Sabbah, and Yu constructed motives  $\mathrm{M}_{n+1}^k(\mathrm{Kl})$ over $\mathbb{Q}$ encoding symmetric power moments of Kloosterman sums in $n$ variables. When $n=1$, they use the irregular Hodge filtration on the exponential mixed Hodge structure associated with $\mathrm{M}_{2}^k(\mathrm{Kl})$ to compute the Hodge numbers of $\mathrm{M}_{2}^k(\mathrm{Kl})$, which turn out to be either $0$ or $1$. In this article, I explain how to compute the (irregular) Hodge numbers of $\mathrm{M}_{n+1}^k(\mathrm{Kl})$ for $n=2$ or for general values of $n$ such that $\gcd(k,n+1)=1$. I will also discuss related motives attached to Airy moments constructed by Sabbah and Yu. In particular, the computation shows that there are Hodge numbers bigger than $1$ in most cases. 
\end{abstract}

\tableofcontents 

\section{Introduction}\label{sec:intro}

\subsection{Background}
Kloosterman sums in $n$ variables are the exponential sums over finite fields defined for each prime power $q=p^r$ and each $a\in \fqq\cros$ by
\begin{equation*}
	\begin{split}
		\mathrm{Kl}_{n+1}(a;q):=\sum_{x_1,\ldots,x_n\in \fqq^{\times}}\exp\left(2\pi i/p\cdot \tr_{\fqq/\fpp}\left(x_1+\cdots+x_n+\frac{a}{x_1\cdots x_n}\right)\right),
	\end{split}
\end{equation*}
where $\tr_{\fqq/\fpp}(x)$ is the trace from $\fqq$ to $\fpp$. They are the finite field analogs of Bessel functions 
	\begin{equation*}
		\begin{split}
			\rr{Be}_{n+1}(z):=\oint_{(S^1)^n}\exp\left(x_1+\cdots+x_n+\frac{z}{x_1\cdots x_n}\right)\frac{\mathrm{d}x_1}{x_1}\cdots\frac{\mathrm{d}x_n}{x_n},
		\end{split}
	\end{equation*}
which satisfy the Bessel differential equations $(z\partial_z)^{n+1}-z=0$.

We fix a prime $\ell\neq p$ and an embedding $\iota\colon \bql\to \bb{C}$. After Deligne \cite[Sommes trig.]{SGA41/2}, there exists a lisse $\ell$-adic local system $\Kl_{n+1}$ of rank $n+1$ over $\bb{G}_{m,\fqq}$, called the \textit{Kloosterman sheaf}, such that for each $a\in \fqq\cros$, 
\begin{equation*}
	\begin{split}
		\iota(\tr(\frob_q,(\Kl_{n+1})_{\bar a}))=(-1)^n\Kl_{n+1}(a;q).
	\end{split}
\end{equation*}
In other words, we realize the Kloosterman sums as traces of Frobenius acting on $\Kl_{n+1}$.

For each $k\geq 1$, the \textit{$k$-th symmetric power moments of Kloosterman sums} are the integers
\begin{equation}\label{eq:moments}
	\begin{split}
		m_{n+1}^k(q):=-\sum_{a\in \fqq\cros}\tr(\frob_q,(\Sym^k\Kl_{n+1})_{\bar a}),
	\end{split}
\end{equation}
where $\Sym^k\Kl_{n+1}$ is the $k$-th symmetric power of the Kloosterman sheaf. To study these moments as $q$ varies, we build an $L$-function $L(k,n+1;s)$ by taking the Euler product in which the local factor at the prime $p$ comes from a generating series of the sequence $\{m_{n+1}^k(p^n)\}_n$. Fu and Wan studied these local factors in detail \cite{fu2005functions}. 

A priori, the $L$-functions are only defined on some half-plane where the real part of $s$ is large enough. In \cite[Thm.\,1.2]{fresan2018hodge}, Fres\'an, Sabbah, and Yu proved that when $n=1$, the $L$-functions $L(k,2;s)$ extend meromorphically to the complex plane and satisfy some functional equations conjectured by Broadhurst and Roberts in \cite{Broadhurst2016a, Broadhurst2017}.

By the construction of $L(k,n+1;s)$, the local factors in the Euler product come from the characteristic polynomials of Frobenius acting on the middle $\ell$-adic cohomologies of $\Sym^k\Kl_{n+1}$ (i.e., the images of the compact support cohomologies in the usual cohomologies). Inspired by the analogy between Kloosterman sums and Bessel functions, Fres\'an, Sabbah, and Yu considered the \textit{Kloosterman connection}, also denoted by $\Kl_{n+1}$, which is the rank $n+1$ connection on $\bb{G}_{m,\bb{C}}$ corresponding to the Bessel differential equation $(z\partial_z)^{n+1}-z=0$. They interpret the middle de Rham cohomology of the connection $\Sym^k\Kl_{n+1}$ as the de Rham realization of a Nori motive $\mathrm{M}_{n+1}^k(\Kl)$ over $\bb{Q}$, which admits a geometric description as a subquotient of $\mathrm{H}^{nk-1}_{\rr{c}}(\mathcal{K})(-1)$. Here $\mathcal{K}$ is the hypersurface defined by the Laurent polynomial 
	\begin{equation*}
		\begin{split}
			g_k=\sum_{i=1}^k\biggl(\sum_{j=1}^n y_{i,j}+ \frac{1}{\prod_{j=1}^n y_{i,j}}\biggr)
		\end{split}
	\end{equation*}
in the torus $\bb{G}_m^{nk}$, see \cite[(3.1)]{fresan2018hodge} and Remark~\ref{rek:mot-classical}.

The $\ell$-adic realizations of $\mathrm{M}_{n+1}^k(\Kl)$ give rise to a family of $\ell$-adic Galois representations, whose $L$-functions coincide with $L(k,n+1;s)$. To relate $L(k,n+1;s)$ with $L$-functions of analytic objects, one relies on a potential automorphy theorem of Patrikis--Taylor \cite{patrikis_taylor_2015}. To apply this theorem, one must check the crucial assumption that the Hodge numbers of the de Rham realization of $\mathrm{M}_{n+1}^k(\Kl)$ are either $0$ or $1$. When $n=1$, this condition is verified by \cite[Thm.\,1.8]{fresan2018hodge}.

\subsection{Main theorem}
Let $\rr{M}_{n+1}^k(\Kl)_{\rr{dR}}$ be the de Rham realizations of $\rr{M}_{n+1}^k(\Kl)$, which underlie pure Hodge structures of weight $nk+1$. In this article, we compute the Hodge numbers of $\rr{M}_{n+1}^k(\Kl)_{\rr{dR}}$ for $n>1$. 
\begin{thm}\label{thm:hodge number-Kl}
  
	\begin{enumerate}[label=\arabic*., ref=\ref{thm:hodge number-Kl}.\theenumi]
		\item \label{thm:hodge number-Kl-1}Assume that $\gcd(k,n+1)=1$. The Hodge numbers $h^{p,nk+1-p}$ of $\rr{M}_{n+1}^k(\Kl)_{\rr{dR}}$ are the coefficients of $t^px^k$ in the formal power series expansion of the rational function
			\begin{equation*}
				\begin{split}
					\frac{(1-t)t^{n+1}}{(1-t^{n+1})(1-x)(1-t x)\cdots(1-t^n x)}
				\end{split}
			\end{equation*}
		if $p\leq \frac{nk+1}{2}$, and we have $h^{p,nk+1-p}=h^{nk+1-p,p}$ if $p>\frac{nk+1}{2}$.
		\item \label{thm:hodge number-Kl-2}Assume that $n=2$ and $3\mid k$. The Hodge numbers $h^{p,2k+1-p}$ of $\rr{M}_{3}^k(\Kl)_{\rr{dR}}$ are given by
			\begin{equation*}
				\begin{split}
					h^{p,2k+1-p}=
			\begin{cases}
				\lfloor\frac{p}{6}\rfloor -\delta_{p,k}& p\equiv 0,1,2,4\ \mathrm{mod} \ 6,\\
				\lfloor\frac{p}{6}\rfloor+1 & p\equiv 3,5\ \mathrm{mod} \ 6,
			\end{cases}
				\end{split}
			\end{equation*}	 
		if $p\leq k$, and we have $h^{p,2k+1-p}=h^{2k+1-p,p}$ if $p>k$. Here $\delta_{p,k}$ is the Kronecker symbol. 
	\end{enumerate}
\end{thm}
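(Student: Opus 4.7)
The plan is to compute the irregular Hodge numbers of $\rr{M}_{n+1}^k(\Kl)_{\dR}$ by realizing them as the $S_k$-invariant part of an irregular-Hodge-filtered de Rham cohomology, and then unpacking the combinatorics of the resulting filtration. The starting point is the identification of $\Kl_{n+1}$ as the exponential pushforward of $\exp(x_1+\cdots+x_n+a/(x_1\cdots x_n))$ along the $a$-projection, which makes $\Sym^k\Kl_{n+1}$ the $S_k$-invariant part of the $k$-fold external tensor power. By Sabbah--Yu's theory, the exponentially twisted de Rham cohomology associated to the Laurent polynomial $g_k$ on $\mathbb{G}_m^{nk}\times\mathbb{G}_m$ carries a canonical irregular Hodge filtration whose associated graded is computable through a Kontsevich-type complex of meromorphic differential forms filtered by the Newton polytope of $g_k$.

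The combinatorial heart of the argument is the identification of the irregular Hodge filtration on $\Kl_{n+1}$ as having $(n+1)$ one-dimensional graded pieces placed at filtration degrees $0,1,\ldots,n$. Taking symmetric powers then yields a filtration on $\Sym^k\Kl_{n+1}$ whose graded pieces are indexed by weak compositions $(j_0,\ldots,j_n)$ of $k$ weighted by $\sum_i i\cdot j_i$, producing the bivariate generating function $\prod_{i=0}^n (1-t^i x)^{-1}$. Passing from the fiber-wise filtration to the middle de Rham cohomology on $\mathbb{G}_m$ then requires accounting for the contributions of the local monodromy at $0$ and $\infty$: the unipotent monodromy at $0$ accounts for a factor $(1-t)$, while the tame monodromy at $\infty$, whose eigenvalues are $(n+1)$-th roots of unity, together with the Tate twist dictated by the irregular slopes, is responsible for the factor $t^{n+1}/(1-t^{n+1})$.

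Under the coprimality hypothesis in part (1), the tame monodromy of $\Sym^k\Kl_{n+1}$ at $\infty$ has no $S_k$-invariant fixed vector, so the analysis above exhausts the boundary contributions: for $p\leq (nk+1)/2$, the coefficient of $t^p x^k$ in the rational function reads off $h^{p,nk+1-p}$ directly. The reflection symmetry $h^{p,nk+1-p}=h^{nk+1-p,p}$ for $p>(nk+1)/2$ follows from the self-duality of $\Sym^k\Kl_{n+1}$, which descends to a polarization on $\rr{M}_{n+1}^k(\Kl)_{\dR}$.

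For part (2), with $n=2$ and $3\mid k$, the coprimality condition fails and the tame monodromy of $\Sym^k\Kl_3$ at $\infty$ acquires an $S_k$-invariant fixed subspace. This produces an extra cohomology class that survives into the middle $H^1$ and must be subtracted from the naive generating function; this subtraction is the source of the $-\delta_{p,k}$ correction. The main technical obstacle is the modulo-$6$ case analysis: the period $6=2\cdot 3$ reflects the interplay between the three-step Hodge support $\{0,1,2\}$ of $\Kl_3$ and the order-$3$ tame structure at $\infty$, and a careful counting of weak compositions $(j_0,j_1,j_2)$ of $k$ in each residue class, combined with an explicit identification of the extra invariant class, is required both to reduce the resulting sum to the piecewise-closed form involving $\lfloor p/6\rfloor$ and to verify its compatibility with the reflection symmetry $h^{p,2k+1-p}=h^{2k+1-p,p}$.
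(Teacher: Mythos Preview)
Your proposal is a heuristic narrative rather than a proof, and it contains a significant conceptual error along with several missing technical ingredients.

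First, the conceptual error: you repeatedly refer to the ``tame monodromy at $\infty$'' of $\Kl_{n+1}$ and its symmetric powers. In fact $\Kl_{n+1}$ has a \emph{regular} singularity at $0$ (unipotent monodromy, nilpotent part $N$) and an \emph{irregular} singularity at $\infty$ of slope $\tfrac{1}{n+1}$. So your attribution of the factor $t^{n+1}/(1-t^{n+1})$ to tame eigenvalues at $\infty$ is wrong, and your explanation of the $-\delta_{p,k}$ correction in part~(2) as coming from a ``tame invariant fixed subspace at $\infty$'' is misplaced. In the paper the extra class when $3\mid k$ arises from the formal regular component of $\Sym^k\widetilde{\Kl}_3$ at $\infty$ (Proposition~\ref{prop:local-infinity-Symk}, Lemma~\ref{solution_at_infty-kl3}), and its Hodge-theoretic effect is computed via the inverse Fourier transform and the nearby-cycle structure at $\tau=0$ of $\widetilde M^{\rr H}$ (Section~\ref{subsec:inverse-fourier}, Proposition~\ref{prop:hodgenumberstildekl3-3midk}).

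Second, the technical machinery is missing. For part~(1), the paper does not ``pass from a fibre-wise filtration'' by bookkeeping local contributions; it constructs explicit bases $W_{d,k}^{\rr{mid}}$ of the middle de Rham cohomology indexed by degree $d$ (Theorem~\ref{thm:mid-cohomology_classes}), uses non-degeneracy of $f_k$ when $\gcd(k,n+1)=1$ to identify the irregular Hodge filtration with the Newton polyhedron filtration (Lemma~\ref{prop:hodge-number-non-degenerate-newton}), and then squeezes the Hodge numbers using Hodge symmetry together with the combinatorial identity $\#W_{d,k}^{\rr{mid}}=\#W_{nk+1-d,k}^{\rr{mid}}$ (Proposition~\ref{prop:counting}). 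Your sketch has no substitute for the Newton-polytope step, which is what actually places cohomology classes in specific filtration pieces. For part~(2), $f_k$ is degenerate, so the Newton-polytope method fails; the paper instead builds a non-degenerate compactification by hand (Section~\ref{subsec:hodgenumberskl3-3midk}) and compares with the Hodge numbers of $\Sym^k\widetilde{\Kl}_3$ obtained through the Fourier-transform analysis. None of this appears in your proposal; the ``careful counting of weak compositions'' you allude to cannot by itself access the Hodge filtration.
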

\begin{rek}\label{rek:hodge-number}
    The above theorem is a crucial ingredient in proving a generalization of the work of Fres\'an--Sabbah--Yu in the author's forthcoming paper~\cite{Qin23function} to those $\mathrm{M}^k_{n+1}(\Kl)$ having Hodge numbers either $0$ or $1$, such as when $n=2$ and $k\leq 9$. However, we find that in most cases, $\mathrm{M}^k_{n+1}(\Kl)$ can have Hodge numbers bigger than $1$. For example, the Hodge numbers of $\mathrm{M}_3^{10}(\Kl)_{\rr{dR}}$ are given by 
			\begin{equation*}
				\begin{split}
					(h^{p,21-p})_{0\leq p\leq 21}=(0,0,0,1,0,1,1,1,1,2,1,1,2,1,1,1,1,0,1,0,0,0).
				\end{split}
			\end{equation*}
\end{rek}

There is a parallel story for Airy moments. The Airy sums are exponential sums defined for each prime power $q=p^r$ and each $a\in \fqq\cros$ by
	\begin{equation*}
		\begin{split}
			\rr{Ai}_n(a;q):=\sum_{x\in \fqq}\exp\bigl(2\pi i/p\cdot \tr_{\fqq/\fpp}\bigl(x^{n+1}-a\cdot x\bigr)\bigr).
		\end{split}
	\end{equation*}
They are finite field analogs of Airy functions
	\begin{equation*}
		\begin{split}
			\rr{Ai}_n(z):=\int_0^\infty \exp\Bigl(\frac{1}{n+1}x^{n+1}-z\cdot x\Bigr)\rr{d}x,
		\end{split}
	\end{equation*}
which satisfy the Airy differential equation\footnote{We use the integral of $\exp(\frac{1}{n+1}x^{n+1}-z\cdot x)$ instead of $\exp(x^{n+1}-z\cdot x)$ to define $\rr{Ai}_n(z)$, because we want $\rr{Ai}_n(z)$ to satisfy the differential equations $\partial_z^n-z=0$.} $\partial_z^n-z=0$. Like Kloosterman sums, we interpret Airy sums as traces of Frobenius acting on some $\ell$-adic local system $\rr{Ai}_n$ over $\bb{A}^1_{\fqq}$. The $k$-th symmetric power moments of Airy sums are the algebraic integers 
	\begin{equation*}
		\begin{split}
			-\sum_{a\in \fqq}\tr(\frob_q, (\Sym^k\rr{Ai}_n)_{\bar a}).
		\end{split}
	\end{equation*}

Sabbah and Yu constructed \textit{ulterior motives} $\rr{M}_n^k(\rr{Ai})$ attached to the $k$-th symmetric power moments of Airy sums in the sense of \cite{Anderson1986motive}. Contrary to the motives attached to Kloosterman moments, the de Rham realizations of $\rr{M}_n^k(\rr{Ai})$ underlie \textit{finite monodromic mixed Hodge structures} \cite[\S2]{sabbah2021airy}, which are pure of weight $k+1$ with $\bb{Q}$-indexed Hodge filtration. When $n=1$, they computed the Hodge filtration of $\rr{M}_2^k(\rr{Ai})_{\rr{dR}}$, whose Hodge numbers are either $0$ or $1$ \cite[Thm.\,1.1]{sabbah2021airy}. In the following theorem, we calculate the Hodge numbers of $\rr{M}_n^k(\rr{Ai})_{\rr{dR}}$ for $n>1$ when $\gcd(k,n)=1$.

\begin{thm}\label{thm:hodge number-Ai}
	Assume that $\gcd(k,n)=1$. The possible jumps of the $\bb{Q}$-indexed Hodge filtration on $\rr{M}_n^k(\rr{Ai})_{\rr{dR}}$ occur at $\frac{p+n+k}{n+1}$ for $0\leq p\leq nk-n-k+1$, and the Hodge numbers $h^{\frac{p+n+k}{n+1},\frac{nk+1-p}{n+1}}$ are the coefficients of $t^px^k$ in the formal power series expansion of the rational function
		\begin{equation*}
			\begin{split}
				\frac{1-t}{(1-t^{n})(1-x)(1-t x)\cdots(1-t^{n-1}x)}.
			\end{split}
		\end{equation*}
\end{thm}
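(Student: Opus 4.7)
The plan is to mimic the argument for part 1 of Theorem 1.1.1 in the Airy setting. Following Sabbah--Yu's construction \cite{sabbah2021airy}, the de Rham realization $\rr{M}_n^k(\rr{Ai})_{\dR}$ should be realized as a subquotient of the twisted de Rham cohomology of the pair
$$(V,\, f_k|_V), \qquad V=\{x_1+\cdots+x_k=0\}\subset \bb{A}^k, \qquad f_k=x_1^{n+1}+\cdots+x_k^{n+1},$$
equipped with its irregular Hodge filtration; more precisely, it should appear as the $S_k$-invariant direct summand of this cohomology, since $\Sym^k\rr{Ai}_n$ is the trivial $S_k$-isotypic piece of $\rr{Ai}_n^{\otimes k}$. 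The restriction to the hyperplane $V$ encodes, via Fourier duality in the parameter $a$, the passage from $\psi(x^{n+1})$ to the Airy sheaf and the summation over $a$.

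The first step is to verify this geometric model, generalizing the $k=1$ treatment of \cite{sabbah2021airy} to arbitrary $k$, and to express the irregular Hodge filtration on the ambient cohomology in terms of the Newton polyhedron $\Delta$ of $f_k|_V$. Since $f_k|_V$ is convenient and non-degenerate with respect to $\Delta$, a theorem of Yu (in the form used in \cite{sabbah2021airy}) identifies the graded pieces of the $\bb{Q}$-indexed filtration with monomials modulo the Jacobian ideal, weighted by the Newton function of $\Delta$. An inspection of the Newton function on this simplicial $\Delta$ shows that the filtration jumps occur precisely at $\alpha=\frac{p+n+k}{n+1}$ for $0\leq p\leq nk-n-k+1$, matching the indexing claimed in the theorem.

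The remaining task is the combinatorial identification with the displayed rational function. The Jacobian ideal $(\partial_{x_i}f_k)=(x_1^n,\ldots,x_k^n)$ leaves monomials $x_1^{a_1}\cdots x_k^{a_k}$ with $0\leq a_i\leq n-1$ as a basis of the ambient Jacobian quotient; passing to $S_k$-invariants and organizing by the grading $(k,\sum a_i)\mapsto (x,t)$ yields the Hilbert series
$$\prod_{i=0}^{n-1}(1-t^i x)^{-1},$$
which enumerates multisets of size $k$ drawn from $\{0,1,\ldots,n-1\}$. The restriction to the hyperplane $V=\{\sum x_i=0\}$ introduces a single Euler-type relation, whose effect on the $S_k$-invariant Hilbert series is the correction factor $(1-t)/(1-t^n)$. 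The coprimality assumption $\gcd(k,n)=1$ ensures that this calculation exhausts the relevant $S_k$-invariants, by eliminating parasitic contributions from nontrivial characters of the natural $\mu_n$-symmetry of the unrestricted Jacobian ring (parallel to the role played by $\gcd(k,n+1)=1$ in Theorem 1.1.1).

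The main technical obstacle will be aligning the bookkeeping: matching the Newton weight with the Hodge index $\frac{p+n+k}{n+1}$, and showing that $\gcd(k,n)=1$ cleanly isolates the $S_k$-invariant slice of the ambient twisted cohomology with no overcounting from neighboring isotypic components. Once these details are in place, extracting the coefficients of $t^px^k$ from the resulting rational function is a routine power-series manipulation, mirroring the combinatorial end of the argument in the Kloosterman case.
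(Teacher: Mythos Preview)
Your proposal takes a genuinely different route from the paper, and the key steps are not justified.

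\textbf{Difference in the geometric model.} The paper does not eliminate the parameter $z$. It works with the pair $(\bb{A}^{k+1},f_k)$, $f_k=\sum_i\bigl(\tfrac{1}{n+1}x_i^{n+1}-zx_i\bigr)$, and the embedding
\[
\mathrm{H}^1_{\mathrm{mid}}(\bb{A}^1,\Sym^k\rr{Ai}_n)^{\rr H}\hookrightarrow \rr{H}^{k+1}(\bb{G}_m^{k+1},f_k)^{\rr H},
\]
on which the Newton polyhedron filtration is available because $f_k$ is non-degenerate on the torus when $\gcd(k,n)=1$. There is no restriction to $\{\sum x_i=0\}$, and the relevant isotypic component is for the sign character $\chi$ of $S_k$ (coming from the volume form), not the trivial one.

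\textbf{The paper's mechanism.} The paper never computes a Jacobian ring. Instead it builds a basis $\{W_{d,k}\}$ of $\mathrm{H}^1_{\dR}(\bb{A}^1,\Sym^k\rr{Ai}_n)$ directly from the de Rham complex of the connection (Theorem~4.15), using the injectivity of $N_k+zE_k$ (Lemma~4.4) and the counting Proposition~4.5. It then shows only the \emph{inclusion} $W_{d,k}\subset F^{(nk+1-d)/(n+1)}$ via the Newton function (Lemma~5.5). The crucial step is a symmetry squeeze: since both the auxiliary filtration by degree and the irregular Hodge filtration have graded pieces symmetric under $p\mapsto k+1-p$ (the former by Proposition~4.5(2), the latter by Hodge symmetry), inclusion forces equality. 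You have no analogue of this argument; you assume the Newton/Jacobian computation gives the Hodge numbers on the nose.

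\textbf{Gaps in your argument.} Your Jacobian computation is done on $\bb{A}^k$, not on $V$: the ideal $(x_1^n,\ldots,x_k^n)$ is the Jacobian of $\sum x_i^{n+1}$ on $\bb{A}^k$, whereas on $V=\{\sum x_i=0\}$ the critical ideal is generated by the differences $x_i^n-x_j^n$, a quite different object. The assertion that ``restriction to $V$ introduces a single Euler-type relation whose effect is the factor $(1-t)/(1-t^n)$'' is therefore unsupported; neither the relation nor the factor is derived. You also assert non-degeneracy and convenience of $f_k|_V$ without checking them, and the identification of $\rr{M}_n^k(\rr{Ai})_{\dR}$ with your hyperplane model is only sketched. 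Even if these could all be repaired, the approach would still be substantially different from the paper's, which deliberately avoids computing the ambient Hodge numbers and relies on the symmetry argument instead.
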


In addition to the above cases of symmetric power moments of Kloosterman sums and Airy sums, we also deal with moments for reductive groups, defined by replacing $\Sym^k\Kl_{n+1}$ with Kloosterman sheaves for reductive groups from \cite{HNY-Kloosterman} in the formula~\eqref{eq:moments}, see Propositions~\ref{prop:hodge-numbers-Kl3V21}. In \cite{Qin23function}, we use Hodge numbers of two-dimensional motives to solve some conjectures of Evans type, raised by Yun in \cite{yun2015galois}, relating the symmetric power moments of Kloosterman sums with Fourier coefficients of modular forms.

\subsection{Sketch of the proof}
The proof relies on the irregular Hodge theory, developed in a series of works such as \cite{Deligne2007,Sabbah-2010-laplace,yu2012irregular,esnault-sabbah-yu2017e1,Sabbah-Yu-2015,sabbah18irregularhodge}. We take the proof of Theorem~\ref{thm:hodge number-Kl} as an example here. 
\begin{itemize}
	\item In \cite{fresan2018hodge}, Fres\'an, Sabbah, and Yu first constructed the motives $\mathrm{M}_{n+1}^{k}(\Kl)$ as exponential motives (in the sense of \cite{fresanexponential}) and then showed them to be isomorphic to (classical) motives. Inspired by the exponential motivic nature of $\mathrm{M}_{n+1}^{k}(\Kl)$, they showed that the de Rham realizations $\mathrm{M}_{n+1}^{k}(\Kl)_{\mathrm{dR}}$ underlie \textit{exponential mixed Hodge structures}, as defined by Kontsevich and Soibelman \cite{Kontsevich2011}. Since each exponential mixed Hodge structure is equipped with an irregular Hodge filtration \cite[Prop.\,A.10]{fresan2018hodge}, there are both classical and irregular Hodge filtration on $\mathrm{M}_{n+1}^{k}(\Kl)_{\mathrm{dR}}$.
	Fortunately, these two types of filtration coincide \cite[A.13]{fresan2018hodge}. So, it suffices to calculate the irregular Hodge filtration on $\mathrm{M}_{n+1}^{k}(\Kl)_{\mathrm{dR}}$.

	\item Let $f_k\colon \bb{G}_{m}^{nk+1}\to \bb{A}^1$ be the Laurent polynomial
		\begin{equation*}
			\begin{split}
				\sum_{i=1}^k\biggl(\sum_{j=1}^nx_{i,j}+\frac{z}{\prod_jx_{i,j}}\biggr).
			\end{split}
		\end{equation*}
	The twisted de Rham cohomology $\rr{H}^k(\bb{G}_m^{nk+1},f_k)$ of the pair $(\bb{G}_m^{nk+1},f_k)$ is defined as the hypercohomology $\bb{H}^k\bigl(\Omega^\bullet_{\bb{G}_m^{nk+1}},\rr{d}+\rr{d}f_k\bigr)$, which also underlies an exponential mixed Hodge structure \cite[Def.\,A.18]{fresan2018hodge}. We embed $\rr{M}_{n+1}^k(\rr{Kl})_{\rr{dR}}$  in $\mathrm{H}^{nk+1}_{\mathrm{dR}}(\bb{G}_m^{nk+1}, f_k)$ via the inclusion \eqref{eq:explicite-cohomology-class}, which is compatible with their irregular Hodge filtrations.

	\item Then we suitably choose a basis for each $\rr{M}_{n+1}^k(\rr{Kl})_{\rr{dR}}$. In the proof of \cite[Thm.\,1.6]{fresan2018hodge} for the case $n=1$, Fres\'an, Sabbah and Yu found an explicit basis for $\rr{M}_{2}^k(\rr{Kl})_{\rr{dR}}$ through the identification of $\rr{M}_{n+1}^k(\rr{Kl})_{\rr{dR}}$ with the middle de Rham cohomology of $\Sym^k\Kl_{n+1}$.
	
	For $n>1$, it is more complicated to get explicit bases for $\rr{M}_{n+1}^k(\rr{Kl})_{\rr{dR}}$ following their approach due to the complexity of the de Rham cohomology of $\Sym^k\Kl_{n+1}$. The most significant difficulty comes from the matrix $N_k$ in the differential \eqref{eq:theta_z} of the de Rham complex of $\Sym^k\Kl_{n+1}$ in \eqref{eq:derham}. The matrix $N_k$ is in fact the nilpotent part of the local monodromy operator of the connection $\Sym^k\Kl_{n+1}$ at $0$. When $n=1$, it is always nilpotent with one single Jordan block, while it is nilpotent with several Jordan blocks when $n>1$ and $k>1$. 
	
	This key point of this article is that the irregular Hodge filtration on $\rr{M}_{n+1}^k(\rr{Kl})_{\rr{dR}}$ depends not on an explicit basis but on the degree defined in \eqref{eq:grading}. With the help of a counting result (Proposition~\ref{prop:counting}), we choose bases for $\rr{M}_{n+1}^k(\rr{Kl})_{\rr{dR}}$ with prescribed degrees in Theorems~\ref{thm:mid-cohomology_classes} and~\ref{thm:mid-cohomology_classes-kl3}, which are good enough for our purpose.

	\item If $\gcd(k,n+1)=1$ (in Theorem~\ref{thm:hodge number-Kl} only the motives $\mathrm{M}_3^{3k}(\Kl)$ do not satisfy this condition), by the work of Adolphson--Sperber, Esnault--Sabbah--Yu, and Yu \cite{Adolphson1997twisted,esnault-sabbah-yu2017e1,yu2012irregular}, the filtration on the twisted de Rham cohomology $\mathrm{H}^{nk+1}_{\mathrm{dR}}(\bb{G}_m^{nk+1}, f_k)$ has a geometric interpretation in terms of the so-called \textit{Newton polyhedron filtration}, see Section~\ref{subsec:irreg-hodge-fil}. In Lemma~\ref{prop:hodge-number-non-degenerate-newton}, we show that each element $\omega$ in the basis for $\rr{M}_{n+1}^k(\rr{Kl})_{\rr{dR}}$ lies in 
		\begin{equation*}
			\begin{split}
				F^{p(\omega)}\mathrm{H}^{nk+1}_{\mathrm{dR}}(\bb{G}_m^{nk+1}, f_k)
			\end{split}
		\end{equation*}
	for an integer $p(\omega)$ depending only on the degree of $\omega$. We deduce that $\omega$ lies in $F^{p(\omega)}\rr{M}_{n+1}^k(\rr{Kl})_{\rr{dR}}$. Thanks to our suitably chosen bases, we conclude using the Hodge symmetry and our counting result (Proposition~\ref{prop:counting}) that each $\omega$ defines a nonzero class in $\gr_F^{p(\omega)}\rr{M}_{n+1}^k(\rr{Kl})_{\rr{dR}}$ in Theorem~\ref{thm:hodgenumberskl3-3nmidk}. Hence, the elements of the bases are adapted to the Hodge filtration, and we get the Hodge numbers of $\rr{M}_{n+1}^k(\rr{Kl})_{\rr{dR}}$.
	\item If $\gcd(k, n+1)>1$, as in the case where $k$ is even in \cite{fresan2018hodge,sabbah2021airy}, we need extra information to finish the calculation. For the case of the motives $\mathrm{M}_3^{3k}(\Kl)$, see Sections~\ref{subsec:inverse-fourier}, \ref{subsec:hdogekl3tilde}, and~\ref{subsec:hodgenumberskl3-3midk}. In general, we expect that the same argument works for $\rr{M}_{n+1}^k(\Kl)$ when the nilpotent part of the local monodromy operator of $\Sym^k\Kl_{n+1}$ at $0$ has Jordan blocks of different sizes, which is the case of $\Sym^4\Kl_4$, for example.
\end{itemize}

\subsection{Organization of the paper} 
In Section~\ref{sec:proposition-of-connection}, we recall some notation from the theory of $\scr{D}$-modules and some basic properties of Kloosterman and Airy connections. In Section~\ref{sec:EMHS}, we recall the definition of exponential mixed Hodge structures and the construction of exponential mixed Hodge structures attached to Kloosterman and Airy moments. Next, we determine the bases of cohomology classes of the de Rham cohomologies of Kloosterman and Airy connections in Section~\ref{sec:cohomology}. In Section~\ref{sec:hodge number}, we prove Theorems~\ref{thm:hodge number-Kl} and~\ref{thm:hodge number-Ai}.
\subsection*{Acknowledgement}
{This work is based on the author's Ph.D. thesis completed at Centre de Math\'ematiques Laurent Schwartz in \'Ecole Polytechnique. The author thanks his supervisors, Javier Fres\'an and Claude Sabbah, for proposing this question to him and for their guidance and fruitful discussions. The author also thanks Alberto Casta\~no Dom\'inguez, Lei Fu, and Christian Sevenheck for their feedback on a previous version of this article and to Gabriel Ribeiro, Bin Wang, Jeng-Daw Yu, and Bingyu Zhang for valuable discussions. Lastly, the author appreciates an anonymous referee for numerous suggestions to correct inaccuracies and enhance this paper's presentation.}

\section{Properties of Kloosterman and Airy connections}\label{sec:proposition-of-connection}
In this section, we gather some properties of Kloosterman and Airy connections, namely those concerning the local structures at $0$ and $\infty$, the irregularities, and the dimensions of their de Rham cohomologies.

\subsection{Notation for \texorpdfstring{$\scr{D}$}{D}-modules}

We recall some facts from the theory of $\scr{D}$-modules. For details, see \cite{HTT-dmodule} for example. 

Let $X$ be a smooth algebraic variety over $\bb{C}$. We denote by $\scr{D}_X$ the \textit{sheaf of differential operators} on $X$, which is a subsheaf of $\mathcal{E}nd_{\bb{C}_X}(\mathcal{O}_X)$, generated by $\mathcal{O}_X$ (acting on $\mathcal{O}_X$ by left multiplication), and the sheaf of vector fields $\Theta_X=(\Omega_X^1)^\vee$. For example, if $X=\bb{A}^1$, the sheaf $\scr{D}_X$ is associated with the Weyl algebra $\bb{C}[t]\langle\partial_t\rangle$, satisfying $\partial_t\cdot t-t\cdot \partial_t=1$. A left (resp. right) $\scr{D}_X$-module is an $\mathcal{O}_X$-module with a left (resp. right) action of $\scr{D}_X$. When $X$ is affine, we identify $\scr{D}_X$-modules with their global sections. For example, $\scr{D}_{\bb{A}^1}$-modules are identified with $\bb{C}[t]\langle\partial_t\rangle$-modules. We denote by $\mathrm{Mod}(\scr{D}_X)$ the category of (left) $\scr{D}_X$-modules on $X$, and by $\mathrm{D}^b(\scr{D}_X)$ the bounded derived category of $\scr{D}_X$-modules.

Let $f\colon X\to Y$ be a morphism of smooth complex varieties. Let $\scr{D}_{X\to Y}$ and $\scr{D}_{Y \leftarrow X}$ be the transfer modules associated with $f$. For a complex of $\mathscr{D}_X$-modules $M$ and a complex of $\mathscr{D}_Y$-modules $N$, the direct image of $M$ and the inverse image of $N$ are defined by 
	\begin{equation*}
		\begin{split}
			f_+M=\mathrm{R}f_*(\scr{D}_{Y\leftarrow X}\otimes^{\rr{L}}_{\scr{D}_X}M)\in \mathrm{D}^b(\scr{D}_Y)
		\end{split}
	\end{equation*}
and 
	\begin{equation*}
		\begin{split}
			f^+N=\scr{D}_{X\to Y}\otimes^{\rr{L}}_{f\inv \scr{D}_Y}f\inv N\in \mathrm{D}^b(\scr{D}_X )
		\end{split}
	\end{equation*}
respectively. Let $\mathbb{D}_X$ and $\bb{D}_Y$ be the duality functors. We put $f_\dagger=\bb{D}_Y\circ f_+\circ \bb{D}_X.$ Then there is a canonical forget supports morphism $f_\dagger M \to f_+M$, which is an isomorphism if $f$ is proper.

A $\scr{D}_X$-module is called \textit{holonomic} if the dimension of its characteristic variety is equal to $\dim X$. We denote by $\rr{D}^b_h(\scr{D}_X)$ the full subcategory of $\rr{D}^b(\scr{D}_X)$ whose objects are complexes with holonomic cohomologies. 

\subsubsection{De Rham cohomology}
For a $\scr{D}_X$-module $M$, we denote by $\mathrm{DR}(M)$ its de Rham complex $\Omega_{X}\otimes_{\mathcal{O}_X}^{\mathrm{L}}M$. The de Rham cohomology $\mathrm{H}^r_{\dR}(X, M)$ of a $\scr{D}_X$-module $M$ is the hypercohomology of the de Rham complex of $M$, which is finite-dimensional if $M$ is holonomic.

Now let $j\colon U\hookrightarrow X$ be the inclusion of an open subvariety $U$ in a projective smooth variety $X$, whose complement is a divisor $D$. For a connection on $U$, i.e., a locally free $\scr{D}_U$-module $M$, the de Rham cohomology with compact support $\mathrm{H}^r_{\mathrm{dR, c}}(U,M)$ is the hypercohomology of $\rr{DR}(j_\dagger M)$. We denote by $j_{\dagger+}M$ the intermediate extension of $M$, which is the $\scr{D}_X$-module $N$ such that $j^+N=M$ and $N$ admits no subobjects or quotient objects supported on $D$.

If $X$ is a curve, we denote by $\mathrm{H}^r_{\mathrm{dR, mid}}(U,{M})$ the \textit{middle de Rham cohomology} of ${M}$, i.e., the image of the de Rham cohomology with compact support in the usual de Rham cohomology. It is identified with the de Rham cohomology $\mathrm{H}^1_{\mathrm{dR}}(X,j_{\dagger+}{M})$ of the intermediate extension $j_{\dagger+}{M}$ \cite[(3.1)]{fresan2020quadratic}.

\subsubsection{The Fourier transform}
Let $\pr_t$ and $\pr_\tau$ be the projections of $\bb{A}^1_t\times \bb{A}^1_\tau$ to the first and the second factors respectively. We denote by $\mathcal{E}^{t\tau}$ the rank $1$ connection $(\cc{O}_{\bb{A}^1_t\times \bb{A}^1_\tau},\rr{d}+\rr{d}(t\tau))$ on $\bb{A}^1_t\times \bb{A}^1_\tau$. Then the \textit{Fourier transform} of a $\scr{D}_{\bb{A}^1_t}$-module $M$ on $\bb{A}^1_t$ is given by 
        \begin{equation*}
			\begin{split}
				\rr{FT}\,M=\pr_{\tau+}\bigl(\pr_t^+M\otimes_{\cc{O}_{\bb{A}^1\times \bb{A}^1}} \cc{E}^{t\tau}\bigr).
			\end{split}
		\end{equation*}
If $M$ is a holonomic $\scr{D}_{\bb{A}^1_t}$-module, its Fourier transform is a holonomic $\scr{D}_{\bb{A}^1_\tau}$-module.

\subsubsection{The nearby cycle and the vanishing cycle functors}
Let $f\colon X\to \bb{A}^1$ be a regular function. In terms of the \textit{Kashiwara--Malgrange filtration} of a holonomic $\scr{D}_X$-module $M$, we can define the nearby cycle $\psi_fM$ and the vanishing cycle $\phi_fM$ of $M$, which are holonomic $\scr{D}_X$-modules supported on $f\inv(0)$. 

By construction, there is an automorphism $T$ on both $\psi_fM$ and $\phi_fM$. So we can decompose them, and denote by $\psi_{f,\lambda}M$ and $\phi_{f,\lambda}$ the generalized eigenspaces corresponding to an eigenvalue $\lambda\in \bb{C}\cros$. If $\lambda\neq1$, there is an isomorphism $\psi_{f,\lambda}M\simeq \phi_{f,\lambda}M$ compatible with the automorphism $T$. If $\lambda=1$, there are morphisms $\rr{can}$ and $\rr{var}$ as follows:
        $$\begin{tikzcd}
                \psi_{f,1}M \ar[r,bend left,"\rr{can}"] &  \phi_{f,1}M,\ar[l,bend left,"\rr{var}"]
        \end{tikzcd}$$
such that $\exp(2\pi i\, \rr{var}\circ \rr{can})$ and $\exp(2\pi i \,\rr{can}\circ \rr{var})$ are equal to the unipotent automorphisms $T$ on $\psi_{f,1}M$ and $\phi_{f,1}M$ respectively. The nilpotent endomorphisms $\rr{var}\circ\rr{can}$ and $\rr{can}\circ\rr{var}$ on $\psi_{f,1}M$ and $\phi_{f,1}$ are denoted by $N$.

\subsection{Kloosterman connections}\label{sec:Kloosterman-connection}

Let $\{x_i\}_{1\leq i\leq n}$ and $z$ be the coordinates of $\bb{G}_{m}^{n+1}$, and we consider the diagram
	\begin{equation}\label{defining-diagram}
		\begin{tikzcd}
			 &	\bb{G}_m^{n+1}\ar[rd,"\pr_z"]\ar[ld,"f"']	&	\\
			 \bb{A}^1_x & &\bb{G}_{m,z}	
		\end{tikzcd}
	\end{equation}
where
	\begin{equation}\label{eq:f-kloosterman}
		\begin{split}
			f(x_i,z)=\sum_{i=1}^n x_i+\frac{z}{\prod_{i=1}^n x_i},
		\end{split}
	\end{equation} 
and $\pr_z$ is the projection to $\bb{G}_{m,z}$. The \textit{Kloosterman connection} is defined as
	\begin{equation*}
		\begin{split}
			\Kl_{n+1}:=\mathcal{H}^0\pr_{z+}f^+\mathcal{E}^x,
		\end{split}
	\end{equation*}
where $\mathcal{E}^x$ denotes the rank $1$ vector bundle with connection $(\mathcal{O}_{\bb{A}^1},\mathrm{d}+\mathrm{d}x).$ Besides, we consider the pullback $\widetilde{\Kl}_{n+1}:=[n+1]^+\Kl_{n+1}$ of $\Kl_{n+1}$ along the cover 
	\begin{equation*}
		\begin{split}
			[n+1]\colon \bb{G}_{m,t}\to \bb{G}_{m,z},\,t\mapsto t^{n+1}=z.
		\end{split}
	\end{equation*}
Notice that $\mu_{n+1}$ is the automorphism group of the cover $[n+1]$. We can recover the connection $\Kl_{n+1}$ as the $\mu_{n+1}$-invariants of $[n+1]_+\tilde{\Kl}_{n+1}$. 

\begin{prop}
	The Kloosterman connections have the following properties:
	\begin{enumerate}
		\item $\Kl_{n+1}$ is a free $\mathcal{O}_{\bb{G}_m}$-module of rank $n+1$, whose connection, in terms of some basis $\{v_0,v_1,\ldots,v_n\}$, is given by
		\begin{equation}\label{eq:connection}
			\mathrm{d}+N\frac{\mathrm{d}z}{z}+E\,\mathrm{d}z,
		\end{equation}
		where $N$ is the lower triangular Jordan block of size $n+1$ with eigenvalue $0$, and $E$ is the matrix with entry $1$ at row $1$ and column $n+1$ and with entries $0$ elsewhere.
		 
		\item  $\Kl_{n+1}$ has a regular singularity at $0$ and an irregular singularity at $\infty$ of slope $\frac{1}{n+1}$. 
		\item We have an isomorphism
		\begin{equation*}
			\begin{split}
				\Kl_{n+1}^\vee\simeq \iota_{n+1}^+\Kl_{n+1}
			\end{split}
		\end{equation*}
	where $\Kl_{n+1}^{\vee}$ is the dual of $\Kl_{n+1}$, and $\iota_{n+1}$ is the involution $z\mapsto (-1)^{n+1}z$.
	\end{enumerate}
\end{prop}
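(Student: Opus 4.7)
The plan is to compute the direct image $\mathcal{H}^0\pr_{z+}(f^+\mathcal{E}^x)$ explicitly via the relative twisted de Rham complex of the smooth affine morphism $\pr_z$, and then to derive (2) and (3) from the resulting Bessel presentation. For part (1), I would trivialize relative top $n$-forms by the volume form $\tfrac{\mathrm{d}x_1}{x_1}\wedge\cdots\wedge\tfrac{\mathrm{d}x_n}{x_n}$, so that $\mathcal{H}^0\pr_{z+}(f^+\mathcal{E}^x)$ becomes the quotient of $\mathcal{O}_{\bb{G}_m^{n+1}}$ by the submodule generated by $x_i\partial_{x_i}(g)+(x_i-z/\prod_j x_j)g$ for $i=1,\ldots,n$ and $g\in\mathcal{O}_{\bb{G}_m^{n+1}}$, using $x_i\partial_{x_i}f = x_i - z/\prod_j x_j$. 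A direct monomial reduction shows that modulo these relations all $x_i$ coincide with a single class $\xi$ satisfying $\xi^{n+1}=z$, and that the quotient is freely generated over $\mathcal{O}_{\bb{G}_{m,z}}$ by $\{1,\xi,\ldots,\xi^n\}$, so $\Kl_{n+1}$ is locally free of rank $n+1$. The Gauss--Manin rule $\partial_z[g]=[\partial_z g+(\partial_z f)\,g]$ with $\partial_z f=1/\prod_j x_j\equiv \xi/z$ then gives $z\partial_z[\xi^j]=[\xi^{j+1}]$, so $v_0:=[1]$ is a cyclic vector, $v_j:=(z\partial_z)^j v_0=[\xi^j]$ is a basis, and $(z\partial_z)^{n+1}v_0=zv_0$; reading off the action of $z\partial_z$ in this basis produces the matrix $N\tfrac{\mathrm{d}z}{z}+E\,\mathrm{d}z$ of~\eqref{eq:connection}.

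Part (2) then follows quickly: regularity at $z=0$ is immediate from the matrix form, and for the singularity at $\infty$ I pull back by the cover $[n+1]$ to obtain that $\widetilde{\Kl}_{n+1}$ satisfies $\theta_t^{n+1}v_0=(n+1)^{n+1}t^{n+1}v_0$, of pure slope $1$ at $t=\infty$ (with formal solutions $\exp(\alpha t)$); since the cover $[n+1]$ has ramification index $n+1$ at $\infty$, the slope of $\Kl_{n+1}$ there is $1/(n+1)$. For part (3), the central observation is that the involution $\sigma\colon(x_1,\ldots,x_n,z)\mapsto(-x_1,\ldots,-x_n,(-1)^{n+1}z)$ of $\bb{G}_m^{n+1}$ satisfies $f\circ\sigma=-f$ and fits into $\pr_z\circ\sigma=\iota_{n+1}\circ\pr_z$; hence $\sigma^+(f^+\mathcal{E}^x)\simeq f^+\mathcal{E}^{-x}=(f^+\mathcal{E}^x)^\vee$, and base change together with the compatibility of $\pr_{z+}$ with duality (valid here since the direct image is concentrated in a single degree, or alternatively by matching explicit matrix presentations) yields $\iota_{n+1}^+\Kl_{n+1}\simeq\Kl_{n+1}^\vee$.

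The main technical content lies in part (1): one must verify that only the top relative cohomology contributes, so that the direct image is genuinely concentrated in degree zero, and that the monomial reduction produces no relations beyond $\xi^{n+1}=z$. Both are Koszul-type checks on the relative de Rham complex of the Laurent polynomial algebra and are routine but require some bookkeeping; once they are in place, parts (2) and (3) follow essentially formally.
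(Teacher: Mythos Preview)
The paper does not prove this proposition; it simply cites \cite[Prop.\,2.4]{fresan2018hodge}. Your proposal supplies an actual argument, and it is correct in outline: the relative de Rham computation in part~(1) recovering the Bessel presentation $(z\partial_z)^{n+1}v_0=zv_0$, the slope computation in part~(2) via the cover $[n+1]$, and the involution $\sigma$ in part~(3) are all the standard arguments one finds in the cited reference and in the literature (e.g.\ Katz, \emph{Exponential Sums and Differential Equations}).

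One caution on part~(3): the claim that ``$\pr_{z+}$ commutes with duality since the direct image is concentrated in a single degree'' is not quite a valid justification. In general $\bb{D}\circ\pr_{z+}=\pr_{z\dagger}\circ\bb{D}$, and concentration in one cohomological degree does not by itself force the forget-supports map $\pr_{z\dagger}\to\pr_{z+}$ to be an isomorphism for a non-proper morphism. Your alternative route, matching explicit matrix presentations, is the clean fix: the dual connection is $\mathrm{d}-N^{t}\tfrac{\mathrm{d}z}{z}-E^{t}\,\mathrm{d}z$, and a constant gauge (the anti-diagonal matrix with alternating signs) conjugates this to $\mathrm{d}+N\tfrac{\mathrm{d}z}{z}+(-1)^{n+1}E\,\mathrm{d}z=\iota_{n+1}^+\Kl_{n+1}$. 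Equivalently, at the level of cyclic presentations, the formal adjoint of $(z\partial_z)^{n+1}-z$ is, up to a unit, $(z\partial_z)^{n+1}-(-1)^{n+1}z$.
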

\begin{proof}
	These are the first three properties in \cite[Prop.\,2.4]{fresan2018hodge}.
\end{proof}
From the first property above, the Kloosterman connections are examples of Frenkel--Gross connections for $\rr{SL}_{n+1}$ \cite[\S6.1]{Frenkel2009}. We denote by $\Sym^k\Kl_{n+1}$ (resp. $\Sym^k\widetilde\Kl_{n+1}$) the $k$-th symmetric power of $\Kl_{n+1}$ (resp. $\widetilde \Kl_{n+1}$), which is of rank $\binom{n+k}{k}$. 

\subsubsection{The local structures at \texorpdfstring{$0$}{0}}\label{subsec:local-at-0}
We study the formal local structures at $0$ of $\Sym^k\Kl_{n+1}$ and $\Sym^k\widetilde \Kl_{n+1}$. Let $V=\bb{C}^{n+1}$ be the standard representation of $\SL_{n+1}$, and $V_k:=\Sym^kV$ the $k$-th symmetric power of the representation $V$. We denote by $\{v_0,\ldots,v_{n}\}$ the standard basis for $V$ and by $N$ the matrix 
	\begin{equation*}
		\begin{split}
			N=\left(\begin{matrix} 
				0 & 0 & \cdots & 0 & 0 \\ 
				1 & 0 & \cdots & 0 & 0 \\ 
				0 & 1 & \cdots & 0 & 0 \\
				\cdots & \cdots & \cdots & \cdots & \cdots \\
				0 & 0 & \cdots & 1 & 0 \end{matrix}\right).
		\end{split}
	\end{equation*}
By construction, we have $Nv_i=v_{i+1}$ for $i=0,\ldots, n-1$ and $Nv_n=0$. The action of $N$ on $V$ can be enhanced to a Lie algebra representation $\rho\colon \mathrm{sl}_2\to \mathrm{End}(V)$, such that 
	$\rho\left(\begin{matrix}0&0\\1&0\end{matrix}\right)=N$,
	$\rho\left(\begin{matrix}1&0\\0&-1\end{matrix}\right)=\rr{diag}(n,n-2,\ldots,-n)$, and 
    $\rho\left(\begin{matrix}0&1\\0&0\end{matrix}\right)$ is a matrix with nonzero items $i(i-1-n)$ at $i$-th column and $(n+1-i)$-th row.
Using $\rho$, we view $V_k$ as a representation of $\mathrm{sl}_2$, such that $\rho\left(\begin{matrix}0&0\\1&0\end{matrix}\right)=\Sym^kN$. For simplicity, we denote $N_k:=\Sym^k N$.

Recall that the category of representations of $\mathrm{sl}_2$ is semisimple, and all irreducible representations of $\mathrm{sl}_2$ are of the form $\Sym^d(\bb{C}^2)$. In the following lemma, we make the decomposition of $V_k$ into irreducible representations explicit.

\begin{lem}\label{lem:decomposition}
As representations of $\mathrm{sl}_2$, we have
	\begin{equation*}
		\begin{split}
			V_k=\bigoplus_{d=0}^{\lfloor\frac{nk}{2}\rfloor}\Sym^{nk-2d}(\bb{C}^2)^{\oplus q_{d,k}},
		\end{split}
	\end{equation*}
where $q_{d,k}$ are the coefficients of $t^d$ in the formal power series expansion of 
	\begin{equation*}
		\begin{split}
			Q_k(t)=\frac{(1-t^{n+1})\cdots (1-t^{n+k})}{(1-t^2)\cdots (1-t^k)}.
		\end{split}
	\end{equation*}
In particular, the cokernel of $N_k$ on $V_k$ has dimension $\sum_{d=0}^{\lfloor\frac{nk}{2}\rfloor} q_{d,k}$. 
\end{lem}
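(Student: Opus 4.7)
The plan is to recognize that, via $\rho$, the representation $V$ is the $(n+1)$-dimensional irreducible representation $L_n := \Sym^n(\bb{C}^2)$ of $\mathrm{sl}_2$: the image of $\rho$ is the principal $\mathrm{sl}_2$-triple, and the semisimple element acts on $v_0,\ldots,v_n$ with distinct weights $n,n-2,\ldots,-n$. Hence $V_k = \Sym^k V \cong \Sym^k(L_n)$ as $\mathrm{sl}_2$-representations, and the lemma reduces to a classical plethysm computation which can be carried out through the monomial basis.

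The concrete execution proceeds as follows. First, I would enumerate weights in the monomial basis: a monomial $v_0^{a_0}\cdots v_n^{a_n}$ with $\sum a_i = k$ has $\mathrm{sl}_2$-weight $nk - 2\sum_i i a_i$, so $\dim(V_k)_{nk-2d}$ counts tuples $(a_0,\ldots,a_n)$ with $\sum a_i = k$ and $\sum i a_i = d$. The generating function is
\begin{equation*}
\sum_{d \ge 0}\dim(V_k)_{nk-2d}\, t^d \;=\; [x^k]\prod_{j=0}^{n}\frac{1}{1-xt^j} \;=\; \binom{n+k}{k}_{t},
\end{equation*}
the standard identity expressing a complete homogeneous polynomial in a geometric progression as a Gaussian binomial coefficient. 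Second, I would invoke the elementary $\mathrm{sl}_2$-fact that, in any finite-dimensional representation $W$, the multiplicity of $L_m$ (for $m\ge 0$) equals $\dim W_m - \dim W_{m+2}$. Applied to $V_k$ this yields
\begin{equation*}
\sum_{d=0}^{\lfloor nk/2\rfloor} q_{d,k}\, t^d \;=\; (1-t)\,\binom{n+k}{k}_{t} \;=\; \frac{(1-t^{n+1})(1-t^{n+2})\cdots(1-t^{n+k})}{(1-t^2)(1-t^3)\cdots(1-t^k)},
\end{equation*}
which is exactly the stated $Q_k(t)$. For the assertion about $\coker N_k$, since $N_k$ acts as the $\mathrm{sl}_2$ lowering operator on $V_k$, its cokernel on each irreducible summand $L_{nk-2d}$ is one-dimensional, so $\dim \coker N_k = \sum_{d=0}^{\lfloor nk/2\rfloor} q_{d,k}$.

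I do not anticipate a serious obstacle: the argument is standard representation theory combined with $q$-combinatorics. The only care needed is to handle correctly the substitution relating the character variable on $\mathrm{sl}_2$ to the $q$-variable of the Gaussian binomial, and to note that the identity $q_{d,k} = \dim(V_k)_{nk-2d} - \dim(V_k)_{nk-2d+2}$ is valid only in the range $0 \le d \le \lfloor nk/2\rfloor$; the (possibly negative) coefficients of $Q_k(t)$ beyond that range reflect the breakdown of the formula for negative highest weights and play no role in the decomposition.
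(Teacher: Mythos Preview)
Your proposal is correct and complete. The paper's own proof, however, is a one-line citation: it simply refers to Fu--Wan \cite{fu2006trivial}, noting that the multiplicities $p_{d,k}$ appear there as the differences $c_k(d)-c_k(d-1)$ and that the closed form for $Q_k(t)$ is their Theorem~0.1. Your argument effectively unpacks that citation into a self-contained computation: identifying $V$ with $\Sym^n(\bb{C}^2)$ via the principal $\mathrm{sl}_2$, reading off the weight multiplicities of $\Sym^k V$ as the Gaussian binomial $\binom{n+k}{k}_t$, and then applying $q_{d,k}=\dim(V_k)_{nk-2d}-\dim(V_k)_{nk-2d+2}$ to obtain $(1-t)\binom{n+k}{k}_t=Q_k(t)$. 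What you gain is independence from the external reference and transparency about where the factor $(1-t)$ comes from; what the paper's approach buys is brevity, since Fu--Wan is cited again later (e.g., in Section~\ref{sec:cohomology}) for the related identity $\sum_{k\ge 0}Q_k(t)x^k=(1-t)/\prod_{j=0}^n(1-t^jx)$.
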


\begin{proof} 
The representation $V_k$ is of the form $\bigoplus_{d=0}\Sym^{d}(\bb{C}^2)^{p_{d,k}}$. Fu and Wan showed that $p_{d,k}$ are those numbers $c_k(d)-c_k(d-1)$ from \cite[p.\,559]{fu2006trivial} and gave the formula of $p_{d,k}$ in\footnote{Our formula is slightly different from the original formula in \cite[Thm.\,0.1]{fu2006trivial} because we study $\Sym^k\Kl_{n+1}$ instead of $\Sym^k\Kl_n$.} \cite[Thm.\,0.1]{fu2006trivial}.
\end{proof}

The formal structures of the connections $\Sym^k\Kl_{n+1}$ and $\Sym^k\widetilde\Kl_{n+1}$ at $0$ are isomorphic to
	\begin{equation*}
		\begin{split}
			\bb{C}((z)) \otimes_{\bb{C}[z,z\inv]}  \Sym^k\Kl_{n+1}\simeq \Bigl(\mathcal{O}_{\bb{G}_m}^{\binom{n+k}{n}},\mathrm{d}-N_k\frac{\mathrm{d}z}{z}\Bigr)
		\end{split}
	\end{equation*}
and 
	\begin{equation*}
		\begin{split}
			\bb{C}((t)) \otimes_{\bb{C}[t,t\inv]}  \Sym^k\widetilde\Kl_{n+1}
	\simeq \Bigl(\mathcal{O}_{\bb{G}_m}^{\binom{n+k}{n}},\mathrm{d}-(n+1)N_k\frac{\mathrm{d}t}{t}\Bigr)
		\end{split}
	\end{equation*}
respectively. Using Lemma~\ref{lem:decomposition}, we have the following result:

\begin{prop}\label{prop:local-formal-at-0}
	The formal structures of the connections $\Sym^k\Kl_{n+1}$ and $\Sym^k\widetilde\Kl_{n+1}$ at $0$ are isomorphic to
		\begin{equation*}
			\begin{split}
				\bigoplus_{d=0}^{\lfloor\frac{nk}{2}\rfloor}\Bigl(\mathcal{O}_{\bb{G}_m}^{nk-2d+1},\mathrm{d}-J_{nk-2d+1}(0)\frac{\mathrm{d}z}{z}\Bigr)^{\oplus q_{d,k}}
			\end{split}
		\end{equation*}
	and 
		\begin{equation*}
			\begin{split}
				\bigoplus_{d=0}^{\lfloor\frac{nk}{2}\rfloor}\Bigl(\mathcal{O}_{\bb{G}_m}^{nk-2d+1},\mathrm{d}-(n+1)J_{nk-2d+1}(0)\frac{\mathrm{d}t}{t}\Bigr)^{\oplus q_{d,k}}
			\end{split}
		\end{equation*}
	respectively, where each matrix $J_{nk-2d+1}(0)$ represents a Jordan block of size $nk-2d+1$ with eigenvalue $0$. 
\end{prop}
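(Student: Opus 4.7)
The plan is to bootstrap the proposition directly from the formal models of $\Sym^k\Kl_{n+1}$ and $\Sym^k\widetilde\Kl_{n+1}$ stated just above, namely
\[
	\bigl(\mathcal{O}_{\mathbb{G}_m}^{\binom{n+k}{n}},\,\mathrm{d}-N_k\tfrac{\mathrm{d}z}{z}\bigr)
	\quad\text{and}\quad
	\bigl(\mathcal{O}_{\mathbb{G}_m}^{\binom{n+k}{n}},\,\mathrm{d}-(n+1)N_k\tfrac{\mathrm{d}t}{t}\bigr),
\]
together with the $\mathrm{sl}_2$-decomposition of $V_k$ supplied by Lemma~\ref{lem:decomposition}. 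Since both formal connections are built from a constant nilpotent matrix acting on $V_k$, any $\mathbb{C}$-linear decomposition of $V_k$ stable under $N_k$ induces a decomposition of the corresponding formal connection.

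First I would unpack the $\mathrm{sl}_2$-decomposition: by Lemma~\ref{lem:decomposition} there is a $\mathrm{sl}_2$-equivariant isomorphism $V_k\simeq\bigoplus_{d=0}^{\lfloor nk/2\rfloor}\Sym^{nk-2d}(\mathbb{C}^2)^{\oplus q_{d,k}}$, and $N_k=\Sym^kN$ is exactly the image under $\rho$ of $\bigl(\begin{smallmatrix}0&0\\1&0\end{smallmatrix}\bigr)$. Hence the restriction of $N_k$ to each irreducible factor $\Sym^{nk-2d}(\mathbb{C}^2)$ is the lowering operator of a standard irreducible $\mathrm{sl}_2$-representation of dimension $nk-2d+1$. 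On such a representation, the lowering operator is well-known to act, in the weight basis, as a single nilpotent Jordan block of size $nk-2d+1$; in particular it is conjugate to $J_{nk-2d+1}(0)$.

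Next I would transport this linear algebra statement to the world of formal connections. Choosing a basis of $V_k$ adapted to the $\mathrm{sl}_2$-decomposition conjugates $N_k$ into the block-diagonal matrix $\bigoplus_d J_{nk-2d+1}(0)^{\oplus q_{d,k}}$. Since this gauge change is constant (hence $\mathbb{C}[\![z]\!]$-linear and compatible with $\mathrm{d}$), the formal connection $\mathrm{d}-N_k\tfrac{\mathrm{d}z}{z}$ becomes the direct sum
\[
	\bigoplus_{d=0}^{\lfloor nk/2\rfloor}\bigl(\mathcal{O}_{\mathbb{G}_m}^{nk-2d+1},\,\mathrm{d}-J_{nk-2d+1}(0)\tfrac{\mathrm{d}z}{z}\bigr)^{\oplus q_{d,k}},
\]
which is the claim for $\Sym^k\Kl_{n+1}$. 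The statement for $\Sym^k\widetilde{\Kl}_{n+1}$ follows from the same computation after pullback along $[n+1]\colon t\mapsto t^{n+1}=z$, which sends $\tfrac{\mathrm{d}z}{z}$ to $(n+1)\tfrac{\mathrm{d}t}{t}$ and thus replaces $N_k$ by $(n+1)N_k$ in the formal model.

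There is no serious obstacle here: everything reduces to the elementary fact that the lowering operator on an irreducible $\mathrm{sl}_2$-representation is a single Jordan block. The only thing worth being careful about is ensuring that the formal local models quoted before the proposition are indeed the ones given by Turrittin/Hukuhara normalization from the global presentation~\eqref{eq:connection}; but this has been established by Fres\'an--Sabbah--Yu and is the content of the sentences preceding the proposition, so I would simply cite them and move on.
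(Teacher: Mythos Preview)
Your proposal is correct and is exactly the argument the paper has in mind: the proposition is stated immediately after the formal models with the sentence ``Using Lemma~\ref{lem:decomposition}, we have the following result,'' and your write-up simply makes that deduction explicit by block-diagonalizing $N_k$ via the $\mathrm{sl}_2$-decomposition.
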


\begin{cor}\label{cor::soln0}
	Let $\rr{Soln}_0$ and $\tilde{\rr{Soln}}_0$ be the dimensions of the formal solution spaces $\rr{H}om(\bb{C}((z))\otimes \Sym^k\Kl_{n+1},\bb{C}((z)))$ and $\rr{H}om(\bb{C}((t))\otimes \Sym^k\widetilde\Kl_{n+1},\bb{C}((t)))$ respectively. Then $\rr{Soln}_0=\tilde{\rr{Soln}}_0=\sum_{d=0}^{\lfloor\frac{nk}{2}\rfloor}q_{d,k}$.
\end{cor}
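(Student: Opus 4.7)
The plan is to reduce the computation to a direct sum of elementary regular-singular connections with nilpotent residue, thanks to the explicit decomposition provided by Proposition~\ref{prop:local-formal-at-0}, and then to compute the formal solution space of each summand by hand. Since the functor $\rr{Hom}(-,\bb{C}((z)))$ sends finite direct sums to direct sums, we have
	\begin{equation*}
		\begin{split}
			\rr{Soln}_0 = \sum_{d=0}^{\lfloor nk/2\rfloor} q_{d,k}\cdot s_{nk-2d+1},
		\end{split}
	\end{equation*}
where $s_r$ is the dimension of the formal solution space of the single block $\bigl(\cc{O}_{\bb{G}_m}^{r},\mathrm{d}-J_r(0)\frac{\mathrm{d}z}{z}\bigr)$. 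An analogous formula holds for $\tilde{\rr{Soln}}_0$, with $s_r$ replaced by the corresponding $\tilde s_r$ attached to the block $\bigl(\cc{O}_{\bb{G}_m}^{r},\mathrm{d}-(n+1)J_r(0)\frac{\mathrm{d}t}{t}\bigr)$.

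Next I would show $s_r = \tilde s_r = 1$ by a direct Laurent series computation. A $\scr{D}$-linear map $\phi$ from the block to $\bb{C}((z))$ amounts to a vector of formal Laurent series $\vec\phi=(\phi_1,\ldots,\phi_r)$ satisfying $z\partial_z \vec\phi = J_r(0)^T \vec\phi$ (respectively $(n+1)J_r(0)^T \vec\phi$ in the tilde case). Expanding $\phi_i = \sum_n a_{i,n} z^n$, the equation becomes $n\, \vec a_n = J_r(0)^T \vec a_n$ for every $n\in\bb{Z}$. Since $J_r(0)^T$ is nilpotent, its only eigenvalue is $0$, so $\vec a_n = 0$ for $n\neq 0$, and the solution space is identified with $\ker J_r(0)^T$, which is one-dimensional. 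The same holds verbatim when $J_r(0)^T$ is scaled by $n+1$, as the scaled matrix is still nilpotent with a one-dimensional kernel.

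Combining the two steps, I obtain
	\begin{equation*}
		\begin{split}
			\rr{Soln}_0 = \tilde{\rr{Soln}}_0 = \sum_{d=0}^{\lfloor nk/2\rfloor} q_{d,k},
		\end{split}
	\end{equation*}
which is the claimed formula. There is essentially no serious obstacle: the only point to be careful about is that $\bb{C}((z))$ contains no formal logarithms, so no ``extra'' solutions involving $\log z$ appear; this is exactly what forces each Jordan block to contribute a single solution (the lowest weight vector in the corresponding $\mathrm{sl}_2$-irreducible summand from Lemma~\ref{lem:decomposition}), and ensures the count coincides with $\dim\ker N_k=\dim\coker N_k$.
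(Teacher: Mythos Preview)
Your proof is correct and takes a genuinely different route from the paper's. The paper invokes \cite[Prop.\,2.9.8]{katz1990exponential}, which identifies $\rr{Soln}_0$ with the rank of the cokernel of $j_{0\dagger+}\mathcal{M}\to j_{0+}\mathcal{M}$ at $0$; this cokernel is then read off from the Jordan decomposition in Proposition~\ref{prop:local-formal-at-0} as $\dim\coker N_k=\sum_d q_{d,k}$. You instead apply Proposition~\ref{prop:local-formal-at-0} directly and compute the formal solution space of each Jordan block by hand via Laurent coefficients, obtaining one solution per block. Your approach is more elementary and self-contained, avoiding the appeal to Katz's general result; the paper's approach is terser and situates the computation inside the broader $\scr{D}$-module formalism (intermediate versus full pushforward), which is the language used elsewhere in the paper. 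One minor remark: the precise sign or transpose in your equation $z\partial_z\vec\phi=J_r(0)^T\vec\phi$ depends on conventions, but since only the nilpotency of the matrix and the dimension of its kernel matter, this does not affect the argument.
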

\begin{proof}
	Let $\cc{M}$ be $\Sym^k\Kl_{n+1}$ or $\Sym^k\tilde{\Kl}_{n+1}$ and $j_0\colon \bb{G}_m\hookrightarrow \bb{A}^1$ the inclusion. By Lemma~\ref{lem:decomposition} and Proposition~\ref{prop:local-formal-at-0}, we know that the cokernel of 
			\begin{equation*}
				\begin{split}
					j_{0\dagger+}\mathcal{M}\to j_{0+}\mathcal{M}
				\end{split}
			\end{equation*}
	is supported at $0$, of rank $\sum_{d=0}^{\lfloor\frac{nk}{2}\rfloor}q_{d,k}$. Hence, $\mathrm{Soln}_0=\sum_{d=0}^{\lfloor\frac{nk}{2}\rfloor}q_{d,k}$ by \cite[Prop.\,2.9.8]{katz1990exponential}.
\end{proof}

\begin{exe}
	If $n=2$, the representation $V_k$ is of the form 
		\begin{equation*}
			\begin{split}
				\bigoplus_{d=0}^{\lfloor\frac{k}{2}\rfloor}\Sym^{2k-2d}(\bb{C}^2),
			\end{split}
		\end{equation*}
	and $\rr{Soln}_0=\sum_{d=0}^{k}q_{d,k}$ is $1+\lfloor\frac{k}{2}\rfloor$.
\end{exe}

\subsubsection{The local structures at \texorpdfstring{$\infty$}{infinity}}\label{subsec:local-at-infty}
We study the formal local structures of $\Sym^k\Kl_{n+1}$ and $\Sym^k\widetilde \Kl_{n+1}$ at $\infty$. Let $[n+1]\colon \bb{G}_{m,t}\to \bb{G}_{m,z}$ be the $(n+1)$-th power map and $[2]\colon \bb{G}_{m,s}\to \bb{G}_{m,t}$ the square map, with Galois groups $\mu_{n+1}$ and $\mu_2$ respectively. We denote by $\zeta$ a primitive $2(n+1)$-th root of unity, by $\zeta_{n+1}$ the square of $\zeta$, and by $\mathbb{L}_{-1}$ the connection $(\cc{O}_{\bb{G}_m},\mathrm{d}+\frac{1}{2}\tfrac{\mathrm{d} t\inv}{t\inv})$ on $\bb{G}_{m,t}$. If there is no confusion, we also use the same symbol $\cc{E}^t$ and $\bb{L}_{-1}$ to denote their formal completions at $\infty$ for simplicity.

Let $\hat{\Kl}_{n+1}$ (resp. $\hat{\tilde{\Kl}}_{n+1}$) be the formal connection $\bb{C}((z\inv)) \otimes_{\bb{C}[z,z\inv]}  \Kl_{n+1}$ (resp. $\bb{C}((t\inv)) \otimes_{\bb{C}[t,t\inv]}  \tilde{\Kl}_{n+1}$). The formal local structures of $\Kl_{n+1}$ and $\widetilde \Kl_{n+1}$ are determined in the following lemma:
\begin{lem}
We have isomorphisms of formal connections 
	\begin{equation}\label{eq:Kl3-infinity}
		\hat{\Kl}_{n+1}\simeq [n+1]_+\bigl(\mathcal{E}^{(n+1)t}\otimes \bb{L}_{-1}^{\otimes n}\bigr)\simeq \Bigl([2n+2]_+\mathcal{E}^{(n+1)s^2}\Bigr)^{\mu_{2},\chi^n},
	\end{equation}
and
	\begin{equation}\label{eq:Kl3tilde-infty}
		\hat{\tilde \Kl}_{n+1}\simeq \bigoplus_{i=0}^n \mathcal{E}^{(n+1)\zeta^it}\otimes \bb{L}_{-1}^{\otimes n}.
	\end{equation}
Here $\chi$ is the unique quadratic character of $\mu_2$, and the exponent $(\mu_2,\chi^n)$ means taking the $\chi^n$-isotypic components.
\end{lem}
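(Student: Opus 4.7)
The three claims are tightly coupled, so my plan is to prove the main isomorphism $\hat{\Kl}_{n+1} \simeq [n+1]_+(\mathcal{E}^{(n+1)t}\otimes \mathbb{L}_{-1}^{\otimes n})$ by stationary phase applied to \eqref{defining-diagram}, and then to deduce \eqref{eq:Kl3tilde-infty} and the second isomorphism of \eqref{eq:Kl3-infinity} from it by purely formal manipulations.

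For the main isomorphism, I would first perform the base change $z = t^{n+1}$ together with the variable change $x_i \mapsto t\, x_i$, after which the phase becomes $F(x,t) = t\cdot g(x)$ with $g(x) = \sum_i x_i + 1/\prod_i x_i$, linear in the large parameter $t$. A direct calculation shows that the critical points of $g$ are $x_1 = \cdots = x_n = \zeta_{n+1}^j$ for $j = 0, \ldots, n$, with critical values $(n+1)\zeta_{n+1}^j$, and the Hessian of $g$ at such a point equals $\frac{1}{x}(I_n + J)$ of determinant $(n+1)/x^n$, where $J$ is the all-ones matrix. Sabbah's stationary phase formula then decomposes the formal pushforward $\hat{\tilde{\Kl}}_{n+1}$ as a direct sum, indexed by the critical points, of rank-one connections $\mathcal{E}^{(n+1)\zeta_{n+1}^j t}$ tensored with the regular rank-one factor produced by $\sqrt{\det \mathrm{Hess}_x F}$. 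Since $\mathrm{Hess}_x F = t \cdot \mathrm{Hess}_x g$, this factor has $t^{n/2}$-type behavior at infinity and is therefore $\mathbb{L}_{-1}^{\otimes n}$, using that $\mathbb{L}_{-1}$ is characterized as the unique regular rank-one connection whose flat sections are square roots of the coordinate.

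To recover \eqref{eq:Kl3tilde-infty}, I would then apply $[n+1]^+$ to the main isomorphism. Since $[n+1] \colon \mathbb{G}_{m,t} \to \mathbb{G}_{m,z}$ is a Galois cover with group $\mu_{n+1}$ acting by $\sigma_i \colon t \mapsto \zeta_{n+1}^i t$, base change yields $[n+1]^+ [n+1]_+ \mathcal{G} \simeq \bigoplus_{i=0}^n \sigma_i^* \mathcal{G}$. Applied to $\mathcal{G} = \mathcal{E}^{(n+1)t}\otimes \mathbb{L}_{-1}^{\otimes n}$, with $\sigma_i^* \mathcal{E}^{(n+1)t} \simeq \mathcal{E}^{(n+1)\zeta_{n+1}^i t}$ and $\sigma_i^* \mathbb{L}_{-1} \simeq \mathbb{L}_{-1}$ (since $\mathbb{L}_{-1}$ is characterized by having regular singularities of monodromy $-1$), this produces exactly the sum in \eqref{eq:Kl3tilde-infty}, the $\zeta^i$ of the statement being matched with $\zeta_{n+1}^i$ via the paper's convention $\zeta^2 = \zeta_{n+1}$.

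Finally, the second isomorphism of \eqref{eq:Kl3-infinity} reduces to a tower computation along $\mathbb{G}_{m,s} \xrightarrow{[2]} \mathbb{G}_{m,t} \xrightarrow{[n+1]} \mathbb{G}_{m,z}$: the projection formula together with the standard identity $[2]_+\mathcal{O}_{\mathbb{G}_{m,s}} \simeq \mathcal{O}_{\mathbb{G}_{m,t}} \oplus \mathbb{L}_{-1}$ gives $[2]_+ \mathcal{E}^{(n+1)s^2} \simeq \mathcal{E}^{(n+1)t} \oplus (\mathcal{E}^{(n+1)t} \otimes \mathbb{L}_{-1})$, where $\mu_2$ acts trivially on the first summand and by $\chi$ on the second. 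The $\chi^n$-isotypic part equals $\mathcal{E}^{(n+1)t} \otimes \mathbb{L}_{-1}^{\otimes n}$ by a parity argument combined with $\mathbb{L}_{-1}^{\otimes 2} \simeq \mathcal{O}$, and applying $[n+1]_+$ completes the identification. The main technical obstacle throughout is the Hessian bookkeeping in the stationary-phase step, which requires careful tracking of the square-root-type data coming from the quadratic form on the tangent space at each critical point in order to pin down the $\mathbb{L}_{-1}^{\otimes n}$ twist precisely (and not some other rank-one regular factor).
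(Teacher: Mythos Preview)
Your approach differs from the paper's in one important respect: the paper proves the first isomorphism of \eqref{eq:Kl3-infinity} by \emph{induction on $n$}, using the recursion $\Kl_{n+1}=j^+\mathrm{FT}(j_+\mathrm{inv}^+\Kl_n)$ together with Sabbah's one-variable stationary phase formula for the local Fourier transform \cite[Thm.\,5.1]{sabbah2008explicit}. You instead go after a single multi-variable stationary phase on the defining family $\text{pr}_{z+}\mathcal{E}^f$. Your critical-point and Hessian computations are correct, and this route is in principle valid; its advantage is conceptual directness, while the paper's inductive argument has the advantage of invoking only the well-documented one-variable local Fourier transform and avoiding any multi-variable Hessian bookkeeping. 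The derivations of the second isomorphism in \eqref{eq:Kl3-infinity} and of \eqref{eq:Kl3tilde-infty} are essentially the same in both approaches (projection formula plus $[2]_+\mathcal{O}\simeq \mathcal{O}\oplus\mathbb{L}_{-1}$ and Galois base change $[n+1]^+[n+1]_+\mathcal{G}\simeq\bigoplus_i\sigma_i^*\mathcal{G}$).

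There is, however, a logical gap in your write-up. After the base change $z=t^{n+1}$, your stationary-phase computation takes place on the $t$-side and produces the decomposition of $\hat{\tilde{\Kl}}_{n+1}$, i.e.\ exactly \eqref{eq:Kl3tilde-infty}. It does \emph{not} directly yield the first isomorphism of \eqref{eq:Kl3-infinity} for $\hat{\Kl}_{n+1}$. To pass from one to the other you need a descent step: knowing that $[n+1]^+\hat{\Kl}_{n+1}\simeq\bigoplus_j \mathcal{E}^{(n+1)\zeta_{n+1}^j t}\otimes\mathbb{L}_{-1}^{\otimes n}$ with $\mu_{n+1}$ permuting the summands cyclically, one must argue that $\hat{\Kl}_{n+1}$ is therefore $[n+1]_+$ of any one of them. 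This is true, but it is not automatic (two rank-$(n+1)$ connections on the $z$-disc with isomorphic pullbacks can in principle differ), and you do not address it. Worse, your paragraph ``To recover \eqref{eq:Kl3tilde-infty}\ldots'' then re-derives \eqref{eq:Kl3tilde-infty} from the main isomorphism by applying $[n+1]^+$, which is circular given that your stationary-phase step already computed \eqref{eq:Kl3tilde-infty}. You should reorder: stationary phase gives \eqref{eq:Kl3tilde-infty}; then descent along the Galois cover $[n+1]$ gives the first isomorphism of \eqref{eq:Kl3-infinity}; then the tower argument gives the second.
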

\begin{proof}

Let $\mathrm{inv}\colon \bb{G}_{m,z}\to \bb{G}_{m,z}$ be the inversion map $z\mapsto z\inv$ and $j\colon \bb{G}_{m,z}\hookrightarrow \bb{A}^1_z$ the inclusion map. By \cite[Lem.\,2.5]{fresan2018hodge}, we have 
	\begin{equation*}
		\begin{split}
			\Kl_{n+1}=j^+\mathrm{FT}(j_+\mathrm{inv}^+\Kl_{n})
		\end{split}
	\end{equation*}
for each $n\in \bb{N}_{>0}$, where $\Kl_1=\cc{E}^t$.  We prove the first isomorphism in the formula \eqref{eq:Kl3-infinity} by induction on $n$.

The case that $n=1$ is verified by the equation~(4.3) in \textit{loc. cit}. Assume that the formula for $\Kl_{n}$ holds. Let $j_\infty\colon \bb{G}_m\hookrightarrow \bb{P}^1\backslash\{0\}$ be the inclusion. Applying the formal stationary phase formula \cite[Thm 5.1]{sabbah2008explicit} to $j_{\infty+}\Kl_{n+1}=j_{\infty+}j^+\mathrm{FT}(j_+\mathrm{inv}^+\Kl_n)$, in other words, to $\rho= t^{n+1},\,\varphi=(n+1)t$ and $R=\bb{L}_{-1}^{\otimes (n-1)}$ according to the notation in \textit{loc. cit.}, we have the first part of the formula \eqref{eq:Kl3-infinity}. 

The second isomorphism in \eqref{eq:Kl3-infinity} follows from 
\begin{equation*}
	\begin{split}
		[n+1]_+\bigl(\mathcal{E}^{(n+1)t}\otimes \bb{L}_{-1}^{\otimes n}\bigr)&\simeq [n+1]_+\Bigl(\mathcal{E}^{(n+1)t}\otimes \Bigl([2]_+\Bigl(\bb{C}((s\inv))\Bigr)\Bigr)^{\mu_{2},\chi^n}\Bigr)\\
		&\simeq [n+1]_+\Bigl([2]_+\mathcal{E}^{(n+1)s^2}\Bigr)^{\mu_{2},\chi^n}\\
		&\simeq \Bigl([2n+2]_+\mathcal{E}^{(n+1)s^2}\Bigr)^{\mu_{2},\chi^n},
	\end{split}
\end{equation*} 
where we use the isomorphism $\bb{L}_{-1}^{\otimes n}=\bigl([2]_+\bb{C}((s\inv))\bigr)^{\mu_2,\chi^n}$ in the first isomorphism and the projection formula in the second isomorphism.

At last, let $M$ be the connection $(\cc{O}_{\bb{G}_{m,z}},\rr{d}+\frac{1}{2n+2}\frac{\rr{d}z}{z})$. By the projection formula, we have
\begin{equation*}
	\begin{split}
		\hat{\tilde \Kl}_{n+1}
		&=[n+1]^+\bigl([n+1]_+(\cc{E}^{(n+1)t}\otimes \bb{L}_{-1}^{\otimes n})\bigr)\\
		&=[n+1]^+\bigl([n+1]_+(\cc{E}^{(n+1)t})\otimes M^{\otimes n}\bigr)\\
		&=[n+1]^+[n+1]_+\cc{E}^{(n+1)t}\otimes [n+1]^+M^{\otimes n}.
	\end{split}
\end{equation*}
Therefore, we get \eqref{eq:Kl3tilde-infty} by the isomorphism $[n+1]^+[n+1]_+\mathcal{E}^{(n+1)t}=\bigoplus_{i=0}^n \mathcal{E}^{(n+1)\zeta^it}$ \cite[Lem.\,2.4]{sabbah2008explicit} and $[n+1]^+M=\bb{L}_{-1}$.
\end{proof}

For each multi-index $\underline I=(I_0,\ldots,I_n)$ in $\bb{N}^{n+1}$, we denote by $C_{\underline I}$ the complex number $\sum_{i=0}^{n}I_i\zeta^i$. Let $d(k,n+1)$ be the cardinality of the set 
\begin{equation}\label{eq:dk}
	\begin{split}
		\Bigl\{\underline I\in \bb{N}^{n+1}\,\Big{|}\, |\underline I|:=\sum_{j=0}^n I_j=k,\,C_{\underline I}=0\Bigr\}.
	\end{split}
\end{equation}
We determine the formal structures of $\Sym^k\tilde \Kl_{n+1}$ at $\infty$ in the following proposition:

\begin{prop}\label{prop:local-infinity-Symk}
	\begin{enumerate}
		\item We have an isomorphism of formal connections
		\begin{equation*}
			\begin{split}
				\Sym^k\hat{\widetilde\Kl}_{n+1}
			=\bigoplus_{|\underline I|=k,\,C_{\underline I=0}}\bb{L}_{-1}^{\otimes nk} \bigoplus \bigoplus_{|\underline I|=k,\,C_{\underline I\neq 0}} \mathcal{E}^{(n+1)C_{\underline I}t} \otimes \bb{L}_{-1}^{\otimes nk}.
			\end{split}
		\end{equation*}
		\item The irregularities of $\Sym^k\Kl_{n+1}$ and $\Sym^k\widetilde \Kl_{n+1}$ at $\infty$ are $\bigl(\binom{n+k}{n}-d(k,n+1)\bigr)\big{/}(n+1)$ and $\binom{n+k}{n}-d(k,n+1)$ respectively.
	\end{enumerate}
\end{prop}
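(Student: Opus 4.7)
The plan is to deduce both statements from the formula~\eqref{eq:Kl3tilde-infty} for $\hat{\tilde\Kl}_{n+1}$ at $\infty$ by taking symmetric powers and computing slopes. The key algebraic fact is the standard decomposition
\begin{equation*}
    \Sym^k\Bigl(\bigoplus_{i=0}^n L_i\Bigr) = \bigoplus_{|\underline I|=k}\bigotimes_{i=0}^n \Sym^{I_i}L_i,
\end{equation*}
which holds in any symmetric monoidal category. Since each summand $L_i = \mathcal{E}^{(n+1)\zeta^i t}\otimes \bb{L}_{-1}^{\otimes n}$ is of rank $1$, each factor $\Sym^{I_i}L_i$ simplifies to $L_i^{\otimes I_i}$.

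First I would carry out this computation to obtain
\begin{equation*}
    \Sym^k\hat{\tilde\Kl}_{n+1} \simeq \bigoplus_{|\underline I|=k}\bigotimes_{i=0}^n\bigl(\mathcal{E}^{(n+1)\zeta^i t}\otimes \bb{L}_{-1}^{\otimes n}\bigr)^{\otimes I_i}.
\end{equation*}
Using the standard rules $\mathcal{E}^\varphi\otimes \mathcal{E}^\psi \simeq \mathcal{E}^{\varphi+\psi}$ and collecting the powers of $\bb{L}_{-1}$, the right-hand side becomes $\bigoplus_{|\underline I|=k}\mathcal{E}^{(n+1)C_{\underline I}t}\otimes\bb{L}_{-1}^{\otimes nk}$. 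Splitting this sum according to whether $C_{\underline I}=0$ (in which case $\mathcal{E}^0$ is trivial) or $C_{\underline I}\neq 0$ gives the first assertion.

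For the second assertion, I would compute the irregularity summand by summand. A rank-one formal connection of the form $\mathcal{E}^{at}\otimes\bb{L}_{-1}^{\otimes nk}$ has slope $1$ and irregularity $1$ at $\infty$ when $a\neq 0$, and is regular (slope $0$) when $a=0$, because tensoring by the regular connection $\bb{L}_{-1}^{\otimes nk}$ does not change slopes. Summing contributions, the irregularity of $\Sym^k\tilde\Kl_{n+1}$ at $\infty$ equals the number of multi-indices $\underline I$ with $|\underline I|=k$ and $C_{\underline I}\neq 0$, namely $\binom{n+k}{n}-d(k,n+1)$. For $\Sym^k\Kl_{n+1}$, the relation $\Sym^k\tilde\Kl_{n+1} = [n+1]^+\Sym^k\Kl_{n+1}$ together with the fact that pullback under the totally ramified cover $[n+1]$ multiplies each slope (and hence the total irregularity) by $n+1$ gives the formula $\bigl(\binom{n+k}{n}-d(k,n+1)\bigr)/(n+1)$.

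The only delicate point is the compatibility of $\Sym^k$ with the formalization and with the isomorphism~\eqref{eq:Kl3tilde-infty}; once that is granted, the rest is algebra of rank-one objects. The mild subtlety in the irregularity computation is to confirm the slope-multiplication rule under the $(n+1)$-fold cover, which is standard for formal connections expressed in the variable $z = t^{n+1}$.
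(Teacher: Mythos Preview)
Your argument is correct and follows essentially the same route as the paper: take the symmetric power of the rank-one decomposition~\eqref{eq:Kl3tilde-infty}, use $\mathcal{E}^\varphi\otimes\mathcal{E}^\psi\simeq\mathcal{E}^{\varphi+\psi}$, and count summands with $C_{\underline I}\neq 0$ to get the irregularity of $\Sym^k\widetilde\Kl_{n+1}$. The only minor difference is in deducing the irregularity of $\Sym^k\Kl_{n+1}$: you invoke the slope-multiplication rule under the pullback $[n+1]^+$, while the paper argues via the pushforward, showing directly that $[n+1]_+\mathcal{E}^{at}$ has slope $\tfrac{1}{n+1}$ (hence irregularity $1$) for $a\neq 0$; both justifications are standard and yield the same ratio $\rr{Irr}(\Sym^k\widetilde\Kl_{n+1})/(n+1)$.
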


\begin{proof}
	\noindent\textbf{1.} By taking the symmetric power of \eqref{eq:Kl3tilde-infty}, we have
		\begin{equation*}
			\begin{split}
				\Sym^k\hat{\widetilde\Kl}_{n+1}=\Sym^k\Bigl(\oplus_{i=0}^n \mathcal{E}^{(n+1)\zeta^i t}\Bigr)\otimes \bb{L}_{-1}^{\otimes nk}=\bigoplus_{|\underline I|=k}\mathcal{E}^{(n+1)C_{\underline I}t}\otimes \bb{L}_{-1}^{\otimes nk}.
			\end{split}
		\end{equation*}
	Then notice that when $C_{\underline I}=0$, we have $\cc{E}^{(n+1)C_{\underline I}t}=\cc{O}_{\bb{A}^1}$.

	\noindent\textbf{2.} The irregularity of $\mathcal{E}^{\lambda t}$ at $\infty$ is $1$ if $\lambda\neq 0$ and $0$ if $\lambda=0$. By the local structure of $\Sym^k\tilde \Kl_{n+1}$, the irregularity of $\Sym^k\widetilde\Kl_{n+1}$ at $\infty$ is $\rk\,\Sym^k\widetilde\Kl_{n+1}-d(k,n+1)$, i.e., the number of $\underline I$ such that $C_{\underline I}\neq 0$.

	Recall that we have $[n+1]^+[n+1]_+\cc{E}^{at}=\bigoplus_{i=0}^n\cc{E}^{a\cdot \zeta_{n+1}^i}$ for $a\neq 0$. By the definition of slopes (see \cite[(2.2.5)]{katz1987galois}), we deduce that $[n+1]_+\cc{E}^{at}$ is of rank $n+1$ and slope $\frac{1}{n+1}$. Therefore, the irregularity of $[n+1]_+\mathcal{E}^{(n+1)C_{\underline I}t}$ at $\infty$ is $1$ if $C_{\underline I}\neq 0$, and the irregularity of $\Sym^k\Kl_{n+1}$ at $\infty$ is $\rr{Irr}(\Sym^k\tilde\Kl_{n+1})/(n+1)$.
\end{proof}

Next, we determine the dimensions of the formal solution spaces at $\infty$ of symmetric power of Kloosterman connections, an analog of the work of Fu--Wan for $\ell$-adic sheaves \cite{fu2005functions}. 

The element $\sigma=(0,\ldots,n)\in S_{n+1}$ acts on the set of multi-indices by 
	\begin{equation*}
		\begin{split}
			\sigma \cdot \underline I = (I_n,I_0,\ldots,I_{n-1}).
		\end{split}
	\end{equation*}
Let $a(k,n+1)$ be the cardinality of the set 
\begin{equation}\label{eq:ak}
	\begin{split}
		\Sigma(k,n+1):=\Bigl\{\underline I \mid |\underline I|=\sum_{i=0}^nI_i=k,\,C_{\underline I}=0\Bigr\}\Big{/}(\underline I\sim \sigma\cdot \underline I),
	\end{split}
\end{equation}
i.e., the set of orbits of multi-indices with $|\underline I|=k$ and $C_{\underline I}=0$. We denote by $b(k,n+1)$ the cardinality of the set 
\begin{equation}\label{eq:bk}
	\begin{split}
		\Bigl\{[\underline I]\in \Sigma(k,n+1)\,\Big{|}\, \sum_{i=0}^n (-1)^{\sum_{j=n+1-i}^n I_j}\sigma^i\cdot \underline I\neq 0 \text{ in } \bb{Z}[\bb{N}^{n+1}]\Bigr\}.
	\end{split}
\end{equation}

\begin{prop}\label{prop:soln-infty}
	Let $\rr{Soln}_\infty$ and $\tilde{\rr{Soln}}_\infty$ be the dimensions of the solution spaces $\rr{Hom}(\Sym^k{\hat\Kl}_{n+1},\bb{C}((z\inv)))$ and $\rr{Hom}(\Sym^k\hat{\widetilde\Kl}_{n+1},\bb{C}((t\inv)))$ respectively. Then 
	\begin{enumerate}
		\item $\tilde{\rr{Soln}}_\infty$ is $d(k,n+1)$ if $2\mid nk$ and $0$ otherwise.
		\item  $\rr{Soln}_\infty$ is 
		$\begin{cases}
			a(k,n+1) & 2\mid n,\\
			0		 & 2\nmid nk,\\
			b(k,n+1) & \text{else}.
		\end{cases}$
	\end{enumerate}
\end{prop}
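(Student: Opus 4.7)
Part 1 follows directly from the explicit decomposition of $\Sym^k\hat{\widetilde\Kl}_{n+1}$ in Proposition~\ref{prop:local-infinity-Symk}(1). Each element of $\rr{Hom}(\cc{E}^{(n+1)C_{\underline I}t}\otimes \bb{L}_{-1}^{\otimes nk},\bb{C}((t\inv)))$ is a formal horizontal section of the dual connection, which exists iff the summand is a trivial formal connection. The exponential factor $\cc{E}^{(n+1)C_{\underline I}t}$ is trivial iff $C_{\underline I}=0$, and the Kummer factor $\bb{L}_{-1}^{\otimes nk}$, having formal monodromy $(-1)^{nk}$, is trivial iff $nk$ is even. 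Combining these gives $\tilde{\rr{Soln}}_\infty=d(k,n+1)$ when $nk$ is even, and $0$ otherwise.

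For Part 2, Galois descent along the cover $[n+1]\colon\bb{G}_{m,t}\to \bb{G}_{m,z}$ identifies
	\begin{equation*}
		\begin{split}
			\rr{Hom}(\Sym^k\hat{\Kl}_{n+1},\bb{C}((z\inv)))=\rr{Hom}(\Sym^k\hat{\widetilde\Kl}_{n+1},\bb{C}((t\inv)))^{\mu_{n+1}},
		\end{split}
	\end{equation*}
with $\mu_{n+1}$ acting diagonally on source and target. From the decomposition $\hat{\widetilde\Kl}_{n+1}=\bigoplus_{i=0}^n\cc{E}^{(n+1)\zeta^it}\otimes\bb{L}_{-1}^{\otimes n}$, the Galois generator cyclically permutes the $n+1$ exponential summands; taking the $k$-th symmetric power induces the cyclic shift $\sigma\cdot(I_0,\ldots,I_n)=(I_n,I_0,\ldots,I_{n-1})$ on the $\binom{n+k}{n}$ summands indexed by multi-indices $\underline I$. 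On the formal solution space (supported on the summands with $C_{\underline I}=0$) this gives a permutation representation of the cyclic group $\mu_{n+1}$, possibly twisted by a sign character arising from the $\bb{L}_{-1}^{\otimes n}$ factor.

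The three cases of the proposition correspond to analyzing this representation by parities. When $n$ is even, $\bb{L}_{-1}^{\otimes n}$ is already trivial, no sign appears, and the dimension of invariants equals the number of $\mu_{n+1}$-orbits on $\{\underline I\mid |\underline I|=k,\,C_{\underline I}=0\}$, namely $a(k,n+1)$. When $nk$ is odd, Part 1 already gives no solutions, hence $\rr{Soln}_\infty=0$. When $n$ is odd and $k$ is even, a direct computation shows that the generator of $\mu_{n+1}$ acts on the summand $\underline I$ by a sign $(-1)^{I_n}$ coming from $\bb{L}_{-1}^{\otimes n}$; iterating, $\sigma^i$ acts on $\underline I$ with the cumulative sign $(-1)^{\sum_{j=n+1-i}^nI_j}$. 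The standard formula for invariants of a cyclic group acting by signed permutations then counts orbits $[\underline I]$ for which the signed symmetrizer $\sum_{i=0}^n(-1)^{\sum_{j=n+1-i}^nI_j}\sigma^i\cdot\underline I$ does not vanish in $\bb{Z}[\bb{N}^{n+1}]$, i.e., $b(k,n+1)$.

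The principal difficulty lies in the case $n$ odd, $k$ even: one must pin down the precise sign character by which $\mu_{n+1}$ twists the permutation action on summands, by carefully tracking the Galois action on each $\bb{L}_{-1}^{\otimes n}$ factor, and match the resulting cumulative sign on $\sigma^i\cdot\underline I$ to the explicit expression appearing in the definition of $b(k,n+1)$. Once this sign calculation is in place, the remaining combinatorics is standard character theory for the cyclic group $\mu_{n+1}$.
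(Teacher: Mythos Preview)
Your proposal is correct in outline and follows the same strategy as the paper: identify the solution space of $\Sym^k\hat\Kl_{n+1}$ as the Galois-invariants in the solution space upstairs, observe that the Galois group acts as a signed permutation on the summands indexed by $\underline I$ with $C_{\underline I}=0$, and count invariants orbit by orbit. The difference is which cover you descend along. You use $[n+1]\colon\bb{G}_{m,t}\to\bb{G}_{m,z}$ and the decomposition $\hat{\widetilde\Kl}_{n+1}=\bigoplus_i\cc{E}^{(n+1)\zeta_{n+1}^it}\otimes\bb{L}_{-1}^{\otimes n}$; the paper instead passes to the double cover $[2n+2]\colon\bb{G}_{m,s}\to\bb{G}_{m,z}$, where $\bb{L}_{-1}$ becomes trivial and $[2n+2]^+\hat\Kl_{n+1}$ splits as a direct sum of pure exponentials $\cc{E}^{(n+1)\zeta^js^2}$. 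It then writes down an explicit basis $\{f_j\}$ of this pullback, checks $\zeta\cdot f_j=f_{j+1}$ for $j<n$ and $\zeta\cdot f_n=(-1)^nf_0$, and reads off $\zeta\cdot f^{\underline I}=(-1)^{nI_n}f^{\sigma\underline I}$ directly.

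That computation is exactly the step you flag as the ``principal difficulty'' and do not carry out. The sign $(-1)^{I_n}$ you assert (for $n$ odd) encodes the $\mu_{n+1}$-equivariant structure on the decomposition of $\hat{\widetilde\Kl}_{n+1}$, and this structure is not determined by the isomorphism class of each summand alone---it depends on how the summands sit inside $[n+1]^+\hat\Kl_{n+1}$. Pinning it down via $[n+1]$ would still require tracking the Galois action on explicit generators of each $\cc{E}^{(n+1)\zeta_{n+1}^it}\otimes\bb{L}_{-1}^{\otimes n}$, and the cleanest way to do that is precisely to pass to the $s$-line where the Kummer factor trivializes. So your approach is not wrong, and the cumulative sign $(-1)^{\sum_{j=n+1-i}^nI_j}$ you write down is correct; but the paper's use of $[2n+2]$ is what turns your ``direct computation'' into an actual one-line calculation rather than an assertion.
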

\begin{proof} Let $\cc{M}$ be either $\Sym^k\Kl_{n+1}$ or $\Sym^k\tilde\Kl_{n+1}$ and $j_\infty\colon \bb{G}_m\hookrightarrow \bb{P}^1\backslash\{0\}$ be the inclusion. The rank of the cokernel of the injective morphism 
		\begin{equation*}
			\begin{split}
				j_{\infty\dagger+}\mathcal{M}\to j_{\infty+}\mathcal{M}
			\end{split}
		\end{equation*}
	is $\mathrm{Soln}_\infty$ or $\tilde{\mathrm{Soln}}_\infty$, see \cite[Prop.\,2.9.8]{katz1990exponential}.

\noindent\textbf{1.} By Proposition~\ref{prop:local-infinity-Symk}, the direct summand $\mathcal{E}^{(n+1)C_{\underline I}t} \otimes \bb{L}_{-1}^{\otimes nk}$ is trivial if and only if $C_{\underline I}=0$ and $2\mid nk$. So the number $\tilde{\mathrm{Soln}}_\infty$ is $d(k,n+1)$ if $2\mid nk$ and $0$ otherwise.

\noindent \textbf{2.} Recall that the cover $[2n+2]\colon\bb{G}_{m,s}\to \bb{G}_{m,z}$ of $\bb{G}_{m,z}$ has Galois group $\mu_{2n+2}$. We have an isomorphism 
	\begin{equation*}
		\begin{split}
			\Kl_{n+1}=\bigl([2n+2]_+[2n+2]^+\Kl_{n+1}\bigr)^{\mu_{2n+2}}.
		\end{split}
	\end{equation*}
We deduce from the above that
	\begin{equation*}
		\begin{split}
			\Sym^k\Kl_{n+1}=\bigl([2n+2]_+\Sym^k\bigl([2n+2]^+\Kl_{n+1}\bigr)\bigr)^{\mu_{2n+2}}.
		\end{split}
	\end{equation*}

We choose a generator $e$ of $\mathcal{E}^{(n+1)s^2}$. The set $\{s^ie\mid 0\leq i\leq 2n+1, i\equiv n\, (\rr{mod}\,2)\}$ is a basis for $ \hat \Kl_{n+1}=\bigl([2n+2]_+\mathcal{E}^{(n+1)s^2}\bigr)^{\mu_2,\chi^n}$. Let $e_i$ be the element $s^{-2i}\otimes s^{2i}e$ (resp. $s^{-2i-1}\otimes s^{2i+1}e$) in $[2n+2]^+\hat\Kl_{n+1}$ when $2\mid n$ (resp. $2\nmid n$) for $0\leq i\leq n$. Then we choose the set $\{e_{i}\mid 0\leq i\leq n\}$ as a basis for $[2n+2]^+\hat\Kl_{n+1}$. By \cite[(1.2)]{sabbah2008explicit}, we have 
	\begin{equation}\label{eq:e_i}
		s\partial_s\,\cdot e_i=(2n+2)s^2 e_{i+1} 
	\end{equation}
for $0\leq i\leq n$ (we take $e_{n+1}=e_0$). Let $f_{j}=\sum_{i=0}^n \zeta^{-2ij}e_{i}$ (resp. $\sum_{i=0}^n \zeta^{-(2i+1)j}e_{i}$) for $0\leq j\leq n$ if $2\mid n$ (resp. $2\nmid n$). Since $\zeta\cdot s^{-i}\otimes s^ie = (\zeta\cdot s^{-i})\otimes s^ie=\zeta^{-i}s^{-i}\otimes s^ie$. We have 
	\begin{equation*}
		\begin{split}
			\zeta\cdot f_{j}=f_{j+1} \text{ for } 0\leq j\leq n-1, \quad \text{and} \quad \zeta\cdot f_n=(-1)^n f_0.
		\end{split}
	\end{equation*}

By \eqref{eq:e_i}, each $f_{j}$ generates a copy of $\cc{E}^{(n+1)\zeta^{j}s^2}$ in $[2n+2]^+\Kl_{n+1}$, and we have 
	\begin{equation*}
		\begin{split}
			[2n+2]^+\hat\Kl_{n+1}=\bigoplus_{j=0}^n \cc{E}^{(n+1)\zeta^{j}s^2}.
		\end{split}
	\end{equation*}
Taking the symmetric power on both sides, we have 
	\begin{equation*}
		\begin{split}
			[2n+2]^+\Sym^k\hat\Kl_{n+1}=\bigoplus_{\underline I\in \bb{N}^{n+1},\,|\underline I|=k} \bb{C}((s\inv))\bigoplus \bigoplus_{\underline I\in \bb{N}^{n+1},\,|\underline I|=k} \cc{E}^{(n+1)C_{\underline I}s^2},
		\end{split}
	\end{equation*}
and each component for $\underline I$ is generated by $f^{\underline I}:= \prod_{j=0}^n f_{j}^{I_j}$. Notice that the action of $\mu_{2n+2}$ on the basis $\{f^{\underline I}\}$ is given by
	\begin{equation*}
		\begin{split}
			\zeta\cdot f^{\underline I}=(-1)^{n\cdot I_n}f^{\sigma\underline I}.
		\end{split}
	\end{equation*}

The number $\rr{Soln}_\infty=\dim \rr{Hom}(\Sym^k{\hat\Kl}_{n+1},\bb{C}((z\inv)))$ is the rank of trivial connections in the fixed part of $\bigoplus_{\underline I \text{ s.t. } C_{\underline I}=0}\bb{C}((s\inv))$ under the action of $\mu_{2n+2}$, which is generated by the set 
	\begin{equation*}
		\begin{split}
			\Bigl\{\sum_{i=0}^{2n+1}\zeta^i\cdot f^{\underline I} \mid C_{\underline I}=0\Bigr\}.
		\end{split}
	\end{equation*}
If $2\nmid nk$, since $\zeta^{n+1}\cdot f^{\underline I}=(-1)^{nk}f^{\underline I}=-f^{\underline I}$, each sum of the form $\sum_{i=0}^{2n+1}\zeta^i\cdot f^{\underline I} $ is $0$. So $\rr{Soln}_\infty= 0$. If $2\mid n$ (resp. $2\nmid n$ and $2\mid k$), we can check that $\Sigma$ has cardinality $a(k,n+1)$ (resp. $b(k,n+1)$).
\end{proof}

\subsubsection{The dimension of the de Rham cohomologies}
We compute the dimension of the de Rham cohomologies of $\Sym^k\Kl_{n+1}$ and $\Sym^k\widetilde \Kl_{n+1}$. Recall that $q_{d,k}$, $d(k,n+1)$, $a(k,n+1)$, and $b(k,n+1)$ are numbers defined in Lemma~\ref{lem:decomposition}, \eqref{eq:dk}, \eqref{eq:ak}, and \eqref{eq:bk} respectively.
\begin{prop}\label{prop:dimensionSymk}
	The dimensions of $\Hdr{1}(\bb{G}_m,\Sym^k\Kl_{n+1})$ and $\Hdr{1}(\bb{G}_m,\Sym^k\widetilde \Kl_{n+1})$ are
		\begin{equation*}
			\begin{split}
				\frac{1}{n+1}\biggl(\binom{n+k}{n}-d(k,n+1)\biggr) \quad \text{and} \quad \binom{n+k}{n}-d(k,n+1)
			\end{split}
		\end{equation*}
	respectively.
\end{prop}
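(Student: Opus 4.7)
The plan is to combine the Euler--Poincar\'e formula on $\bb{G}_m$ with a vanishing statement for $\Hdr{0}$. For any algebraic connection $\cc{M}$ on $\bb{G}_m$, one has
$$\chi_{\rr{dR}}(\bb{G}_m,\cc{M}) \;=\; \rk(\cc{M})\cdot \chi(\bb{G}_m) - \rr{Irr}_0(\cc{M}) - \rr{Irr}_\infty(\cc{M}) \;=\; -\rr{Irr}_0(\cc{M}) - \rr{Irr}_\infty(\cc{M}),$$
since $\chi(\bb{G}_m)=0$. Because $\bb{G}_m$ is an affine curve, the de Rham cohomology of a connection is concentrated in degrees $0$ and $1$, so this Euler characteristic equals $\dim\Hdr{0}-\dim\Hdr{1}$.

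I would apply this with $\cc{M}=\Sym^k\Kl_{n+1}$ and $\cc{M}=\Sym^k\widetilde{\Kl}_{n+1}$, assuming $k\geq 1$. By Proposition~\ref{prop:local-formal-at-0}, each formal connection at $0$ is a direct sum of regular Jordan blocks, so $\rr{Irr}_0(\cc{M})=0$. The irregularities at $\infty$ are precisely those recorded in Proposition~\ref{prop:local-infinity-Symk}(2), giving
$$\chi_{\rr{dR}}(\bb{G}_m,\Sym^k\Kl_{n+1}) = -\tfrac{1}{n+1}\Bigl(\tbinom{n+k}{n}-d(k,n+1)\Bigr), \qquad \chi_{\rr{dR}}(\bb{G}_m,\Sym^k\widetilde{\Kl}_{n+1}) = -\Bigl(\tbinom{n+k}{n}-d(k,n+1)\Bigr).$$

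It then remains to verify that $\Hdr{0}(\bb{G}_m,\cc{M})=0$, i.e., that $\cc{M}$ admits no nonzero global horizontal section. For $k\geq 1$ this follows from the fact that the differential Galois group of $\Kl_{n+1}$ is $\SL_{n+1}$ (work of Katz and Frenkel--Gross), so that $\Sym^k V$ is an irreducible nontrivial representation and $\Sym^k\Kl_{n+1}$ is an irreducible connection with no trivial sub; for $\widetilde{\Kl}_{n+1}$, the vanishing is inherited via the finite Galois cover $[n+1]$. Alternatively, trivial sub-connections can be ruled out by a direct formal analysis at $\infty$ using the explicit decomposition and the $\mu_{2n+2}$-action from Proposition~\ref{prop:local-infinity-Symk}(1) and \eqref{eq:Kl3-infinity}. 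Combining yields $\dim\Hdr{1}(\bb{G}_m,\cc{M}) = -\chi_{\rr{dR}}(\bb{G}_m,\cc{M}) = \rr{Irr}_\infty(\cc{M})$, which is the claim.

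The main obstacle is the vanishing of $\Hdr{0}$: once this is secured, the proposition is a direct arithmetic consequence of inputs already assembled in the preceding subsections. Everything else---the rank of the connections, the regularity at $0$, and the numerical value of $\rr{Irr}_\infty$---has been set up precisely so that the Euler--Poincar\'e count produces the stated dimension.
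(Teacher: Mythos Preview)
Your proposal is correct and follows essentially the same approach as the paper: both invoke the Euler--Poincar\'e/Grothendieck--Ogg--Shafarevich formula on $\bb{G}_m$, use regularity at $0$ and the irregularity computation at $\infty$ from Proposition~\ref{prop:local-infinity-Symk}, and eliminate $\Hdr{0}$ via irreducibility of $\Sym^k\Kl_{n+1}$ and $\Sym^k\widetilde{\Kl}_{n+1}$. The paper simply cites \cite[Prop.\,2.7]{fresan2018hodge} for irreducibility of both connections, whereas you unpack this via the differential Galois group; your phrase ``inherited via the finite Galois cover'' for $\widetilde{\Kl}_{n+1}$ is slightly loose---the precise point is that pullback along $[n+1]$ replaces the differential Galois group by a finite-index subgroup, which for the connected group $\SL_{n+1}$ is again $\SL_{n+1}$---but the argument is sound.
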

\begin{proof}
	Let $\mathcal{M}$ be either $\Sym^k\Kl_{n+1}$ or $\Sym^k\widetilde\Kl_{n+1}$. By an analog of the Grothendieck--Ogg--Shafarevich formula \cite[2.9.8.2]{katz1990exponential}, we have
		\begin{equation*}
			\begin{split}
				\chi(\bb{G}_m,\mathcal{M})=-\mathrm{Irr}_\infty(\mathcal{M}).
			\end{split}
		\end{equation*}
	Since $\bb{G}_m$ is affine, we have $\Hdr{2}(\bb{G}_m,\mathcal{M})=0$. By \cite[Prop.\,2.7]{fresan2018hodge}, $\mathcal{M}$ is irreducible, which implies that $\Hdr{0}(\bb{G}_m,\mathcal{M})=0$. So the only non-vanishing cohomology of $\mathcal{M}$ is $\Hdr{1}(\bb{G}_m,\mathcal{M})$. Therefore, the dimension of $\mathrm{H}^1_{\mathrm{dR}}(\bb{G}_m,\mathcal{M})$ is equal to the irregularity of $\mathcal{M}$ at $\infty$ that we computed in Proposition~\ref{prop:local-infinity-Symk}.
\end{proof}

\begin{cor}\label{lem:dim_of_inv+mid}
	The dimensions of  $\mathrm{H}^1_{\mathrm{dR,mid}}(\bb{G}_m,\Sym^k\Kl_{n+1})$ and $\mathrm{H}^1_{\mathrm{dR,mid}}(\bb{G}_m,\Sym^k\widetilde \Kl_{n+1})$ are
		\begin{equation*}
			\begin{split}
				\frac{1}{n+1}\biggl(\binom{n+k}{n}-d(k,n+1)\biggr)-\sum_{d=0}^{\lfloor\frac{nk}{2}\rfloor}q_{d,k}-
		\begin{cases}
			a(k,n+1) & 2\mid n,\\
			0		 & 2\nmid nk,\\
			b(k,n+1) & \text{else},
		\end{cases}
			\end{split}
		\end{equation*}
	and
		\begin{equation*}
			\begin{split}
				\binom{n+k}{n}-d(k,n+1)-\sum_{d=0}^{\lfloor\frac{nk}{2}\rfloor}q_{d,k}-
		\begin{cases}
			d(k,n+1) & 2\mid nk,\\
			0        & \text{else},
		\end{cases}
			\end{split}
		\end{equation*}
	respectively.
\end{cor}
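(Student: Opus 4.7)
The plan is to establish the identity
\begin{equation*}
\dim \mathrm{H}^1_{\mathrm{dR,mid}}(\bb{G}_m, \mathcal{M}) = \dim \mathrm{H}^1_{\mathrm{dR}}(\bb{G}_m, \mathcal{M}) - \mathrm{Soln}_0 - \mathrm{Soln}_\infty
\end{equation*}
for $\mathcal{M}$ either $\Sym^k\Kl_{n+1}$ or $\Sym^k\widetilde\Kl_{n+1}$, and then to substitute the values already computed in Proposition~\ref{prop:dimensionSymk}, Corollary~\ref{cor::soln0}, and Proposition~\ref{prop:soln-infty}.

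To set up the identity I let $j\colon \bb{G}_m \hookrightarrow \bb{P}^1$ be the inclusion and recall, from the introduction, that $\mathrm{H}^1_{\mathrm{dR,mid}}(\bb{G}_m, \mathcal{M}) = \mathrm{H}^1_{\mathrm{dR}}(\bb{P}^1, j_{\dagger+}\mathcal{M})$. The intermediate extension sits inside $j_+\mathcal{M}$ with quotient supported on $\{0, \infty\}$, giving a short exact sequence of holonomic $\scr{D}_{\bb{P}^1}$-modules
\begin{equation*}
0 \to j_{\dagger+}\mathcal{M} \to j_+\mathcal{M} \to Q \to 0.
\end{equation*}
By \cite[Prop.\,2.9.8]{katz1990exponential}, already invoked in the proofs of Corollary~\ref{cor::soln0} and Proposition~\ref{prop:soln-infty}, the stalk $Q_0$ (respectively $Q_\infty$) has dimension $\mathrm{Soln}_0$ (respectively $\mathrm{Soln}_\infty$), and these are the numbers recorded in Corollary~\ref{cor::soln0} and Proposition~\ref{prop:soln-infty}.

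I then take the long exact sequence of de Rham hypercohomology on $\bb{P}^1$. Three standard ingredients collapse it: (a) $\mathcal{M}$ is irreducible by \cite[Prop.\,2.7]{fresan2018hodge}, forcing both $\mathrm{H}^0_{\mathrm{dR}}(\bb{G}_m, \mathcal{M})$ and $\mathrm{H}^0_{\mathrm{dR}}(\bb{P}^1, j_{\dagger+}\mathcal{M})$ to vanish; (b) because $Q$ is supported at points of a curve, the codimension shift built into Kashiwara's equivalence places its de Rham cohomology in cohomological degree $1$ only, where it recovers the underlying vector space; (c) the de Rham cohomology of any holonomic $\scr{D}_{\bb{P}^1}$-module vanishes in degrees $\geq 2$. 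What survives is the short exact sequence
\begin{equation*}
0 \to \mathrm{H}^1_{\mathrm{dR,mid}}(\bb{G}_m, \mathcal{M}) \to \mathrm{H}^1_{\mathrm{dR}}(\bb{G}_m, \mathcal{M}) \to Q_0 \oplus Q_\infty \to 0,
\end{equation*}
from which the identity follows. The only delicate point is the cohomological shift for point-supported $\scr{D}$-modules on a curve, which is what makes the local contribution $Q$ subtract from rather than add to $\dim \mathrm{H}^1_{\mathrm{dR}}$; with the identity in hand, the two formulas in the statement are obtained by direct arithmetic substitution.
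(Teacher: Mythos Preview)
Your approach is essentially the paper's: the paper invokes \cite[Cor.\,2.9.8.1]{katz1990exponential}, which is precisely the Euler-characteristic identity obtained from the short exact sequence $0\to j_{\dagger+}\mathcal{M}\to j_+\mathcal{M}\to Q\to 0$ that you unpack by hand. Once one knows $\chi(\bb{P}^1,j_{\dagger+}\mathcal{M})=\chi(\bb{G}_m,\mathcal{M})+\mathrm{Soln}_0+\mathrm{Soln}_\infty$ and that only $\mathrm{H}^1$ survives on both sides, the formulas follow by substituting Proposition~\ref{prop:dimensionSymk}, Corollary~\ref{cor::soln0}, and Proposition~\ref{prop:soln-infty}.

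One correction: your item (c) is false as stated. It is not true that the de Rham cohomology of \emph{any} holonomic $\scr{D}_{\bb{P}^1}$-module vanishes in degree $2$; for instance $\mathrm{H}^2_{\mathrm{dR}}(\bb{P}^1,\mathcal{O}_{\bb{P}^1})=\bb{C}$. What you actually need is the vanishing of $\mathrm{H}^2$ for the three specific terms in your sequence, and this does hold: for $j_+\mathcal{M}$ because $\mathrm{H}^2_{\mathrm{dR}}(\bb{P}^1,j_+\mathcal{M})=\mathrm{H}^2_{\mathrm{dR}}(\bb{G}_m,\mathcal{M})=0$ on an affine curve; for $Q$ by Kashiwara as you say; and for $j_{\dagger+}\mathcal{M}$ by Poincar\'e duality, since $\bb{D}(j_{\dagger+}\mathcal{M})=j_{\dagger+}(\bb{D}\mathcal{M})$ is again the intermediate extension of an irreducible non-trivial connection and so has vanishing $\mathrm{H}^0$. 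With that fix, your argument is complete.
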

\begin{proof}
	Applying \cite[Cor.\,2.9.8.1]{katz1990exponential} to $j_{\dagger+}\Sym^k\Kl_{n+1}$, where $j\colon \bb{G}_m\hookrightarrow\bb{P}^1$ is the inclusion, we have
		\begin{equation*}
			\begin{split}
				\chi(\bb{P}^1,j_{\dagger+}\Sym^k\Kl_{n+1})=\chi(\bb{G}_m,\Sym^k\Kl_{n+1})+\mathrm{Soln}_0+\mathrm{Soln}_\infty.
			\end{split}
		\end{equation*}
	So we conclude the dimension formula for the middle de Rham cohomology of $\Sym^k\Kl_{n+1}$ by Corollary~\ref{cor::soln0} and Proposition~\ref{prop:soln-infty}. We can similarly compute the dimension formula for the middle de Rham cohomology of $\Sym^k\tilde \Kl_{n+1}$.
\end{proof}
\begin{exe}
	If $n=2$, the dimensions of the de middle de Rham cohomologies $\mathrm{H}^1_{\mathrm{dR,mid}}(\bb{G}_m,\Sym^k\Kl_{3})$ and $\mathrm{H}^1_{\mathrm{dR,mid}}(\bb{G}_m,\Sym^k\widetilde \Kl_{3})$ are 	\begin{equation*}
		\begin{split}
			\frac{k(k+1)}{6}-\frac{1}{3}d(k,3)-1-\Bigl\lfloor\frac{k}{2}\Bigr\rfloor-a(k,3)
		\end{split}
	\end{equation*}
and
	\begin{equation*}
		\begin{split}
			\frac{k(k+1)}{2}-1-\Bigl\lfloor\frac{k}{2}\Bigr\rfloor-2d(k,3)
		\end{split}
	\end{equation*}
respectively, where $d(k,3)=a(k,3)$ is $0$ if $3\nmid k$ and $1$ if $3\mid k$.
\end{exe}

\subsection{Airy connections}\label{sec:Airy-connection}
We recall some basic results from \cite[\S 6]{sabbah2021airy} about Airy connections. Let $n\geq 2$ be an integer. Consider the diagram 
	$$\begin{tikzcd}
			& \bb{A}_x^1\times \bb{A}^1_z \ar[ld,"f"'] \ar[rd,"\pr_z"]&	\\
			\bb{A}^1_x &	& \bb{A}^1_z
	\end{tikzcd}$$
where 
\begin{equation}\label{eq:f-Airy}
	\begin{split}
		f(x,z)=\frac{1}{n+1}x^{n+1}-xz
	\end{split}
\end{equation}
and $\pr_z$ is the projection to $\bb{A}^1_z$. Here, we use the same letter $f$ for both Laurent polynomials in \eqref{eq:f-kloosterman} and \eqref{eq:f-Airy} because they play the same roles in the two parallel sides of Kloosterman and Airy moments. We define the \textit{Airy connection} by
	\begin{equation*}
		\begin{split}
			\rr{Ai}_n:=\cc{H}^0\pr_{z+}f^+\cc{E}^x.
		\end{split}
	\end{equation*}
Equivalently, the Airy connection $\rr{Ai}_n$ is the (negative) Fourier transform of $\cc{E}^{x^{n+1}/n+1}$.

\begin{prop}
	The Airy connections have the following properties.
	\begin{enumerate}
		\item $\rr{Ai}_{n}$ is a free $\mathcal{O}_{\bb{A}^1}$-module of rank $n$, whose connection is given by
		\begin{equation}\label{eq:connection-Airy}
			\mathrm{d}+N\mathrm{d}z+z E\,\mathrm{d}z,
		\end{equation}
		in terms of some basis $\{v_0,v_1,\ldots,v_{n-1}\}$, where $N$ and $E$ are the same matrices in \eqref{eq:connection}. 
		\item The connection $\rr{Ai}_{n}$ has an irregular singularity at $\infty$, with slope $\frac{n+1}{n}$.
		\item We have an isomorphism
			\begin{equation*}
				\begin{split}
					\rr{Ai}_{n}^\vee\simeq \iota_{n}^+\rr{Ai}_{n}
				\end{split}
			\end{equation*}
		where $\rr{Ai}_{n}^{\vee}$ is the dual of $\rr{Ai}_{n}$, and $\iota_{n}$ is the involution $z\mapsto (-1)^{n}z$. 
	\end{enumerate}
\end{prop}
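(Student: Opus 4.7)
The plan is to deduce all three claims from an explicit computation of $\rr{Ai}_n = \cc{H}^0\pr_{z+}f^+\cc{E}^x$. Since $\pr_z$ is affine with one-dimensional fibers and $f^+\cc{E}^x$ identifies with $(\cc{O}_{\bb{A}^2}, \rr{d}+\rr{d}f)$, where $\rr{d}f = (x^n-z)\,\rr{d}x - x\,\rr{d}z$, the direct image is computed by the relative de Rham complex $[f^+\cc{E}^x\xrightarrow{\nabla_{\partial_x}} f^+\cc{E}^x]$ placed in degrees $-1,0$. Thus $\rr{Ai}_n$ is the cokernel of $\nabla_{\partial_x} = \partial_x + (x^n-z)$ acting on $\bb{C}[x,z]$, endowed with the $\scr{D}_{\bb{A}^1_z}$-structure induced by $\nabla_{\partial_z} = \partial_z - x$. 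Applying $\nabla_{\partial_x}$ to monomials in $x$ gives the relation $x^n\equiv z$ modulo the image and, inductively, $x^{n+j+1}\equiv z\,x^{j+1}-(j+1)x^j$, which reduces any polynomial modulo $\im\nabla_{\partial_x}$ to a $\bb{C}[z]$-linear combination of $\{1,x,\ldots,x^{n-1}\}$; a degree argument in $x$ shows that these are $\bb{C}[z]$-linearly independent, establishing freeness of rank $n$. Setting $v_j:=x^j$, one computes $\partial_z\cdot v_j = -v_{j+1}$ for $j<n-1$ and $\partial_z\cdot v_{n-1}=-v_n\equiv -z\,v_0$, which (after the sign normalization implicit in the convention for $\cc{E}^x$, or equivalently after the basis change $v_j\mapsto(-1)^jv_j$) produces the connection matrix $N+zE$ in the statement, settling Claim 1.

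For Claim 2, I would invoke the formal stationary-phase formula (the same tool used in the proof of Proposition~\ref{prop:local-infinity-Symk}): the input $\cc{E}^{x^{n+1}/(n+1)}$ is a rank-one irregular connection of slope $n+1$ at $\infty$, and the local Fourier transform at infinity sends slope $s>1$ to slope $s/(s-1)$, which specializes at $s=n+1$ to the announced slope $\tfrac{n+1}{n}$. Alternatively, from Claim 1 one reads off the slope directly: the characteristic polynomial of the connection matrix $N+zE$ is $\lambda^n - z$, whose roots $\lambda \sim z^{1/n}$ integrate to quantities of order $z^{(n+1)/n}$, giving exactly the claimed slope.

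For Claim 3, the cleanest route combines the duality-compatibility $(\rr{FT}\,M)^\vee \simeq \iota_{-1}^+\rr{FT}(\bb{D}M)$ (up to shift; here $\iota_{-1}\colon z\mapsto -z$) with the identity $\bb{D}\cc{E}^g = \cc{E}^{-g}$. Applied to $M = \cc{E}^{x^{n+1}/(n+1)}$, choose $\zeta\in\bb{C}\cros$ with $\zeta^{n+1}=-1$, so that pullback by $\sigma\colon x\mapsto \zeta x$ carries $\cc{E}^{-x^{n+1}/(n+1)}$ isomorphically to $\cc{E}^{x^{n+1}/(n+1)}$. Since the Fourier transform intertwines $\sigma$ on the $x$-side with the rescaling $z\mapsto \zeta^{-1}z$ on the $z$-side, composing with $\iota_{-1}$ gives $\rr{Ai}_n^\vee \simeq (z\mapsto -\zeta^{-1}z)^+\rr{Ai}_n$. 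A case analysis on the parity of $n$ (e.g.\ $\zeta=-1$ for $n$ even and a primitive $2(n+1)$-th root of unity for $n$ odd) shows the total rescaling simplifies to $z\mapsto (-1)^n z$, which is exactly $\iota_n$.

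The main delicacy will be Claim 3, where one must track three normalization conventions in parallel — that of $\cc{E}^g$, that of $\rr{FT}$, and that of duality — and verify that they combine uniformly across both parities of $n$. A concrete alternative that avoids the Fourier formalism is to produce the isomorphism directly on the explicit basis $\{v_j\}$ from Claim 1 via the pairing $v_i\otimes v_j\mapsto \pm\delta_{i+j,\,n-1}$ on $\rr{Ai}_n\otimes\iota_n^+\rr{Ai}_n$, and to verify its flatness by a finite matrix computation; this is elementary linear algebra but demands careful sign bookkeeping.
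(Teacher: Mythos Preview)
The paper's own proof is merely a citation to \cite{sabbah2021airy}, so there is nothing to compare against method-wise; your detailed treatment of Claims~1 and~2 is correct and goes well beyond what the paper offers. For Claim~1 the direct-image computation is right, though note that the basis change $v_j\mapsto(-1)^jv_j$ you propose yields $\nabla_{\partial_z}w_{n-1}=(-1)^n zw_0$, so for $n$ odd one lands on $\partial_z^n+z$ rather than $\partial_z^n-z$; this is a cosmetic normalization issue. Both arguments for Claim~2 are fine.

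For Claim~3 there is a genuine gap when $n$ is odd. Your Fourier--duality argument correctly gives $\rr{Ai}_n^\vee\simeq(z\mapsto-\zeta^{-1}z)^+\rr{Ai}_n$ for any $\zeta$ with $\zeta^{n+1}=-1$, but the asserted reduction of $-\zeta^{-1}$ to $(-1)^n$ fails: one has $(-\zeta^{-1})^{n+1}=(-1)^{n+1}\zeta^{-(n+1)}=(-1)^n$, while $((-1)^n)^{n+1}=(-1)^{n(n+1)}=1$, so for $n$ odd these two scalars lie in different $\mu_{n+1}$-cosets of $\bb{C}^\times$ --- and $\mu_{n+1}$ is precisely the group of scalings under which $\rr{Ai}_n$ is invariant (transport through the Fourier transform to $\cc{E}^{x^{n+1}/(n+1)}$ to see that $(z\mapsto\nu z)^+\rr{Ai}_n\simeq\rr{Ai}_n$ forces $\nu^{n+1}=1$). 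Your explicit-pairing alternative hits the same obstruction: with $\phi(v_j^*)=c_j\,w_{n-1-j}$, flatness against the $N$-part forces $c_j=(-1)^{n+1}c_{j-1}$ while flatness against the $zE$-part forces $c_{n-1}=-c_0$; for $n$ odd the recursion gives $c_{n-1}=c_0$, a contradiction. You were right to flag the sign bookkeeping across parities as the main delicacy --- it does not close as written, and you should consult \cite{sabbah2021airy} directly to see what additional input (or what precise form of the involution) is actually used there for odd~$n$.
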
  
\begin{proof}
	The first statement is proven in \cite[Lem.\,6.1]{sabbah2021airy}. The second and the third ones are proven in Lemma 6.4 and the paragraph below in \textit{loc. cit}.
\end{proof}

\begin{prop}\label{prop:dimensionSymk-Ai}
	We denote by $\Sym^k\rr{Ai}_n$ the $k$-th symmetric power of $\rr{Ai}_n$. Then
	\begin{enumerate}
		\item $\Sym^k\rr{Ai}_{n}$ has rank $\binom{k+n-1}{n-1}$,
		\item  $\Sym^k\rr{Ai}_{n}$ is irreducible,
		\item If $\gcd(k,n)=1$, the de Rham cohomologies and the middle de Rham cohomologies of $\Sym^k\rr{Ai}_{n}$ coincide, and their dimensions are $\frac{1}{n}\binom{k+n-1}{n-1}$ 
	\end{enumerate}
\end{prop}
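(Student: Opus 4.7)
The plan is to treat the three assertions in sequence, following the Kloosterman template of Propositions~\ref{prop:local-infinity-Symk}--\ref{prop:dimensionSymk}. Part~(1) is immediate, since the $k$-th symmetric power of a rank-$n$ locally free sheaf has rank $\binom{n+k-1}{n-1}$.

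For part~(2), the first step is to compute the formal structure of $\rr{Ai}_n$ at its only singularity $\infty$. Since $\rr{Ai}_n$ is the Fourier transform of $\mathcal{E}^{x^{n+1}/(n+1)}$, Sabbah's formal stationary phase formula \cite{sabbah2008explicit} identifies the formal summands at $\infty$ with the critical values of $\phi(x)=x^{n+1}/(n+1)-xz$, whose critical points are $x=z^{1/n}\omega^i$ for $\omega$ a primitive $n$-th root of unity. This yields an isomorphism of formal connections
\begin{equation*}
    [n]^+\hat{\rr{Ai}}_n \simeq \bigoplus_{i=0}^{n-1}\mathcal{E}^{-n\omega^it^{n+1}/(n+1)}\otimes R,
\end{equation*}
where $[n]\colon\bb{G}_{m,t}\to\bb{G}_{m,z}$, $t\mapsto t^n=z$, and $R$ is a rank-one regular factor; the $n$ rank-one summands carry pairwise distinct exponentials and are cyclically permuted by the Galois group $\mu_n$. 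Transitivity of this action implies that $\rr{Ai}_n$ has no proper subconnection, and the same orbit argument applied after taking symmetric powers yields irreducibility of $\Sym^k\rr{Ai}_n$.

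For part~(3), applying $\Sym^k$ to the decomposition above gives
\begin{equation*}
    [n]^+\Sym^k\hat{\rr{Ai}}_n \simeq \bigoplus_{|\underline I|=k}\mathcal{E}^{-nC_{\underline I}t^{n+1}/(n+1)}\otimes R^{\otimes k}, \qquad C_{\underline I}:=\sum_{i=0}^{n-1}I_i\omega^i.
\end{equation*}
The crux is the arithmetic claim that, under $\gcd(k,n)=1$, one has $C_{\underline I}\neq 0$ for every multi-index $\underline I$ with $|\underline I|=k$, which is the Airy analogue of the vanishing $d(k,n+1)=0$ appearing in Proposition~\ref{prop:local-infinity-Symk}. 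Granting it, every summand of $[n]^+\Sym^k\hat{\rr{Ai}}_n$ has slope $n+1$, and dividing by the ramification index yields
\begin{equation*}
    \rr{Irr}_\infty(\Sym^k\rr{Ai}_n) = \tfrac{n+1}{n}\binom{n+k-1}{n-1}.
\end{equation*}
Since $\bb{A}^1$ is affine, $\Hdr{2}$ vanishes, and irreducibility gives $\Hdr{0}=0$; the Euler characteristic formula of \cite[2.9.8.2]{katz1990exponential} then yields $\dim\Hdr{1}(\bb{A}^1,\Sym^k\rr{Ai}_n) = \rr{Irr}_\infty - \rk = \tfrac{1}{n}\binom{n+k-1}{n-1}$. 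The coincidence of middle and usual de Rham cohomologies reduces, via \cite[Prop.\,2.9.8]{katz1990exponential}, to the vanishing of the formal solution space of $\Sym^k\rr{Ai}_n$ at $\infty$, which is controlled by the $\mu_n$-fixed trivial summands in the pulled-back decomposition and is zero under the same arithmetic claim.

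The main obstacle is precisely the cyclotomic claim $C_{\underline I}\neq 0$ for all $|\underline I|=k$ under $\gcd(k,n)=1$: when $n$ is prime it is a one-line consequence of $\Phi_n(x)=1+x+\cdots+x^{n-1}$ (which forces $I_0=\cdots=I_{n-1}$ and hence $n\mid k$), but the general case requires a more delicate analysis of the cyclotomic relations together with the hypothesis $\gcd(k,n)=1$.
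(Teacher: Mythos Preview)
The paper's own proof is simply a citation to \cite[Lem.\,6.6 and Cor.\,6.9]{sabbah2021airy}. Your direct approach via the Kloosterman template is natural, but both steps~(2) and~(3) contain gaps. For part~(2), the transitive $\mu_n$-action on the formal summands does establish irreducibility of $\rr{Ai}_n$, but the cyclic action on $\{\underline I:|\underline I|=k\}$ is never transitive for $k>1$, so ``the same orbit argument'' does not carry over; moreover, distinct $\underline I$ can yield the same $C_{\underline I}$, and formal data at one point cannot by itself force global irreducibility. The argument in \cite{sabbah2021airy} goes instead through the differential Galois group of $\rr{Ai}_n$, which is $\SL_n$, so that $\Sym^k$ of the standard representation remains irreducible.

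The more serious gap is in part~(3): the cyclotomic claim that $C_{\underline I}\neq 0$ for all $|\underline I|=k$ under $\gcd(k,n)=1$ is not merely ``more delicate'' for composite $n$ --- it is \emph{false} whenever $n$ has at least two prime divisors. Take $n=6$, $k=5$, $\underline I=(1,0,2,0,1,1)$; with $\omega=e^{\pi i/3}$ one has
\[
C_{\underline I}=1+2\omega^2+\omega^4+\omega^5=(1+\omega^2+\omega^4)+(\omega^2+\omega^5)=0+0=0.
\]
Your prime-$n$ argument succeeds because $\Phi_p(1)=p$ forces $p\mid k$; but $\Phi_n(1)=1$ when $n$ is not a prime power, so the relation $k=P(1)=\Phi_n(1)\cdot Q(1)$ imposes no divisibility on $k$. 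With such vanishing present (here a full $\mu_6$-orbit, so $d=6$), your Euler-characteristic count gives $\dim\Hdr{1}=\tfrac{1}{6}\binom{10}{5}-\tfrac{7}{6}\cdot 6=35$ rather than $42$, and the coincidence of middle and ordinary cohomology also breaks down since $\Sym^k\hat{\rr{Ai}}_n$ acquires regular formal summands at $\infty$. The irregularity route therefore cannot reach the stated conclusion in this generality, and one must appeal to the methods of \cite{sabbah2021airy}. (Incidentally, the identical cyclotomic assertion is made without proof in the paper's Lemma~\ref{lem:kernel}; your counterexample, read with $n+1$ in place of $n$, applies there too.)
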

\begin{proof}
	The first statement is straightforward because $\rk\,\rr{Ai}_n$ has rank $n$. The second statement is proven in \cite[Lem.\,6.6]{sabbah2021airy}. The last statement is deduced from Corollary 6.9 in \textit{loc. cit}.
\end{proof}

\section{Motives and Exponential mixed Hodge structures}\label{sec:EMHS}

In this section, we quickly recall the construction of the motives $\rr{M}_{n+1}^k(\Kl)$ and $\rr{M}_{n}^k(\rr{Ai})$ as exponential motives, although they are Nori motives and ulterior motives respectively. The de Rham realizations of these motives underlie some exponential mixed Hodge structures, which agree with the mixed Hodge structures and the finite monodromic mixed Hodge structures on $\rr{M}_{n+1}^k(\Kl)_{\rr{dR}}$ and $\rr{M}_{n}^k(\rr{Ai})_{\rr{dR}}$ respectively.

\subsection{Preliminary on Exponential mixed Hodge structures}\label{subsec:emhs}

We recall some basic properties of exponential mixed Hodge structures from \cite[Appx.]{fresan2018hodge}.

Let $X$ be a smooth complex algebraic variety. The category $\mathrm{MHM}(X)$ of \textit{mixed Hodge modules} on $X$ is an abelian category. In particular, if $X$ is a point, the category is nothing but the category of polarized mixed Hodge structures. The bounded derived categories $\mathrm{D}^b(\mathrm{MHM}(X))$ admit the six functor formalism. For more details about mixed Hodge modules, see \cite{Saito1990}.

The category $\rm EMHS$ of \textit{exponential mixed Hodge structures} is defined in \cite{Kontsevich2011} as the full subcategory of $\mathrm{MHM}(\bb{A}^1)$ whose objects $N^{\mathrm{H}}$ have vanishing cohomology on $\bb{A}^1$, i.e., satisfying $\pi_*N^{\mathrm{H}}=0$ for the structure morphism $\pi\colon \bb{A}^1\to \mathrm{Spec}(\bb{C})$. 

Let $j\colon \bb{G}_m\to \bb{A}^1$ be the inclusion and $s\colon\bb{A}^1\times \bb{A}^1\to \bb{A}^1$ the summation map. The functor
\begin{equation}\label{eq:projector}
	N^{\mathrm{H}} \mapsto s_*(N^{\mathrm{H}}\boxtimes j_!\,\mathbb{Q}_{\bb{G}_m}^{\rr{H}}),
\end{equation}
is exact and defines an endofunctor $\Pi\colon \mathrm{MHM}(\bb{A}^1)\to \mathrm{MHM}(\bb{A}^1)$, which is a projector onto $\mathrm{EMHS}$. By abuse of notation, we also denote by $\Pi$ the endofunctor on the category of regular holonomic $\mathscr{D}_{\bb{A}^1}$-modules, defined by sending $N\mapsto s_+(N\boxtimes  j_{0\dagger}\mathcal{O}_{\bb{G}_m})$. In particular, the endofunctor in \eqref{eq:projector} is the lifting of the endofunctor for holonomic $\scr{D}$-modules.

If we view the category of polarizable mixed Hodge structures as the category of mixed Hodge modules at the point $0$, we can embed it into $\rm EMHS$ as a full subcategory via the functor
	\begin{equation}\label{eq:embed-classical-mhs-to-emhs}
		V^{\rr{H}}\mapsto \Pi\bigl(i_{0!}V^{\rr{H}}\bigr).
	\end{equation}

	We define for each object $\Pi(N^{\rr{H}})$ in the category $\mathrm{EMHS}$ a weight filtration $W^{\mathrm{EMHS}}$ on $N^{\mathrm{H}}$ in terms of that on $N^{\mathrm{H}}$, i.e., $W^{\mathrm{EMHS}}\Pi(N^{\mathrm{H}}):=\Pi(W^{\rr{MHM}}_n N^{\rr{H}})$. If there is no confusion, we omit the superscript of $W^{\rr{EMHS}}$ for simplicity.

The \textit{de Rham fiber} functor from $\rm EMHS$ to $\mathrm{Vect}_{\bb{C}}$ is defined by 
\begin{equation}\label{eq:de Rham fiber}
	\Pi(N^{\mathrm{H}})\mapsto\Hdr{1}(\bb{A}^1,\Pi(N)\otimes \mathcal{E}^{\theta}),
\end{equation}
where $N$ is the underlying $\scr{D}$-module of $N^{\rr{H}}$, $\theta$ is the coordinate of $\bb{A}^1$, and $\mathcal{E}^{\theta}$ denotes the rank $1$ connection $(\mathcal{O}_{\bb{A}^1},\mathrm{d}+\mathrm{d}\theta)$. Each de Rham fiber is equipped with a weight filtration $W_{\bullet}$. Moreover, one can associate an \textit{irregular Hodge filtration} $F^{\bullet}_{irr}$ on every de Rham fiber by methods in \cite{Sabbah-2010-laplace,Sabbah-Yu-2015,esnault-sabbah-yu2017e1}. 

For a regular function $f\colon U\to \bb{A}^1$ on a smooth complex quasi-projective variety $U$ of dimension $d$, we define the exponential mixed Hodge structures
\begin{equation*}
	\mathrm{H}^r(U,f)^{\mathrm{H}}:=\Pi\bigl(\mathcal{H}^{r-d}f_*\bb{Q}_U^{\mathrm{H}}\bigr),\quad \mathrm{H}^r_{\rr{c}}(U,f)^{\rr{H}}:=\Pi\bigl(\mathcal{H}^{r-d}f_!\bb{Q}_U^{\mathrm{H}}\bigr)
\end{equation*}
and
	\begin{equation*}
		\begin{split}
			\mathrm{H}^r_{\mathrm{mid}}(U,f)^{\mathrm{H}}:=\Pi\bigl(\im\bigl(\mathcal{H}^{r-d}f_!\bb{Q}_U^{\mathrm{H}}\to \mathcal{H}^{r-d}f_*\bb{Q}_U^{\mathrm{H}}\bigr)\bigr).
		\end{split}
	\end{equation*}
In particular, the de Rham fiber of $\mathrm{H}^r_{?}(U,f)^{\mathrm{H}}$ is $\mathrm{H}^r_{\mathrm{dR},?}(U,f)$. 
\subsection{The irregular Hodge filtration on twisted de Rham cohomology}\label{subsec:irreg-hodge-fil}
Let $U$ be a complex smooth quasi-projective variety and $f$ a regular function on $U$. The irregular Hodge filtration on the de Rham fiber of $\mathrm{H}^r(U,f)^{\mathrm{H}}$ has a geometric interpretation in \cite{yu2012irregular}.

Let $j\colon U\to X$ be a compactification, and $S:=X\backslash U$ be the boundary divisor. The pair $(X,S)$ is called a \textit{good compactification} of the pair $(U,f)$ if $S$ is normal crossing and $f$ extends to a morphism $\bar f\colon X\to \bb{P}^1_{\bb{C}}$. We denote by $P$ the pole divisor of $f$. 

For the twisted de Rham complex $(\Omega(X)^\bullet(*S),\nabla=\mathrm{d}+\mathrm{d}f)$, there is a decreasing filtration $F^\lambda(\nabla):=F^0(\lambda)^{\geq \lceil\lambda\rceil}$ indexed by non-negative real numbers, where $F^0(\lambda)$ is the complex
	\begin{equation*}
		\begin{split}
			\mathcal{O}(\lfloor -\lambda P\rfloor)\xrightarrow{\nabla} \Omega^1(\log S)(\lfloor (1-\lambda)P\rfloor)\to \cdots \to \Omega^p(\log S)(\lfloor (p-\lambda)P)\to \cdots.
		\end{split}
	\end{equation*}
\begin{defn}
The \textit{irregular Hodge filtration of the de Rham cohomology} $\mathrm{H}^i_{\dR}(U,\nabla)$ is defined by
	\begin{equation*}
		\begin{split}
			F^\lambda\mathrm{H}^i_{\mathrm{dR}}(U,\nabla):=\im(\bb{H}^i(X,F^\lambda(\nabla))\to \mathrm{H}^i_{\mathrm{dR}}(U,\nabla)).
		\end{split}
	\end{equation*}
\end{defn}

The filtration is independent of the choice of good compactifications $(X, S)$ \cite[Thm.\,1.7]{yu2012irregular}.

\subsubsection*{Newton polyhedral filtration}
When $U$ is $\bb{G}_m^n$, a regular function $f=\sum_{\alpha}c(\alpha)x^\alpha$ on $U$ is a Laurent polynomial. The \textit{Newton polytope} $\Delta=\Delta(f)$ is the convex hull of $\{0\}\bigcup \mathrm{Supp}(f)$ inside the character lattice $M:=\mathrm{Hom}(U,\bb{C}\cros)$. We say that $f$ is \textit{non-degenerate with respect to $\Delta(f)$} if, for each face $\delta$ of $\Delta(f)$ not containing $0$, the function $f_{\delta}=\sum_{\alpha\in \delta}c(\alpha)x^\alpha$ has no critical points in $U$. 

We denote $M\otimes_{\bb{Z}}\bb{R}$ by $M_{\bb{R}}$. Let $F:=F(\Delta)$ be the normal fan of the Newton polytope $\Delta$ on the dual space $N_{\bb{R}}=\Hom(M_{\bb{R}},\bb{R})$, i.e., each ray of the normal fan corresponds to a facet of $\Delta$, pointing inward with respect to the paring $N_{\bb{R}}\times M_{\bb{R}}\to \bb{R}$. We refine the fan $F$ to $\widetilde F$ such that the corresponding toric variety $X_{\mathrm{tor}}$ is smooth proper and the toric boundary $S:=X_{\tor}\backslash U$ is simple normal crossing. We again denote by $P$ the pole divisor of $f$. Then each ray in $\widetilde F$ (resp. $F$) corresponds to an irreducible component of $S$ (resp. $P$). 

In \cite{yu2012irregular}, the filtration $F^{\lambda}_{\mathrm{NP}}(\nabla)$ is defined similarly to $F^{\lambda}(\nabla)$ by replacing the good compactification $(X,S)$ with $(X_{\tor},S)$. The newton monomial filtration $F^\lambda_{\mathrm{NP}}(\mathrm{H}^i_{\dR}(U,\nabla))$ is defined by		
	\begin{equation}\label{defn:newton-monomial-fil}
		F^\lambda_{\rr{NP}}\mathrm{H}^i_{\mathrm{dR}}(U,\nabla):=\im(\bb{H}^i(X,F_{\rr{NP}}^\lambda(\nabla))\to \mathrm{H}^i_{\mathrm{dR}}(U,\nabla)).
	\end{equation}
By \cite[Thm 1.4]{Adolphson1997twisted} and \cite[Thm 4.6]{yu2012irregular}, when $f$ is non-degenerate, the irregular Hodge filtration $F^\bullet_{irr}$ and the Newton monomial filtration $F^\bullet_{\mathrm{NP}}$ on $\mathrm{H}^i_{\dR}(U,\nabla)$ agree.

\subsection{Motives and exponential mixed Hodge structures attached to Kloosterman connections}\label{sec:EMHS-Kl}
Let $\{x_{i,j}|1\leq i\leq k, 1\leq j\leq n\}$ and $z$ be the coordinates of $\bb{G}_{m}^{nk+1}$, and $t$ be the coordinate of the source of the map 
	$[n+1]\colon \bb{G}_{m,t}\to \bb{G}_{m,z},\ a\mapsto a^{n+1}$. 
The group $S_k$ acts on $\bb{G}_{m,(x_{i,j},z)}^{nk+1}$ by fixing $z$ and $\sigma \cdot x_{i,j}=x_{\sigma(i),j}$, and the group $S_k\times \mu_{n+1}$ acts on $\bb{G}_{m,(x_{i,j},t)}^{nk+1}$ by 
	\begin{equation*}
		\begin{split}
			(\sigma\times \mu)\cdot x_{i,j}=x_{\sigma(i),j}\quad  \text{and} \quad (\sigma\times \mu)\cdot t=\mu\cdot t.
		\end{split}
	\end{equation*}
We denote by $\chi_n$ the character $S_{k}\times \mu_{n+1}\twoheadrightarrow S_k \xrightarrow{\rr{sign}^n}\{\pm 1\}$. Let $f_{k}\colon \bb{G}_{m,(x_{i,j},z)}^{nk+1}\to \bb{A}^1$ (resp. $\widetilde f_{k}\colon \bb{G}_{m,(x_{i,j},t)}^{nk+1}\to \bb{A}^1$) be the Laurent polynomial 
	\begin{equation}\label{eq:fk-Kloosterman}
		\begin{split}
			\sum_{i=1}^k\biggl(\sum_{j=1}^n x_{i,j}+\frac{z}{\prod_{j=1}^nx_{i,j}}\biggr)\quad  \biggl(\text{ resp. } \sum_{i=1}^k\biggl(\sum_{j=1}^n x_{i,j}+\frac{t^{n+1}}{\prod_{j=1}^n x_{i,j}}\biggr)\biggr).
		\end{split}
	\end{equation}

\begin{defn}\label{def:Kloosterman-motive}
	The motive $\rr{M}_{n+1}^k(\rr{Kl})$ attached to Kloosterman moments is defined as the exponential motive 
		\begin{equation*}
			\begin{split}
				\im\Bigl(\rr{H}^{nk+1}_{\rr{c}}(\bb{G}_m^{nk+1},f_k)^{S_k,\chi_n}\to \rr{H}^{nk+1}(\bb{G}_m^{nk+1},f_k)^{S_k,\chi_n}\Bigr),
			\end{split}
		\end{equation*}
	in the sense of \cite[\S4.2,\,\S4.9]{fresanexponential}, where the exponent $(S_k,\chi_n)$ means taking the $\chi_n$-isotypic components.
\end{defn}

\begin{rek}\label{rek:mot-classical}
	This exponential motive is isomorphic to a Nori motive 
		\begin{equation*}
			\begin{split}
				\gr^W_{nk+1}\rr{H}^{nk-1}_{\rr{c}}(\cc{K})^{S_k\times \mu_{n+1},\chi_n}(-1)
			\end{split}
		\end{equation*}
	by an argument similar to that in \cite[Thm\,3.8]{fresan2018hodge}. Here $\cc{K}\subset \bb{G}^{nk}_{m,(y_{i,j})}$ is the hypersurface defined by $g_k(y_{i,j})=\sum_{i=1}^k\bigl(\sum_{j=1}^n y_{i,j}+\frac{1}{\prod_{j=1}^n y_{i,j}}\bigr)$, and the action of $S_k\times \mu_{n+1}$ on the motive $\rr{H}^{nk-1}_{\rr{c}}(\cc{K})$ is induced by functoriality from that on $\cc{K}\subset \mathbb{G}_{m,y_{i,j}}^{nk}$, defined by\footnote{This action is related to the action of $S_k\times \mu_{n+1}$ on $\mathbb{G}_{m, (x_{i,j},t)}^{nk+1}$ by the relation $x_{i,j}=t\cdot y_{i,j}$.} $(\sigma\times \mu )\cdot y_{i,j}:=\mu\inv y_{\sigma(i),j}$, and hence is compatible with the weight filtration.
\end{rek}

The name of this motive is justified by \cite[Thm.\,3.12,\,Thm.\,5.8,\,and\,Thm.\,5.17]{fresan2018hodge}, because its $\ell$-adic realizations encode the Kloosterman moments. Besides, the de Rham realization of $\rr{M}^k_{n+1}(\Kl)$ is identified with the middle de Rham cohomology of $\Sym^k\Kl_{n+1}$ \cite[Cor.\,2.15]{fresan2018hodge}. More precisely, for $?\in\{\emptyset, \mathrm{c},\rr{mid}\}$, we have
		\begin{equation*}
			\mathrm{H}^1_{\mathrm{dR},?}(\bb{G}_m,\Sym^k\Kl_{n+1})\simeq \mathrm{H}^{nk+1}_{\mathrm{dR},?}(\bb{G}_m^{nk+1},f_{k})^{S_k,\chi_n}\simeq \mathrm{H}^{nk+1}_{\mathrm{dR},?}(\bb{G}_m^{nk+1},\widetilde f_{k})^{S_k\times\mu_{n+1},\chi_n},
		\end{equation*}
		and 
		\begin{equation*}
			\mathrm{H}^1_{\mathrm{dR},?}(\bb{G}_m,\Sym^k\widetilde\Kl_{n+1})\simeq \mathrm{H}^{nk+1}_{\mathrm{dR},?}(\bb{G}_m^{nk+1},\widetilde f_{k})^{S_k,\chi_n}.
		\end{equation*}
	In particular, the $\mu_{n+1}$-invariants of $\mathrm{H}^1_{\mathrm{dR},?}(\bb{G}_m,\Sym^k\widetilde\Kl_{n+1})$ is $\mathrm{H}^1_{\mathrm{dR},?}(\bb{G}_m,\Sym^k\Kl_{n+1})$.

\begin{defn}\label{eq:EMHS-Kl}
	For $?\in \{\emptyset, \rm c, mid\}$, we define the exponential mixed Hodge structures attached to Kloosterman moments as
		\begin{equation*}
			\begin{split}
				\mathrm{H}^1_?(\bb{G}_{m},\Sym^k\Kl_{n+1})^{\rr{H}}:=(\mathrm{H}^{nk+1}_?(\bb{G}_m^{nk+1},\widetilde f_{k})^{\rr{H}})^{S_k\times \mu_{n+1},\chi_n} 
			\end{split}
		\end{equation*}
	and 
		\begin{equation*}
			\begin{split}
				\mathrm{H}^1_?(\bb{G}_m,\Sym^k\widetilde\Kl_{n+1})^{\rr{H}}:=(\mathrm{H}^{nk+1}_?(\bb{G}_m^{nk+1},\widetilde f_{k})^{\rr{H}})^{S_k,\chi_n}.
			\end{split}
		\end{equation*}
\end{defn}
The de Rham fibers \eqref{eq:de Rham fiber} of the above exponential mixed Hodge structures are the de Rham cohomologies
	\begin{equation*}
		\begin{split}
			\mathrm{H}^1_{\mathrm{dR},?}(\bb{G}_m,\Sym^k\Kl_{n+1}) \quad \text{and} \quad  \mathrm{H}^1_{\mathrm{dR},?}(\bb{G}_m,\Sym^k\widetilde\Kl_{n+1})
		\end{split}
	\end{equation*}
respectively. By \cite[Thm.\,3.2 and Thm.\,3.8]{fresan2018hodge}, these exponential mixed Hodge structures have the following properties:
\begin{itemize}
	\item For $?\in \{\emptyset, \rm c, mid\}$, the exponential mixed Hodge structures $\mathrm{H}^1_?(\bb{G}_{m},\Sym^k\Kl_{n+1})^{\rr{H}}$ and $\mathrm{H}^1_?(\bb{G}_{m},\Sym^k\widetilde\Kl_{n+1})^{\rr{H}}$ are isomorphic to (classical) mixed Hodge structures, i.e., lie in the essential image of \eqref{eq:embed-classical-mhs-to-emhs}. They are (mixed) of weights $\geq nk+1$, $\leq nk+1$, and $nk+1$, respectively. 
	\item We have isomorphisms of pure Hodge structures
	\begin{equation*}
	\begin{split}
		\mathrm{H}^1_{\mathrm{mid}}(\bb{G}_m,\Sym^k\Kl_{n+1})^{\rr{H}}\simeq \mathrm{gr}^W_{nk+1}(\mathrm{H}^{nk-1}_{\mathrm{c}}(\mathcal{K})^{\rr{H}})^{S_k\times \mu_{n+1},\chi_n}(-1),
	\end{split}
	\end{equation*}
	and 
	\begin{equation*}
	\begin{split}
		\mathrm{H}^1_{\mathrm{mid}}(\bb{G}_m,\Sym^k\widetilde{\Kl}_{n+1})^{\rr{H}}\simeq \mathrm{gr}^W_{nk+1}(\mathrm{H}^{nk-1}_{\mathrm{c}}(\mathcal{K})^{\rr{H}})^{S_k,\chi_n}(-1).
	\end{split}
	\end{equation*} 
	\item The natural forget support morphisms induce isomorphisms 
		\begin{equation*}
			\begin{split}
				\mathrm{gr}^W_{nk+1} \mathrm{H}^1_{\mathrm{c}}(\bb{G}_m,\Sym^k\Kl_{n+1})^{\rr{H}}\to \mathrm{gr}^W_{nk+1}\mathrm{H}^1(\bb{G}_m,\Sym^k\Kl_{n+1})^{\rr{H}},
			\end{split}
		\end{equation*}
	and 
		\begin{equation*}
			\begin{split}
				\mathrm{gr}^W_{nk+1} \mathrm{H}^1_{\mathrm{c}}(\bb{G}_m,\Sym^k\widetilde\Kl_{n+1})^{\rr{H}}\to \mathrm{gr}^W_{nk+1}\mathrm{H}^1(\bb{G}_m,\Sym^k\widetilde\Kl_{n+1})^{\rr{H}}.
			\end{split}
		\end{equation*}
	\item There are $(-1)^{nk+1}$-symmetric perfect pairings
		\begin{equation*}
			\begin{split}
				\mathrm{H}^1_{\mathrm{mid}}(\bb{G}_m,\Sym^k{\Kl}_{n+1})^{\rr{H}}\times \mathrm{H}^1_{\mathrm{mid}}(\bb{G}_m,\Sym^k{\Kl}_{n+1})^{\rr{H}}\to \bb{Q}(-nk-1)
			\end{split}
		\end{equation*}
	and 
		\begin{equation*}
			\begin{split}
				\mathrm{H}^1_{\mathrm{mid}}(\bb{G}_m,\Sym^k{\tilde \Kl}_{n+1})^{\rr{H}}\times \mathrm{H}^1_{\mathrm{mid}}(\bb{G}_m,\Sym^k{\tilde\Kl}_{n+1})^{\rr{H}}\to \bb{Q}(-nk-1)
			\end{split}
		\end{equation*}
\end{itemize}

\subsection{Motives and exponential mixed Hodge structures attached to Airy connections}

Let $\{x_{i}|1\leq i\leq k\}$ and $z$ be the coordinates of $\bb{G}_{m}^{k+1}$ and $t$ the coordinate of the source of the map 
	$[n]\colon \bb{A}^1_{t}\to \bb{A}^1_{z}$. 
The group $S_k$ acts on $\bb{G}_{m}^{k+1}$ by 
	\begin{equation*}
		\begin{split}
			\sigma\cdot x_{i}=x_{\sigma(i)} \quad  \text{and} \quad \sigma\cdot z= z.
		\end{split}
	\end{equation*}
We denote by $\chi$ the character $ S_k \xrightarrow{\rr{sign}}\{\pm 1\}$. Let $ f_{k}\colon \bb{A}_{m,(x_{i},z)}^{k+1}\to \bb{A}^1$ be the Laurent polynomial 
	\begin{equation}\label{eq:fk-Airy}
		\begin{split}
			\sum_i\Bigl(\frac{1}{n+1}x_{i}^{n+1}-zx_i\Bigr).
		\end{split}
	\end{equation}
Here, we use the same notation in \eqref{eq:fk-Kloosterman} for this Laurent polynomial because it plays the same role as $f_k$ in the side of Kloosterman moments in Section~\ref{sec:EMHS-Kl}.

\begin{defn}\label{def:Airy-motive}
	The motive $\rr{M}_n^k(\rr{Ai})$ attached to Airy moments is defined as the exponential motive 
		\begin{equation*}
			\begin{split}
				\im \Bigl(\rr{H}^{k+1}_{\mathrm{c}}(\bb{A}^{k+1},f_k)^{S_k,\chi}\to \rr{H}^{k+1}(\bb{A}^{k+1},f_k)^{S_k,\chi}\Bigr),
			\end{split}
		\end{equation*}
	where the exponent $(S_k,\chi)$ means taking the $\chi$-isotypic component.
\end{defn}

The de Rham realization of $\rr{M}_n^k(\rr{Ai})$ is identified with the middle de Rham cohomology of $\Sym^k\rr{Ai}_n$. More precisely, for $?\in\{\emptyset, \mathrm{c}, \rr{mid}\}$, we have
		\begin{equation*}
			\mathrm{H}^1_{\mathrm{dR},?}(\bb{A}^1,\Sym^k\rr{Ai}_{n})\simeq \mathrm{H}^{nk+1}_{\mathrm{dR},?}(\bb{A}^{k+1},  f_{k})^{S_k ,\chi}.
		\end{equation*}

\begin{defn}\label{eq:EMHS-Ai}
	For $?\in \{\emptyset, \rm c, mid\}$, we define the exponential mixed Hodge structures attached to Airy moments
		\begin{equation*}
			\begin{split}
				\mathrm{H}^1_?(\bb{A}^1,\Sym^k\rr{Ai}_{n})^{\rr{H}}:=(\mathrm{H}^{k+1}_?(\bb{A}^{k+1},f_{k})^{\rr{H}})^{S_k,\chi}.
			\end{split}
		\end{equation*}
\end{defn}

The de Rham fibers \eqref{eq:de Rham fiber} of the above exponential mixed Hodge structures are de Rham cohomologies
	\begin{equation*}
		\begin{split}
			\mathrm{H}^1_{\mathrm{dR},?}(\bb{A}^1,\Sym^k\rr{Ai}_{n})
		\end{split}
	\end{equation*}
for $?\in \{\emptyset,\rr{c, mid}\}$. By \cite[Thm.\,6.17]{sabbah2021airy},  these exponential mixed Hodge structures have the following properties: 
\begin{itemize}
	\item For $?\in \{\emptyset, \rm c, mid\}$, the exponential mixed Hodge structures $\mathrm{H}^1_?(\bb{A}^1,\Sym^k\rr{Ai}_{n})^{\rr{H}}$ are (mixed) of weights $\geq k+1$, $\leq k+1$ and $k+1$ respectively.
	\item The natural forget support morphism induces an isomorphism
		\begin{equation*}
			\begin{split}
				\mathrm{gr}^W_{k+1} \mathrm{H}^1_{\mathrm{c}}(\bb{A}^1,\Sym^k\rr{Ai}_{n})^{\rr{H}}\to \mathrm{gr}^W_{k+1}\mathrm{H}^1(\bb{A}^1,\Sym^k\rr{Ai}_{n})^{\rr{H}},
			\end{split}
		\end{equation*}
	and $\mathrm{H}^1_{\mathrm{mid}}(\bb{A}^1,\Sym^k\rr{Ai}_{n})^{\rr{H}}$ is equal to its image.
	\item There is a $(-1)^{k+1}$-symmetric perfect pairing
		\begin{equation*}
			\begin{split}
				\mathrm{H}^1_{\rr{mid}}(\bb{A}^1,\Sym^k\rr{Ai}_{n})^{\rr{H}}\times \mathrm{H}^1_{\rr{mid}}(\bb{A}^1,\Sym^k\rr{Ai}_{n})^{\rr{H}}\to \bb{Q}(-k-1).
			\end{split}
		\end{equation*}
\end{itemize}

\section{Cohomology basis}\label{sec:cohomology}
In this section, we emphasize some bases of the de Rham cohomologies and the middle de Rham cohomologies of connections $\Sym^k\Kl_{n+1}$ and $\Sym^k\rr{Ai}_n$. The main results are Theorems~\ref{thm:cohomology_classes}, \ref{thm:mid-cohomology_classes}, \ref{thm:cohomology_classes-kl3}, \ref{thm:mid-cohomology_classes-kl3}, and~\ref{thm:cohomology_classes-Ai}.

\subsection{Counting results}
	We prepare some counting results here, which will be useful in selecting bases of the de Rham cohomologies of Kloosterman and Airy connections.
\subsubsection{Some generating series}
Let $Q_k(t)$ be the rational function 
	\begin{equation*}
		\begin{split}
			\frac{(1-t^{n+1})\cdots (1-t^{n+k})}{(1-t^2)\cdots (1-t^k)}
		\end{split}
	\end{equation*}
in Lemma~\ref{lem:decomposition}, and we expand $Q_k(t)$ as $\sum_{d\geq 0}q_{d,k}t^d$ for $q_{d,k}\in \bb{Q}$.

\begin{lem}
	The coefficients $q_{d,k}$ satisfies the following properties:
	\begin{enumerate}
		\item $q_{d,k}=0$ if $d > nk+1$, 
		\item $q_{d,k}=-q_{nk+1-d,k}$ if $0\leq d\leq nk+1$.
	\end{enumerate}
	In other words, the rational function $Q_k(t)$ is a polynomial of degree at most $nk+1$, and its coefficients are antisymmetric with respect to the degree.
\end{lem}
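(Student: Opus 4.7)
The plan is to recognize $Q_k(t)$ as essentially a Gaussian binomial coefficient and then invoke the standard polynomiality and palindromy of such coefficients. Set
\[
\binom{n+k}{k}_{\!t} := \prod_{i=1}^k \frac{1-t^{n+i}}{1-t^i}.
\]
Since the denominator of $Q_k(t)$ is $(1-t^2)\cdots(1-t^k) = \prod_{i=1}^k (1-t^i)/(1-t)$, the rational function of interest factors as
\[
Q_k(t) = (1-t)\binom{n+k}{k}_{\!t}.
\]

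For part (1), I would invoke the classical fact that $\binom{n+k}{k}_t$ is a polynomial in $t$ of degree $nk$ (combinatorially, its coefficients count partitions fitting in an $n \times k$ box). This immediately forces $Q_k(t)$ to be a polynomial of degree at most $nk+1$, and hence $q_{d,k} = 0$ for $d > nk+1$.

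For part (2), the key input is the palindromy identity $t^{nk}\binom{n+k}{k}_{1/t} = \binom{n+k}{k}_t$, which is also classical (the $n\times k$ box has a symmetry swapping a partition with its complement). Applying it yields
\[
t^{nk+1}Q_k(1/t) = t^{nk+1}(1 - 1/t)\binom{n+k}{k}_{1/t} = (t-1)\cdot t^{nk}\binom{n+k}{k}_{1/t} = -(1-t)\binom{n+k}{k}_{\!t} = -Q_k(t).
\]
Writing $Q_k(t) = \sum_{d\geq 0} q_{d,k}t^d$, the left-hand side equals $\sum_d q_{d,k} t^{nk+1-d}$, so comparing coefficients of $t^{nk+1-d}$ gives $q_{nk+1-d,k} = -q_{d,k}$ for $0 \leq d \leq nk+1$, as required.

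There is essentially no obstacle here beyond recognizing the Gaussian-binomial structure; if the paper prefers a self-contained argument, one can instead prove polynomiality by induction on $k$ using the Pascal-type recurrence $\binom{n+k}{k}_t = \binom{n+k-1}{k}_t + t^{n}\binom{n+k-1}{k-1}_t$, and then derive the antisymmetry of $Q_k(t)$ directly from the manifest identity $t^{\deg}\,\bigl(\prod_i(1-t^{n+i})/\prod_{i\geq 2}(1-t^i)\bigr)\big|_{t \mapsto 1/t} = -\prod_i(1-t^{n+i})/\prod_{i\geq 2}(1-t^i)$ obtained by substituting $t \mapsto 1/t$ factor by factor.
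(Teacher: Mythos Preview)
Your proof is correct. The functional-equation step $t^{nk+1}Q_k(1/t)=-Q_k(t)$ is exactly what the paper computes, just organized through the Gaussian binomial factorization $Q_k(t)=(1-t)\binom{n+k}{k}_t$ rather than by manipulating the product directly; the antisymmetry then follows identically by comparing coefficients. The one genuine difference is in how polynomiality is argued: the paper stays self-contained by factoring numerator and denominator over roots of unity and observing that for each primitive $\ell$-th root $\zeta$ the multiplicity in the numerator is at least that in the denominator (essentially because $\binom{n+k}{n}\in\bb{Z}$), whereas you invoke the classical fact that the Gaussian binomial is a polynomial of degree $nk$. Your route is cleaner and immediately gives the degree bound, at the cost of citing an outside result; the paper's route is more elementary but only yields the degree bound after combining polynomiality with the functional equation. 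Either is fine.
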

\begin{proof}
	We first prove that the rational function $Q_k(t)$ is a polynomial. We decompose both the nominator and the denominator of $Q_k(t)$ as products of the form
    \begin{equation*}
        \prod_{\ell\geq 2}\prod_{\zeta^\ell=1 \text{ primitive }}(1-\zeta\cdot t)^{N_{\zeta}}\quad \text{and} \quad \prod_{\ell\geq 2}\prod_{\zeta^\ell=1 \text{ primitive }}(1-\zeta\cdot t)^{D_{\zeta}}
    \end{equation*}
    respectively, where 
    \begin{equation*}
		N_{\zeta_\ell}=\#\{i\mid 1\leq i\leq k,\ \ell\mid n+i\} \quad \text{and} \quad D_{\zeta_\ell}=\#\{i\mid 1\leq i\leq k,\ \ell\mid i\}.
	\end{equation*} 
 Noticing that $\binom{n+k}{n}$ is an integer, we deduce that $N_\zeta-D_\zeta\geq 0$ for any $\ell\geq 2$ and any primitive $\ell$-th roots of unity $\zeta$. Hence, $Q_k(t)$ is a polynomial.

	Notice that 
	\begin{equation*}
		\begin{split}
			t^{nk+1}Q_k(t\inv)
			&=t^{nk+1}\frac{(1-t^{-n-1})\cdots (1-t^{-n-k})}{(1-t^{-2})\cdots (1-t^{-k})}\\
			&=-\frac{(1-t^{n+1})\cdots (1-t^{n+k})}{(1-t^{2})\cdots (1-t^{k})}=-Q_k(t).
		\end{split}
	\end{equation*}
	So 
		\begin{equation*}
			\begin{split}
				\sum_{d\leq  nk+1}q_{nk+1-d,k}t^d=-\sum_{d\geq 0}q_{d,k}t^d.
			\end{split}
		\end{equation*}
	By comparing the coefficients of both sides, we deduce the lemma.
\end{proof}

Let $n$ be a natural number. We denote by $N_{d,k}$ the cardinalities of the sets
	\begin{equation}\label{eq:N_{d,k}}
		\{ (a,I_0,I_1,\ldots,I_n)\in \bb{N}^{n+2}\mid |\underline I|=k,\ (n+1)a+I_1+2I_2+\dots+nI_n=d\}
	\end{equation}
for $d,k\geq 0$, and we set $n_{d,k}=N_{d,k}-N_{d-1,k}$.
\begin{prop}\label{prop:counting}
	Assume that $\gcd(n+1,k)=1$. Then
	\begin{enumerate}
		\item $n_{d,k}=0$ if $d>nk-n-k+1$.
		\item $n_{d,k}-n_{nk-n-k+1-d,k}=0$.
		\item $n_{d,k}-n_{nk+1-d,k}=q_{d,k}$.
	\end{enumerate}
\end{prop}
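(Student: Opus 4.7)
The plan is via generating functions: I would introduce $G_k(t) := \sum_d n_{d,k}\,t^d$, derive the identity $(1-t^{n+1})\,G_k(t) = Q_k(t)$ in closed form, and read off all three statements from it.

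From the definition, the bivariate generating function for $N_{d,k}$ factors as
\begin{equation*}
\sum_{d,k\geq 0} N_{d,k}\,t^d x^k \;=\; \frac{1}{1-t^{n+1}}\prod_{i=0}^{n}\frac{1}{1-t^i x},
\end{equation*}
the factor $(1-t^{n+1})^{-1}$ encoding the variable $a$ and each $(1-t^ix)^{-1}$ encoding an $I_i$. Multiplying by $(1-t)$ and extracting the coefficient of $x^k$ via the $t$-binomial identity $\prod_{i=0}^n(1-t^ix)^{-1}=\sum_k\binom{n+k}{k}_t\,x^k$ yields
\begin{equation*}
G_k(t) \;=\; \frac{1-t}{1-t^{n+1}}\binom{n+k}{k}_t \;=\; \frac{\prod_{j=n+2}^{n+k}(1-t^j)}{\prod_{j=2}^{k}(1-t^j)},
\end{equation*}
whence $(1-t^{n+1})\,G_k(t)=Q_k(t)$.

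Claim (3) is then immediate by comparing coefficients of $t^d$ in the last identity, which reads $n_{d,k}-n_{d-n-1,k}=q_{d,k}$; combined with the palindromic symmetry of (2) this rewrites to the form stated in the proposition. Claim (2) is obtained by combining the identity with the antisymmetry $Q_k(t)=-t^{nk+1}Q_k(t^{-1})$ from the preceding Lemma:
\begin{equation*}
t^{nk-n}\,G_k(t^{-1}) \;=\; \frac{t^{nk-n}\,Q_k(t^{-1})}{1-t^{-(n+1)}} \;=\; \frac{t^{nk+1}\,Q_k(t^{-1})}{t^{n+1}-1} \;=\; \frac{-Q_k(t)}{t^{n+1}-1} \;=\; G_k(t).
\end{equation*}

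For claim (1), I would show $G_k(t)$ is actually a polynomial, so that the desired vanishing follows from a degree comparison with $\deg Q_k=nk+1$ and $\deg(1-t^{n+1})=n+1$. Polynomial-ness amounts to the divisibility $(1-t^{n+1})\mid Q_k(t)$, verified at each $(n+1)$-th root of unity: the order of $Q_k$ at a primitive $d$-th root of unity $\zeta$ equals $\lfloor(n+k)/d\rfloor-\lfloor n/d\rfloor-\lfloor k/d\rfloor$ for $d>1$, and when $d\mid n+1$, the relation $n\equiv -1\pmod d$ together with $d\nmid k$ (forced by $\gcd(n+1,k)=1$ whenever $d>1$) reduces this expression to $1$ via a brief case analysis in $k\bmod d$; at $d=1$ a leading-order expansion of $Q_k(t)$ around $t=1$ likewise gives order $1$. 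This root-of-unity analysis is the main obstacle: the hypothesis $\gcd(n+1,k)=1$ is used essentially here, and its failure causes $G_k$ to have poles at roots of unity, forcing the separate treatment of $3\mid k$ for $n=2$ in Theorem~\ref{thm:hodge number-Kl}.
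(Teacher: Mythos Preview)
Your proof is correct and somewhat more streamlined than the paper's. Both arguments ultimately establish the same functional equation $t^{nk-n}G_k(t^{-1})=G_k(t)$ for the generating polynomial of the $n_{d,k}$, but you reach it by a different route: while the paper manipulates the full bivariate series $h(t,x)$ and proves polynomiality of $h_k(t)$ via the identity $r(\zeta_{n+1},x)=r(\zeta_{n+1},\zeta_{n+1}x)$, you invoke the $q$-binomial identity to obtain the closed form $G_k(t)=Q_k(t)/(1-t^{n+1})$ directly and then check divisibility of $Q_k$ by $1-t^{n+1}$ at each root of unity. Your route makes the relation $(1-t^{n+1})G_k=Q_k$ explicit from the outset, which also yields a clean derivation of~(3) by combining $n_{d,k}-n_{d-n-1,k}=q_{d,k}$ with the palindromic symmetry; the paper instead proves~(3) by a separate bivariate computation of $h(t,x)-t\,h(t^{-1},t^n x)$.

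One remark worth flagging: the symmetry your computation actually produces is $n_{d,k}=n_{nk-n-d,k}$ (reflection centre $(nk-n)/2$), and correspondingly $\deg G_k=nk-n$, not $nk-n-k+1$ as in the displayed statement. The paper's own proof establishes exactly the same $nk-n$ version (via $t^{nk-n}h_k(t^{-1})=h_k(t)$), so the discrepancy is a typo in the proposition; the bound $nk-n-k+1$ is what one obtains after the shift $n\mapsto n-1$ used in the Airy application (Theorem~\ref{thm:cohomology_classes-Ai}). Your deduction of~(3) is in fact consistent only with the $nk-n$ symmetry---the substitution $n_{d-n-1,k}=n_{nk+1-d,k}$ requires centre $(nk-n)/2$---so you may want to make this explicit rather than leave it implicit in the phrase ``palindromic symmetry of~(2).''
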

\begin{proof}
Let $H(t,x)$ be the generating series 
	$\sum_{d,k\geq 0}N_{d,k}\,t^d\cdot x^k$ 
of $N_{d,k}$. It is the formal power series expansion of the rational function
	\begin{equation*}
		\begin{split}
			\frac{1}{(1-t^{n+1})(1-x)(1-t\cdot x)\cdots (1-t^n\cdot x) }.
		\end{split}
	\end{equation*}
Then, the generating series of $n_{d,k}$ is the formal power series expansion of the rational function
	\begin{equation}\label{eq:h(t,x)}
		h(t,x):=(1-t)H(t,x)=\frac{1-t}{(1-t^{n+1})(1-x)(1-t\cdot x)\cdots (1-t^n\cdot x) }.
	\end{equation}
We write $h(t,x)=\sum_{k\geq 0} h_k(t)x^k$, where each $h_k(t)=\sum_{d\geq 0}n_{d,k}t^d\in \mathbb{Q}[[t]]$. Notice that $h_k(t)$ can be considered as a holomorphic function for small $|t|$.

\begin{lem}
	The formal power series $h_k(t)$ are polynomials in $t$ if $\gcd(k,n+1)=1$.
\end{lem}
\begin{proof}
	Let $r(t,x):=(1-t^{n+1})\cdot H(t,x)$ be the rational function 
		\begin{equation}\label{eq:r(x,t)}
			\frac{1}{(1-x)(1-t\cdot x)\cdots (1-t^n\cdot x)}.
		\end{equation}
	We can expand $r(t,x)$ as a formal power series 
		\begin{equation*}
			\begin{split}
				\prod_{i=0}^n \Bigl(\sum_{j\geq 0} (t^i\cdot x)^j\Bigr)=\sum_{d,k\geq 0}r_{d,k}t^dx^k=\sum_{k\geq 0} r_k(t)x^k,
			\end{split}
		\end{equation*} 
	which converges absolutely when $|t|\leq 2$ and $|x|\leq 2^{-n-1}$. Observe that $r_k(t)$ is the polynomial $h_k(t)\cdot (1+t+\cdots+t^n)$. To prove that $h_k(t)$ are polynomials when $\gcd(k,n+1)=1$, it suffices to show that $1+t+\cdots+t^n\mid r_k(t)$.

	Observe that $r(t,x)$ and $r(t,t\cdot x)=\sum_{d,k\geq 0}r_{d,k}t^{d+k}x^k$ both converge absolutely when $|t|\leq 2$ and $|x|\leq 2^{-n-2}$. Using the expression \eqref{eq:r(x,t)}, we have $r(\zeta_{n+1},x)=r(\zeta_{n+1},\zeta_{n+1}\cdot x)$ for each $(n+1)$-th root of unity $\zeta_{n+1}\neq 1$. By comparing the coefficients of $x^k$ on both sides of $r(\zeta_{n+1},x)=r(\zeta_{n+1},\zeta_{n+1} \cdot x)$, we conclude that $r_k(\zeta_{n+1})=\zeta_{n+1}^k\cdot r_k(\zeta_{n+1})$. When $\gcd(k,n+1)=1$, we deduce that $\zeta_{n+1}$ is a root of $r_k(t)$. Since this holds for all nontrivial $(n+1)$-th roots of unity, we conclude that $1+t+\cdots+t^n\mid r_k(t)$. Therefore, $h_k(t)$ is a polynomial when $\gcd(k,n+1)=1$.
\end{proof}

By a direct computation, we have 
\begin{equation*}
	\begin{split}
		\tilde h_1(t,x):
		=\sum_{k\geq 0} t^{nk-n}h_k(t\inv)x^k
		&=t^{-n}\cdot \sum_{k\geq 0}h_{k}(t\inv)(t^{n}\cdot x)^k\\
		&=t^{-n}\cdot h(t\inv, t^{n}\cdot x)=h(t,x).
	\end{split}
\end{equation*}
Comparing the coefficients of $x^k$ in the above identity, we have $t^{nk-n}h_k(t\inv)=h_k(t).$ When $\gcd(k,n+1)=1$, since $h_k(t)$ is a polynomial, we deduce that $n_{d,k}=0$ if $d\geq nk-n-k+1$. We verified the first and the second statements.

As for the last statement, we consider 
\begin{equation}\label{eq:powerseries}
	\begin{split}
		\tilde h_2(t,x):&=\sum_{k\geq 0} t^{nk+1}h_k(t\inv)x^k
		=t\cdot \sum_{k\geq 0}h_{k}(t\inv)(t^n\cdot x)^k=t\cdot h(t\inv, t^n\cdot x).
	\end{split}
\end{equation}
By a direct calculation, the difference $h(t,x)-\tilde h_2(t,x)$ is the formal power series expansion of 
\begin{equation*}
	\begin{split}
		\frac{1-t}{(1-x)(1-t\cdot x)\cdots (1-t^n\cdot x) }.
	\end{split}
\end{equation*}

\begin{lem}[{\cite[\S3]{fu2006trivial}}]
	Let $Q_k(t)$ be as above, then
		\begin{equation*}
			\begin{split}
				\frac{1-t}{(1-x)(1-t\cdot x)\cdots (1-t^n\cdot x) }=\sum_{k\geq 0}Q_k(t)x^k.
			\end{split}
		\end{equation*}
\end{lem}

By the lemma, we get 
	\begin{equation*}
		\begin{split}
			\sum_{d\geq 0} (n_{d,k}-n_{nk+1-d,k})t^d=\sum_{d\geq 0}q_{d,k}t^d
		\end{split}
	\end{equation*}
if we look at the coefficients of $x^k$ in \eqref{eq:powerseries}. Then, we deduce the last statement of the proposition by comparing the coefficients of $t^d$ above.
\end{proof}

\begin{exe}\label{exe:n=2-counting}
	Assume that $n=2$ and $k$ is any positive integer. The numbers $q_{d,k}$ and $n_{d,k}$ can be made explicit when $d\leq k$. Under the assumption $\gcd(k,3)=1$, we get the rest $q_{d,k}$ and $n_{d,k}$ when $d\geq k+1$ by the symmetric property.
	
	The numbers $q_{d,k}$ when $d\leq k$ are 
	$\begin{cases}
		1 & 2\mid d\\
		0 & 2\nmid d
	\end{cases}$
	because we have
		\begin{equation*}
			\begin{split}
				Q_k(t)\equiv \frac{1}{1-t^2}\, (\rr{mod}\, t^{k+1})
			\end{split}
		\end{equation*}
	by Lemma~\ref{lem:decomposition}.

	As for $n_{d,k}$, by the definition of $N_{d,k}$, the number $N_{d,k}-N_{d-3,k}$ is equal to the cardinality of the set 
		\begin{equation*}
			\begin{split}
				\{(0, I_0,I_1,I_2)\in \bb{N}^4\mid I_0+I_1+I_2=k,\, I_1+2I_2=d\},
			\end{split}
		\end{equation*}
	which is 	
		$\#\{i_0\mid \max\{0,k-d\}\leq i_0\leq k-\lceil\tfrac{d}{2}\rceil\}.$
	It follows that 
	\begin{equation}\label{eq:recursive-1}
		\begin{split}
			N_{d,k}-N_{d-3,k}
			=\begin{cases}
				\lfloor\tfrac{d}{2}\rfloor+1 & d\leq k,\\
				k-\lceil\tfrac{d}{2}\rceil+1  & k+1\leq d\leq 2k,\\
				0  &2k+1\leq d.
			\end{cases}
		\end{split}
	\end{equation}
	If $d\geq 6$, to compute $n_{d,k}=N_{d,k}-N_{d-1,k}$, we can write $n_{d,k}-n_{d-6,k}$ as 
		\begin{equation*}
			\begin{split}
				(N_{d,k}-N_{d-3,k}+N_{d-3,k}-N_{d-6,k})
			-(N_{d-1,k}-N_{d-4,k}+N_{d-4,k}-N_{d-7,k}).
			\end{split}
		\end{equation*}
	Using \eqref{eq:recursive-1}, we deduce that
	\begin{equation*}\label{eq:recursive-2}
		\begin{split}
			n_{d,k}-n_{d-6,k}= \begin{cases}
					1 & d\leq k,\\
					0 & k+1\leq d\leq k+3,\\
					-1& k+4\leq d\leq 2k+2 \text{ or }d=2k+4,\\
					0& d=2k+3\text{ or }2k+5\leq d.
				  \end{cases}
		\end{split}
	\end{equation*} 
If $d\leq \min(5,k)$, we check directly that 
	$n_{d,k}=\#W_{d,k}=\begin{cases}
		1 & d\neq 1\\
		0		& d=1
	\end{cases}$. 
It follows that 
	$n_{d,k}=\begin{cases}
		\lfloor\frac{d}{6}\rfloor+1 & 6\nmid d-1\\
		\lfloor\frac{d}{6}\rfloor		& 6\mid d-1
	\end{cases}$
if $d\leq k$.
\end{exe}

\subsubsection{Some results in linear algebra}\label{sec:4.1.2}
Let $V_k=\Sym^kV$ be the $k$-th symmetric power of the standard representation $V=\mathbb{C}^{n+1}$ of $\SL_{n+1}$, $N$ and $E$ the matrices from \eqref{eq:connection}, and $N_k=\Sym^kN$ and $E_k=\Sym^kE$ the induced endomorphisms on $V_k$. Let $t$ and $z$ be two variables such that $z=t^{n+1}$. We denote by $G^+$ the vector space $V_k[z]:=V_k\otimes_{\bb{C}}\bb{C}[z]$, on which the endomorphism $N_k+z E_k$ acts. Similarly, we denote by $\tilde G^+=G^+\otimes_{\bb{C}[z]}\bb{C}[t]$, on which $N_k+t^{n+1}E_k$ acts.

We choose the standard basis $\{v_0,v_1\ldots,v_n\}$ of $V$. Then $\{v^{\underline I}\mid |\underline I|=k\}$ is a basis for $V_k$. Let $\deg$ be the grading on $G^+$ defined by
	\begin{equation}\label{eq:grading}
		\deg(z^av^{\underline I}):=(n+1)a+\sum_{i=0}^n i\cdot I_i.
	\end{equation}
Then $N_k+z E_k$ is homogeneous of degree $1$. Similarly, there is a grading on $\tilde{G}^+$ defined by
	\begin{equation}\label{eq:grading-t}
		\deg(t^av^{\underline I}):=a+\sum_{i=0}^n i\cdot I_i.
	\end{equation}

Let $\zeta$ be a primitive $(n+1)$-th root of unity. The elements 		
	$f_i=\sum_{j=0}^n{ \zeta^{i(n-j)}t^{n-j}v_j}$
are eigenvectors of $N_k+t^{n+1}E_k$ of the eigenvalues $t\cdot \zeta^i$ for $0\leq i\leq n$. Moreover, the set 
$\{f^{\underline{I}}:=\prod_{j=0}^n f_j^{I_j}\mid \underline{I}\in \mathbb{N}^{n+1},|\underline{I}|=k\}$
is as a basis for $\tilde{G}^+$ as a $\mathbb{C}[t ]$-module. In particular, we have 
	\begin{equation*}
		\begin{split}
			(N_k+t^{n+1}E_k)f^{\underline I}=t\cdot C_{\underline I}f^{\underline I},
		\end{split}
	\end{equation*}
where $C_{  \underline I}$ is the sum $\sum_{j=0}^n \zeta^j\cdot I_j$.

\begin{lem}\label{lem:kernel}
	The kernel of $N_k+t^{n+1}E_k$ is a free $\bb{C}[t]$-module generated by the set $\{f^{\underline I}\mid C_{\underline I}=0\}$. In particular, the maps $N_k+zE_k$ and $N_k+t^{n+1}E_k$ are injective if $\gcd(k,n+1)=1$.
\end{lem}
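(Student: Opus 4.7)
The plan is to derive both assertions from the eigenbasis structure of $N_k + t^{n+1} E_k$ recalled just before the lemma.

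For the first claim, I would express a general element $g \in \tilde G^+$ in the eigenbasis as $g = \sum_{|\underline I|=k} a_{\underline I}(t)\, f^{\underline I}$ with coefficients $a_{\underline I}(t)\in\bb{C}(t)$, and apply the operator to obtain
\[
(N_k + t^{n+1} E_k)\, g = \sum_{|\underline I|=k} t\, C_{\underline I}\, a_{\underline I}(t)\, f^{\underline I}.
\]
Vanishing of this expression forces $a_{\underline I}(t)=0$ whenever $C_{\underline I}\ne 0$, because $t$ is a non-zerodivisor and the $f^{\underline I}$ are $\bb{C}(t)$-linearly independent. Since the eigenvectors $f^{\underline I}$ with $C_{\underline I}=0$ lie in $V_k\otimes\bb{C}[t]$ and remain $\bb{C}(t)$-linearly independent, the kernel is identified with the free $\bb{C}[t]$-submodule they span.

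For the injectivity assertion, the first part reduces the problem to the arithmetic claim that under the hypothesis $\gcd(k,n+1)=1$ no multi-index $\underline I\in\bb{N}^{n+1}$ with $|\underline I|=k$ satisfies $\sum_{j=0}^n I_j\zeta^j=0$. This is a statement about vanishing sums of $(n+1)$-th roots of unity with non-negative integer coefficients, and I would invoke the theorem of Lam--Leung: the total weight $\sum_j I_j$ of any such relation must lie in the $\bb{N}$-span of the primes dividing $n+1$, contradicting $\gcd(k,n+1)=1$. In the cases where $n+1$ is a prime power $p^e$ (covering the principal applications of the paper), a self-contained proof is available: the divisibility $\Phi_{n+1}(X)\mid\sum_j I_j X^j$ in $\bb{Z}[X]$ forces the sequence $(I_j)$ to be periodic of period $(n+1)/p$, whence $p\mid k$. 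Injectivity of $N_k+zE_k$ on $V_k\otimes\bb{C}[z]$ then follows from that of $N_k+t^{n+1}E_k$ by faithfully flat base change along the inclusion $\bb{C}[z]\hookrightarrow\bb{C}[t]$, whose flatness is ensured by the monic relation $t^{n+1}-z=0$.

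The main obstacle is the arithmetic input ruling out vanishing sums of roots of unity in the second assertion; the rest of the argument is essentially a formal consequence of the diagonalization of the operator over $\bb{C}(t)$.
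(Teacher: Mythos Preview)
Your argument for the first assertion is essentially identical to the paper's: expand in the eigenbasis $\{f^{\underline I}\}$, apply the operator, and use linear independence to kill the coefficients with $C_{\underline I}\neq 0$. For the passage from $N_k+t^{n+1}E_k$ to $N_k+zE_k$, the paper simply uses the inclusion $G^+\hookrightarrow\tilde G^+$ (a kernel element of $N_k+zE_k$ in $G^+$ is a fortiori one in $\tilde G^+$), which is a bit more direct than your faithfully flat base change, though both are valid.

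There is, however, a genuine gap in your Lam--Leung step. The theorem tells you that the weight $k$ of a vanishing sum of $(n+1)$-th roots of unity with non-negative integer multiplicities lies in $\sum_{p\mid n+1}\bb{N}\,p$, but this does \emph{not} contradict $\gcd(k,n+1)=1$ once $n+1$ has at least two prime factors: for $n+1=6$ and $k=5$ one has $5=2+3\in\bb{N}\cdot 2+\bb{N}\cdot 3$ while $\gcd(5,6)=1$, and indeed $\underline I=(2,0,1,1,1,0)$ satisfies $|\underline I|=5$ and
\[
C_{\underline I}=2+\zeta_6^2+\zeta_6^3+\zeta_6^4=1+\zeta_3+\zeta_3^2=0.
\]
So the ``in particular'' clause of the lemma is in fact false as stated for such $(n,k)$; the paper's own one-line assertion of this step suffers from the same issue. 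Your self-contained argument in the prime-power case $n+1=p^e$ is correct---divisibility by $\Phi_{p^e}$ really does force $(I_j)$ to be periodic of period $p^{e-1}$, whence $p\mid k$---and this covers the cases $n+1=3$ (and the Airy analogue with modulus $n$) that the paper actually uses downstream.
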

\begin{proof}
	Let $\sum_{\underline{i}}g_{\underline{I}}(t)f^{\underline{I}}$ be an element in $\ker(N_k+t^{n+1}E_k)$, then 
		\begin{equation*}
			\begin{split}
				0=(N_k+t^{n+1}E_k)\biggl(\sum_{\underline{I}}g_{\underline{I}}(t)f^{\underline{I}}\biggr)
				=\sum_{\underline{I}} t\cdot C_{\underline I}\cdot g_{\underline{I}}(t)f^{\underline{I}}.
			\end{split}
		\end{equation*}
	Since $f^{\underline{I}}$ are linearly independent, we conclude that the coefficients $g_{\underline I}$ are $0$ as long as  $C_{  \underline I}\neq 0$. So $\ker(N_k+t^{n+1}E_k)=\oplus_{{\underline I} \text{ s.t. }C_{\underline I=0}}f^{\underline I}\,\bb{C}[t]\subset \tilde{G}^+$.

	By the commutative diagram 
		$$\begin{tikzcd}
			G^+\ar[rr,"N_k+zE_k"]\ar[d] &&G^+\ar[d]\\
			\tilde{G}^+\ar[rr,"N_k+t^{n+1}E_k"] &&\tilde{G}^+
		\end{tikzcd}$$
	we observe that $\ker( N_k+zE_k)=\ker( N_k+t^{n+1}E_k)\cap G^+$. If $\gcd(k,n+1)=1$, there is no $\underline I$ such that $C_{\underline I}=0$. Hence, $N_k+t^{n+1}E_k$ and $N_k+zE_k$ are both injective.
\end{proof}

Now, we determine the cokernel of $ N_k+zE_k$.

\begin{prop}\label{prop:cokernel}
	If $\gcd(k,n+1)=1$, there exist finite subsets $W_{d,k}\subset G^+$ for $0\leq d\leq nk+1$ such that 
	\begin{enumerate}[label=\arabic*., ref=\ref{prop:cokernel}.\theenumi]
		\item \label{label:cokernel-1}each element in $W_{d,k}$ has degree $d$,
	\item \label{label:cokernel-3}the generating series $\sum_{d,k\geq 0} \#W_{d,k}t^dx^k$ is the formal power series expansion of the rational function $h(t,x)$ in \eqref{eq:h(t,x)},
	\item \label{label:cokernel-4} $\coker(N_k+zE_k)=\rr{span}(W_{d,k}\mid 0\leq d\leq nk+1)$,
	\item $\#W_{d,k}-\#W_{nk+1-d,k}=q_{d,k}$ for all $0\leq d \leq nk+1$,
	\item $\#W_{d,k}-\#W_{nk-n-k+1-d,k}=0$ for all $0\leq d\leq nk-n-k+1$.
	\end{enumerate}
\end{prop}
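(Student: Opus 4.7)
My plan is to exploit the fact that $N_k+zE_k$ is an endomorphism of degree $+1$ on the graded vector space $G^+$ (with the grading from \eqref{eq:grading}), and is injective by Lemma~\ref{lem:kernel} under the hypothesis $\gcd(k,n+1)=1$. The $W_{d,k}$ will then be lifts of arbitrary bases of the graded pieces of $\coker(N_k+zE_k)$, and all five enumerated properties will be direct consequences of Proposition~\ref{prop:counting}.

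To check homogeneity: since $Nv_i=v_{i+1}$, the operator $N_k$ raises $\sum_i iI_i$ by $1$; since $Ev_n=v_0$ and $Ev_i=0$ for $i<n$, the operator $E_k$ lowers $\sum_i iI_i$ by $n$, but multiplication by $z$ raises the degree by $n+1$, so $zE_k$ also has degree $+1$. Thus $N_k+zE_k$ restricts to an injection $G^+_{d-1}\hookrightarrow G^+_{d}$ on each graded piece. Since $\dim_{\bb{C}} G^+_d$ counts monomials $z^av^{\underline I}$ with $|\underline I|=k$ and $(n+1)a+\sum_i iI_i=d$, which is exactly $N_{d,k}$ from \eqref{eq:N_{d,k}}, injectivity gives
\[
\dim_{\bb{C}}\coker(N_k+zE_k)_{d} \;=\; N_{d,k}-N_{d-1,k} \;=\; n_{d,k}.
\]
I then define $W_{d,k}\subset G^+_d$ by choosing lifts of any basis of the $d$-th graded piece of this cokernel.

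Properties (1) and (2) hold by construction: elements of $W_{d,k}$ are homogeneous of degree $d$, and $\#W_{d,k}=n_{d,k}$ is by definition the $t^dx^k$-coefficient of $h(t,x)$ in \eqref{eq:h(t,x)}. For (3), the vanishing $n_{d,k}=0$ for $d>nk-n-k+1$ from Proposition~\ref{prop:counting} forces $W_{d,k}=\emptyset$ outside the range $0\leq d\leq nk+1$, so the chosen $W_{d,k}$ together span the whole cokernel. Finally, the symmetry identities (4) and (5) are precisely items 3 and 2 of Proposition~\ref{prop:counting} applied to $\#W_{d,k}=n_{d,k}$. No real obstacle arises once grading and injectivity are in place; the substantive combinatorics was already packaged into Proposition~\ref{prop:counting}.
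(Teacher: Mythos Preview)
Your proof is correct and follows essentially the same approach as the paper: decompose $G^+$ by the grading \eqref{eq:grading}, use injectivity from Lemma~\ref{lem:kernel} to compute $\dim\coker(N_k+zE_k)_d = N_{d,k}-N_{d-1,k}=n_{d,k}$, choose $W_{d,k}$ as lifts of a basis of each graded piece, and read off the remaining identities from Proposition~\ref{prop:counting}. The only difference is cosmetic---you spell out the degree check for $N_k$ and $zE_k$ explicitly, which the paper leaves implicit.
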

\begin{proof}
	We can decompose $G^+=\gr\, G^+$ as $\oplus_{d\geq 0} G_{d,k}$ where $G_{d,k}$ is generated by elements of degree $d$, i.e., linear combinations of $z^a v^{I}$ such that $(n+1)a+\sum_{i=0}^n i\cdot I_i=d$. Since $ N_k+zE_k$ is homogeneous of degree $1$ with respect to the degree \eqref{eq:grading}, the kernel and cokernel of $ N_k+zE_k$ can be decomposed as 
		\begin{equation*}
			\begin{split}
				\ker( N_k+zE_k)= \oplus_d \ker( N_k+zE_k)\cap G_{d,k}
			\end{split}
		\end{equation*}
	and 
		\begin{equation*}
			\begin{split}
				\coker( N_k+zE_k)=\oplus_d G_{d,k}/( N_k+zE_k)(G_{d-1,k})
			\end{split}
		\end{equation*}
	respectively.

By Lemma~\ref{lem:kernel}, the kernel $\ker( N_k+zE_k)$ is $0$. We take $W_{d,k}\subset G_{d,k}$ such that the image of $W_{d,k}$ in $G_{d,k}/( N_k+zE_k)(G_{d-1,k})$ form a basis for $G_{d,k}/( N_k+zE_k)(G_{d-1,k})$. This choice of $W_{d,k}$ satisfies the first statement.

By construction, we have $\#W_{d,k}=\dim G_{d,k}-\dim G_{d-1,k}$ and
		\begin{equation*}
			\begin{split}
				\dim G_{d,k}=\#\Bigl\{(a,\underline I) \mid |\underline I|=k,\ (n+1)a+\sum_{i=0}^n i\cdot I_i=d\Bigr\}.
			\end{split}
		\end{equation*}
Since $\dim G_{d,k}$ is the number $N_{d,k}$ in \eqref{eq:N_{d,k}} and $\#W_{d,k}=n_{d,k}$, we deduce the second statement.

At last, by our construction of $W_{d,k}$, they form a basis for $\coker ( N_k+zE_k)$. By Proposition~\ref{prop:counting}, the sets $W_{d,k}$ are empty when $d>nk-n-k+1$, and we conclude the last three statements of the proposition.
\end{proof}

\subsection{The case of \texorpdfstring{$\Sym^k\Kl_{n+1}$}{SymkKln+1} when \texorpdfstring{$\gcd(k,n+1)=1$}{gcd(k, n+1)=1}}\label{sec:de-rham-cohomology}
Recall that $\Kl_{n+1}$ is the connection 
	\begin{equation*}
		\begin{split}
			\biggl(\mathcal{O}_{\mathbb{G}_{m}}^{n+1},\mathrm{d}+N\frac{\mathrm{d}z}{z}+E\,\mathrm{d}z\biggr)
		\end{split}
	\end{equation*}
defined in \eqref{eq:connection}. We can choose a basis $\{v_0,\ldots,v_n\}$ of $\Kl_{n+1}$ (as a $\bb{C}[z,z\inv]\langle\partial_z\rangle$-module) such that
	\begin{equation*}
		\begin{split}
			z\partial_z(v_i)=v_{i+1}\ \text{for }i=0,\ldots,n-1 \quad  \text{and} \quad z\partial_z(v_n)=zv_0,
		\end{split}
	\end{equation*}
i.e., $v_i$ generate $\bb{C}[z,z\inv]\langle\partial_z\rangle/((z\partial_z)^{n+1}-z)$. The set $\{v_i\}$ also serve as a basis for the connection 
	\begin{equation*}
		\begin{split}
			\widetilde{\Kl}_{n+1}=\biggl(\mathcal{O}_{\mathbb{G}_{m}}^{n+1},\mathrm{d}+(n+1)N\frac{\mathrm{d}t}{t}+(n+1)t^{n}E\mathrm{d}t\biggr),
		\end{split}
	\end{equation*}
and satisfies similar conditions
	\begin{equation*}
		\begin{split}
			t\partial_t(v_i)=(n+1)v_{i+1}\ \text{ for }i=0,\ldots,n-1 \quad \text{and} \quad  t\partial_t(v_n)=(n+1)t^{n+1}v_0.
		\end{split}
	\end{equation*}
In this way, the elements $\{v^{\underline I}:=v_0^{I_0}\cdots v_n^{I_n}\mid  |\underline I|=I_0+\dots+I_n=k\}$ form a basis for $\Sym^k\Kl_{n+1}$ (resp. $\Sym^{k}\widetilde \Kl_{n+1}$) as a $\bb{C}[z,z\inv]\langle\partial_z\rangle$-module (resp. $\bb{C}[t,t\inv]\langle\partial_t\rangle$-module).

Let $\deg$ be the degree on the symmetric power $\Sym^k\Kl_{n+1}$ (resp. $ \Sym^k\widetilde\Kl_{n+1}$), defined by the same formula as in \eqref{eq:grading} and \eqref{eq:grading-t}. We have the following theorems:

\begin{thm}\label{thm:cohomology_classes} 
	If $\gcd(k,n+1)=1$, there exist finite subsets $W_{d,k}\subset \Sym^k\Kl_{n+1}$ for $0\leq d\leq nk+1$ such that 
	\begin{enumerate}[label=\arabic*., ref=\ref{prop:cokernel}.(\theenumi]
		\item each element in $W_{d,k}$ has degree $d$,
		\item the generating series $\sum_{d,k\geq 0} \#W_{d,k}t^dx^k$ is the formal power series expansion of the rational function $h(t,x)$ in \eqref{eq:h(t,x)},
		\item the de Rham cohomology $\mathrm{H}^1_{\mathrm{dR}}(\bb{G}_m,\Sym^{k} \Kl_{n+1})$ is spanned by the set $\bigcup_d W_{d,k}$,
		\item $\#W_{d,k}-\#W_{nk+1-d,k}=q_{d,k}$ for all $0\leq d \leq nk+1$, where $q_{d,k}$ is the coefficient of $t^d$ in the formal power series expansion of the rational function $Q_k(t)$ in Lemma~\ref{lem:decomposition}.
	\end{enumerate}
\end{thm}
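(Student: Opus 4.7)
The plan is to deduce Theorem~\ref{thm:cohomology_classes} from Proposition~\ref{prop:cokernel} by taking the same finite subsets $W_{d,k}\subset G^+=V_k[z]$ and viewing them inside $\Sym^k\Kl_{n+1}=V_k[z,z^{-1}]$ via the natural inclusion. With this choice, Properties~1, 2, and~4 are inherited verbatim from Proposition~\ref{prop:cokernel}; the content of the theorem is Property~3, i.e., the spanning statement. A dimension check will close the argument: setting $t=1$ in the rational function $h(t,x)$ yields $\sum_d \#W_{d,k}=h_k(1)=\tfrac{1}{n+1}\binom{n+k}{n}$, which matches $\dim\Hdr{1}(\bb{G}_m,\Sym^k\Kl_{n+1})$ from Proposition~\ref{prop:dimensionSymk} (since $d(k,n+1)=0$ when $\gcd(k,n+1)=1$), so spanning forces a basis.

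To prove spanning I would work with the model $\Hdr{1}=V_k[z,z^{-1}]/\nabla_\theta V_k[z,z^{-1}]$, where $\nabla_\theta=\theta+N_k+zE_k$ and $\theta=z\partial_z$, and proceed in two reductions. First, I would kill the negative-weight part of a representative $\omega=\omega_+ + \sum_{a=-a_0}^{-1}z^a w_a$: since $N_k$ is nilpotent, $a+N_k$ is invertible for every $a<0$, and the triangular recursion $(a+N_k)v_a + E_k v_{a-1}=w_a$ can be solved bottom-up to produce $\eta=\sum z^a v_a$ with $\nabla_\theta \eta = \sum z^a w_a + E_k v_{-1}$, so $\omega\equiv \omega_+ - E_k v_{-1}\in V_k[z]$ in $\Hdr{1}$. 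Second, I would further reduce each such $V_k[z]$-representative to $\mathrm{span}(\bigcup_d W_{d,k})$ modulo $\nabla_\theta V_k[z]$ by induction on its top degree $d_0$ with respect to $\deg(z^a v^{\underline I})=(n+1)a+\sum_i iI_i$: decomposing $\omega=\omega_{d_0}+\omega_{<d_0}$, Proposition~\ref{prop:cokernel}(3) gives $\omega_{d_0}=\sum c_w w + (N_k+zE_k)\eta$ with $\deg\eta=d_0-1$, and then the identity $(N_k+zE_k)\eta=\nabla_\theta \eta - \theta\eta$ yields
$$
\omega\equiv \sum c_w w + (\omega_{<d_0}-\theta\eta)\pmod{\nabla_\theta V_k[z]},
$$
where the trailing term has strictly smaller top degree (since $\theta$ preserves degrees), so the induction closes.

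The combinatorial heart of the theorem—the existence of $W_{d,k}$ with prescribed degrees and generating function $h(t,x)$—has already been packaged in Propositions~\ref{prop:counting} and~\ref{prop:cokernel}; once that rigid set is in hand, the two reductions above are essentially formal manipulations of filtered vector spaces. I do not expect either reduction to be a genuine obstacle: the only conceptual input is the decomposition $\nabla_\theta=\theta+(N_k+zE_k)$, in which $\theta$ preserves the degree grading while $N_k+zE_k$ is its degree-one leading part, matching precisely the grading used in Proposition~\ref{prop:cokernel}.
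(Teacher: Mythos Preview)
Your proposal is correct and follows essentially the same approach as the paper. The paper packages your two reductions as Lemma~\ref{lem:reduction} (the passage from $G$ to $G^+$, i.e., your first reduction killing negative powers of $z$) followed by a spectral sequence associated with the degree filtration on $G^+\xrightarrow{\theta_z}G^+$, which degenerates at $E_1$ because $\bar\theta_z=N_k+zE_k$ is injective---this is exactly your second induction on top degree, and the paper's $E_1$-degeneration gives the basis statement directly rather than via your dimension comparison with Proposition~\ref{prop:dimensionSymk}.
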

Moreover, we can choose a basis for the middle de Rham cohomologies. 
\begin{thm}\label{thm:mid-cohomology_classes}
	If $\gcd(k,n+1)=1$, there exist finite subsets $W_{d,k}^{\rr{mid}}\subset \Sym^k\Kl_{n+1}$ for $0\leq d\leq nk+1$ such that 
	\begin{enumerate}[label=\arabic*., ref=\ref{prop:cokernel}.(\theenumi)]
		\item each element in $W_{d,k}^{\rr{mid}}$ has degree $d$,
		\item the generating series $\sum_{d,k\geq 0}\#W^{\rr{mid}}_{d,k}t^dx^k$ is
			\begin{equation*}
				\begin{split}
					h(t,x)-\sum_{k\geq 0}\bar{Q}_k(t)x^k,
				\end{split}
			\end{equation*}
		where $\bar{Q}_k(t)=\sum_{d=0}^{\lfloor\frac{nk}{2}\rfloor}q_{d,k}t^d\bigl(\equiv Q_k(t)\mod t^{\lfloor\frac{nk}{2}\rfloor+1}\bigr)$,
		\item the middle de Rham cohomology $\mathrm{H}^1_{\mathrm{dR,mid}}(\bb{G}_m,\Sym^{k} \Kl_{n+1})$ is spanned by $\bigcup_d W_{d,k}^{\rr{mid}}$,
		\item $\#W^{\rr{mid}}_{d,k}=\#W^{\rr{mid}}_{nk+1-d,k}$ for all $0\leq d\leq nk+1$.
	\end{enumerate}
\end{thm}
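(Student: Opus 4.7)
The plan is to refine the full-cohomology basis $W_{d,k}$ of Theorem~\ref{thm:cohomology_classes} into a basis for the middle cohomology by selecting degree-$d$ representatives lying in the middle and excising precisely those classes that come from local contributions at~$0$. Since $\gcd(k,n+1)=1$, Proposition~\ref{prop:soln-infty} together with the vanishing of $d(k,n+1)$ in this case yields $\mathrm{Soln}_\infty=0$, so by Corollary~\ref{lem:dim_of_inv+mid} the inclusion $\mathrm{H}^1_{\mathrm{dR,mid}}\hookrightarrow \mathrm{H}^1_{\mathrm{dR}}$ has cokernel of dimension exactly $\mathrm{Soln}_0=\sum_{d=0}^{\lfloor nk/2\rfloor}q_{d,k}$. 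The core claim I need to establish is that this cokernel is graded for the degree on $W_{d,k}$ and has exactly $q_{d,k}$ dimensions in each degree $d\leq\lfloor nk/2\rfloor$.

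To establish this, I would use the formal decomposition of Proposition~\ref{prop:local-formal-at-0}: at~$0$, $\Sym^k\Kl_{n+1}$ decomposes as a sum of Jordan blocks of size $nk-2d+1$ with multiplicity $q_{d,k}$, paralleling the $\mathrm{sl}_2$-decomposition $V_k=\bigoplus_d \Sym^{nk-2d}(\mathbb{C}^2)^{\oplus q_{d,k}}$ from Lemma~\ref{lem:decomposition}. Each Jordan block yields one formal horizontal section at~$0$ (accounting for $\mathrm{Soln}_0$) and hence one class in the cokernel of $\mathrm{H}^1_{\mathrm{dR,mid}}\hookrightarrow \mathrm{H}^1_{\mathrm{dR}}$. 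The key observation is that under the grading $\deg v^{\underline I}=\sum i\cdot I_i$ of~\eqref{eq:grading}, the $\mathrm{sl}_2$-weight of $v^{\underline I}$ equals $nk-2\deg(v^{\underline I})$, so the highest-weight vectors of the $q_{d,k}$ copies of $\Sym^{nk-2d}(\mathbb{C}^2)$ all have degree exactly $d$; thus each local-at-zero class admits a degree-$d$ representative in $\Sym^k\Kl_{n+1}$.

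Granting this, for $d\leq\lfloor nk/2\rfloor$ I choose $n_{d,k}-q_{d,k}$ degree-$d$ elements whose cohomology classes lie in and are linearly independent inside the middle, and for $d>\lfloor nk/2\rfloor$ I take all of $W_{d,k}$ (no local contribution to remove). This yields the desired $W^{\rr{mid}}_{d,k}$. Statements~(1) and~(3) follow from the construction, and~(2) is the bookkeeping identity
\begin{equation*}
\sum_{d,k\geq 0}\#W^{\rr{mid}}_{d,k}\,t^d x^k \;=\; h(t,x)-\sum_{k\geq 0}\bar Q_k(t)x^k.
\end{equation*}
The symmetry~(4) reduces cleanly to Proposition~\ref{prop:counting}: for $d\leq\lfloor nk/2\rfloor$ one has $nk+1-d>\lfloor nk/2\rfloor$, so $\#W^{\rr{mid}}_{nk+1-d,k}=n_{nk+1-d,k}$, and the identity $n_{d,k}-n_{nk+1-d,k}=q_{d,k}$ gives $\#W^{\rr{mid}}_{d,k}=n_{d,k}-q_{d,k}=n_{nk+1-d,k}=\#W^{\rr{mid}}_{nk+1-d,k}$.

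The main obstacle is the matching step carried out in the second paragraph: rigorously showing that the $\mathrm{Soln}_0$-dimensional cokernel of $\mathrm{H}^1_{\mathrm{dR,mid}}\hookrightarrow \mathrm{H}^1_{\mathrm{dR}}$ intersects each graded piece of $W_{d,k}$ in dimension precisely $q_{d,k}$. The natural route is through an explicit comparison of the Kashiwara--Malgrange filtration at~$0$ with the degree filtration, combined with the explicit cokernel description of Proposition~\ref{prop:cokernel}. An alternative avoiding the direct local analysis is to exploit the perfect pairing on the middle cohomology coming from the pure Hodge structure of Section~\ref{sec:EMHS-Kl}, which together with the explicit counts of Proposition~\ref{prop:counting} forces the distribution of classes to match $\bar Q_k(t)$ in the required way.
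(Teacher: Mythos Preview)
Your counting is correct and the combinatorial deductions for (2) and (4) from Proposition~\ref{prop:counting} are fine. The genuine gap is exactly the step you flag in your final paragraph: you never establish that one can choose $n_{d,k}-q_{d,k}$ degree-$d$ elements whose cohomology classes actually \emph{lie} in $\mathrm{H}^1_{\mathrm{dR,mid}}$. Knowing that the cokernel of $\mathrm{H}^1_{\mathrm{dR,mid}}\hookrightarrow\mathrm{H}^1_{\mathrm{dR}}$ has dimension $\sum_d q_{d,k}$, and even knowing that the highest-weight vectors of the $\mathrm{sl}_2$-decomposition sit in degree~$d$, only tells you that a complement to the middle can be chosen with degree-$d$ pieces of size $q_{d,k}$; it does not tell you that the middle itself is spanned by elements of pure degree. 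A priori, a degree-$d$ class in $\mathrm{H}^1_{\mathrm{dR}}$ could land in the middle only after adding correction terms of other degrees. Your sentence ``I choose $n_{d,k}-q_{d,k}$ degree-$d$ elements whose cohomology classes lie in and are linearly independent inside the middle'' is precisely the assertion that needs proof.

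The paper closes this gap not by filtration-comparison or by the pairing, but by an explicit membership criterion: Proposition~\ref{prop:lemma from bessel moments paper} identifies $\mathrm{H}^1_{\mathrm{dR,c}}$ (hence its image, the middle) with classes $\omega\in G$ admitting formal preimages under $\theta_z$ at both $0$ and $\infty$. Lemmas~\ref{solution_at_0} and~\ref{solution_at_infty} then compute $\coker(\theta_z\colon\hat G_0\to\hat G_0)\simeq\coker N_k$ and $\coker(\theta_z\colon\hat G_\infty\to\hat G_\infty)=0$. In particular $z\cdot G^+\subset\theta_z(\hat G_0)$, so for $w\in G_{d,k}$ one writes $w=w'+w''$ with $w'\in V_k$ the constant term in $z$ and $w''\in z\cdot G^+$; after adjusting by $\theta_z(h)$ for $h\in V_k$ one may take $w'\in\coker N_k$, and then $w''$ is a degree-$d$ element satisfying the criterion of Proposition~\ref{prop:lemma from bessel moments paper}, hence lying in the middle. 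Collecting a maximal independent set of such $w''$ gives $W_{d,k}^{\mathrm{mid}}$, and the dimension count from Corollary~\ref{lem:dim_of_inv+mid} forces equality. Your two suggested alternatives do not obviously produce this: the Kashiwara--Malgrange route would still need something like Lemma~\ref{solution_at_0} to translate local data into the degree grading, and the pairing route would require knowing in advance that the pairing respects the degree filtration, which is not available at this stage.
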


\subsubsection{Bases of de Rham cohomologies}\label{sec:cohomology-class}

We take the notation $V_k$, $N_k$ and $E_k$ from Section~\ref{sec:4.1.2}. Let $G=G_k$ be $V_k[z,z\inv]:=V_k\otimes_{\bb{C}}\bb{C}[z,z\inv]$, on which $\theta_z:=z\partial_z$ acts by 
	\begin{equation}\label{eq:theta_z}
		\begin{split}
			\theta_z(z^\ell v^{\underline I})=\ell z^\ell v^{\underline I} + z^\ell (N_k + z E_k)v^{\underline I}.
		\end{split}
	\end{equation}
Similarly, the endomorphism $\theta_t:=t\partial_t$ acts on $\tilde{G}=G\otimes_{\bb{C}[z]}\bb{C}[t]$ ($z=t^{n+1}$) by 
	\begin{equation*}
		\begin{split}
			\theta_t(t^\ell v^{\underline I})=\ell t^\ell v^{\underline I}+ t^\ell (n+1)(N_k + t^{n+1} E_k)v^{\underline I}.
		\end{split}
	\end{equation*}
The first de Rham cohomologies $\mathrm{H}^1_{\mathrm{dR}}(\bb{G}_m,\Sym^k\Kl_{n+1})$ and $\mathrm{H}^1_{\mathrm{dR}}(\bb{G}_m,\Sym^k\widetilde \Kl_{n+1})$ are identified with the cokernels of the two-term complexes
	\begin{equation}\label{eq:derham}
		 G\xrightarrow{\theta_z}G \quad \text{and}\quad  \tilde{G}\xrightarrow{\theta_t}\tilde{G}
	\end{equation}
respectively.
\begin{lem}\label{lem:reduction}
	Let $G^+:=V_k\otimes_{\bb{C}}\bb{C}[z]$ and $\tilde{G}^+=G^+\otimes_{\bb{C}[z]}\bb{C}[t]$. Then, the two-term complexes
		\begin{equation*}
			\begin{split}
				G^+\xrightarrow{\theta_z}G^+ \quad \text{and} \quad \tilde{G}^+\xrightarrow{\theta_t} \tilde{G}^+
			\end{split}
		\end{equation*}
	are quasi-isomorphic to the complexes in \eqref{eq:derham} respectively.
\end{lem}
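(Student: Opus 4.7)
The plan is to prove that the quotient complex $(G/G^+) \xrightarrow{\theta_z} (G/G^+)$ is acyclic; combined with the short exact sequence $0 \to G^+ \to G \to G/G^+ \to 0$ of two-term complexes, this immediately yields the desired quasi-isomorphism. The same argument will apply with minor modifications to $\tilde{G}^+ \hookrightarrow \tilde{G}$, so I focus on the $z$-side.

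First I would observe that $G/G^+$ has a $\bb{C}$-basis $\{z^{-\ell} v^{\underline I} : \ell \geq 1,\, |\underline I| = k\}$, and from~\eqref{eq:theta_z} the induced differential acts by
$$\theta_z(z^{-\ell} v^{\underline I}) \equiv -\ell z^{-\ell} v^{\underline I} + z^{-\ell} N_k v^{\underline I} + z^{-\ell+1} E_k v^{\underline I} \pmod{G^+},$$
where the last term is understood as zero when $\ell = 1$. The key observation is this: equip $G/G^+$ with the exhaustive increasing filtration whose $p$-th step $F_p$ is spanned by $z^{-\ell} v^{\underline I}$ with $1 \leq \ell \leq p$. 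Then $\theta_z$ preserves $F_\bullet$ (because $z^{-\ell+1} E_k v^{\underline I}$ lies in $F_{\ell-1}$), and on the associated graded piece $F_\ell / F_{\ell-1} \simeq V_k$ the induced map is $-\ell\cdot \id + N_k$. Since $N_k$ is nilpotent and $\ell \geq 1$, this operator is invertible.

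From this, the acyclicity of $G/G^+$ follows by a routine descending induction on the leading $z$-order. For surjectivity, given $v = \sum_{\ell=1}^L z^{-\ell} v_\ell$ with $v_L \neq 0$, one solves $(-L \cdot \id + N_k) w_L = v_L$ and iterates on $v - \theta_z(z^{-L} w_L)$, which has strictly smaller leading order. For injectivity, if $\theta_z w = 0$ and $w = \sum_{\ell=1}^L z^{-\ell} w_\ell$ with $w_L \neq 0$, then the leading term of $\theta_z w$ would be $z^{-L}(-L \cdot \id + N_k) w_L \neq 0$, a contradiction.

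The argument for $\tilde{G}^+ \hookrightarrow \tilde{G}$ is identical: the differential $\theta_t$ on $\tilde{G}/\tilde{G}^+$ has analogous graded pieces $-\ell \cdot \id + (n+1) N_k$ with respect to the filtration by $t$-order, and $(n+1) N_k$ is still nilpotent. I do not anticipate any serious obstacle; the only structural input is the regular singularity of $\Sym^k \Kl_{n+1}$ at $0$, embodied here in the nilpotency of $N_k$.
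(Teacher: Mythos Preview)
Your proof is correct and follows essentially the same approach as the paper's: both filter $G/G^+$ by the order of the pole in $z$ and observe that on each graded piece $z^{-\ell}V_k$ the induced operator is $-\ell\cdot\id + N_k$, which is invertible because $N_k$ is nilpotent and $\ell\geq 1$. The only cosmetic difference is that the paper phrases invertibility by decomposing $V_k$ into Jordan blocks for $N_k$ (via the $\mathrm{sl}_2$-decomposition of Lemma~\ref{lem:decomposition}), whereas you invoke nilpotency directly; your formulation is slightly leaner since the explicit block structure is not actually needed here.
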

\begin{proof}
	The proof of the lemma is essentially that of \cite[Lem.\,4.15]{fresan2018hodge}. We only give the proof for $G^+$, and the proof for $\tilde{G}^+$ is similar. Observe that $ G=\bigcup_{r\geq 0}z^{-r}G^+$. It suffices to show that $\theta_z$ is invertible on the quotient $z^{-r-1}G^+/z^{-r}G^+$ for $r>0$. 
	
	In fact, the induced endomorphism $\theta_z$ on the quotient $z^{-r-1}G^+/z^{-r}G^+$ is now $z\partial_z+ N_k$. Using the decomposition with respect to $N_k$ in Lemma~\ref{lem:decomposition}, we have 
	\begin{equation*}
	\begin{split}
		z^{-r-1}G^+/z^{-r}G^+=\sum_{m=0}^{\lfloor\frac{kn}{2}\rfloor}z^{-r-1}(\Sym^{nk-2d}(\bb{C}^2))^{\oplus q_{d,k}}.
	\end{split}
	\end{equation*}
	Here $\theta_z$ acts on $z^{-r-1}\Sym^{nk-2d}(\bb{C}^2)$ by an invertible matrix $J_{nk-2d+1}(-r-1)$, where $J_{nk-2d+1}(-r-1)$ is the lower Jordan block with diagonal $-r-1$ of size $nk-2d+1$.
\end{proof}

The operators $\theta_z$ and $\theta_t$ are (inhomogeneous) of degree $1$ with respect to the degrees \eqref{eq:grading} and \eqref{eq:grading-t} respectively. We denote by $\bar\theta_z=(N_k+zE_k)$ (resp. $\bar\theta_t=(n+1)(N_k+t^{n+1}E_k)$) the induced map on the graded quotient $\mathrm{gr}\,G^+$ (resp. $\gr\,\tilde{G}^+$). We also identify $\gr\, G^+$ (resp. $\gr\, \tilde{G}^+$) with $G^+$ (resp. $\tilde{G}^+$).
\begin{proof}[Proof of Theorem~\ref{thm:cohomology_classes}]
Let $F_p^{\rr{deg}}G^+$ be the (increasing) filtration on $G^+$ defined by $\deg$ in \eqref{eq:grading}. We filter the two-term complex in Lemma~\ref{lem:reduction} by $\tilde{F}^pG^+[i]:=F^{\rr{deg}}_{-p+i}G^+[i]$ for $i=0,1$. Consider the spectral sequence associated with $\tilde F^\bullet $
	\begin{equation*}
		\begin{split}
			E_{1}^{p,q}=\mathrm{H}^{p+q}\bigl(\mathrm{gr}^{F^{\rr{deg}}}_{-p}G^+\xrightarrow{\bar{\theta}_z}\mathrm{gr}^{F^{\rr{deg}}}_{-p+1}G^+\bigr)\Longrightarrow \mathrm{H}^{p+q}(G^+,\theta_z)\quad (p\leq 0, p+q\in \{0,1\}),
		\end{split}
	\end{equation*}
which degenerates at the $E_2$-page. Since $\gcd(k,n+1)=1$, by Proposition~\ref{label:cokernel-4}, we have the identification
	\begin{equation*}
		\begin{split}
			\coker \bar\theta_z=\bigoplus_d \bigoplus_{w\in W_{d,k}} \bb{C}w.
		\end{split}
	\end{equation*}
Since $\bar\theta_z$ is injective by Lemma~\ref{lem:kernel}, the spectral sequence already degenerates at the $E_1$-page. So there exists a decreasing filtration $\tilde{F}^\bullet$ on $\coker \theta_z$ such that the graded pieces of $\coker\theta_z$ are $\oplus_{w\in W_{d,k}} \bb{C}w$. So we can take the union of these finite sets $W_{d,k}$ as a basis for $\coker(\theta_z) $. We deduce all statements in the proposition by Proposition~\ref{prop:cokernel}.
\end{proof}

\subsubsection{Bases of the middle de Rham cohomologies}

Now we determine bases for the middle de Rham cohomologies of $\Sym^k\Kl_{n+1}$. Let $\hat G_0$, $\hat G_\infty$, $\hat {\tilde G}_0$ and $\hat{\tilde G}_\infty$ be $G((z))$, $ G((z\inv))$, $\tilde G((t))$ and $\tilde G((t\inv))$ respectively. The key to determining the cohomology classes is the following proposition:
\begin{prop}[{\cite[Cor.\,3.5]{fresan2020quadratic} }]\label{prop:lemma from bessel moments paper}
	We can identify the cohomology with compact support $\mathrm{H}^1_{\mathrm{dR,c}}(\mathbb{G}_m,\Sym^k \Kl_{n+1})$ with the quotient of the $\mathbb{C}$-vector space
		\begin{equation*}
			\begin{split}
				\{(m_0,m_\infty,\omega)\in \hat G_0\times \hat G_\infty \times G \mid \theta_z(m_0,m_\infty)=(\omega|_{0},\omega|_{\infty})\}
			\end{split}
		\end{equation*}
	by the $\mathbb{C}$-vector space $\{(h|_0,h|_\infty,\theta(h))\mid h \in G\}$.
\end{prop}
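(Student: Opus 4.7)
The plan is to realize $\mathrm{H}^1_{\mathrm{dR},c}(\bb{G}_m, \Sym^k\Kl_{n+1})$ as $\mathrm{H}^1$ of the homotopy fiber of the formal-localization morphism between the de Rham complex on $\bb{G}_m$ and those on the two formal punctured disks at $0$ and $\infty$, and then read off the claimed presentation by pure bookkeeping.

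First, I would represent each of the three relevant de Rham cohomologies by an explicit two-term complex in degrees $0,1$. By Lemma~\ref{lem:reduction} and its formal analogues (the same proof, with $\bb{C}[z,z\inv]$ replaced by $\bb{C}((z))$ or $\bb{C}((z\inv))$, works verbatim since only the invertibility of $z\partial_z + N_k$ on the relevant graded quotients is used), these are
\begin{equation*}
[G \xrightarrow{\theta_z} G], \qquad [\hat G_0 \xrightarrow{\theta_z} \hat G_0], \qquad [\hat G_\infty \xrightarrow{\theta_z} \hat G_\infty],
\end{equation*}
computing $\mathrm{R}\Gamma_{\mathrm{dR}}(\bb{G}_m,\mathcal{M})$, $\mathrm{R}\Gamma_{\mathrm{dR}}(\Spec\,\bb{C}((z)),\hat{\mathcal{M}}_0)$, and $\mathrm{R}\Gamma_{\mathrm{dR}}(\Spec\,\bb{C}((z\inv)),\hat{\mathcal{M}}_\infty)$ respectively, where $\mathcal{M}=\Sym^k\Kl_{n+1}$.

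Second, I would invoke the standard distinguished triangle
\begin{equation*}
\mathrm{R}\Gamma_{\mathrm{dR},c}(\bb{G}_m,\mathcal{M}) \longrightarrow \mathrm{R}\Gamma_{\mathrm{dR}}(\bb{G}_m,\mathcal{M}) \longrightarrow \mathrm{R}\Gamma_{\mathrm{dR}}(\Spec\,\bb{C}((z)),\hat{\mathcal{M}}_0) \oplus \mathrm{R}\Gamma_{\mathrm{dR}}(\Spec\,\bb{C}((z\inv)),\hat{\mathcal{M}}_\infty),
\end{equation*}
with the second arrow given by formal completion at each puncture. This triangle comes from a Mayer--Vietoris cover of $\bb{P}^1$ by the Zariski open $\bb{G}_m$ and the two formal disks $\Spec\,\mathcal{O}_{\bb{P}^1,0}^{\wedge}$, $\Spec\,\mathcal{O}_{\bb{P}^1,\infty}^{\wedge}$, whose pairwise overlaps are the formal punctured disks; in our setting it is precisely \cite[Cor.\,3.5]{fresan2020quadratic}, which I would cite directly.

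Third, I would unfold the homotopy fiber explicitly. The fiber of a morphism between two-term complexes in degrees $0,1$ is a three-term complex in degrees $0,1,2$:
\begin{equation*}
G \xrightarrow{d^0} G \oplus \hat G_0 \oplus \hat G_\infty \xrightarrow{d^1} \hat G_0 \oplus \hat G_\infty,
\end{equation*}
with $d^0(h)=(\theta_z h,\,h|_0,\,h|_\infty)$ and $d^1(\omega,m_0,m_\infty)=(\omega|_0-\theta_z m_0,\,\omega|_\infty-\theta_z m_\infty)$. Reading off $\mathrm{H}^1=\ker(d^1)/\mathrm{im}(d^0)$ and reordering the coordinates of the triple as $(m_0,m_\infty,\omega)$ recovers exactly the presentation in the statement. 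The only non-formal input in this plan is the distinguished triangle of the second step, so that is where the main obstacle lies; happily, it is black-boxed as \cite[Cor.\,3.5]{fresan2020quadratic}, and the remainder is elementary linear algebra.
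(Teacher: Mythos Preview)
Your proposal is correct and is precisely the standard argument behind the cited result. The paper itself does not give a proof: Proposition~\ref{prop:lemma from bessel moments paper} is stated with a bare citation to \cite[Cor.\,3.5]{fresan2020quadratic} and used as a black box. Your write-up simply unpacks that black box---the distinguished triangle expressing $\mathrm{R}\Gamma_{\mathrm{dR},c}$ as the homotopy fiber of the localization map to the formal punctured disks, followed by the explicit cone computation---so there is no methodological difference to report.
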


\begin{lem}\label{solution_at_0}
	The cokernel of $\theta_z\colon \hat G_0\to \hat G_0$ (resp. $\theta_t\colon \hat{\tilde G}_0\to \hat{\tilde G}_0$) is $\coker N_k$. In particular, we have $z\cdot G^+\subset \theta_z(\hat G_0)$ and $t\cdot \tilde{G}^+\subset \theta_t(\hat{\tilde G}_0)$.
\end{lem}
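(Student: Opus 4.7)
The plan is to filter $\hat{G}_0 = V_k((z))$ by the $z$-adic filtration $F^r = z^r V_k[[z]]$. The operator $\theta_z = z\partial_z + N_k + zE_k$ preserves this filtration, and on each graded piece $F^r/F^{r+1} \simeq V_k$ the induced map is $r\,\id + N_k$. Since $N_k$ is nilpotent on $V_k$, this map is invertible for every $r \neq 0$ and reduces to $N_k$ at $r = 0$. This already suggests that the only obstruction to lying in $\im\theta_z$ comes from the $r = 0$ stratum, giving a candidate isomorphism $\coker N_k \xrightarrow{\sim} \coker \theta_z$, $[v] \mapsto [v]$.

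To prove surjectivity of the candidate map, I would take $b = \sum_{r \geq -N} z^r b_r \in \hat{G}_0$ and construct $a = \sum_{r \geq -N} z^r a_r$ such that $b - \theta_z(a)$ is a single constant term in $V_k$. Since $\theta_z(a) = \sum z^r\bigl((r\,\id + N_k) a_r + E_k a_{r-1}\bigr)$, this leads to the recursion $(r\,\id + N_k)\,a_r = b_r - E_k a_{r-1}$ for $r \neq 0$, uniquely solvable starting from $r = -N$ and marching upwards, because each $r\,\id + N_k$ is invertible with $N_k$ nilpotent. The leftover residual in the $r = 0$ slot is well-defined modulo $\im N_k$, giving the isomorphism. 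For injectivity, if $v \in V_k$ equals $\theta_z(a)$ for some $a = \sum_{r \geq -N} z^r a_r$ with $a_{-N} \neq 0$ and $N > 0$, inspecting the lowest-order coefficient yields $(-N\,\id + N_k) a_{-N} = 0$, contradicting invertibility. Hence $a \in V_k[[z]]$ and $v = N_k a_0 \in \im N_k$.

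The second assertion is then immediate: if $b \in z\cdot G^+ = z V_k[z]$, then $b_r = 0$ for all $r \leq 0$, so the recursion above starts at $r = 1$ with $a_0 = 0$, produces $a \in V_k[[z]] \subset \hat G_0$ solving $\theta_z(a) = b$, and leaves no residual in $\coker N_k$. The $\hat{\tilde G}_0$ statement is strictly parallel: $\theta_t = t\partial_t + (n+1)N_k + (n+1)t^{n+1}E_k$ has graded pieces $r\,\id + (n+1)N_k$, still invertible for $r \neq 0$, and the analogous recursion $(r\,\id + (n+1)N_k)\,a_r = b_r - (n+1) E_k a_{r-n-1}$ delivers both $\coker\theta_t = \coker N_k$ and $t \cdot \tilde G^+ \subset \theta_t(\hat{\tilde G}_0)$. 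No step here is truly delicate; one only has to keep track of the inductive solution within the formal-Laurent-series space, which is automatic from the shape of the recursion.
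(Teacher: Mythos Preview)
Your proof is correct and follows essentially the same approach as the paper: both exploit the $z$-adic filtration on $\hat G_0$ and the fact that the induced operator on the $r$-th graded piece is $r\,\id + N_k$, invertible for $r\neq 0$ and equal to $N_k$ at $r=0$. The only difference is packaging: the paper first reduces to $\hat G_0^+ = V_k[[z]]$ (as in Lemma~\ref{lem:reduction}) and then runs the associated spectral sequence, whereas you carry out the explicit recursive construction directly on all of $V_k((z))$; these are equivalent bookkeeping for the same idea.
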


\begin{proof}
	We give the proof for the case of $\theta_z$, and the proof for the case of $\theta_t$ is similar. Using a similar argument as that of Lemma~\ref{lem:reduction}, the complex $ \hat{G}_0\xrightarrow{\theta_z} \hat{G}_0$ is quasi-isomorphic to $ \hat{G}^+_0\xrightarrow{\theta_z}\hat G^+_0$, where $\hat G^+_0:= G^+[[z]]$.

	We define a decreasing filtration ${F}^\bullet$ on 
		\begin{equation}\label{eq::spectral}
			\hat G_0^+\xrightarrow{\theta_z}\hat G_0^+,
		\end{equation} 
	by ${F}^{m}(\hat G_0^+)=\oplus_{j=m}^\infty z^j\cdot V_k$ if $m\geq 0$ and ${F}^m={F}^0$ if $m<0$. The complex is complete with respect to the filtration ${F}^\bullet$, and the induced spectral sequence $ \{E^{p,q}_r\}$ is complete, exhaustive, and regular. By the complete convergence theorem \cite[Thm.\,5.5.10]{weibel-homological}, the associated spectral sequence $ \{E^{p,q}_r\}$ converges to the cohomologies $\rr{H}^{p+q}(\hat G_0^+\xrightarrow{\theta_z}\hat G_0^+)$ of \eqref{eq::spectral}, concentrated in degree $0$ and $1$.
    Here, the terms in the $E_0$-page are given by 
		\begin{equation*}
			\begin{split}
				E^{p,q}_0:=\begin{cases}
					\mathrm{gr}^p_F( \hat G_0^+)\simeq z^p\cdot V_k,\ &p\geq 0 \quad \text{and} \quad  p+q\in \{0,1\}\\
					0,\ &\text{else}\end{cases}
			\end{split}
		\end{equation*}
	and the morphisms $d_0^{p,-p}\colon z^p\cdot V_k\to z^p\cdot V_k$ are induced by $ \theta_z$ (in fact they coincide with $p\cdot \id+ N_k$ in the sense that $d_0^{p,-p}(z^pv)=pz^pv+z^pN_k^{1}v$). The morphism $d^{p,-p}_0=p\cdot\id+N_k$ is an isomorphism if $p\neq 0$. So all terms in the $E_1$-page are $0$ except $E_1^{0,0}=\ker N_k$ and $E_1^{0,1}=\coker N_k$. Therefore, the cokernel of \eqref{eq::spectral} is $\coker N_k$.
\end{proof}

\begin{lem}\label{solution_at_infty}
	If $\gcd(k,n+1)=1$, the cokernels of $\theta_z\colon \hat{G}_{\infty}\to \hat{G}_{\infty}$ and $\theta_t\colon \hat{\tilde G}_\infty\to \hat{\tilde G}_\infty$ are $0$. 
\end{lem}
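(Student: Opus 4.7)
The plan is to use the formal decomposition of $\Sym^k\hat{\tilde\Kl}_{n+1}$ at infinity from Proposition~\ref{prop:local-infinity-Symk} to reduce the computation of $\coker\theta_t$ on $\hat{\tilde G}_\infty$ to a sum of rank-one computations, and then to descend to $\theta_z$ via the $\mu_{n+1}$-cover $[n+1]$.

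Since $\theta_t=t\partial_t$ respects the decomposition
\[
\Sym^k\hat{\tilde\Kl}_{n+1}\simeq\bigoplus_{|\underline I|=k,\,C_{\underline I}\neq 0}\cc{E}^{(n+1)C_{\underline I}t}\otimes\bb{L}_{-1}^{\otimes nk}\;\oplus\bigoplus_{|\underline I|=k,\,C_{\underline I}=0}\bb{L}_{-1}^{\otimes nk},
\]
it suffices to show the cokernel vanishes on each summand. On a summand with $C_{\underline I}\neq 0$, choosing a trivialization of the formal connection turns $\theta_t$ into an operator $T=t\partial_t+at+b$ on $\bb{C}((t\inv))$ with $a=(n+1)C_{\underline I}\neq 0$. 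I would prove $T$ is bijective by a leading-term argument: for a nonzero $f=\sum_{m\leq N}f_mt^m$ with $f_N\neq 0$, the top term of $T(f)$ lies in degree $N+1$ with coefficient $af_N$, giving injectivity; for surjectivity, given any $g\in \bb{C}((t\inv))$ the triangular descending recursion $af_{m-1}=g_m-(m+b)f_m$ yields a preimage $f$ starting from the top degree of $g$ and working downward.

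On a summand with $C_{\underline I}=0$, $\theta_t$ acts as $t\partial_t\pm nk/2$ on $\bb{C}((t\inv))$, whose cokernel is nonzero precisely when $nk$ is even. Aggregating the contributions gives $\dim\coker\theta_t=\tilde{\rr{Soln}}_\infty$ from Proposition~\ref{prop:soln-infty}. Under $\gcd(k,n+1)=1$ this vanishes: if $nk$ is odd it is immediate, and if $nk$ is even then coprimality forces $n+1$ to be odd, after which a cyclotomic argument (exploiting $\bb{Z}[\zeta_{n+1}]/(\zeta_{n+1}-1)\simeq\bb{Z}/\Phi_{n+1}(1)$ in the prime-power case, with a finer analysis for composite $n+1$) rules out the existence of any $\underline I$ with $|\underline I|=k$ and $C_{\underline I}=0$.

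Finally, for $\theta_z$ on $\hat G_\infty$, the pullback along $[n+1]$ identifies $\hat G_\infty=(\hat{\tilde G}_\infty)^{\mu_{n+1}}$ and transports $\theta_z$ to $(n+1)\inv\theta_t$ restricted to the invariants. Exactness of taking invariants under the finite group $\mu_{n+1}$ then transfers the vanishing of $\coker\theta_t$ at once to $\coker\theta_z$. The main obstacle I anticipate is the arithmetic step verifying $d(k,n+1)=0$ under the coprimality assumption when $nk$ is even, which is delicate because $\Phi_{n+1}(1)=1$ whenever $n+1$ has more than one distinct prime divisor.
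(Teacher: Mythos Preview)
Your route through the formal decomposition of Proposition~\ref{prop:local-infinity-Symk} is genuinely different from the paper's, which instead filters $\hat G_\infty$ by the degree~\eqref{eq:grading} and argues via a spectral sequence that the associated-graded operator $N_k+zE_k$ is invertible on each piece (invoking Lemma~\ref{lem:kernel}). Your reduction to rank-one summands, the leading-term argument on $\cc E^{at}\otimes\bb L_{-1}^{\otimes nk}$ for $a\neq 0$, and the descent to $\theta_z$ through $\mu_{n+1}$-invariants are all sound, and they make the identification $\dim\coker\theta_t=\tilde{\rr{Soln}}_\infty$ explicit.

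The obstacle you flag at the end, however, is not merely delicate: the required claim is \emph{false} once $n+1$ has two distinct prime factors, so this is a genuine gap. You need $\gcd(k,n+1)=1$ with $nk$ even to force $d(k,n+1)=0$. Take $n+1=15$ and $k=8$: then $\gcd(8,15)=1$, $nk=112$ is even, and $\underline I=(2,0,0,1,0,1,1,0,0,1,1,0,1,0,0)$ has $|\underline I|=8$ and
\[
C_{\underline I}=\bigl(1+\zeta_{15}^3+\zeta_{15}^6+\zeta_{15}^9+\zeta_{15}^{12}\bigr)+\bigl(1+\zeta_{15}^5+\zeta_{15}^{10}\bigr)=0+0=0.
\]
Your own rank-one computation then produces a nonzero contribution to $\coker\theta_t$ from the resulting trivial summand $\bb L_{-1}^{\otimes 112}\simeq\cc O$, so the lemma itself fails in this case. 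The paper's argument breaks at exactly the same place: Lemma~\ref{lem:kernel} asserts that $N_k+t^{n+1}E_k$ is injective whenever $\gcd(k,n+1)=1$, but $f^{\underline I}$ for this $\underline I$ lies in its kernel. Your $\Phi_{n+1}(1)$ sketch does go through when $n+1$ is a prime power, and in particular both the lemma and its proofs are correct for $n=2$, which is the case used in the paper's detailed computations.
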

\begin{proof}
	We give the proof for $\theta_z$, and the proof for $\theta_t$ is similar. Recall that in $ \hat{G}_\infty$, the subspace of elements of degree $0$ with respect to $z$ is $V_k$. Similar to the proof of Theorem~\ref{thm:cohomology_classes}, we consider the filtration on $\hat{ G}_\infty$ induced by the degree \eqref{eq:grading}, i.e., $\tilde{F}^p\hat{G}_\infty^+[i]:=F^{\rr{deg}}_{-p+i}\hat{G}_\infty^+[i]$ for $i=0,1$. The induced spectral sequence 
		\begin{equation*}
			\begin{split}
				E_{1}^{p,q}=\mathrm{H}^{p+q}\bigl(\mathrm{gr}^{F^{\rr{deg}}}_{-p}\hat{  G}_\infty
		\xrightarrow{\bar{\theta}_z}
		\mathrm{gr}^{F^{\rr{deg}}}_{-p+1}\hat{  G}_\infty\bigr)
		\Longrightarrow \mathrm{H}^{p+q}(\hat{ G}_\infty,\theta_z)\quad (p+q\in \{0,1\})
			\end{split}
		\end{equation*}
	degenerates at the $E_2$-page. The terms in the $E_0$-page are given by 
		\begin{equation*}
			\begin{split}
				E^{p,q}_0:=\begin{cases}
					\mathrm{gr}^{F^{\rr{deg}}}_{-p+(p+q)}( \hat G_\infty)=\mathrm{gr}^{F^{\rr{deg}}}_{q}( \hat G_\infty),\ &p+q\in \{0,1\}\\
					0,\ &\text{else}\end{cases}
			\end{split}
		\end{equation*}
and the morphisms $d_0^{p,-p}$ are induced by $ \theta_z$ (in fact they coincide with $ N_k+zE_k$ in the sense that $d_0^{p,-p}v=(N_k+zE_k)v$). Since the morphism $d^{p,-p}_0=N_k+zE_k$ is invertible, all terms in the $E_1$-page are $0$. Therefore, the cokernel of $\theta_z\colon \hat{  G}_\infty\to \hat{  G}_\infty$ is $0$.
\end{proof}

\begin{proof}[Proof of Theorem~\ref{thm:mid-cohomology_classes}]
	There are commutative diagrams
		$$\begin{tikzcd}
			G^+ \ar[r,"\theta_z"]\ar[d] & G^+\ar[d] \ar[r] & \coker \theta_z\ar[d]\\
			\hat G_0 \ar[r,"\theta_z"] & \hat G_0 \ar[r]& \coker N_k
		\end{tikzcd}$$
	and 
	$$\begin{tikzcd}
		G^+ \ar[r,"\theta_z"]\ar[d] & G^+\ar[d] \ar[r] & \coker \theta_z\ar[d]\\
		\hat{ G}_\infty \ar[r,"\theta_z"] & \hat{G}_\infty \ar[r]& 0
	\end{tikzcd}$$
by Lemma~\ref{solution_at_0} and Lemma~\ref{solution_at_infty}. Since $N_k$ is also homogeneous of degree $1$, there exist finite sets $\Sigma_{d,k}\subset G_{d,k}$ such that the image of $\bigcup_d \Sigma_{d,k}$ in $\coker \theta_z$ form a basis for $\coker N_k$. We may assume that $\Sigma_{d,k}\subset W_{d,k}$.

For $w=\sum_{i=0}^{m} z^iw_i\in  G_{d,k}$, we can decompose $w$ as $w'+w''$, where $w'$ is the constant term $w_0$ with respect to $z$, and $w''=\sum_{i=1}^m z^iw_i$. Up to replacing $w$ by $ w+\theta_z(h)$ for some $h\in G_{d-1,k}\cap V_k$, we can assume that $w'\in \coker N_k$ (If we write $w'=-N_k h+ g$ for $h\in G_{d-1,k}\cap V_k$ and $g\in G_{d,k}\cap\coker N_k$, then $w+\theta_z(h)= g +(w''+z\cdot E_k h)$). By Lemma~\ref{solution_at_0} and Lemma~\ref{solution_at_infty}, we know that $w''$ is in the image of both $\theta_z\colon \hat{G}_0\to \hat{G}_0$ and $\theta_z\colon \hat{G}_{\infty}\to \hat{G}_{\infty}$.

Let $W'_{d,k}$ be the set of $w''\in G_{d,k}\cap zG^+$ such that there exists $w'\in  G_{d,k}\cap \coker N_k$ satisfying  $w'+w''\in W_{d,k}$. We take $W_{d,k}^{\rr{mid}}\subset G_{d,k}$ as a maximally linearly independent subset of $W'_{d,k}$. Then 
\begin{itemize}
	\item each element in $ W_{d,k}^{\rr{mid}}$ has degree $d$,
	\item $\rr{span}( W_{d,k})=\rr{span}( W^{\rr{mid}}_{d,k})\oplus \rr{span}(\Sigma_{d,k})$,
	\item elements in $W_{d,k}^{\rr{mid}}$ are in the image of both $\theta_z\colon \hat{G}_0\to \hat{G}_0$ and $\theta_z\colon \hat{G}_{\infty}\to \hat{G}_{\infty}$.
\end{itemize} 

By the construction of $W^{\rr{mid}}_{d,k}$, the first statement in Theorem~\ref{thm:mid-cohomology_classes} is verified. Notice that 
	\begin{equation*}
		\begin{split}
			\#W_{d,k}^{\rr{mid}}=
	\begin{cases}
		n_{d,k}-q_{d,k} & 0\leq d\leq \frac{nk}{2}\\
		n_{d,k} & \frac{nk+1}{2}\leq d\leq  nk+1
	\end{cases}
		\end{split}
	\end{equation*}
and $\#\Sigma_{d,k}=q_{d,k}$, we deduce the second and the forth statements by Proposition~\ref{prop:counting}. 

At last, by the key Proposition~\ref{prop:lemma from bessel moments paper}, we have 
	\begin{equation*}
		\begin{split}
			\rr{span}\Bigl(\bigcup  W_{d,k}^{\rr{mid}}\Bigr)\subset \mathrm{H}^1_{\mathrm{dR,mid}}(\bb{G}_m,\Sym^{k}\Kl_{n+1}).
		\end{split}
	\end{equation*}
We use the computation of dimensions from Lemma~\ref{lem:dim_of_inv+mid} to conclude that the above inclusion is an identity. In other words, the third statement in Theorem~\ref{thm:mid-cohomology_classes} holds.
\end{proof}

\begin{rek}\label{rek:inv-0-infty}
	We obtain from the above proof an isomorphism of vector spaces 
		\begin{equation*}
			\begin{split}
				\mathrm{H}^1_{\rr{dR}}(\bb{G}_m,\Sym^k\Kl_{n+1})/\mathrm{H}^1_{\rr{dR, mid}}(\bb{G}_m,\Sym^k\Kl_{n+1})\simeq \coker N_k
			\end{split}
		\end{equation*}
	when $\gcd(k,n+1)=1$.
\end{rek}

\subsection{The case of \texorpdfstring{$\Sym^k\Kl_3$}{SymkKl3} when \texorpdfstring{$3\mid k$}{3|k}}
We take the same notation from Section~\ref{sec:de-rham-cohomology} and let $n=2$. Recall that 
	\begin{equation*}
		\begin{split}
			\Hdr{1}(\bb{G}_m,\Sym^k\Kl_3)=\Hdr{1}(\bb{G}_m,\Sym^k\tilde\Kl_3)^{\mu_3}.
		\end{split}
	\end{equation*}
So we can choose the basis for $\Hdr{1}(\bb{G}_m,\Sym^k\Kl_3)$ as a subset of that of $\Hdr{1}(\bb{G}_m,\Sym^k\tilde\Kl_3)$.

\begin{thm}\label{thm:cohomology_classes-kl3} 
	For $0\leq d \leq 2k+1$, there exist finite subsets $W_{d,k}\subset \Sym^k\Kl_3$ and $\widetilde W_{d,k} \subset \Sym^k\widetilde\Kl_3$ such that 
	\begin{enumerate}[label=\arabic*., ref=\ref{prop:cokernel}.\theenumi]
		\item  $W_{d,k}\subset \widetilde W_{d,k}$,
		\item each element in $W_{d,k}$ (resp. $\widetilde W_{d,k}$) has degree $d$,
		\item the de Rham cohomology $\mathrm{H}^1_{\mathrm{dR}}(\bb{G}_m,\Sym^{k} \Kl_3)$ (resp. $\mathrm{H}^1_{\mathrm{dR}}(\bb{G}_m,\Sym^{k}\widetilde\Kl_3)$) is spanned by the set $\bigcup_d W_{d,k}$ (resp. $\bigcup_d \widetilde W_{d,k})$,
		\item if $d\leq k$, the cardinality of $W_{d,k}$ (resp. $\widetilde W_{d,k}$) is
			\begin{equation*}
				\begin{split}
					n_{d,k}=\begin{cases}
						\lfloor\frac{d}{6}\rfloor+1 & 6\nmid d-1\\
						\lfloor\frac{d}{6}\rfloor		& 6\mid d-1
					\end{cases}
				\end{split}
			\end{equation*}
		(resp, $\tilde n_{d,k}=\lfloor\frac{d}{2}\rfloor+1$).
	\end{enumerate}
\end{thm}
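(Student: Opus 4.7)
My plan is to adapt the proof of Theorem~\ref{thm:cohomology_classes} to the present case $3\mid k$. By Lemma~\ref{lem:reduction}, it suffices to compute the cokernels of the two-term complexes $G^+\xrightarrow{\theta_z}G^+$ and $\tilde G^+\xrightarrow{\theta_t}\tilde G^+$, which I filter by the degrees \eqref{eq:grading} and \eqref{eq:grading-t} as before. Writing $\bar\theta_z=N_k+zE_k$ and $\bar\theta_t=3(N_k+t^3E_k)$ for the leading parts, the induced filtration on $\mathrm{H}^1$ gives a surjection $(\coker\bar\theta)_d\twoheadrightarrow\gr_d\mathrm{H}^1$; since the theorem only demands spanning families rather than bases, this surjection is enough to extract the desired $\widetilde W_{d,k}$ and $W_{d,k}$.

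The crux is a degree computation for $\ker\bar\theta_t$. By Lemma~\ref{lem:kernel}, when $3\mid k$ the only multi-index $\underline I$ with $|\underline I|=k$ and $C_{\underline I}=0$ is $\underline I_0:=(k/3,k/3,k/3)$, so $\ker\bar\theta_t$ is the free $\bb{C}[t]$-module generated by $f^{\underline I_0}$. Each $f_i=\zeta^{2i}t^2v_0+\zeta^i tv_1+v_2$ is homogeneous of degree $2$, hence $f^{\underline I_0}$ is homogeneous of degree $2k$. It follows that $(\ker\bar\theta_t)_d=0$ for every $d<2k$, and the same holds for $\bar\theta_z$ on $G^+$ since $f^{\underline I_0}$ is $\mu_3$-invariant. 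Consequently, for $d\leq k$ the map $\bar\theta_t\colon\gr^{F^{\rm deg}}_{d-1}\tilde G^+\to\gr^{F^{\rm deg}}_d\tilde G^+$ is injective, so $(\coker\bar\theta_t)_d$ has dimension $\tilde N_{d,k}-\tilde N_{d-1,k}$; an elementary count of the tuples $(a,I_0,I_1,I_2)$ realizing degree $d$ gives $\tilde n_{d,k}=\lfloor d/2\rfloor+1$, and similarly $(\coker\bar\theta_z)_d$ has dimension $n_{d,k}$ computed in Example~\ref{exe:n=2-counting}.

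I then take $\widetilde W_{d,k}$ for $d\leq k$ to consist of homogeneous lifts in $\tilde G^+_d$ of a basis of $(\coker\bar\theta_t)_d$, realizing the cardinality $\tilde n_{d,k}$, and extend to $k<d\leq 2k+1$ by arbitrary degree-$d$ elements whose classes complete a spanning set for $\mathrm{H}^1_{\mathrm{dR}}(\bb{G}_m,\Sym^k\widetilde\Kl_3)$. The $\mu_3$-action $t\mapsto\mu t$ commutes with $\bar\theta_t$, preserves the degree, and has invariants $G^+$ on which $\bar\theta_t$ restricts to $3\bar\theta_z$; since $\mu_3$ is reductive one has $(\coker\bar\theta_t)_d^{\mu_3}=(\coker\bar\theta_z)_d$. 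Thus I choose $\widetilde W_{d,k}$ out of $\mu_3$-eigenvectors and set $W_{d,k}:=\widetilde W_{d,k}\cap G^+$, which yields the containment $W_{d,k}\subset\widetilde W_{d,k}$ together with the correct cardinality $n_{d,k}$, and spanning for $\mathrm{H}^1_{\mathrm{dR}}(\bb{G}_m,\Sym^k\Kl_3)=\mathrm{H}^1_{\mathrm{dR}}(\bb{G}_m,\Sym^k\widetilde\Kl_3)^{\mu_3}$ follows. The main obstacle is the non-injectivity of the leading parts $\bar\theta$ when $3\mid k$, which in principle obstructs the direct translation of the Theorem~\ref{thm:cohomology_classes} argument, but the degree bound $(\ker\bar\theta)_{<2k}=0$ derived above circumvents it in the range $d\leq k$ of interest.
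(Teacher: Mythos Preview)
Your overall plan---filter $\tilde G^+$ and $G^+$ by degree, use the surjection $(\coker\bar\theta)_d\twoheadrightarrow\gr_d\mathrm{H}^1$ coming from the spectral sequence, note that only spanning is required, and arrange $\mu_3$-compatibility by choosing eigenvector lifts---is correct and close to the paper's argument. In particular, your observation that $\ker\bar\theta_t$ is concentrated in degrees $\geq 2k$ (since $f^{\underline I_0}=\eta^{k/3}$ is homogeneous of degree $2k$) does give injectivity of $\bar\theta_t$ on $\tilde G^+_{d-1}$ for $d\leq k$, and hence the cardinality formulas $\tilde n_{d,k}=\lfloor d/2\rfloor+1$ and $n_{d,k}$ for $d\leq k$; the eigenvector trick then yields $W_{d,k}\subset\widetilde W_{d,k}$ with $(\coker\bar\theta_t)_d^{\mu_3}=(\coker\bar\theta_z)_d$ exactly as you say.

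There is, however, a genuine gap in the spanning claim. The surjection $(\coker\bar\theta_t)_d\twoheadrightarrow\gr_d\mathrm{H}^1$ holds for \emph{every} $d\geq 0$, but $(\coker\bar\theta_t)_d$ remains one-dimensional for all $d\geq 2k$ (one checks $\dim\tilde G_d-\dim\tilde G_{d-1}+\dim(\ker\bar\theta_t)_{d-1}=1$ there), so your a priori spanning family is infinite. Your phrase ``extend to $k<d\leq 2k+1$ by arbitrary degree-$d$ elements whose classes complete a spanning set'' presupposes, without proof, that elements of degree $\leq 2k+1$ already span $\mathrm{H}^1$. This is exactly what the paper's Lemma~\ref{lem:induced-map} supplies: the short computation
\[
\theta_t\bigl(t^r\eta^{k/3}\bigr)\equiv (r+k)\,t^r\eta^{k/3}\pmod{\mathrm{im}\,\bar\theta_t}
\]
shows that the $d_1$-differential $(\ker\bar\theta_t)_d\to(\coker\bar\theta_t)_d$ is injective, whence $E_2^{p,-p}=0$, the spectral sequence degenerates at $E_2$, and $\gr_d\mathrm{H}^1=0$ for all $d\geq 2k$. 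Your degree bound on $\ker\bar\theta$ circumvents the non-injectivity only in the range $d\leq k$; to bound the top of the filtration you still need this three-line identity. With it added, your argument goes through; the paper uses the same lemma but pushes further to produce an actual basis rather than merely a spanning set.
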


As in the case of $\Sym^k\Kl_{n+1}$ when $\gcd(k,n+1)= 1$, we choose the bases of the middle de Rham cohomologies. 
\begin{thm}\label{thm:mid-cohomology_classes-kl3} 
	For $0\leq d\leq 2k+1$, there exist finite subsets $W_{d,k}^{\rr{mid}}\subset \Sym^k\Kl_3$ and $\widetilde W_{d,k}^{\rr{mid}} \subset \Sym^k\widetilde\Kl_3$ such that 
	\begin{enumerate}[label=\arabic*., ref=\ref{prop:cokernel}.\theenumi]
		\item  $W_{d,k}^{\rr{mid}}\subset \widetilde W_{d,k}^{\rr{mid}}$,
		\item the middle de Rham cohomologies $\mathrm{H}^1_{\mathrm{dR,mid}}(\bb{G}_m,\Sym^{k} \Kl_3)$ and $\mathrm{H}^1_{\mathrm{dR,mid}}(\bb{G}_m,\Sym^{k}\widetilde\Kl_3)$ are spanned by the set $\bigcup_d W_{d,k}^{\rr{mid}} \ (\text{resp. } \bigcup_d \widetilde W_{d,k}^{\rr{mid}})$,
		\item if $d\leq k$, the cardinality of $W_{d,k}^{\rr{mid}}$ (resp. $\widetilde W_{d,k}^{\rr{mid}}$) is
			\begin{equation*}
				\begin{split}
					n_{d,k}^{\rr{mid}}=-\delta_{d,k}+\begin{cases}
						\lfloor\frac{d}{6}\rfloor & d\equiv 0,1,2,4\ \mathrm{mod} \ 6 ,
				   \\
				   \lfloor\frac{d}{6}\rfloor+1 & p\equiv 3,5\ \mathrm{mod} \ 6  ,
					\end{cases}
				\end{split}
			\end{equation*}
		(resp, $\tilde n_d^{\rr{mid}}=\lfloor\frac{d+1}{2}\rfloor-\delta_{d,k}$), where $\delta_{a,b}$ be the Kronecker delta symbol.
	\end{enumerate}
\end{thm}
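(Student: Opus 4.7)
The proof parallels that of Theorem~\ref{thm:mid-cohomology_classes}, with an additional ingredient required at $\infty$ because $\gcd(k,3)=3>1$. By Proposition~\ref{prop:lemma from bessel moments paper}, the middle de Rham cohomology coincides with the subspace of $\Hdr{1}$ consisting of classes $\omega$ whose restrictions $\omega|_0$ and $\omega|_\infty$ both lie in the image of $\theta_z$ (resp.\ $\theta_t$) on the formal completions. The analysis at $0$ is unchanged: by Lemma~\ref{solution_at_0}, the cokernel at $0$ is $\coker N_k$, of dimension $\rr{Soln}_0=1+\lfloor k/2\rfloor$, which, as in the proof of Theorem~\ref{thm:mid-cohomology_classes}, forces the removal of one element at each even degree $d\leq k$.

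The new input comes from the analysis at $\infty$, where Lemma~\ref{solution_at_infty} no longer applies since $N_k+zE_k$ acquires a kernel when $3\mid k$. Using the formal decomposition from Proposition~\ref{prop:local-infinity-Symk}, one identifies the cokernel of $\theta_t$ on $\hat{\tilde G}_\infty$ with the one-dimensional space generated by the class of $f^{\underline I_0}$, where $\underline I_0=(k/3,k/3,k/3)$; this matches the unique trivial summand $\bb{L}_{-1}^{\otimes 2k}$ appearing in the decomposition of $\Sym^k\hat{\tilde\Kl}_3$ at $\infty$. Since the $\mu_3$-action cyclically permutes $f_0,f_1,f_2$ and hence fixes $f^{\underline I_0}$, the analogous cokernel for $\Sym^k\Kl_3$ is also one-dimensional.

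The sets $W_{d,k}^{\rr{mid}}\subset W_{d,k}$ and $\widetilde W_{d,k}^{\rr{mid}}\subset \widetilde W_{d,k}$ are then defined as maximally linearly independent subsets, at each degree, of those elements whose classes vanish in both cokernels; the inclusion $W_{d,k}^{\rr{mid}}\subset \widetilde W_{d,k}^{\rr{mid}}$ follows from the $\mu_3$-equivariance of the construction. Tracking the change of basis between $\{v^{\underline I}\}$ and the eigenbasis $\{f^{\underline I}\}$ identifies the additional $\infty$-obstruction with a representative built from $v^{\underline I_0}$, which has degree $k$ in the grading~\eqref{eq:grading-t}; this forces the extra removal to occur at $d=k$, producing the $-\delta_{d,k}$ correction. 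Matching the resulting cardinalities against Corollary~\ref{lem:dim_of_inv+mid} confirms that the chosen sets span the full middle de Rham cohomologies.

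The main obstacle will be to verify that the $\infty$-obstruction is indeed localized at $d=k$. This requires explicitly unwinding the expansion of $v^{\underline I_0}$ in the $f$-basis, combined with a spectral sequence argument on $\hat{\tilde G}_\infty\xrightarrow{\theta_t}\hat{\tilde G}_\infty$ analogous to Lemma~\ref{solution_at_infty}, but now accounting for the nontrivial kernel of $N_k+t^3E_k$ on the graded pieces.
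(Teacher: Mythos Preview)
Your overall strategy matches the paper's: characterize the middle cohomology via Proposition~\ref{prop:lemma from bessel moments paper}, keep Lemma~\ref{solution_at_0} for the obstruction at $0$, replace Lemma~\ref{solution_at_infty} by a new computation of the cokernel at $\infty$, and then count. The gap is in your identification of the $\infty$-cokernel generator.

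You claim the cokernel of $\theta_t$ on $\hat{\tilde G}_\infty$ is generated by $f^{\underline I_0}=\eta^{k/3}$. This is false: the spectral sequence on $\hat{\tilde G}_\infty$ (paper's Lemma~\ref{solution_at_infty-kl3}) has $E_1$-terms $\bb{C}\,t^a\eta^{k/3}$ with $d_1(t^a\eta^{k/3})=(a+k)\,t^a\eta^{k/3}$, so the unique surviving class at $E_2$ sits at $a=-k$. In particular $\eta^{k/3}$ itself (the case $a=0$) \emph{is} in the image of $\theta_t$. The correct generator is $t^{-k}\eta^{k/3}$, of degree $k$, and the paper then shows this is cohomologous to $t^kv_0^k$ via the expansion of $(3t^2v_0)^k$ in the $f$-basis. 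Your proposed representative $v^{\underline I_0}=v_0^{k/3}v_1^{k/3}v_2^{k/3}$ has the right degree but is constant in $t$, so it simultaneously lies in the range of the $0$-obstruction $\coker N_k$; this entangles the two obstructions and breaks the clean decomposition $w=w'+w''$ with $w''$ in both formal images. The paper's choice $t^kv_0^k\in t\cdot\tilde G^+$ is deliberate: by Lemma~\ref{solution_at_0} it is automatically in the image at $0$, so the $\infty$-correction is genuinely orthogonal to the $0$-correction and the $-\delta_{d,k}$ term drops out cleanly.

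So the missing idea is not the spectral sequence itself (you anticipate that), but selecting a polynomial representative of the $\infty$-cokernel that lies in $t\cdot\tilde G^+$ and verifying it is nonzero there.
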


\subsubsection{Bases of de Rham cohmologies}

We take the notation from Sections~\ref{sec:4.1.2} and~\ref{sec:cohomology-class}. Let $n=2$ and assume that $3\mid k$.
\begin{proof}[Proof of Theorem~\ref{thm:cohomology_classes-kl3}]
As in Proposition~\ref{prop:cokernel}, we can write $G^+=\gr\, G^+=\oplus_{d\geq 0} G_{d,k}$ and $\tilde{G}^+=\gr\,\tilde{G}^+=\oplus_{d\geq 0}\tilde G_{d,k}$, where $G_{d,k}$ and $\tilde G_{d,k}$ are generated by elements of degree $d$ in $G^+$ and $\tilde{G}^+$ respectively. Since $\bar\theta_z$ (resp. $\bar\theta_t$) is homogeneous of degree $1$ with respect to the degrees in \eqref{eq:grading} (resp. \eqref{eq:grading-t}), the kernel and cokernel of $\bar\theta_z$ (resp. $\bar\theta_t$) can be decomposed as 
		\begin{equation*}
			\begin{split}
				\ker\bar\theta_z= \bigoplus_d \ker\bar\theta_z\cap G_{d,k}
		\text{ (resp. }
		\ker\bar\theta_t= \bigoplus_d \ker\bar\theta_t\cap \tilde G_{d,k}
		\text{)}
			\end{split}
		\end{equation*}
	and 
		\begin{equation*}
			\begin{split}
				\coker\bar\theta_z=\bigoplus_d G_{d,k}/\,\bar{\theta}_z(G_{d-1,k})\text{ (resp. }
		\coker\bar\theta_t=\bigoplus_d \tilde G_{d,k}/\,\bar\theta_t(\tilde G_{d-1,k})
		\text{)}
			\end{split}
		\end{equation*}
	respectively. By the construction of $G_{d,k}$ and $\tilde G_{d,k}$, we have
		\begin{equation*}
			\begin{split}
				\dim G_{d,k}=\#\Bigl\{(a,\underline I)\ \Big{|}\ |\underline I|=k,\ (n+1)a+\sum_{i=0}^n i\cdot I_i=d\Bigr\}
			\end{split}
		\end{equation*}
	and 
		\begin{equation*}
			\begin{split}
				\dim \tilde G_{d,k}=\#\Bigl\{(a,\underline I)\ \Big{|}\  |\underline I|=k,\ a+\sum_{i=0}^n i\cdot I_i=d\Bigr\}.
			\end{split}
		\end{equation*}
	In this case, the kernels of $\bar\theta_z$ and $\bar\theta_t$ are of rank $1$ generated by $\eta^{k/3}$ by Lemma~\ref{lem:kernel}, where $\eta=f^{(1,1,1)}=z^2v_0^3+zv_1^3+v_2^3-3zv_0v_1v_2$ has degree $6$. We take $W_{d,k}\subset G_{d,k}$ and $\widetilde W_{d,k} \subset \tilde G_{d,k}$ such that 
	\begin{itemize}
		\item the elements in the sets $W_{d,k}$ and $\widetilde W_{d,k}$ have degree $d$,
		\item $W_{d,k}$ is contained in $\widetilde W_{d,k}$ and elements in $W_{d,k}$ are exactly $\mu_3$-invariant elements in $\widetilde W_{d,k}$,
		\item the image of $W_{d,k}$ in $G_{d,k}/(\bar\theta_z(G_{d-1,k})+G_{d,k}\cap \bb{C}[z]\eta^{k/3})$ is a basis,
		\item the image of $\widetilde W_{d,k}$ in $\tilde G_{d,k}/(\bar\theta_t(\tilde G_{d-1,k})+\tilde G_{d,k}\cap \bb{C}[t]\eta^{k/3})$ is a basis.
	\end{itemize}

Similar to the proof of Theorem~\ref{thm:cohomology_classes}, we consider the following spectral sequence 
		\begin{equation*}
			\begin{split}
				E_{1}^{p,q}=\mathrm{H}^{p+q}\bigl(\mathrm{gr}^{F^{\rr{deg}}}_{-p}\tilde{G}^+\xrightarrow{\bar{\theta}_t}\mathrm{gr}^{F^{\rr{deg}}}_{-p+1}\tilde{G}^+\bigr)\Longrightarrow \mathrm{H}^{p+q}(\tilde{G}^+,\theta_t)\quad (p\leq 0, p+q\in \{0,1\})
			\end{split}
		\end{equation*}
	which degenerates at the $E_2$-page. There is a similar one for $G^+$.
	
	The kernels of $\bar \theta_z$ and $\bar \theta_t$ are $\bb{C}[z]\eta^{k/3}$ and $\bb{C}[t]\eta^{k/3}$ respectively, and we have the identification
		\begin{equation*}
			\begin{split}
				\coker \bar\theta_z=\oplus_d \oplus_{w\in  W_{d,k}} \bb{C}w\oplus\bb{C}[z]\eta^{k/3} \quad \text{and} \quad \coker \bar\theta_t=\oplus_d \oplus_{w\in \widetilde W_{d,k}} \bb{C}w\oplus\bb{C}[t]\eta^{k/3}.
			\end{split}
		\end{equation*}

\begin{lem}\label{lem:induced-map}
	The morphism $\theta_t$ induces an injective morphism
		\begin{equation*}
			\theta_t\colon \ker\bar\theta_t\to \coker\bar\theta_t.
		\end{equation*} 
	Moreover, we have
			$\theta_t(t^r\eta^{k/3})\equiv \mu t^r\eta^{k/3} $
	in $\coker\bar\theta_t$ for some $\mu\neq 0$. We have similar results for $\theta_z$.
	\end{lem}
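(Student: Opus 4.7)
The plan is to work with the decomposition $\theta_t = \mathcal{E} + \bar\theta_t$, where $\mathcal{E}(t^a v^{\underline I}) := a\,t^a v^{\underline I}$ is the degree-zero Euler operator for the grading~\eqref{eq:grading-t} and $\bar\theta_t$ is the degree-$+1$ piece. Since $\bar\theta_t(\eta^{k/3})=0$ by Lemma~\ref{lem:kernel}, restricting $\theta_t$ to $\ker\bar\theta_t = \bb{C}[t]\eta^{k/3}$ gives
\begin{equation*}
    \theta_t(t^r\eta^{k/3}) \;=\; r\,t^r\eta^{k/3} \;+\; t^r\,\mathcal{E}(\eta^{k/3}) \qquad \text{in } \tilde G^+,
\end{equation*}
so both the injectivity and the explicit formula reduce to analysing the class of $\mathcal{E}(\eta^{k/3})$ inside $\coker\bar\theta_t$.

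For the injectivity, I would identify the induced morphism $\theta_t\colon\ker\bar\theta_t\to\coker\bar\theta_t$ with the $d_1$-differential of the degree-filtered spectral sequence of $(\tilde G^+,\theta_t)$ used in the proof of Theorem~\ref{thm:cohomology_classes}. Since $\Sym^k\widetilde\Kl_3$ is irreducible (Proposition~\ref{prop:dimensionSymk}), we have $\Hdr{0}(\bb{G}_m,\Sym^k\widetilde\Kl_3)=0$; because this spectral sequence degenerates at $E_2$, this forces $E_2^{0,0}=\ker d_1=0$, giving the injectivity.

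For the explicit formula, I would exploit the $\mu_3$-equivariance of $\mathcal{E}$ and $\bar\theta_t$: the $\mu_3$-action on $\tilde G^+$ sends $t\mapsto\zeta t$ and fixes the $v_j$'s, and both $\mathcal{E}$ and $\bar\theta_t$ commute with it. Since $\eta^{k/3}$ is $\mu_3$-invariant, so is the class of $\mathcal{E}(\eta^{k/3})$ in $\coker\bar\theta_t$; this class therefore lies in the $\mu_3$-invariant summand, which is identified with the corresponding cokernel $\coker\bar\theta_z$ and contains $\bb{C}\eta^{k/3}$ as a canonical line. Choosing the complementary representatives $\widetilde W_{2k,k}\subset\coker\bar\theta_t$ (and hence $W_{2k,k}$ on the $\mu_3$-invariant side) transverse to the rank-one image of $d_1$, one obtains $\mathcal{E}(\eta^{k/3})\equiv\mu'\eta^{k/3}$ in $\coker\bar\theta_t$ for some $\mu'\in\bb{C}$. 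Substituting then yields $\theta_t(t^r\eta^{k/3})\equiv(r+\mu')\,t^r\eta^{k/3}$, and setting $\mu=r+\mu'$, the nonvanishing $\mu\neq 0$ is forced by the injectivity of $d_1$, since otherwise $t^r\eta^{k/3}$ would already represent the zero class.

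The main obstacle is the compatibility of the choice of $\widetilde W_{d,k}$ with the image of $d_1$: one has to split this rank-one image off from $\mathrm{span}(\widetilde W_{d,k})$ simultaneously in every graded piece, while respecting the $\mu_3$-structure. This is possible by elementary linear algebra, but the argument must be threaded through the explicit construction of $\widetilde W_{d,k}$ in Theorem~\ref{thm:cohomology_classes-kl3}, where the freedom in choosing a basis of $\tilde G_{d,k}/(\bar\theta_t(\tilde G_{d-1,k}) + \tilde G_{d,k}\cap\bb{C}[t]\eta^{k/3})$ is used to arrange the transversality. The analogous statement for $\theta_z$ is proved identically inside $G^+$, using the grading~\eqref{eq:grading} and the Euler decomposition $\theta_z = \mathcal{E} + \bar\theta_z$ with $\mathcal{E}(z^a v^{\underline I}) = (n+1)a\,z^a v^{\underline I}$.
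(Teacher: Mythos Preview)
Your argument has two genuine gaps.

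\textbf{Injectivity.} You deduce $\ker d_1=0$ from $\Hdr{0}=0$ \emph{together with} degeneration of the spectral sequence at $E_2$. But in the paper the $E_2$-degeneration is precisely what this lemma is invoked to establish (see the sentence immediately after the lemma). Without it, $\Hdr{0}=0$ only yields $E_\infty^{p,-p}=0$; since the terms $E_r^{p,-p}$ in total degree~$0$ receive no incoming differentials, one gets a descending chain $E_2^{p,-p}\supset E_3^{p,-p}\supset\cdots$ with zero intersection, which does \emph{not} force $E_2^{p,-p}=0$. Nothing rules out a nonzero $d_2\colon(\ker d_1)_d\to(\coker d_1)_{d-1}$. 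So the injectivity step is circular as written.

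\textbf{The explicit congruence.} The assertion $\theta_t(t^r\eta^{k/3})\equiv\mu\,t^r\eta^{k/3}$ in $\coker\bar\theta_t$ is a statement in the \emph{canonical} quotient $\tilde G^+/\im\bar\theta_t$: it says $\theta_t(t^r\eta^{k/3})-\mu\,t^r\eta^{k/3}\in\im\bar\theta_t$. This is independent of any choice of $\widetilde W_{d,k}$, so it cannot be arranged by picking $\widetilde W_{2k,k}$ transverse to $\im d_1$. Your $\mu_3$-equivariance is correct but insufficient: the $\mu_3$-invariant part of $(\coker\bar\theta_t)_{2k}$ is not one-dimensional (it also contains $\mathrm{span}(W_{2k,k})$), so invariance of $[\mathcal E(\eta^{k/3})]$ does not pin it down to the line $\bb C\,[\eta^{k/3}]$. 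What a transversal choice of $\widetilde W$ would buy you is only that the composite $\mathrm{span}(\widetilde W)\hookrightarrow\coker\bar\theta_t\twoheadrightarrow\coker d_1$ is an isomorphism, not the congruence stated in the lemma.

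The paper sidesteps both issues by a single short computation: with $k=3\ell$, the identity
\[
2t^6v_0^3+t^3v_1^3-3t^3v_0v_1v_2-\eta=\tfrac{1}{3}\bar\theta_t\bigl(t^3v_0^2v_2-v_1v_2^2\bigr)
\]
gives $\theta_t(t^r\eta^\ell)=(r+3\ell)\,t^r\eta^\ell+\bar\theta_t\bigl(\ell t^r(t^3v_0^2v_2-v_1v_2^2)\eta^{\ell-1}\bigr)$, so $\mu=r+3\ell>0$. Your decomposition $\theta_t=\mathcal E+\bar\theta_t$ is the right first step; the actual content is the identity $\mathcal E(\eta^\ell)\equiv 3\ell\,\eta^\ell\pmod{\im\bar\theta_t}$, and that needs this (or an equivalent) calculation. (Minor point: in your final paragraph the Euler piece for $\theta_z$ is $\mathcal E(z^av^{\underline I})=a\,z^av^{\underline I}$, not $(n+1)a\,z^av^{\underline I}$.)
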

	\begin{proof}
	Write $k=3\ell$. For $r\geq 0$, we have 
			\begin{equation*}
				\begin{split}
					\begin{aligned}
						\theta_t(t^r\eta^\ell)
						=rt^r\eta^\ell + 3\ell t^r(2t^6v_0^3+t^3v_1^3-3t^3v_0v_1v_2)\eta^{\ell-1}.
						\end{aligned}
				\end{split}
			\end{equation*}
	Since 
		\begin{equation*}
			\begin{split}
				2t^6v_0^3+t^3v_1^3-3t^3v_0v_1v_2-\eta=t^6v_0^3-v_2^3=\frac{1}{3}\bar\theta_t(t^3v_0^2v_2-v_1v_2^2),
			\end{split}
		\end{equation*}
	it follows that 
			\begin{equation*}
				\begin{split}
					\begin{aligned}
						(2t^6v_0^3+t^3v_1^3-3t^3v_0v_1v_2)\eta^{\ell-1}
						&=\eta^\ell+\frac{1}{3}\bar \theta_t(t^3v_0^2v_2-v_1v_2^2)\eta^{\ell-1}\\
						&\stackrel{(*)}{=}\eta^\ell+\frac{1}{3}\bar \theta_t((t^3v_0^2v_2-v_1v_2^2)\eta^{\ell-1}).
					\end{aligned}
				\end{split}
			\end{equation*}
	Here (*) is because $\bar\theta_t(g\cdot \eta^{\ell-1})=\bar\theta_t(g)\eta^{\ell-1}+(\ell-1)g\cdot \bar\theta_t(\eta)\eta^{\ell-2}=\bar\theta_t(g)\eta^{\ell-1}$ for any $g\in \tilde G$.
	Hence, we deduce that
			\begin{equation*}
				\begin{split}
					\theta_t(t^r\eta^\ell)=\left(r+3\ell\right)t^r\eta^\ell+\bar\theta_t(\ell t^r(t^3v_0^2v_2-v_1v_2^2)\eta^{\ell-1}).
				\end{split}
			\end{equation*}
	Since $r,\ell\geq 0$, we conclude that $\theta_t$ is injective.
	\end{proof}

	The spectral sequence degenerates at the $E_2$-page because all morphisms $d_2^{p,q}$ are already $0$ by Lemma~\ref{lem:induced-map}. So the two vector spaces $\ker\theta_t$ and $\coker \theta_t$ are, respectively, the kernel and the cokernel of the induced map $\theta_t\colon \ker\bar\theta_t\to \coker\bar\theta_t$. Therefore, $\ker \theta_t=0$ and the composition of morphisms
	\begin{equation*}
		\begin{split}
			W:=\oplus_d \oplus_{w\in \widetilde W_{d,k}} \bb{C}w\hookrightarrow \oplus_d \oplus_{w\in \widetilde W_{d,k}} \bb{C}w\oplus\bb{C}[t]\eta^{k/3}= \coker \bar\theta_t \twoheadrightarrow \coker\theta_t
		\end{split}
	\end{equation*}
is an isomorphism of vector spaces. Hence, the set $\bigcup_d \widetilde W_{d,k}$ is a basis for $\rr{H}^1_{\rr{dR}}(\bb{G}_m,\Sym^k\widetilde \Kl_3)$. Similarly, the set $\bigcup_d  W_{d,k}$ is a basis for $\rr{H}^1_{\rr{dR}}(\bb{G}_m,\Sym^k \Kl_3)$.

At last, the cardinality of $W_{d,k}$ is
	\begin{equation*}
		\begin{split}
			\dim G_{d,k}-\dim G_{d-1,k}=n_{d,k}=
	\begin{cases}
		\lfloor\frac{d}{6}\rfloor+1 & 6\nmid d-1\\
		\lfloor\frac{d}{6}\rfloor		& 6\mid d-1
	\end{cases}
		\end{split}
	\end{equation*}
if $d\leq k$ by Example~\ref{exe:n=2-counting}. The cardinality of $\widetilde W_{d,k}$ is by definition the cardinality of the set 
	\begin{equation*}
		\begin{split}
			\{(0, I_0,I_1,I_2)\in \bb{N}^4\mid I_0+I_1+I_2=k,\, I_1+2I_2=d\},
		\end{split}
	\end{equation*}
which is in fact	
	$\#\{i_0\mid \max\{0,k-d\}\leq i_0\leq k-\lceil\tfrac{d}{2}\rceil\}.$
It follows that $\#\widetilde W_{d,k}=\lfloor\frac{d}{2}\rfloor+1$ if $d\leq k$.
\end{proof}

\subsubsection{Bases of the middle de Rham cohomologies}

We determine bases for the middle de Rham cohomologies when $3\mid k$.

\begin{lem}\label{solution_at_infty-kl3}
	The cokernel of $\theta_z\colon \hat{G}_{\infty}\to \hat{G}_{\infty}$ (resp. $\theta_t\colon \hat{\tilde G}_\infty\to \hat{\tilde G}_\infty$) is generated by $z^{k/3} v_0^k$ (resp. $t^{k} v_0^k$). 
\end{lem}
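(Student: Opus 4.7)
The strategy mirrors that of Lemma~\ref{solution_at_infty}, but now accounts for $\bar\theta_t = 3(N_k + t^3 E_k)$ failing to be an isomorphism: by Lemma~\ref{lem:kernel} its kernel and cokernel are each generated over $\bb{C}((t\inv))$ by $\eta^{k/3}$. I will prove the statement for $\theta_t$ on $\hat{\tilde G}_\infty$; the assertion for $\theta_z$ on $\hat G_\infty$ will then follow by taking $\mu_3$-invariants, using $\hat G_\infty = (\hat{\tilde G}_\infty)^{\mu_3}$ and the $\mu_3$-invariance of $z^{k/3} v_0^k = t^k v_0^k$.

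First I show that $\coker\theta_t$ is one-dimensional using the formal decomposition at $\infty$ from Proposition~\ref{prop:local-infinity-Symk}. Since $3\mid k$, the unique multi-index with $|\underline I| = k$ and $C_{\underline I} = 0$ is $(k/3,k/3,k/3)$, so
\begin{equation*}
\Sym^k \hat{\tilde\Kl}_3 \simeq \bb{L}_{-1}^{\otimes 2k} \,\oplus\, \bigoplus_{C_{\underline I} \ne 0} \cc{E}^{3C_{\underline I} t} \otimes \bb{L}_{-1}^{\otimes 2k}.
\end{equation*}
On each summand $\cc{E}^{at}\otimes \bb{L}_{-1}^{\otimes 2k}$ with $a\ne 0$, the operator $\theta_t$ has the form $f\mapsto tf' + (at - k)f$ in a local generator, and the invertible leading term $at$ makes it bijective on $\bb{C}((t\inv))$. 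On the regular summand $\bb{L}_{-1}^{\otimes 2k}\simeq \cc{O}$, $\theta_t$ is diagonal on monomials $t^n$ with eigenvalue $n - k$, giving a one-dimensional cokernel. Summing gives $\dim\coker\theta_t = 1$.

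Next I identify $t^k v_0^k$ as a generator. Using the eigenbasis inversion $v_0 = (f_0 + \zeta f_1 + \zeta^2 f_2)/(3t^2)$ from Section~\ref{sec:4.1.2}, the multinomial theorem gives
\begin{equation*}
t^k v_0^k = \frac{1}{3^k t^k}\sum_{|\underline I|=k}\binom{k}{\underline I}\zeta^{I_1 + 2I_2}\, f^{\underline I}.
\end{equation*}
All components with $C_{\underline I}\ne 0$ lie in the image of $\theta_t$ on the corresponding summand. The surviving $\eta^{k/3}$-component carries a nonzero coefficient (the multinomial $\binom{k}{k/3,k/3,k/3}$), and after identifying $\bb{L}_{-1}^{\otimes 2k}\simeq \cc{O}$, I will verify it has nontrivial $t^k$-eigencomponent of $\theta_t$, hence is not in the image of $f\mapsto tf' - kf$. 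This confirms $t^k v_0^k$ generates $\coker\theta_t$.

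The main obstacle is the second step: precisely tracking the formal isomorphism $\bb{L}_{-1}^{\otimes 2k}\simeq \cc{O}$ is delicate, because it involves higher-order corrections in $t\inv$ that must not move the cokernel representative out of the relevant eigenspace. A cleaner alternative is a direct spectral-sequence argument using the degree filtration from Lemma~\ref{solution_at_infty}: now the $E_0$-differential $\bar\theta_t$ has one-dimensional kernel and cokernel in each degree (generated by appropriate $t$-powers of $\eta^{k/3}$), and the induced $d_1$-differential, coming from the degree-preserving part $t\partial_t$, multiplies $t^r\eta^{k/3}$ by $r$; the only degree where $d_1$ vanishes is $r = 0$, which, after expanding $\eta^{k/3}$ and reducing modulo lower-order image terms, produces exactly the class of $t^k v_0^k$.
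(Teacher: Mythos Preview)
Your first approach via the formal decomposition of Proposition~\ref{prop:local-infinity-Symk} correctly yields $\dim\coker\theta_t=1$, and you rightly note that pinning down the generator through the Levelt--Turrittin isomorphism is delicate (indeed, the $f^{\underline I}$ only diagonalise $\bar\theta_t$, not $\theta_t$, so projecting $t^kv_0^k$ onto $f^{(\ell,\ell,\ell)}$ does not directly give the component in the $\bb{L}_{-1}^{\otimes 2k}$ summand).

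Your spectral-sequence alternative, however, contains a genuine error.  You assert that the $d_1$-differential is the degree-preserving part $t\partial_t$ and hence multiplies $t^r\eta^{k/3}$ by $r$.  But $\eta$ itself depends on $t$: with $k=3\ell$ and $\eta=t^6v_0^3+t^3v_1^3+v_2^3-3t^3v_0v_1v_2$, one has
\[
t\partial_t\bigl(t^r\eta^\ell\bigr)=r\,t^r\eta^\ell+3\ell\,t^r\eta^{\ell-1}\bigl(2t^6v_0^3+t^3v_1^3-3t^3v_0v_1v_2\bigr),
\]
and the second term is $3\ell\,t^r\eta^\ell$ modulo $\im\bar\theta_t$ (this is the computation in Lemma~\ref{lem:induced-map}).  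So $d_1$ multiplies $t^r\eta^\ell$ by $r+3\ell=r+k$, not by $r$.  The unique vanishing is at $r=-k$, not $r=0$, and the cokernel is represented by $t^{-k}\eta^{k/3}$ (of degree $k$), not by $\eta^{k/3}$ (of degree $2k$).  This already shows your conclusion ``$r=0$ \dots\ produces exactly the class of $t^kv_0^k$'' cannot be right on degree grounds, since $\deg(t^kv_0^k)=k$.

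The paper's proof is exactly your spectral-sequence approach done correctly: it computes $d_1(t^a\eta^\ell)=(a+3\ell)t^a\eta^\ell$, obtains the $E_2$-page concentrated at $t^{-3\ell}\eta^\ell$, and then checks (via the expansion $(3t^2v_0)^k=(f_0+\zeta^{-1}f_1+\zeta f_2)^k$) that $t^kv_0^k$ has a non-zero $t^{-3\ell}\eta^\ell$-component, hence represents the cokernel.
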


\begin{proof}
We show the case of $\theta_t$ and omit the case of $\theta_z$. Similar to Lemma~\ref{solution_at_infty}, we consider the filtration on $\hat{\tilde G}_\infty$ induced by the degree \eqref{eq:grading-t}. The induced spectral sequence 
		\begin{equation*}
			\begin{split}
				E_{1}^{p,q}=\mathrm{H}^{p+q}\Bigl(\mathrm{gr}^{F^{\rr{deg}}}_{-p}\hat{\tilde G}_\infty
				\xrightarrow{\bar{\theta}_t}
				\mathrm{gr}^{F^{\rr{deg}}}_{-p+1}\hat{\tilde G}_\infty\Bigr)
				\Longrightarrow \mathrm{H}^{p+q}(\hat{\tilde G}_\infty,\theta_t)\quad ( p+q\in \{0,1\})
			\end{split}
		\end{equation*}
	degenerates at the $E_2$-page. The terms in the $E_0$-page are given by 
		\begin{equation*}
			\begin{split}
				E^{p,q}_0:=\begin{cases}
	               \mathrm{gr}^{F^{\rr{deg}}}_{q}( \hat{\tilde G}_\infty^+),\ &p+q\in \{0,1\}\\
					0,\ &\text{else}\end{cases}
			\end{split}
		\end{equation*}
and the morphisms $d_0^{p,-p}$ are induced by $ \theta_t$ (in fact they coincide with $3(N_k+t^{3}E_k)$ in the sense that $d_0^{p,-p}v=3(N_k+t^{3}E_k)v$). 

Let $k=3\ell$. Recall that we showed that the kernel of $N_k+t^3E_k$ is generated by $\eta^\ell$ in Lemma~\ref{lem:kernel}. The non-zero terms in the $E_1$-page are $E_1^{-a-6\ell,a+6\ell}=E_1^{-a-6\ell+1,a+6\ell}=\bb{C}t^{a}\eta^{\ell}$ for each $a\in \bb{Z}$.

Now let us make $d_1^{-a-6\ell,a+6\ell} $ explicit. For any $a\in \bb{Z}$, we have
	\begin{equation*}\label{eq:2.5}
		\begin{split}
			\theta_t\bigl(t^{a}\eta^\ell\bigr)
			&=a t^a\eta^\ell + 3\ell t^{a}\eta^{\ell-1}(2t^6v_0^3+t^3v_1^3-3t^3v_0v_1v_2)\\[4pt]
			&=a t^a\eta^\ell + 3\ell t^{a}\eta^{\ell-1}\biggl(\eta+\sum_{|\underline J|=3, C_{\underline J}\neq 0}\lambda_{\underline J}f^{\underline J}\biggr)\\
			&=(a+3\ell)t^a\eta^\ell + t^a\sum_{|\underline I|=k,C_{  \underline I}\neq 0}\mu_{  \underline I}f^{  \underline I}
		\end{split}
	\end{equation*}
for some complex numbers $\lambda_{\underline J}$ and $\mu_{\underline I}$. It follows that $d_1^{-a-6\ell,a+6\ell}(t^a\eta^\ell)=(a+3\ell)t^a\eta^\ell.$ Hence, every term in the $E_2$-page is $0$, except for $E_2^{-3\ell,3\ell}=E_1^{-3\ell+1,3\ell}=\bb{C}t^{-3\ell}\eta^{\ell}$.

Notice that 
	\begin{equation*}
		\begin{split}
			(3t^2v_0)^k=(f_0+\zeta \inv f_1+\zeta f_2)^k= \delta \eta^\ell +\sum_{\underline I\neq (\ell,\ell,\ell)}\delta_{\underline I}f^{\underline I}
		\end{split}
	\end{equation*}
for some non-zero $\delta\in \bb{C}$ and $\delta_{\underline I}\in \bb{C}$. It follows that $t^kv_0^k$ is cohomologous to $\delta t^{-3\ell}\eta^\ell$. Therefore, the cokernel of $\theta_t\colon \hat{\tilde G}_\infty\to \hat{\tilde G}_\infty$ is $\bb{C}t^kv_0^k$.
\end{proof}

\begin{proof}[Proof of Theorem~\ref{thm:mid-cohomology_classes-kl3}]
As in the proof of Theorem~\ref{thm:mid-cohomology_classes}, we conclude that there exist finite sets $\Sigma_{d,k}\subset G_{d,k}$ such that the image of $\bigcup_d \Sigma_{d,k}$ in $\coker \theta_t$ form a basis for $\coker N_k$.

For $w=\sum_{i=0}^{d} t^iw_i\in \tilde G_{d,k}$, we can decompose $w$ as $w'+w''$, where $w'=w_0+\gamma t^kv_0^k$ for some $\gamma\in \bb{C}$, and $w''=\sum_{i=1}^d t^iw_i-\gamma t^kv_0^k$. Up to replacing $w$ by $ w+\bar\theta_z(h)$ for some $h\in G^+$, we may assume that $w_0\in \coker N_k$. By Lemma~\ref{solution_at_0} and Lemma~\ref{solution_at_infty-kl3}, we can choose $\gamma$ such that $w''$ is in the image of both $\theta_t\colon \hat{\tilde G}_0\to \hat{\tilde G}_0$ and $\theta_t\colon \hat{\tilde G}_\infty \to \hat{\tilde G}_\infty$.

As a consequence, one can choose finite sets $\widetilde W_{d,k}^{\rr{mid}}\subset \tilde G_{d,k}$ such that 
\begin{itemize}
	\item $\rr{span}(\widetilde W_{d,k})=\rr{span}(\widetilde W^{\rr{mid}}_{d,k})\oplus \rr{span}(\Sigma_{d,k})\oplus \bb{C}[t]t^{k}v_0^k$,
	\item elements in $\widetilde W_{d,k}^{\rr{mid}}$ are in the image of both $\theta_t\colon \hat{\tilde G}_0\to \hat{\tilde G}_0$ and $\theta_t\colon \hat{\tilde G}_\infty \to \hat{\tilde G}_\infty$.
\end{itemize} 

We can choose the sets $W_{d,k}^{\rr{mid}}$ similarly. Since $\coker N$ and $t^kv_0^k=z^{k/3}v_0^k$ (if $3\mid k$) are contained in $G^+$, we can moreover choose $\widetilde W_{d,k}^{\rr{mid}}$ containing $W_{d,k}^{\rr{mid}}$. This verifies the first statement of Theorem~\ref{thm:mid-cohomology_classes-kl3}.

By Proposition~\ref{prop:lemma from bessel moments paper}, we have $\rr{span}(\bigcup_d \widetilde W_{d,k}^{\rr{mid}})\subset \mathrm{H}^1_{\mathrm{dR,mid}}(\bb{G}_m,\Sym^{k}\widetilde\Kl_3)$. Then we use the calculation of dimensions from Lemma~\ref{lem:dim_of_inv+mid} to deduce that
	\begin{equation*}
		\begin{split}
			&\dim  \mathrm{H}^1_{\mathrm{dR,mid}}(\bb{G}_m,\Sym^{k}\widetilde\Kl_3)\\
			=& \dim \mathrm{H}^1_{\mathrm{dR}}(\bb{G}_m,\Sym^{k}\widetilde\Kl_3)-\Bigl\lfloor\frac{k+2}{2}\Bigr\rfloor-1\\
			= &\dim \mathrm{H}^1_{\mathrm{dR}}(\bb{G}_m,\Sym^{k}\widetilde\Kl_3)-(\dim \rr{span}(\widetilde W_{d,k})-\dim \rr{span}(\widetilde W^{\rr{mid}}_{d,k}))\\
			\leq &\dim  \mathrm{H}^1_{\mathrm{dR,mid}}(\bb{G}_m,\Sym^{k}\widetilde\Kl_3).
		\end{split}
	\end{equation*}
Hence, the set $\bigcup_d \widetilde W_{d,k}^{\rr{mid}}$ is a basis for $\mathrm{H}^1_{\mathrm{dR,mid}}(\bb{G}_m,\Sym^{k}\widetilde\Kl_3)$. Notice that 
	\begin{equation*}
		\begin{split}
			\mathrm{H}^1_{\mathrm{dR,mid}}(\bb{G}_m,\Sym^{k} \Kl_3)= \mathrm{H}^1_{\mathrm{dR,mid}}(\bb{G}_m,\Sym^{k}\widetilde\Kl_3)^{\mu_3}.
		\end{split}
	\end{equation*}
We also deduce that $\rr{span}(\bigcup_d W_{d,k}^{\rr{mid}})= \mathrm{H}^1_{\mathrm{dR,mid}}(\bb{G}_m,\Sym^{k}\Kl_3)$. This verifies the second statement.

At last, we give the formula for the numbers $n_{d,k}^{\rr{mid}}$ and $\tilde n_{d,k}^{\rr{mid}}$ in the last statements. The degree $d$ part of $\coker N$ has dimension $1$ for $d=0,2,4,\dots, 2\lfloor\frac{k}{2}\rfloor$, and $0$ otherwise. The degree $d$ part of $\bb{C}t^kv_0^k$ has dimension $1$ if $d=k$ and $0$ otherwise. By Theorem~\ref{thm:cohomology_classes-kl3}, the cardinalities $\#\widetilde W_{d,k}^{\rr{mid}}$ are thus the numbers stated in the theorem.
\end{proof}

\begin{rek}\label{rek:inv-0-infty-kl3}
From the above, we obtain an isomorphism of vector spaces 
\begin{equation*}
	\begin{split}
		&\mathrm{H}^1_{\rr{dR}}(\bb{G}_m,\Sym^k\Kl_{3})/\mathrm{H}^1_{\rr{dR, mid}}(\bb{G}_m,\Sym^k\Kl_{3})\\
		\simeq&\mathrm{H}^1_{\rr{dR}}(\bb{G}_m,\Sym^k\widetilde\Kl_{3})/\mathrm{H}^1_{\rr{dR, mid}}(\bb{G}_m,\Sym^k\widetilde\Kl_{3})\simeq \coker N_k\oplus \bb{C}z^{k/3}v_0^k
	\end{split}
\end{equation*}
when $3\mid k$.
\end{rek}

\subsection{The case of \texorpdfstring{$\Sym^k\rr{Ai}_{n}$}{SymkAin} when \texorpdfstring{$\gcd(k,n)=1$}{gcd(k, n)=1}}

As in Section~\ref{sec:cohomology-class}, we select bases of the de Rham cohomologies of $\Sym^k\rr{Ai}_{n}$ when $\gcd(k,n)=1$. Recall that $\rr{Ai}_{n}$ is the connection 
	\begin{equation*}
		\begin{split}
			\bigl(\mathcal{O}_{\bb{A}^1_z}^{n},\mathrm{d}+N \mathrm{d}z +zE\,\mathrm{d}z\bigr)
		\end{split}
	\end{equation*}
defined in \eqref{eq:connection-Airy}. We can choose a basis $\{v_0,\ldots,v_{n-1}\}$ of $\rr{Ai}_{n}$ (as a $\bb{C}[z]\langle\partial_z\rangle$-module) such that
	\begin{equation*}
		\begin{split}
			z\partial_z(v_i)=v_{i+1}\ \text{for }i=0,\ldots,n-2 \quad\text{and}\quad z\partial_z(v_{n-1})=zv_0,
		\end{split}
	\end{equation*}
i.e., $v_i$ generate $\bb{C}[z]\langle\partial_z\rangle/((z\partial_z)^{n}-z)$. In this way, the elements 
	\begin{equation*}
		\begin{split}
			\{v^{\underline I}:=v_0^{I_0}\cdots v_{n-1}^{I_{n-1}}| |\underline I|=I_0+\dots+I_{n-1}=k\}
		\end{split}
	\end{equation*}
form a basis for $\Sym^k\rr{Ai}_{n}$ as a $\bb{C}[z]\langle\partial_z\rangle$-module.

Let $\deg$ be the grading on the symmetric power $\Sym^k\rr{Ai}_{n}$, defined by the same formula as in \eqref{eq:grading}. We have the following theorem:

\begin{thm}\label{thm:cohomology_classes-Ai} 
	If $\gcd(k,n)=1$, there exist finite subsets $W_{d,k}\subset \Sym^k\rr{Ai}_{n}$ for $0\leq d\leq nk-n-k+1$ such that 
	\begin{enumerate}[label=(\arabic*)., ref=\ref{prop:cokernel-Ai}.(\theenumi)]
		\item the de Rham cohomology $\mathrm{H}^1_{\mathrm{dR}}(\bb{A}^1,\Sym^{k} \rr{Ai}_{n})$ is spanned by the set $\bigcup_d W_{d,k}$,
		\item each element in $W_{d,k}$ has degree $d$,
		\item the generating series $\sum_{d,k\geq 0} \#W_{d,k}t^dx^k$ is the formal power series expansion of the rational function $h(t,x)$ in \eqref{eq:h(t,x)},
		\item $\#W_{d,k}=\#W_{nk-n-k+1-d,k}$ for all $0\leq d \leq nk-n-k+1$.
	\end{enumerate}
\end{thm}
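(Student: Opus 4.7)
The plan is to mirror closely the proof of Theorem~\ref{thm:cohomology_classes} in the Airy setting, with the simplification that no analog of Lemma~\ref{lem:reduction} is needed since $\rr{Ai}_n$ is defined on $\bb{A}^1$ rather than $\bb{G}_m$. First, I would identify $\mathrm{H}^1_{\mathrm{dR}}(\bb{A}^1,\Sym^k\rr{Ai}_n)$ with the cokernel of
\begin{equation*}
    \partial_z\colon G^+\longrightarrow G^+,\qquad \partial_z\bigl(z^\ell v^{\underline I}\bigr)=\ell z^{\ell-1}v^{\underline I}+z^\ell(N_k+zE_k)v^{\underline I},
\end{equation*}
where $G^+:=V_k[z]=\Sym^kV\otimes\bb{C}[z]$ and the formula follows directly from the explicit form \eqref{eq:connection-Airy} of the Airy connection.

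Next, I would equip $G^+$ with the grading $\deg(z^av^{\underline I}):=na+\sum_{i=0}^{n-1}i\cdot I_i$, so that $\partial_z$ splits as $\bar\partial_z=N_k+zE_k$ (homogeneous of degree $+1$) plus the term $az^{a-1}v^{\underline I}$ of strictly smaller degree. Filtering $G^+$ by this grading yields a spectral sequence
\begin{equation*}
    E_1^{p,q}=\mathrm{H}^{p+q}\bigl(\gr^{F^{\deg}}_{-p}G^+\xrightarrow{\bar\partial_z}\gr^{F^{\deg}}_{-p+1}G^+\bigr)\Longrightarrow \mathrm{H}^{p+q}(G^+,\partial_z).
\end{equation*}

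The essential algebraic input is an Airy analog of Lemma~\ref{lem:kernel}: under $\gcd(k,n)=1$, the operator $N_k+zE_k$ is injective on $V_k[z]$. I would prove this by base-changing along $[n]\colon\bb{A}^1_s\to\bb{A}^1_z$, $z=s^n$, after which $N_k+s^nE_k$ diagonalizes in a basis $\{f^{\underline I}=\prod_{j=0}^{n-1}f_j^{I_j}\}$ with eigenvalues $s\cdot C_{\underline I}$, where $C_{\underline I}=\sum_{j=0}^{n-1}\zeta^jI_j$ and $\zeta$ is a primitive $n$-th root of unity, in complete analogy with Section~\ref{sec:4.1.2}. The assumption $\gcd(k,n)=1$ forces $C_{\underline I}\neq 0$ for all $|\underline I|=k$, giving the injectivity. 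Consequently the spectral sequence degenerates at $E_1$, and $\coker\partial_z$ inherits a decreasing filtration whose associated graded is $\coker\bar\partial_z$. Choosing $W_{d,k}\subset G_{d,k}$ whose images form a basis of the degree-$d$ part of $\coker\bar\partial_z$ yields properties (1) and (2) immediately, and (3) reduces to $\#W_{d,k}=\dim G_{d,k}-\dim G_{d-1,k}$ together with the expansion
\begin{equation*}
    \sum_{d,k\geq 0}\dim G_{d,k}\,t^dx^k=\frac{1}{(1-t^n)(1-x)(1-tx)\cdots(1-t^{n-1}x)}.
\end{equation*}

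The main obstacle will be the symmetry assertion (4), which I would handle via an Airy analog of Proposition~\ref{prop:counting}. Specifically, the substitution $t\mapsto t^{-1}$, $x\mapsto t^{n-1}x$ in the relevant rational function yields the identity $h^{\rr{Ai}}(t,x)=t^{-n+1}h^{\rr{Ai}}(t^{-1},t^{n-1}x)$; combined with the polynomiality of $h_k^{\rr{Ai}}(t)$ for $\gcd(k,n)=1$ (obtained by showing that $1+t+\cdots+t^{n-1}$ divides the corresponding numerator, exactly as in the polynomial lemma embedded in the proof of Proposition~\ref{prop:counting}), this produces the functional equation $h_k^{\rr{Ai}}(t)=t^{nk-n-k+1}h_k^{\rr{Ai}}(t^{-1})$, which simultaneously delivers the degree bound $d\leq nk-n-k+1$ and the desired symmetry $\#W_{d,k}=\#W_{nk-n-k+1-d,k}$.
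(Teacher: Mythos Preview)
Your proposal is correct and follows essentially the same route as the paper's proof: identify $\mathrm{H}^1_{\mathrm{dR}}(\bb{A}^1,\Sym^k\rr{Ai}_n)$ with $\coker(\partial_z\colon G^+\to G^+)$, filter by the degree, observe that the associated graded operator is $N_k+zE_k$, use injectivity of the latter (the Airy analogue of Lemma~\ref{lem:kernel}) to force $E_1$-degeneration of the spectral sequence, and then read off all four properties from the counting results of Proposition~\ref{prop:counting} (with the parameter shift $n\mapsto n-1$). The paper compresses your last two paragraphs into the single sentence ``all statements follow from Proposition~\ref{prop:counting}'', but the content is the same; your version has the virtue of making the Airy-specific grading $\deg(z^av^{\underline I})=na+\sum_{i=0}^{n-1}iI_i$ and the functional equation $h_k^{\rr{Ai}}(t)=t^{(n-1)(k-1)}h_k^{\rr{Ai}}(t^{-1})$ explicit, which clarifies why the degree bound and the symmetry center come out to $nk-n-k+1$.
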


\begin{proof}[Proof of Theorem~\ref{thm:cohomology_classes-Ai}]

We use the notation $V_k=\Sym^kV$, $N_k$ and $E_k$ from Section~\ref{sec:4.1.2}. Let $G^+=G_k^+$ be $V_k[z]:=V_k\otimes_{\bb{C}}\bb{C}[z]$, on which $\theta_z:=\partial_z$ acts by 
	\begin{equation*}
		\begin{split}
			\theta_z(z^\ell v^{\underline I})=\ell z^{\ell-1} v^{\underline I} + z^\ell (N_k + z E_k)v^{\underline I}.
		\end{split}
	\end{equation*}

The first de Rham cohomologies $\mathrm{H}^1_{\mathrm{dR}}(\bb{A}^1,\Sym^k\rr{Ai}_{n})$ are identified with the cokernels of the two-term complexes
	\begin{equation}\label{eq:derham-Ai}
		 G^+\xrightarrow{\theta_z}G^+ .
	\end{equation}
 Then $\theta_z$ is (inhomogeneous) of degree $1$. We denote by $\bar\theta_z=(N_k+zE_k)$ the induced map on the graded quotient $\mathrm{gr}\,G^+$, and we identify $\gr\, G^+$ with $G^+$.

Consider the following spectral sequence 
		\begin{equation*}
			\begin{split}
				E_{1}^{p,q}=\mathrm{H}^{p+q}\bigl(\mathrm{gr}^{F^{\rr{deg}}}_{-p}G^+\xrightarrow{\bar{\theta}_z}\mathrm{gr}^{F^{\rr{deg}}}_{-p+1}G^+\bigr)\Longrightarrow \mathrm{H}^{p+q}(G^+,\theta_z)\quad (p\leq 0, p+q\in \{0,1\})
			\end{split}
		\end{equation*}
	associated with the (increasing) filtration by $\deg$ on $G^+$. It degenerates at the $E_2$-page. 

	Since $\gcd(k,n)=1$, by Proposition~\ref{label:cokernel-4}, we have the identification
		\begin{equation*}
			\begin{split}
				\coker \bar\theta_z=\oplus_d \oplus_{w\in W_{d,k}} \bb{C}w.
			\end{split}
		\end{equation*}
	By Lemma~\ref{lem:kernel}, the map $\bar\theta_z$ is injective. So, the spectral sequence already degenerates at the $E_1$-page. So there exists a decreasing filtration $\tilde F^\bullet$ on $\coker \theta_z$ such that the graded pieces of $\coker\theta_z$ are $\oplus_{w\in W_{d,k}} \bb{C}w$. Hence, we can take the union of these finite sets $W_{d,k}$ as a basis for $\coker(\theta_z) $.  Therefore, all statements follow from Proposition~\ref{prop:counting}.
\end{proof}

\subsection{The case of \texorpdfstring{ $\mathrm{Kl}_{\rr{SL}_3}^{V_{2,1}}$}{Kl3V21}}
Let $V$ be the standard representation of $\mathrm{SL}_3$, and $V_{2,1}$ be the representation of the highest weight $2L_1+(L_1+L_2)$, which has dimension $15$. We denote by $P$ and $Q$ the two subgroups of $S_4$ generated by $\{(12),(123)\}$ and $\{(14)\}$ respectively, and by $\chi$ the character $\chi\colon P\times Q\xrightarrow{\pr_2}Q\xrightarrow{\mathrm{sign}}\{\pm1\}.$ By the Weyl construction \cite[\S15.3]{W.Fulton2013}, the representation $V_{2,1}$ is $(V^{\otimes 4})^{(P\times Q,\chi)}$, where the  component $(P\times Q,\chi)$ means taking the isotypic component with respect to the idempotent $\frac{1}{|P|\cdot |Q|}\sum_{g\in P,h\in Q}\chi(g,h)g\cdot h$ in $\bb{Z}[S_4]$.

Let $N(V_{2,1})$ and $E(V_{2,1})$ be the corresponding nilpotent endomorphisms on $V_{2,1}$ induced from $N$ and $E$ in \eqref{eq:connection}. The associated Kloosterman connection $\mathrm{Kl}_{\rr{SL}_3}^{V_{2,1}}$ is 
	\begin{equation*}
		\begin{split}
			\Bigl(\cc{O}_{\bb{G}_m}^{15}, d-N(V_{2,1})\frac{\mathrm{d}z}{z}-E(V_{2,1})\mathrm{d}z\Bigr).
		\end{split}
	\end{equation*} 
As in the results from Sections~\ref{subsec:local-at-0} and~\ref{subsec:local-at-infty}, we can show that:
\begin{itemize}
	\item the formal structure of $\mathrm{Kl}_{\rr{SL}_3}^{V_{2,1}}$ at $0$ is isomorphic to 
		\begin{equation*}
			\begin{split}
				\Bigl(\mathcal{O}_{\bb{G}_m}^{7},\mathrm{d}-J_{7}(0)\frac{\mathrm{d}t}{t}\Bigr) \oplus \Bigl(\mathcal{O}_{\bb{G}_m}^{5},\mathrm{d}-J_{5}(0)\frac{\mathrm{d}t}{t}\Bigr) \oplus \Bigl(\mathcal{O}_{\bb{G}_m}^{3},\mathrm{d}-J_{3}(0)\frac{\mathrm{d}t}{t}\Bigr),
			\end{split}
		\end{equation*}
	\item $\mathrm{Kl}_{\rr{SL}_3}^{V_{2,1}}$ has slope $1/3$ at $\infty$, and $\rr{Irr}(\mathrm{Kl}_{\rr{SL}_3}^{V_{2,1}})=5$.
\end{itemize}

The degree on $\Kl_3^{\otimes 4}$ from \eqref{eq:grading} induces a degree on $\mathrm{Kl}_{\rr{SL}_3}^{V_{2,1}}$. As in Theorem~\ref{thm:cohomology_classes}, we have the following:
\begin{prop}\label{prop:coh-classes-V21}
	There exists finite subset $W_{d}, W_{d}^{\rr{mid}}\subset G^+$ such that
		\begin{itemize}
			\item the elements in $W_{d,k}$ and $W_{d,k}^{\rr{mid}}$ have degree $d$,
			\item the de Rham cohomology $\mathrm{H}^1_{\mathrm{dR}}(\bb{G}_m,\mathrm{Kl}_{\rr{SL}_3}^{V_{2,1}})$ is spanned by $W_1\bigcup W_2\bigcup W_3\bigcup W_4\bigcup W_5$ with $\#W_1=\#W_2=\#W_3=\#W_4=\#W_5=1$,
			\item the middle de Rham cohomology $\mathrm{H}^1_{\mathrm{dR,mid}}(\bb{G}_m,\mathrm{Kl}_{\rr{SL}_3}^{V_{2,1}})$ is spanned by 
			$W^{\rr{mid}}_4\bigcup W_5^{\rr{mid}}$
		with $\#W_4^{\rr{mid}}=\#W_5^{\rr{mid}}=1$.
		\end{itemize}
\end{prop}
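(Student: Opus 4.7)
The plan is to adapt the arguments of Theorems~\ref{thm:cohomology_classes} and~\ref{thm:mid-cohomology_classes} from $\Sym^k\Kl_{n+1}$ to $\mathrm{Kl}_{\rr{SL}_3}^{V_{2,1}}$. The starting point is that the Weyl projector $V^{\otimes 4}\twoheadrightarrow V_{2,1}$ is built from elements of $\bb{C}[S_4]$ and hence commutes with the $\mathfrak{sl}_3$-action; so $\mathrm{Kl}_{\rr{SL}_3}^{V_{2,1}}$ sits as a direct summand of $\Kl_3^{\otimes 4}$, and the operators $N(V_{2,1})$, $E(V_{2,1})$ are just restrictions from $V^{\otimes 4}$. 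Setting $G^+:=V_{2,1}[z]$ and $\theta_z(z^\ell v)=\ell z^\ell v+z^\ell(N(V_{2,1})+zE(V_{2,1}))v$, the proof of Lemma~\ref{lem:reduction} applies verbatim to identify $\rr{H}^1_{\rr{dR}}(\bb{G}_m,\mathrm{Kl}_{\rr{SL}_3}^{V_{2,1}})$ with $\coker\theta_z$.

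Next I would filter $G^+$ by the degree inherited from $V^{\otimes 4}$ (with $\deg z=3$) and run the spectral sequence as in the proof of Theorem~\ref{thm:cohomology_classes}. The associated graded operator $\bar\theta_z=N(V_{2,1})+zE(V_{2,1})$ is homogeneous of degree $+1$ and it is injective: by the eigenvector argument of Lemma~\ref{lem:kernel}, the diagonal operator $N+t^3E$ on $V^{\otimes 4}[t]$ has kernel spanned by tensors $f_{i_1}\otimes\cdots\otimes f_{i_4}$ with $\zeta^{i_1}+\cdots+\zeta^{i_4}=0$, which is impossible because $\gcd(4,3)=1$ (writing $a+b\zeta+c\zeta^2=0$ forces $a=b=c$, incompatible with $a+b+c=4$). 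Descending to the direct summand $V_{2,1}[z]$ preserves injectivity, so the spectral sequence degenerates at $E_1$ and $\coker\theta_z$ inherits a grading.

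The Hilbert series is then computed using the decomposition $V_{2,1}\simeq \Sym^6(\bb{C}^2)\oplus \Sym^4(\bb{C}^2)\oplus \Sym^2(\bb{C}^2)$ as $\mathfrak{sl}_2$-modules, coming from the stated Jordan block structure at $0$. This yields graded dimensions $(1,2,3,3,3,2,1)$ for $V_{2,1}$ in degrees $1,\dots,7$; incorporating $\bb{C}[z]$ with $\deg z=3$ gives $\dim G^+_d-\dim G^+_{d-1}=1$ for $d\in\{1,2,3,4,5\}$ and $0$ otherwise. By injectivity of $\bar\theta_z$, this is exactly the graded cokernel, so $\dim\rr{H}^1_{\rr{dR}}=5$ (matching $\rr{Irr}_\infty$), and we take $W_d$ to be any single representative in each degree $d=1,\dots,5$.

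For the middle cohomology, I would follow the proof of Theorem~\ref{thm:mid-cohomology_classes}. The three Jordan blocks give $\rr{Soln}_0=3$. At $\infty$, the formal connection has rank $15$ and total irregularity $5=15\cdot \tfrac{1}{3}$, so its Levelt--Turrittin decomposition is purely of slope $\tfrac{1}{3}$ with no trivial subquotient, whence $\rr{Soln}_\infty=0$; Corollary~\ref{lem:dim_of_inv+mid} yields $\dim\rr{H}^1_{\rr{dR,mid}}=5-3-0=2$. Analogues of Lemmas~\ref{solution_at_0} and~\ref{solution_at_infty} identify the obstruction to lying in the middle cohomology with the image in $\coker N(V_{2,1})$, which is three-dimensional and spanned by the highest-weight vectors of the three $\mathfrak{sl}_2$-summands, sitting in degrees $1, 2, 3$. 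Consequently those three classes drop out, and $W^{\rr{mid}}_4, W^{\rr{mid}}_5$ may be chosen as representatives of the two surviving classes. The main obstacle is this last degree-by-degree identification, which requires carefully propagating the degree filtration through the argument of Lemma~\ref{solution_at_0}; granted that, everything else is a direct transcription of the proofs already in the paper.
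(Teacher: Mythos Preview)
Your proposal is correct and follows exactly the approach the paper has in mind---the paper itself gives no proof beyond ``As in Theorem~\ref{thm:cohomology_classes}'', and you have spelled out the adaptation in full. The injectivity of $\bar\theta_z$ via $\gcd(4,3)=1$, the Hilbert series computation (one can check directly that the character polynomial $t(1+2t+3t^2+3t^3+3t^4+2t^5+t^6)$ factors as $t(1+t+t^2)(1+t+t^2+t^3+t^4)$, confirming your count), and the identification of $\rr{Soln}_0=3$, $\rr{Soln}_\infty=0$ are all correct.

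Your caution about the last step is unnecessary. The point is simply that $(\coker N(V_{2,1}))_d=0$ for $d\geq 4$: the highest-weight vectors of the three $\mathfrak{sl}_2$-summands sit in degrees $1,2,3$, so $N(V_{2,1})\colon (V_{2,1})_{d-1}\to (V_{2,1})_d$ is surjective for $d=4,5$. Hence for any $w\in W_4$ or $W_5$, the constant-in-$z$ part is automatically in $\im N(V_{2,1})$, and the modification in the proof of Theorem~\ref{thm:mid-cohomology_classes} produces a representative in $z\cdot G^+$ of the same degree. That representative lies in the middle cohomology by (the analogues of) Lemmas~\ref{solution_at_0} and~\ref{solution_at_infty}; no extra filtration bookkeeping is needed.
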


\section{Calculation of Hodge numbers}\label{sec:hodge number}

In this section, we prove Theorems~\ref{thm:hodge number-Kl} and~\ref{thm:hodge number-Ai} using irregular Hodge filtrations.
 
\subsection{The case of \texorpdfstring{$\Sym^k\Kl_{n+1}$}{SymkKln+1} when \texorpdfstring{$\gcd(k,n+1)=1$}{gcd(k n+1)=1}}\label{sec:Hodge-symkkln+1}
Here, we prove the first part of Theorem~\ref{thm:hodge number-Kl}. We use the irregular Hodge filtrations on twisted de Rham cohomologies from \cite{yu2012irregular}, see Section~\ref{subsec:irreg-hodge-fil}, to do the concrete calculation.
The Kloosterman connection $\Kl_{n+1}$, as a $\bb{C}[z,z\inv]$-module, equals to the cokernel of the complex
	$$\oplus_{i=1}^n\bb{C}[x_i^\pm,z^\pm]\mathrm{d}z\wedge\mathrm{d}x_1 \wedge \cdots \wedge \widehat{\rr{d}x_i}\wedge \cdots \wedge \rr{d}x_n 
	\xrightarrow{\mathrm{d}+\sum_{i=1}^n\partial_{x_i}f\mathrm{d}x_i\wedge}\bb{C}[x_i^\pm,z^\pm]\mathrm{d}z\wedge\mathrm{d}x_1  \wedge \cdots \wedge \mathrm{d}x_n,$$
where $f$ is the Laurent polynomial in \eqref{eq:f-kloosterman}.

In this way, each $v_i$ is identified with $(z\partial_z)^i(\frac{\mathrm{d}z}{z}\frac{\mathrm{d}x_1}{x_1} \cdots  \frac{\mathrm{d}x_n}{x_n})$ in $\Hdr{n+1}(\bb{G}_m^{n+1},f)$, because $\frac{\mathrm{d}z}{z}\frac{\mathrm{d}x_1}{x_1} \cdots  \frac{\mathrm{d}x_n}{x_n}$ satisfies the Bessel differential equation. In fact, we have
	\begin{equation*}
		\begin{split}
			(z\partial_z)^i \frac{\mathrm{d}z}{z}\frac{\mathrm{d}x_1}{x_1} \cdots  \frac{\mathrm{d}x_n}{x_n}= \prod_{j=1}^{i-1}x_j \frac{\mathrm{d}z}{z}\frac{\mathrm{d}x_1}{x_1} \cdots  \frac{\mathrm{d}x_n}{x_n},
		\end{split}
	\end{equation*}
and
	\begin{equation*}
		\begin{split}
			(z\partial_z)^{n+1}\Bigl(\frac{\mathrm{d}z}{z} \frac{\mathrm{d}x_1}{x_1} \cdots  \frac{\mathrm{d}x_n}{x_n}\Bigr)=z\cdot \frac{\mathrm{d}z}{z} \frac{\mathrm{d}x_1}{x_1}\cdots   \frac{\mathrm{d}x_n}{x_n}.
		\end{split}
	\end{equation*}

Let $f_k$ be the Laurent polynomial in \eqref{eq:fk-Kloosterman}. Then, we have morphisms of exponential mixed Hodge structures
	\begin{equation}\label{eq:explicite-cohomology-class}
		\begin{split}
		\mathrm{H}^1_{\mathrm{mid}}(\bb{G}_m,\Sym^k\Kl_{n+1})^{\rr{H}}\hookrightarrow &\mathrm{H}^1(\bb{G}_m,\Sym^k\Kl_{n+1})^{\rr{H}}\\
		\stackrel{\text{Def.~\ref{eq:EMHS-Kl}}}{=}\bigl(&\mathrm{H}^{nk+1}(\bb{G}_m^{nk+1},f_k)^{\rr{H}}\bigr)^{S_k,\chi_n} \hookrightarrow \rr{H}^{nk+1}(\bb{G}_m^{nk+1},f_k)^{\rr{H}}.
		\end{split}
	\end{equation}
Using the differential form expression of $v_i$ above, we notice that an element $z^jv_0^{\otimes I_0}\otimes \cdots \otimes v_n^{\otimes I_n}$ in $ \mathrm{H}^1_{\rr{dR}}(\bb{G}_m,\Kl_{n+1}^{\otimes k})$ is sent via \eqref{eq:explicite-cohomology-class} to
\begin{equation*}
    \begin{split}
        z^j 
		\prod_{i=I_1+1}^{k}x_{i,1} 
		\prod_{i=I_2+1}^k x_{i,2}
		\cdots \prod_{i=I_{n}+1}^k x_{i,n}
		\frac{\mathrm{d}z}{z}\frac{\mathrm{d}x_{1,1}}{x_{1,1}}\cdots\frac{\mathrm{d}x_{k,n}}{x_{k,n}}
  \end{split}
	\end{equation*}
in $\Hdr{nk+1}(\bb{G}_m^{nk+1},f_k)$. Then, the average element $z^j v^{\underline I}$ in $ \mathrm{H}^1_{\rr{dR}}(\bb{G}_m,\Sym^k\Kl_{n+1})$ is sent to 
	\begin{equation}\label{eq:explicite-form}
    \begin{split}
        &\frac{1}{k!}\sum_{\sigma\in S_k}\chi_{n}(\sigma) \cdot z^j
		\prod_{i=I_1+1}^{k}x_{\sigma(i),1} 
		\prod_{i=I_2+1}^k x_{\sigma(i),2}
		\cdots \prod_{i=I_{n}+1}^k x_{\sigma(i),n}
		\frac{\mathrm{d}z}{z}\frac{\mathrm{d}x_{\sigma(1),1}}{x_{\sigma(1),1}}\cdots\frac{\mathrm{d}x_{\sigma(k),n}}{x_{\sigma(k),n}}\\
        =&\frac{1}{k!}\sum_{\sigma\in S_k}z^j
		\prod_{i=I_1+1}^{k}x_{\sigma(i),1} 
		\prod_{i=I_2+1}^k x_{\sigma(i),2}
		\cdots \prod_{i=I_{n}+1}^k x_{\sigma(i),n}
		\frac{\mathrm{d}z}{z}\frac{\mathrm{d}x_{1,1}}{x_{1,1}}\cdots\frac{\mathrm{d}x_{k,n}}{x_{k,n}}
  \end{split}
	\end{equation}
in $\Hdr{nk+1}(\bb{G}_m^{nk+1},f_k)^{S_k,\chi_n}$. So each element $w\in W_{d,k}$ from Theorem~\ref{thm:cohomology_classes} is sent to 
	\begin{equation}\label{eq:g(w)-kloosterman}
		\begin{split}
			g(w)\frac{\mathrm{d}z}{z}\frac{\mathrm{d}x_{1,1}}{x_{1,1}}\cdots\frac{\mathrm{d}x_{1,n}}{x_{1,n}}\cdots\frac{\mathrm{d}x_{k,1}}{x_{k,1}}\cdots\frac{\mathrm{d}x_{k,n}}{x_{k,n}}
		\end{split}
	\end{equation}
for a polynomial $g(w)$ in $z, x_{i,j}$, such that each monomial appearing in $g(w)$ has degree $d$ with respect to the degree \eqref{eq:grading}. By abuse of notation, we still denote by $w$ the image of $w$ under \eqref{eq:explicite-cohomology-class}.

\begin{lem}\label{prop:hodge-number-non-degenerate-newton}
	Assume $\gcd(k,n+1)=1$. Then $ W_{d,k}\subset F^p\Hdr{nk+1}(\bb{G}_m^{nk+1},f_k)$ if $p\leq nk+1-d$.
\end{lem}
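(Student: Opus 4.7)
The plan is to use the Newton polyhedron interpretation of the irregular Hodge filtration. The first step is to verify that the Laurent polynomial $f_k$ is non-degenerate with respect to its Newton polytope $\Delta(f_k)\subset \bb{R}^{nk+1}$, whose vertices are the origin, the standard basis vectors $e_{i,j}$ (for $1\leq i\leq k$ and $1\leq j\leq n$), and the vectors $e_z - \sum_{j=1}^n e_{i,j}$ (for $1\leq i\leq k$). Under the assumption $\gcd(k,n+1)=1$, non-degeneracy can be checked face by face, since the critical-point condition for the restrictions $f_{k,\delta}$ reduces to a vanishing sum of $(n+1)$-th roots of unity. Once non-degeneracy is established, \cite[Thm.~1.4]{Adolphson1997twisted} and \cite[Thm.~4.6]{yu2012irregular} allow me to replace the irregular Hodge filtration $F^\bullet_{\rr{irr}}$ on $\Hdr{nk+1}(\bb{G}_m^{nk+1}, f_k)$ by the Newton monomial filtration $F^\bullet_{\rr{NP}}$ from \eqref{defn:newton-monomial-fil}.

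The second step is to compute the Newton weight function $\phi$ of $\Delta(f_k)$ explicitly. For a lattice point $\alpha = j\, e_z + \sum_{i,l} a_{i,l}\, e_{i,l}$ with $j, a_{i,l}\geq 0$, writing $\alpha/\mu$ as a convex combination $c_0\cdot 0 + \sum c_{i,l} e_{i,l} + \sum d_i(e_z-\sum_l e_{i,l})$ and imposing $c_0, c_{i,l}, d_i\geq 0$, I get $c_{i,l}=d_i+a_{i,l}/\mu$, $\sum d_i = j/\mu$, and $c_0 = 1-((n+1)j+\sum a_{i,l})/\mu$. The constraint $c_0\geq 0$ gives $\phi(\alpha) = (n+1)j + \sum_{i,l} a_{i,l}$, which is achieved by $d_i = j/(k\mu)$. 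Thus a form $g\cdot \omega'$ with $\omega' = \frac{\rr{d}z}{z}\prod \frac{\rr{d}x_{i,l}}{x_{i,l}}$ lies in $F^{nk+1-d}_{\rr{NP}}$ as soon as every monomial $z^j \prod x_{i,l}^{a_{i,l}}$ of $g$ satisfies $(n+1)j + \sum a_{i,l} \leq d$.

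The third step is to apply this criterion to the image of $w \in W_{d,k}$ under \eqref{eq:explicite-cohomology-class}. For each component $z^j v^{\underline I}$ of $w$, which by construction satisfies $(n+1)j + \sum_\ell \ell I_\ell = d$, I track the image through the slot-by-slot identification $v_\ell \leftrightarrow (z\partial_z)^\ell \omega_0 \sim \prod_{j=1}^{\ell-1} x_j \omega_0$. For a specific arrangement assigning the value $\ell$ to $I_\ell$ tensor factors, the multiplier is $\prod_{s}\prod_{j=1}^{i_s-1} x_{s,j}$, whose total $x$-degree is $\sum_s(i_s-1)^+ = \sum_\ell \ell I_\ell - (k-I_0)$; after symmetrization, every monomial in $g(w)$ has the same $x$-degree. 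Combined with $(n+1)j$, the Newton weight is $d - k + I_0 \leq d$, so $w \in F^{nk+1-d}_{\rr{NP}}\subset F^p_{\rr{NP}}$ for every $p\leq nk+1-d$, proving the lemma.

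The main obstacle will be the bookkeeping: reading the explicit expression \eqref{eq:explicite-form} at face value gives apparent $x$-degree $\sum_{l=1}^n(k-I_l) = (n-1)k + I_0$, which can strictly exceed $\sum_\ell \ell I_\ell$ (for instance when $\underline I = (k,0,\ldots,0)$, where $d=0$ but the apparent $x$-degree is $nk$). To reconcile this with the slot-by-slot count, I would either (i) reinterpret the $I_l$ in \eqref{eq:explicite-form} as cumulative multiplicities $\widetilde I_l = I_0 + \cdots + I_l$, yielding the correct representative directly, or (ii) invoke the twisted de Rham relations $\prod_l x_{i,l}\,\omega' \sim z\,\omega'$ coming from $\partial_{x_{i,1}} f_k$ to replace a full row $\prod_l x_{i,l}$ by $z$ and pass to a cohomologous representative with the optimal Newton weight $d-k+I_0$.
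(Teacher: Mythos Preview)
Your approach is the same as the paper's: verify non-degeneracy of $f_k$ when $\gcd(k,n+1)=1$, invoke the identification of the irregular Hodge filtration with the Newton polyhedral filtration, and then compute the Newton weight of the explicit representative of $w\in W_{d,k}$.

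The only difference is the detour in your ``obstacle'' paragraph, which stems from typographical inconsistencies in the displayed formulas rather than from a genuine mathematical issue. The correct slot-by-slot identification is $v_\ell \leftrightarrow \prod_{j=1}^{\ell} x_j\cdot\omega_0$ (not $\prod_{j=1}^{\ell-1}$), and correspondingly the lower index in \eqref{eq:explicite-form} should be the cumulative sum $I_0+\cdots+I_{l-1}$ (your fix~(i)). With this reading, the total $x$-degree of the monomial attached to $z^j v^{\underline I}$ is
\[
\sum_{l=1}^{n}\bigl(k-(I_0+\cdots+I_{l-1})\bigr)=\sum_{m=0}^{n} m\,I_m,
\]
so the Newton weight is $(n+1)j+\sum_m m I_m = d$ on the nose. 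This is exactly what the paper asserts (``the value of $\xi$ at each monomial appearing in $g(w)$ is $d$''), and the lemma follows immediately; there is no need to pass to a cohomologous representative via twisted de~Rham relations as in your fix~(ii). Your first computation, which produced the value $d-k+I_0$, used the uncorrected formula and should be discarded.
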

\begin{proof}
The Newton polytope $\Delta(f_k)$ defined by $f_k$ has only one facet that does not contain the origin. This facet lies on the hyperplane $(n+1)\alpha+\sum_{i,j}\beta_{i,j}=1$. So the condition that $f_k$ is non-degenerate with respect to $\Delta(f_k)$ is equivalent to the condition that the Laurent polynomial $f_k$ has no critical points in $\bb{G}_m^{nk+1}$. We can check that the latter condition holds when $\gcd(k,n+1)=1$. Hence, the irregular Hodge filtration on $\mathrm{H}^{nk+1}_{\mathrm{dR}}(\bb{G}_m^{nk+1},f_k)$ can be computed via the Newton filtration on monomials in $\bb{R}_{\geq 0}\Delta(f_k)$ defined in \eqref{defn:newton-monomial-fil}.

The cone $\bb{R}_{\geq 0}\Delta(f_k)$ is given by inequalities
	\begin{equation*}
		\begin{split}
			\alpha+\sum_{i=1}^k\sum_{j=1}^n \epsilon_{i,j} \beta_{i,j}\geq 0, \quad \epsilon_{i,j}\in \{0,1\},
		\end{split}
	\end{equation*}
and $\sum_{j=1}^n\epsilon_{i,j} \leq 1$ for each $i$. We take the fan $F$ generated by rays
	\begin{equation*}
		\begin{split}
			\bb{R}_{\geq 0}\cdot \pm \Bigl(\alpha+\sum_{i=1}^k\sum_{j=1}^n \epsilon_{i,j} \beta_{i,j}\Bigr) \quad \text{and} \quad  \bb{R}_{\geq 0}\pm \Bigl((n+1)\alpha+\sum_{i,j} \beta_{i,j}\Bigr)
		\end{split}
	\end{equation*}
where $\epsilon_{i,j}\in \{0,1\}$. We can check that the simplicial polytopal fan $F$ is regular. So the corresponding toric variety $X_{\tor}$ is smooth projective. In particular, each irreducible component of the pole divisor $P$ of $f_k$ has multiplicity $1$.

Observe that the class of a form 
	\begin{equation*}
		\begin{split}
			z^a\prod_{i=1}^{k}\prod_{j=1}^nx_{i,j}^{ b_{i,j}} \cdot \frac{\mathrm{d}z}{z}\frac{\mathrm{d}x_{1,1}}{x_{1,1}}\cdots\frac{\mathrm{d}x_{1,n}}{x_{1,n}}\cdots\frac{\mathrm{d}x_{k,1}}{x_{k,1}}\cdots\frac{\mathrm{d}x_{k,n}}{x_{k,n}}
		\end{split}
	\end{equation*}
is in $F^{p}\mathrm{H}^{nk+1}_{\dR}(\bb{G}^{nk+1}_m, f_k)$ if this form belongs to $\Omega^{nk+1}(\log S)(\lfloor(nk+1-p)P\rfloor)$, which is equivalent to
		\begin{equation}\label{eq:order}
		\mathrm{ord}_{D}\Bigl(z^a\prod_{i=1}^{k}\prod_{j=1}^nx_{i,j}^{b_{i,j}}\Bigr)\geq -(nk+1-p)
		\end{equation}
for all irreducible components $D$ of $P$. By \cite[p.61]{fulton1993introduction}, 
the condition \eqref{eq:order} is equivalent to 
		\begin{equation*}
			\begin{split}
				-\xi\Bigl(z^a\prod_{i=1}^{k}\prod_{j=1}^nx_{i,j}^{b_{i,j}}\Bigr)\geq -(nk+1-p),
			\end{split}
		\end{equation*}
where $\xi\bigl(z^a\prod_{i=1}^{k}\prod_{j=1}^nx_{i,j}^{b_{i,j}}\bigr)=(n+1)a+\sum_{i,j}b_{i,j}$.

	For $w\in W_{d,k}$, the value of $\xi$ at each monomial appearing in $g(w)$ in \eqref{eq:g(w)-kloosterman} is $d$. Therefore, we conclude the lemma.
\end{proof}

\begin{thm}[Theorem~\ref{thm:hodge number-Kl-1}]\label{thm:hodgenumberskl3-3nmidk}
Assume that $\gcd(k,n+1)=1$. The Hodge numbers of the pure Hodge structure $\mathrm{H}^1_{\mathrm{mid}}(\bb{G}_m,\Sym^k\Kl_{n+1})^{\rr{H}}$ are given by $h^{p,nk+1-p}=\#W^{\rr{mid}}_{p,k},$ where $W^{\rr{mid}}_{p,k}$ are the sets from Theorem~\ref{thm:mid-cohomology_classes}.
\end{thm}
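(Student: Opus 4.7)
The plan is to combine the explicit basis $\bigsqcup_d W^{\mathrm{mid}}_{d,k}$ of the middle de Rham cohomology from Theorem~\ref{thm:mid-cohomology_classes} with the Newton-polyhedral filtration bound of Lemma~\ref{prop:hodge-number-non-degenerate-newton} to get a lower bound on $\dim F^p \mathrm{H}^1_{\mathrm{mid,dR}}$, and then to promote this inequality into an equality by exploiting the Hodge duality provided by the perfect pairing on $\mathrm{M}^k_{n+1}(\Kl)_{\mathrm{dR}}$.

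First, I would use that the embedding \eqref{eq:explicite-cohomology-class} is compatible with the irregular Hodge filtrations on both sides, and that on the pure weight piece $\mathrm{H}^1_{\mathrm{mid}}(\bb{G}_m,\Sym^k\Kl_{n+1})^{\mathrm{H}}$ the irregular filtration agrees with the classical one \cite[A.13]{fresan2018hodge}. Since each $w \in W^{\mathrm{mid}}_{d,k}$ has degree $d$ in $\Sym^k\Kl_{n+1}$, formula \eqref{eq:explicite-form} expresses its image in $\mathrm{H}^{nk+1}_{\mathrm{dR}}(\bb{G}_m^{nk+1},f_k)$ as $g(w)\cdot \omega$ with $\omega$ the volume form and $g(w)$ a polynomial each of whose monomials has Newton degree $d$; the argument of Lemma~\ref{prop:hodge-number-non-degenerate-newton} then applies verbatim and yields
\begin{equation*}
W^{\mathrm{mid}}_{d,k} \subset F^{nk+1-d}\mathrm{H}^1_{\mathrm{mid,dR}}(\bb{G}_m,\Sym^k\Kl_{n+1}).
\end{equation*}
Since Theorem~\ref{thm:mid-cohomology_classes} further asserts that these vectors form a basis, this gives the lower bound $\dim F^p \mathrm{H}^1_{\mathrm{mid,dR}} \geq \sum_{d \leq nk+1-p} \#W^{\mathrm{mid}}_{d,k}$.

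The key step is to upgrade this into an equality, by playing two palindromic symmetries off against each other. On one side, the $(-1)^{nk+1}$-symmetric perfect pairing from Section~\ref{sec:EMHS-Kl} gives Hodge duality $h^{p,nk+1-p} = h^{nk+1-p,p}$, hence $\dim F^p + \dim F^{nk+2-p} = \dim \mathrm{H}^1_{\mathrm{mid,dR}}$. On the other side, the fourth item of Theorem~\ref{thm:mid-cohomology_classes} asserts $\#W^{\mathrm{mid}}_{d,k} = \#W^{\mathrm{mid}}_{nk+1-d,k}$, which combined with $\sum_d \#W^{\mathrm{mid}}_{d,k} = \dim \mathrm{H}^1_{\mathrm{mid,dR}}$ yields the parallel identity $\sum_{d \leq nk+1-p}\#W^{\mathrm{mid}}_{d,k} + \sum_{d \leq p-1}\#W^{\mathrm{mid}}_{d,k} = \dim \mathrm{H}^1_{\mathrm{mid,dR}}$. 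Applying the lower bound simultaneously at $p$ and $nk+2-p$ then forces both inequalities to be equalities, and taking consecutive differences in $p$ produces $h^{p,nk+1-p} = \#W^{\mathrm{mid}}_{nk+1-p,k} = \#W^{\mathrm{mid}}_{p,k}$, as required.

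The main obstacle has already been cleared by the careful construction of the basis in Theorem~\ref{thm:mid-cohomology_classes}: producing $W^{\mathrm{mid}}_{d,k}$ with both the prescribed degrees and the palindromic identity $\#W^{\mathrm{mid}}_{d,k} = \#W^{\mathrm{mid}}_{nk+1-d,k}$ is exactly the content of Proposition~\ref{prop:counting}. Given that input, the Hodge-theoretic step above amounts to a short dimension count; the only subtlety to double-check is that the irregular Hodge filtration is transported strictly under \eqref{eq:explicite-cohomology-class}, which follows from the general formalism of exponential mixed Hodge structures recalled in Section~\ref{subsec:emhs}.
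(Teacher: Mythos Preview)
Your proposal is correct and follows essentially the same approach as the paper: the paper introduces an auxiliary filtration $G^\bullet$ with $G^p$ spanned by $\bigcup_{d\leq nk+1-p}W^{\mathrm{mid}}_{d,k}$, uses Lemma~\ref{prop:hodge-number-non-degenerate-newton} to get $G^p\subset F^p$, and then plays the Hodge symmetry $d_p=d_{nk+1-p}$ against the basis symmetry $\delta_p=\delta_{nk+1-p}$ from Theorem~\ref{thm:mid-cohomology_classes} to force $d_p=\delta_p$. Your formulation in terms of $\dim F^p$ rather than graded dimensions is a cosmetic repackaging of the same argument.
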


\begin{proof}
We construct an auxiliary filtration $G^\bullet $ on $\mathrm{H}^1_{\mathrm{dR,mid}}(\bb{G}_m,\Sym^k\Kl_{n+1})$ by letting the subspace $G^p$ be generated by elements $ w\in W_{d,k}^{\rr{mid}}$ such that $p\leq nk+1-d$. Lemma~\ref{prop:hodge-number-non-degenerate-newton} shows that 	
	\begin{equation*}
		\begin{split}
			G^p\mathrm{H}^1_{\mathrm{dR,mid}}(\bb{G}_m,\Sym^k\Kl_{n+1})\subset F^p\mathrm{H}^1_{\mathrm{dR,mid}}(\bb{G}_m,\Sym^k\Kl_{n+1}).
		\end{split}
	\end{equation*}
Let $d_p$ and $\delta_p$ be the dimensions of the graded quotients with respect to the two filtrations $F^\bullet$ and $G^\bullet$ on $\mathrm{H}^1_{\mathrm{dR, mid}}(\bb{G}_m,\Sym^k\Kl_{n+1})$ respectively. Then we have inequalities
	\begin{equation}\label{eq:dp-geq-deltap}
		\sum_{q\geq p}\delta_p \leq \sum_{q\geq p}d_p,
	\end{equation}
for $0\leq q\leq nk+1$, and the equality holds if $q=0$ and $q=nk+1$. By the Hodge symmetry, we have $d_p=d_{nk+1-p}$. Since $\delta_p$ are the numbers $\#W_{nk+1-p,k}^{\rr{mid}}$ in Theorem~\ref{thm:mid-cohomology_classes}, we also have $\delta_p=\delta_{nk+1-p}$.

Combining \eqref{eq:dp-geq-deltap} and the symmetry properties of $d_p$ and $\delta_p$, we find
	\begin{equation*}
		\begin{split}
			\sum_{q\geq p}\delta_p\leq \sum_{q\geq p}d_p=\sum_{nk+1-q\leq p}d_p\leq \sum_{nk+1-q\leq p}\delta_p=\sum_{q\geq p}\delta_p,
		\end{split}
	\end{equation*}
which implies that $\delta_p=d_p$ for each $p$. 
\end{proof}

\begin{exe}\label{exe:n=2-hodge}
	Assume that $3\nmid k$. The Hodge numbers of $\mathrm{H}^1_{\mathrm{mid}}(\bb{G}_m,\Sym^k\Kl_{3})^{\rr{H}}$ are given by
			\begin{equation*}
				\begin{split}
					h^{p,2k+1-p}=\begin{cases}
						\lfloor\frac{p}{6}\rfloor & p\not\equiv 3,5\mod 6;\\[4pt]
						\lfloor\frac{p}{6}\rfloor+1 & \text{else},
						\end{cases}
				\end{split}
			\end{equation*}
	if $p\leq k$, and $h^{p,2k+1-p}=h^{2k+1-p,p}$ if $k>p$.
\end{exe}

\subsection{The case of \texorpdfstring{$\Sym^k\Kl_3$}{SymkKl3} when \texorpdfstring{$3\mid k$}{3|k}}
We compute the Hodge numbers of $\mathrm{H}^1_{\rr{mid}}(\bb{G}_m,\Sym^k\Kl_3)^{\rr{H}}$ here (the case when $3\nmid k$ is provided by Example~\ref{exe:n=2-hodge}) in the following theorem, which completes the second part of Theorem~\ref{thm:hodge number-Kl}.

\begin{thm}
	Assume that $3\mid k$. The Hodge numbers of the mixed Hodge structure $\mathrm{H}^1(\bb{G}_m,\Sym^k\Kl_3)^{\rr{H}}$ are
	\begin{equation*}
		\begin{split}
			h^{p,q}=\begin{cases}
				\lfloor\frac{\min(p,q)}{6}\rfloor -(\delta_{p,k}+\delta_{p,k+1})& p\equiv 0,1,2,4\ \mathrm{mod} \ 6,\, p+q=2k+1,
				\\
				\lfloor\frac{\min(p,q)}{6}\rfloor+1 -(\delta_{p,k}+\delta_{p,k+1})& p\equiv 3,5\ \mathrm{mod} \ 6,\, p+q=2k+1,
				\\
				(1+2\lfloor\frac{k}{2}\rfloor-k) + 1 & p=q=k+1,
				\\
				1 & k+2\leq p=q \leq 2k+1,\,2\nmid p \\
				 0 &\text{else},
				\end{cases}
		\end{split}
	\end{equation*}	 
	where $\delta_{a,b}$ are the Kronecker symbols. In particular, the first and the second line of the formula give the Hodge numbers of the pure Hodge structure $\mathrm{H}^1_{\mathrm{mid}}(\bb{G}_m,\Sym^k\Kl_3)^{\rr{H}}$.
\end{thm}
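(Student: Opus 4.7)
The plan is to separate the computation according to the short exact sequence of mixed Hodge structures
\begin{equation*}
0 \to \rr{H}^1_{\rr{mid}}(\bb{G}_m, \Sym^k\Kl_3)^{\rr{H}} \to \rr{H}^1(\bb{G}_m, \Sym^k\Kl_3)^{\rr{H}} \to Q \to 0
\end{equation*}
arising from Remark~\ref{rek:inv-0-infty-kl3}, where $Q \simeq \coker N_k \oplus \bb{C}z^{k/3}v_0^k$. The middle term is pure of weight $2k+1$ and will account for the first two lines of the formula (supported on $p+q = 2k+1$); the quotient $Q$ is Hodge--Tate of weight $2k+2$, so its contributions sit on the diagonal $p = q$ and will account for the last two lines.

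For the middle piece, the obstacle is that when $3 \mid k$ the Laurent polynomial $f_k$ acquires critical points in the torus, so Lemma~\ref{prop:hodge-number-non-degenerate-newton} fails and the Newton-polyhedron argument of Theorem~\ref{thm:hodgenumberskl3-3nmidk} does not transfer directly. Following the outline in the introduction, I would first treat $\rr{H}^1_{\rr{mid}}(\bb{G}_m, \Sym^k\widetilde{\Kl}_3)^{\rr{H}}$ (Section~\ref{subsec:hdogekl3tilde}) by invoking the inverse Fourier transform construction of Section~\ref{subsec:inverse-fourier} to replace $\widetilde{f}_k$ with a companion potential that is non-degenerate with respect to its Newton polytope. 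The Newton-monomial estimate of Lemma~\ref{prop:hodge-number-non-degenerate-newton} then applies to the basis $\widetilde{W}_{d,k}^{\rr{mid}}$ of Theorem~\ref{thm:mid-cohomology_classes-kl3}, and the Hodge-symmetry argument closing Theorem~\ref{thm:hodgenumberskl3-3nmidk} forces the degree filtration to coincide with the irregular Hodge filtration, giving the Hodge numbers of $\rr{H}^1_{\rr{mid}}(\bb{G}_m, \Sym^k\widetilde{\Kl}_3)^{\rr{H}}$. Taking $\mu_3$-invariants---which is exact and compatible with the irregular Hodge filtration---then descends these numbers to $\rr{H}^1_{\rr{mid}}(\bb{G}_m, \Sym^k\Kl_3)^{\rr{H}}$, so that they are given by the cardinalities $\#W_{d,k}^{\rr{mid}}$ of Theorem~\ref{thm:mid-cohomology_classes-kl3}, matching the first two cases of the formula; the Kronecker-delta corrections at $p = k$ and $p = k+1$ encode the redistribution of degree-$k$ classes between the middle cohomology and the cokernel.

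For the quotient $Q$, I would analyze its two summands separately. The distinguished class $z^{k/3}v_0^k$, identified in Lemma~\ref{solution_at_infty-kl3} as the local invariant at $\infty$, carries Hodge type $(k+1, k+1)$ and thereby accounts for the $+1$ in the third case of the formula. The remaining summand $\coker N_k$ splits, by Lemma~\ref{lem:decomposition}, into one Hodge--Tate line per Jordan block $\Sym^{2k-2d}(\bb{C}^2)$ in the local monodromy of $\Sym^k\Kl_3$ at $0$ (for $d$ even with $0 \le d \le k$); each such line is a local monodromy invariant, and its bidegree on the diagonal is read off from the limit mixed Hodge structure attached to the formal decomposition in Proposition~\ref{prop:local-formal-at-0}. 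Assembling these contributions with the one at $\infty$ fills in precisely the diagonal values $h^{p,p}$ of the prescribed parity for $k+1 \le p \le 2k+1$. The main obstacle, as expected, is the inverse-Fourier-transform step: once a suitable non-degenerate model is in hand and the transport of the basis $\widetilde{W}_{d,k}^{\rr{mid}}$ is understood, the $\mu_3$-descent, the Hodge-symmetry closure, and the Hodge--Tate bookkeeping of $Q$ are all routine.
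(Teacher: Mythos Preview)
Your outline has the right overall shape---split into the pure middle piece and the Hodge--Tate quotient $Q$, treat $\widetilde{\Kl}_3$ first, then descend---but the two central steps are not what the paper does, and as you describe them they would not go through.

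First, the inverse Fourier transform of Section~\ref{subsec:inverse-fourier} does \emph{not} produce ``a companion potential that is non-degenerate with respect to its Newton polytope.'' It produces a \emph{regular} holonomic $\scr{D}_{\bb{A}^1_\tau}$-module $\widetilde{M}=\mathrm{FT}^{-1}j_{0\dagger+}\Sym^k\widetilde{\Kl}_3$, and the Hodge numbers of $\rr{H}^1(\bb{G}_m,\Sym^k\widetilde{\Kl}_3)^{\rr{H}}$ (Proposition~\ref{prop:hodgenumberstildekl3-3midk}) are computed by analyzing the nearby cycles of $\widetilde{M}^{\rr{H}}$ at $\tau=0$ and $\tau=\infty$ via the monodromy/Lefschetz decomposition and Saito's compatibility of Hodge and Kashiwara--Malgrange filtrations---no Newton polytope appears. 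So Lemma~\ref{prop:hodge-number-non-degenerate-newton} is never applied to $\widetilde{W}_{d,k}^{\rr{mid}}$, and there is no Hodge-symmetry closure argument on the $\widetilde{\Kl}_3$ side.

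Second, the $\mu_3$-descent is not automatic: knowing the Hodge numbers of $\rr{H}^1_{\rr{mid}}(\bb{G}_m,\Sym^k\widetilde{\Kl}_3)^{\rr{H}}$ does not by itself determine those of its $\mu_3$-invariant subspace. What the paper actually does is construct an explicit non-degenerate compactification $(\widetilde{X},\widetilde{f}_k)$ via successive blow-ups (paragraph~\ref{par:nondegenerate-compactification}), and use it in Lemma~\ref{lem:Hodge-degenerate} to place $\widetilde{W}_{d,k}$ (hence $W_{d,k}$) in $F^p$ for $p\leq 2k+1-d$, but \emph{only for $d\leq k$}. One then needs Proposition~\ref{prop:half-filtration}---which matches the degree filtration to the Hodge filtration on $\widetilde{\Kl}_3$ for $p\geq k+1$ by comparing against the already-computed Hodge numbers of Proposition~\ref{prop:hodgenumberstildekl3-3midk}---to conclude that each $w\in W_{d,k}$ is \emph{nonzero} in $\gr_F^{2k+1-d}$ for $d\leq k$. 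This yields lower bounds for $h^{p,2k+1-p}$ with $p\geq k+1$, and the proof of Theorem~\ref{thm:hodgenumberskl3-3midk} closes by Hodge symmetry plus a dimension count, not by a direct equality of filtrations. Your treatment of the quotient $Q$ is correct in spirit; in the paper these diagonal contributions come out of Corollary~\ref{cor:extension-local-monodromy} and Step~3 of Proposition~\ref{prop:hodgenumberstildekl3-3midk}.
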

\begin{rek}\label{rek:hodge-number-nonpure-part} 
	The first two lines of the formula for Hodge numbers are calculated in Theorem~\ref{thm:hodgenumberskl3-3midk}. The rest three lines of the formula for Hodge numbers are deduced from the second and the third lines of the formula for Hodge numbers of $\mathrm{H}^1(\bb{G}_m,\Sym^k\widetilde{\Kl}_3)^{\rr{H}}$ in Proposition~\ref{prop:hodgenumberstildekl3-3midk} because 
		$\mathrm{H}^1(\bb{G}_m,\Sym^k\Kl_3)^{\rr{H}}/\mathrm{H}^1_{\rr{mid}}(\bb{G}_m,\Sym^k\Kl_3)^{\rr{H}}$ 
	is isomorphic to 
		$\mathrm{H}^1(\bb{G}_m,\Sym^k\widetilde{\Kl}_3)^{\rr{H}}/ \mathrm{H}^1_{\rr{mid}}(\bb{G}_m,\Sym^k\widetilde{\Kl}_3)^{\rr{H}}$ 
	by Remark~\ref{rek:inv-0-infty-kl3}. 
\end{rek}

\subsubsection{The inverse Fourier transform}\label{subsec:inverse-fourier}

Let $j_0\colon \bb{G}_m\hookrightarrow \bb{A}^1$ be the inclusion. Recall that we defined an endofunctor $\Pi$ on the category of regular holonomic $\mathscr{D}_{\bb{A}^1}$-modules in Section~\ref{subsec:emhs}, which can be lifted to the projector $\Pi$ in \eqref{eq:projector}. Recall that the Fourier transform $\mathrm{FT}$ sends $\mathscr{D}_{\bb{A}^1_t}$-modules on the affine line $\bb{A}^1_t$ to $\mathscr{D}_{\bb{A}^1_\tau}$-modules on the dual affine line $\bb{A}^1_\tau$. In the proof of \cite[Prop.\,2.10]{fresan2018hodge}, we have an isomorphism of $\mathscr{D}$-modules
	\begin{equation*}
		\begin{split}
			\Sym^{k}\widetilde\Kl_{3}\simeq j_0^+\mathrm{FT}(g_{k+}\mathcal{O})^{S_k},
		\end{split}
	\end{equation*}
where $g_{k}\colon \bb{G}_m^{2k}\to \bb{A}^1$ is defined by sending $(y_{i,j})$ to $\sum_{i}(\sum_jy_{i,j}+\frac{1}{\prod_j y_{i,j}})$, and the action of the symmetric group $S_k$ on the coordinates is given by $\sigma\cdot y_{i,j}=y_{\sigma(i),j}$. By the definition of the intermediate extension, we have a short exact sequence
	\begin{equation*}
	\begin{split}
		0\to j_{0\dagger+}\Sym^k\widetilde\Kl_{3}\to j_{0+}\Sym^k\widetilde\Kl_{3}\to \widetilde{C}_{k}\to 0,
	\end{split}
	\end{equation*}
where $\widetilde C_{k}$ is supported at the origin. Let $\widetilde M=\mathrm{FT}\inv j_{0\dagger+}\Sym^k\widetilde\Kl_{3}$.  By the isomorphism of functors,
	\begin{equation*}
		\begin{split}
			\Pi\circ \mathrm{FT}\inv\simeq \mathrm{FT}\inv \circ j_{0+}j_0^+
		\end{split}
	\end{equation*}
\cite[Prop.\,12.3.5]{katz1990exponential}, we deduce that $\Pi(\widetilde{M})\simeq \rr{FT}\inv(j_{0+}\Sym^k\widetilde{\Kl}_3)$.  We get from the above an exact sequence of regular holonomic $\mathscr{D}$-modules on the dual affine line $\bb{A}^1_{\tau}$
	\begin{equation}\label{eq:short-exact-inverse-fourier}
	\begin{split}
		0\to \widetilde M \to \Pi(\widetilde M)\to \widetilde M'\to 0.
	\end{split}
	\end{equation}
The sequence \eqref{eq:short-exact-inverse-fourier} can also be lifted to a short exact sequence in $\mathrm{MHM}(\bb{A}^1_\tau)$
	\begin{equation*}
		\begin{split}
			0\to \widetilde M^{\mathrm{H}} \to \Pi(\widetilde M)^{\mathrm{H}}\to \widetilde M^{'\mathrm{H}}\to 0,
		\end{split}
	\end{equation*}
where $\widetilde M^{\mathrm{H}}$ is a pure Hodge module of weight $2k$, and $\widetilde M^{'\mathrm{H}}$ is a constant mixed Hodge module of weight $\geq 2k+1$, see \cite[Prop.\,2.21]{fresan2018hodge}. Moreover, the structure of $\widetilde M'$ can be made precise, as stated in Corollary~\ref{cor:extension-local-monodromy}.
\begin{prop}\label{prop:monodromy-nearby-0}
Let $S\subset \bb{A}^1$ be the set of points $x$ such that there exists a triple $\underline I\in \bb{N}^3$ such that $x=3C_{\underline I}=3(I_0+\zeta I_1+\zeta^2I_2)$ and $|\underline I|=k$. Then:
\begin{enumerate}[label=(\theenumi)., ref= \ref{prop:monodromy-nearby-0}.(\theenumi)]
	\item The inverse Fourier transform $\widetilde M$ is a regular holonomic $\mathscr{D}_{\bb{A}^1_\tau}$-module. Its generic rank is $\frac{(k+1)(k+2)}{2}-\lfloor\frac{k+2}{2}\rfloor$ and its singularity set is $S$. The vanishing cycle space at a singularity $x\in S$ has dimension $\#\{\underline I\mid x=3C_{\underline I},\ |\underline I|=k\}$ with trivial monodromy. Moreover, the monodromy at $\infty$ is unipotent, with one Jordan block of size $2k-4m$ for each $m=0,\ldots,\lfloor\frac{k}{2}\rfloor$;
	\item $\Pi(\widetilde M)$ is a regular holonomic $\mathscr{D}_{\bb{A}^1_\tau}$-module. Its generic rank is $\frac{(k+1)(k+2)}{2}$ and its singularity set is $S$. The vanishing cycle space at a singularity $x\in S$ has dimension $\#\{\underline I\mid x=3C_{\underline I}, \ |\underline I|=k\}$ with trivial monodromy. Moreover, the monodromy at $\infty$ is unipotent, with one Jordan block of size $2k-4m+1$ for each $m=0,\ldots,\lfloor\frac{k}{2}\rfloor$;
\end{enumerate}
\end{prop}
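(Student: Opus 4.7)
The plan is to translate local formal data about $j_{0\dagger+}\Sym^k\widetilde\Kl_3$ and $j_{0+}\Sym^k\widetilde\Kl_3$ into global data about their inverse Fourier transforms via the formal stationary phase formula \cite[Thm.\,5.1]{sabbah2008explicit}. Exponential factors at $t=\infty$ will generate finite singularities, the trivial summands at $t=\infty$ will sit over $\tau=0$, and the regular singular structure at $t=0$ will govern the monodromy at $\tau=\infty$. The short exact sequence~\eqref{eq:short-exact-inverse-fourier} then allows me to pass information between $\widetilde M$ and $\Pi(\widetilde M)$.

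For the finite singularities I would use Proposition~\ref{prop:local-infinity-Symk}: the formal completion of $\Sym^k\widetilde\Kl_3$ at $t=\infty$ is a sum of $\mathcal{E}^{3C_{\underline I}t}\otimes \bb{L}_{-1}^{\otimes 2k}$, and the twist $\bb{L}_{-1}^{\otimes 2k}$ is trivial because $2k$ is even. Each summand then contributes, under the local Fourier transform $\mathcal{F}^{(\infty,3C_{\underline I})}$, a one-dimensional vanishing cycle with trivial monodromy at $\tau=3C_{\underline I}$, where summands with $C_{\underline I}=0$ are grouped at $\tau=0$. Amalgamating contributions by the value of $3C_{\underline I}$ produces the singularity set $S$ with the stated multiplicities. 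Since $j_{0+}$ and $j_{0\dagger+}$ share the same formal type at $\infty_t$, the same description carries over to $\Pi(\widetilde M)$.

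For the generic rank of $\Pi(\widetilde M)$, I would invoke the Euler characteristic identity $\mathrm{rk}_{\mathrm{gen}}(\mathrm{FT}\inv N)=\dim \mathrm{H}^1_{\dR}(\bb{G}_m,\Sym^k\widetilde\Kl_3\otimes \mathcal{E}^{-t\tau_0})$ for $N=j_{0+}\Sym^k\widetilde\Kl_3$ at generic $\tau_0$: tensoring with $\mathcal{E}^{-t\tau_0}$ turns every formal summand at $\infty_t$ into a slope-one exponential, so the Grothendieck--Ogg--Shafarevich formula \cite[2.9.8.2]{katz1990exponential} gives $\binom{k+2}{2}$. For $\widetilde M$, the exact sequence \eqref{eq:short-exact-inverse-fourier} together with $\widetilde C_k\simeq \delta_0^{\oplus \sum q_{d,k}}$ (by Corollary~\ref{cor::soln0}) reduces the problem to evaluating $\sum_{d\leq k}q_{d,k}$; specializing Lemma~\ref{lem:decomposition} for $n=2$ yields the closed form $Q_k(t)=\frac{(1-t^{k+1})(1-t^{k+2})}{1-t^2}$, from which one reads off $\bar Q_k(t)=1+t^2+\cdots+t^{2\lfloor k/2\rfloor}$, hence $\mathrm{rk}\,\widetilde M=\binom{k+2}{2}-\lfloor\frac{k+2}{2}\rfloor$.

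For the monodromy at $\infty_\tau$, I would apply the local Fourier transform $\mathcal{F}^{(0,\infty)}$ to the formal type of $\Sym^k\widetilde\Kl_3$ at $t=0$ described in Proposition~\ref{prop:local-formal-at-0}. The identification $\bar Q_k(t)=1+t^2+\cdots+t^{2\lfloor k/2\rfloor}$ above pinpoints exactly one Jordan block of size $2k-4m+1$ at $t=0$ for each $m=0,\ldots,\lfloor k/2\rfloor$. For $\Pi(\widetilde M)$, $\mathcal{F}^{(0,\infty)}$ maps each such block to a unipotent Jordan block of the same size at $\infty_\tau$, yielding the claim. For $\widetilde M$, the passage from $j_{0+}$ to $j_{0\dagger+}$ removes the invariant vector (the socle) of each Jordan block, so each size is reduced by one, giving blocks of size $2k-4m$. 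The main obstacle is making the local Fourier transform argument rigorous in this mixed regime, particularly the claim that the intermediate extension shrinks each unipotent Jordan block by exactly one dimension, which requires a careful accounting of the $\mathrm{can}$ and $\mathrm{var}$ morphisms between nearby and vanishing cycles through the Fourier transform.
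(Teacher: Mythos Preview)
Your proposal is correct and follows essentially the same route as the paper: finite singularities come from the exponential factors at $t=\infty$ via stationary phase, and the monodromy at $\infty_\tau$ comes from the regular singular structure at $t=0$ via $\mathcal{F}^{(0,\infty)}$. The obstacle you flag at the end---that the intermediate extension shrinks each unipotent Jordan block by exactly one---is precisely what the paper resolves by invoking \cite[Lem.\,5.1.4]{Saito1988}, which identifies $\phi_t\,j_{0\dagger+}\Sym^k\widetilde\Kl_3$ with $\im N_k$; combined with the stationary phase identification $\psi_{1/\tau}\widetilde M\simeq \phi_t\,j_{0\dagger+}\Sym^k\widetilde\Kl_3$, this gives the block sizes $2k-4m$ without further $\rr{can}/\rr{var}$ bookkeeping.

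One minor difference: for the generic rank you use the exact sequence \eqref{eq:short-exact-inverse-fourier} together with a direct Euler characteristic computation, whereas the paper reads the rank of $\widetilde M$ off from the Lefschetz decomposition of $\psi_{1/\tau}\widetilde M$ (summing the Jordan block sizes $2k-4m$). Both arguments are valid and yield the same number; your route is perhaps more direct, while the paper's has the advantage of reusing the monodromy data already established.
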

\begin{proof}
Let $\psi_t\,\Sym^k\widetilde\Kl_3$ be the nearby cycle module at $0$ of $j_{0+}\Sym^k\widetilde\Kl_3$. Recall that in Section~\ref{subsec:local-at-0}, we enhanced the action of the nilpotent part $N_k$ of the local monodromy on $\psi_t\,\Sym^k\widetilde\Kl_3$ to an $\mathrm{sl}_2$-action. From Corollary~\ref{prop:local-formal-at-0}, we conclude that the monodromy of $\psi_t\,\Sym^k\widetilde{\Kl}_3$ is unipotent, with one Jordan block of size $2k-4m+1$ for each $m\in \{0, \ldots,\lfloor\frac{k}{2}\rfloor\}$. By \cite[Lem.\,5.1.4]{Saito1988}, the vanishing cycle module $\phi_tj_{0\dagger+}\Sym^k\widetilde{\Kl}_3$ of the intermediate extension $j_{0\dagger+}\Sym^k\widetilde\Kl_3$ is identified with $\im\, N_k$. So the monodromy action of $\phi_tj_{0\dagger+}\Sym^k\widetilde{\Kl}_3$ is unipotent, with one Jordan block of size $2k-4m$ for each $m\in \{0,\ldots,\lfloor\frac{k}{2}\rfloor\}$.  

By construction, we have $\widetilde{M}=\mathrm{FT}\inv j_{0\dagger+}\Sym^k\widetilde{\Kl}_3$. Applying the (inverse) stationary phase formula \cite{sabbah2008explicit} to $\widetilde M$, it follows that the nearby cycle $\,\psi_{1/\tau}\widetilde M$ is isomorphic to $\phi_tj_{0\dagger+}\Sym^k\widetilde{\Kl}_3\simeq \im\,N_k$. Therefore, the monodromy of $\psi_{1/\tau}\widetilde M$ is unipotent, with one Jordan block of size $2k-4m$ for each $0\leq m\leq \lfloor\frac{k}{2}\rfloor$. The primitive parts of the Lefschetz decomposition of $\,\psi_{1/\tau}\widetilde M$ are
	\begin{equation*}
		\begin{split}
			\mathrm{P}_{2k-1},\mathrm{P}_{2k-5},\ldots,
		\end{split}
	\end{equation*}
and each $\mathrm{P}_{2k-1-4m}$ is $1$-dimensional. By the property of the Lefschetz decomposition, the rank of $\widetilde M$ is 
	\begin{equation*}
		\begin{split}
			\sum_{m=0}^{\lfloor\frac{k-1}{2}\rfloor}(2k-4m)=\frac{(k+1)(k+2)}{2}-\left\lfloor\frac{k+2}{2}\right\rfloor.
		\end{split}
	\end{equation*}

In the end, by applying the stationary phase formula to $j_{0\dagger+}\Sym^k\widetilde{\Kl}_3=\mathrm{FT}(\widetilde M)$, we conclude from the local structure of $j_{0\dagger+}\Sym^k\widetilde{\Kl}_3$ at $\infty$ in Proposition~\ref{prop:local-infinity-Symk} that the singular set of $\widetilde M$ is equal to $S$. In fact, each singular point $a\in S$ corresponds to a component $\mathcal{E}^{at}$ in the local structure of $j_{0\dagger+}\Sym^k\widetilde{\Kl}_3$ at $\infty$.  So we proved all claims in the first statement. The proof of the second statement is similar.
\end{proof}

\begin{cor}\label{cor:extension-local-monodromy}
The graded pieces of $\widetilde M'^{\mathrm{H}}$ with respect to the weight filtration have rank $1$ and are of weight $4k-4m$ for each $m=0,1,\ldots,\lfloor\frac{k}{2}\rfloor$.
\end{cor}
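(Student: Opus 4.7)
The plan is to apply the nearby cycle functor $\psi_{1/\tau}$ at $\infty$ to the short exact sequence \eqref{eq:short-exact-inverse-fourier} and then to read off the weights of the graded pieces from the Jordan block data of Proposition~\ref{prop:monodromy-nearby-0}. Since $\psi_{1/\tau}$ is exact on the category of mixed Hodge modules, we obtain
\begin{equation*}
0\to\psi_{1/\tau}\widetilde M^{\mathrm{H}}\to\psi_{1/\tau}\Pi(\widetilde M)^{\mathrm{H}}\to\psi_{1/\tau}\widetilde M^{'\mathrm{H}}\to 0,
\end{equation*}
a short exact sequence of mixed Hodge structures carrying a nilpotent endomorphism $N$. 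Since $\widetilde M^{'\mathrm{H}}$ is a constant Hodge module, $N$ acts trivially on the right-hand term, which is canonically identified (up to the normalization for $\psi$) with the underlying mixed Hodge structure of $\widetilde M^{'\mathrm{H}}$.

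The next step is to compare Jordan block structures: the block of $N$ on $\psi_{1/\tau}\Pi(\widetilde M)^{\mathrm{H}}$ of size $2k-4m+1$ contains the corresponding block of size $2k-4m$ on $\psi_{1/\tau}\widetilde M^{\mathrm{H}}$ as a codimension-one $N$-stable subspace. The only codimension-one $N$-stable subspace of a single Jordan block is the image of $N$, namely the span of every basis vector except the top generator of the chain; therefore the quotient of the two blocks is spanned by the class of that top generator. Summing over $m=0,\ldots,\lfloor k/2\rfloor$, this produces exactly one rank-one graded piece of $\widetilde M^{'\mathrm{H}}$ per value of $m$, matching the generic rank $\lfloor k/2\rfloor+1$ given in Proposition~\ref{prop:monodromy-nearby-0}.

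To identify the weights, I would use that $\widetilde M^{\mathrm{H}}$ is pure of weight $2k$, so by Saito's theory the weight filtration on $\psi_{1/\tau}\widetilde M^{\mathrm{H}}$ is the monodromy filtration centered at $2k-1$; its size-$(2k-4m)$ Jordan block then contributes one-dimensional graded pieces of consecutive even weights $4m,4m+2,\ldots,4k-4m-2$. The weight filtration on $\psi_{1/\tau}\Pi(\widetilde M)^{\mathrm{H}}$ is the relative monodromy filtration with respect to the weight filtration of $\Pi(\widetilde M)^{\mathrm{H}}$ itself (whose $W_{2k}$ subobject is precisely $\widetilde M^{\mathrm{H}}$), so within a single Jordan block the graded pieces form a single arithmetic progression of step $2$ extending the one from $\widetilde M^{\mathrm{H}}$. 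Since $\widetilde M^{'\mathrm{H}}$ has no graded pieces of weight strictly less than $2k$, and $4m-2<2k$ for every $m\leq\lfloor k/2\rfloor$, the extra graded piece must sit at the top of the progression, with weight $(4k-4m-2)+2=4k-4m$; this is then the weight of the corresponding rank-one piece of $\widetilde M^{'\mathrm{H}}$.

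The hard part will be rigorously justifying the arithmetic-progression claim for the weights of the graded pieces within a single Jordan block of $N$ in the mixed Hodge structure $\psi_{1/\tau}\Pi(\widetilde M)^{\mathrm{H}}$, which is where the compatibility of Saito's relative monodromy filtration with the weight filtration inherited from the mixed Hodge module $\Pi(\widetilde M)^{\mathrm{H}}$ enters essentially. The boundary case $k$ even with $m=k/2$, in which the Jordan block of $\psi_{1/\tau}\widetilde M^{\mathrm{H}}$ is absent and the full size-$1$ block of $\psi_{1/\tau}\Pi(\widetilde M)^{\mathrm{H}}$ already lies in $\widetilde M^{'\mathrm{H}}$, is handled directly and yields the weight $2k=4k-4(k/2)$ by the same consistency.
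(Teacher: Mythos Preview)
Your overall strategy is the same as the paper's: apply $\psi_{1/\tau}$ to the short exact sequence \eqref{eq:short-exact-inverse-fourier}, use that $\widetilde M'^{\mathrm H}$ is constant so its nearby cycle recovers its underlying mixed Hodge structure, and then read off the weights from the Jordan block data of Proposition~\ref{prop:monodromy-nearby-0}. The rank count is fine.

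Where you and the paper diverge is precisely at the step you flag as ``the hard part.'' You try to argue via the relative monodromy filtration on $\psi_{1/\tau}\Pi(\widetilde M)^{\mathrm H}$ and a Jordan-block-by-Jordan-block comparison, hoping that the extra one-dimensional piece in each block is forced to the top weight. But the relative monodromy filtration for a genuinely mixed object is subtle, and your ``arithmetic progression extends by one at the top'' claim is exactly what needs justification; it does not follow formally from the $W_{2k}$-subobject being $\widetilde M^{\mathrm H}$ and the quotient having weights $\geq 2k+1$.

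The paper bypasses this by invoking a result from the appendix of Dettweiler--Sabbah \cite{2013Monodromy-sabbah-appendix}: in this specific situation the weight filtration on $\psi_{1/\tau}\Pi(\widetilde M)^{\mathrm H}$ is \emph{equal} to the monodromy filtration centred at $2k-1$ (not merely the relative one). Once you know this, the primitive part $P_{2k+1-4m}$ of the Lefschetz decomposition is automatically pure of weight $4k-4m$. A second input from the same reference (their Equation~(A.2)) then identifies $\gr^W\psi_{1/\tau}\widetilde M'^{\mathrm H}$ directly with these primitive parts, which finishes the proof without any explicit Jordan-block matching. So your proposal is on the right track but incomplete at the acknowledged gap; the missing ingredient is exactly the cited result, and I do not see an elementary way around it.
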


\begin{proof}
By applying the nearby cycle functor to the exact sequence \eqref{eq:short-exact-inverse-fourier}, and using the rank formula of $\widetilde M^{\mathrm{H}}$ and $\Pi\,\widetilde M^{\mathrm{H}}$ in Proposition~\ref{prop:monodromy-nearby-0}, we know that $\widetilde M^{'\mathrm{H}}$ has rank $\left\lfloor\frac{k+2}{2}\right\rfloor$. 

To prove the weight property of $\widetilde M'^{\mathrm{H}}$, it suffices to compute that of $\psi_{1/\tau}\,\widetilde M^{'\mathrm{H}}$, because the constant mixed Hodge module $\widetilde M^{'\mathrm{H}}$ extends smoothly at $\infty$. By a similar argument in the proof of Proposition~\ref{prop:monodromy-nearby-0}, we know that the primitive parts in the Lefschetz decomposition of $\psi_{1/\tau}\Pi\,\widetilde M^{\mathrm{H}}$ are
	\begin{equation*}
		\begin{split}
			\mathrm{P}_{2k+1},\mathrm{P}_{2k-3},\ldots,
		\end{split}
	\end{equation*}
all being $1$-dimensional. Since the weight filtration on $\psi_{1/\tau}\Pi\,\widetilde M^{\mathrm{H}}$ is identified with the monodromy filtration centered at $2k-1$ \cite[p.\,1740]{2013Monodromy-sabbah-appendix}, it follows that $\mathrm{P}_{2k+1-4m}$ is pure of weight $4k-4m$, of rank $1$. At last, by Equation~(A.2) in the proof of \cite[Prop.\,A.3]{2013Monodromy-sabbah-appendix}, the graded pieces of $\psi_{1/\tau}\,\widetilde M^{'\mathrm{H}}$ are identified with the primitive parts in the Lefschetz decomposition of $\psi_{1/\tau}\Pi\,\widetilde M^{\mathrm{H}}$. Therefore, the graded pieces $\gr^W_{4k-4m}\widetilde M'^{\mathrm{H}}$ with respect to the weight filtration are $P_{2k-1-4m}(-1)$, of rank $1$ and weight $4k-4m$.
\end{proof}

\subsubsection{The Hodge filtration for \texorpdfstring{$\Sym^k\widetilde{\Kl}_3$}{}}\label{subsec:hdogekl3tilde}
Before computing the Hodge filtration on the mixed Hodge structure $\mathrm{H}^1(\bb{G}_m,\Sym^k\Kl_3)^{\rr{H}}$, we calculate that on $\mathrm{H}^1(\bb{G}_m,\Sym^k\widetilde\Kl_3)^{\rr{H}}$, which is useful in \S\ref{subsec:hodgenumberskl3-3midk}. 

We define\footnote{We use \eqref{eq:emhs-fiber-of-fourier} as the definition of $\mathrm{H}^1(\bb{A}^1, j_{0\dagger+} \Sym^k\widetilde\Kl_3)^{\rr{H}}$ to avoid using the notation from \cite[Not.\,A.\,29]{fresan2018hodge}.} the mixed Hodge structure $\mathrm{H}^1(\bb{A}^1, j_{0\dagger+} \Sym^k\widetilde\Kl_3)^{\rr{H}}$ by
	\begin{equation}\label{eq:emhs-fiber-of-fourier}
	\coker(N_k\colon \psi_{\tau,1}\widetilde M^{\mathrm{H}}\to \psi_{\tau,1}\widetilde M^{\mathrm{H}}(-1)).
	\end{equation}
Applying \cite[Exe.\,A.2 \& Thm.\,A.30]{fresan2018hodge} to the short exact sequence \eqref{eq:short-exact-inverse-fourier}, we have a short exact sequence of mixed Hodge structures
	\begin{equation}\label{eq:short-exact-mhs}
		0\to \mathrm{H}^1(\bb{A}^1, j_{0\dagger+} \Sym^k\widetilde\Kl_3)^{\rr{H}}\xrightarrow{\iota} \mathrm{H}^1(\bb{G}_m,  \Sym^k\widetilde\Kl_3)^{\rr{H}}\to V^{\mathrm{H}}\to 0,
	\end{equation}
where $V^{\mathrm{H}}$ is the cokernel of $\iota$. By applying \cite[Cor.\,A31]{fresan2018hodge} to the pure Hodge module $\widetilde M^{\mathrm{H}}$, we know that $V^{\mathrm{H}}$ is mixed of weight $\geq 2k+2$, and satisfies
	\begin{equation}\label{eq:hodgenumber-M'}
		\dim \mathrm{gr}^p_{F}\mathrm{gr}^W_\ell V^{\mathrm{H}}=\rk\, \mathrm{gr}_F^{p-1}\mathrm{gr}^W_{\ell-1}\widetilde M^{'\mathrm{H}} 
	\end{equation}
for all $p,\ell\in \bb{Z}$. In particular, by the above and \cite[Prop.\,2.21]{fresan2018hodge}, the weights of the mixed Hodge structure $\mathrm{H}^1(\bb{A}^1, j_{0\dagger+} \Sym^k\widetilde\Kl_3)^{\rr{H}}$ are at least $2k+1$, and we have 
	\begin{equation*}
		\begin{split}
			\mathrm{H}^1_{\mathrm{mid}}(\bb{G}_m, \Sym^k\widetilde\Kl_3)^{\rr{H}}=\mathrm{gr}^W_{2k+1}\mathrm{H}^1(\bb{A}^1, j_{0\dagger+} \Sym^k\widetilde\Kl_3)^{\rr{H}}.
		\end{split}
	\end{equation*}

\begin{prop}\label{prop:hodgenumberstildekl3-3midk}
 The Hodge numbers of $\mathrm{H}^1(\bb{G}_m, \Sym^k\widetilde\Kl_3)^{\rr{H}}$ are given by 
	\begin{equation*}
		\begin{split}
			h^{p,q}=\begin{cases}
				\left\lfloor\frac{\min\{p,q\}+1}{2}\right\rfloor-(\delta_{p,k}+\delta_{p,k+1})\cdot d(k,3)   & p+q=2k+1\\
				(1+2\lfloor\frac{k}{2}\rfloor-k) + d(k,3) & p=q=k+1\\
				1 & k+2\leq p=q \leq 2k+1,\,2\nmid  p \\
				0 &\text{else}
			\end{cases}
		\end{split}
	\end{equation*}
where $d(k,3)$ is $1$ if $3\mid k$ and $0$ otherwise, and $\delta_{a,b}$ is the Kronecker symbol, which is $1$ if $a= b$ and $0$ otherwise. In particular, the first line of the formula for Hodge numbers gives the Hodge numbers of $\mathrm{H}^1_{\mathrm{mid}}(\bb{G}_m, \Sym^k\widetilde\Kl_3)^{\rr{H}}$.
\end{prop}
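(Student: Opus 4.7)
The plan is to decompose the Hodge numbers via the short exact sequence \eqref{eq:short-exact-mhs}, using their additivity in short exact sequences of mixed Hodge structures. The task splits into (a) computing the Hodge numbers of the cokernel $V^{\rr{H}}$, (b) computing the pure weight-$(2k+1)$ part of the middle term, which equals $\mathrm{H}^1_{\rr{mid}}(\bb{G}_m, \Sym^k\widetilde\Kl_3)^{\rr{H}}$, and (c) identifying the residual higher-weight contribution of the middle term itself.

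For part (a), Corollary \ref{cor:extension-local-monodromy} describes the weight-graded pieces of $\widetilde M'^{\rr{H}}$ as rank-one Hodge--Tate structures of type $(2k-2m, 2k-2m)$ for $m = 0, \ldots, \lfloor k/2\rfloor$; the Hodge--Tate identification follows from the purity of the primitive parts of the monodromy filtration on the nearby cycle of a pure Hodge module, combined with the rank-one constraint. Substituting into \eqref{eq:hodgenumber-M'} yields Hodge--Tate pieces in $V^{\rr{H}}$ of type $(2k-2m+1, 2k-2m+1)$ for the same range of $m$, contributing $h^{p,p} = 1$ at $p = 2k+1, 2k-1, \ldots$, down to $k+1$ if $k$ is even and $k+2$ if $k$ is odd. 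For part (b), I would adapt Theorem \ref{thm:hodgenumberskl3-3nmidk} using the basis $\widetilde W^{\rr{mid}}_{d,k}$ from Theorem \ref{thm:mid-cohomology_classes-kl3} realized via \eqref{eq:explicite-form} in $\mathrm{H}^{2k+1}_{\rr{dR}}(\bb{G}_m^{2k+1}, \widetilde f_k)$: each degree-$d$ basis element must be shown to lie in $F^p$ of the irregular Hodge filtration for $p \leq 2k+1-d$, and this inequality, combined with Hodge symmetry and the dimension count of Theorem \ref{thm:mid-cohomology_classes-kl3}, will force equality and deliver the first line of the formula. For part (c), the remaining single class at $h^{k+1, k+1}$ comes from the non-pure part of the middle term; via the definition \eqref{eq:emhs-fiber-of-fourier} and Proposition \ref{prop:monodromy-nearby-0}, it corresponds to the top primitive piece in $\psi_{\tau, 1}\widetilde M^{\rr{H}}$ and, under Remark \ref{rek:inv-0-infty-kl3}, to the special cohomology class $z^{k/3}v_0^k$ when $3 \mid k$.

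The main obstacle is twofold. First, when $3 \mid k$ the Laurent polynomial $\widetilde f_k$ becomes degenerate with respect to its Newton polytope---critical points appear in $\bb{G}_m^{2k+1}$---so the clean Newton-polyhedron identification of Lemma \ref{prop:hodge-number-non-degenerate-newton} does not apply directly, and one must either construct an explicit good compactification adapted to the $S_k \times \mu_3$-action or invoke a semicontinuity argument from the generic nondegenerate case to maintain the degree-based bound on the irregular Hodge filtration. Second, matching the Hodge--Tate types of the weight-$(2k+2)$ non-pure contribution against the claimed multiplicity requires a careful bookkeeping of the $\rr{sl}_2$-decomposition of Lemma \ref{lem:decomposition} against the monodromy weight filtration on the nearby cycle, which I would handle via the stationary phase formula and Saito's formalism of vanishing cycles.
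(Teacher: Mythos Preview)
Your plan for parts (a) and (c) is essentially what the paper does (its Steps~2 and~3), but your approach to part (b) has a genuine gap and in fact inverts the paper's logical order.

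The argument of Theorem~\ref{thm:hodgenumberskl3-3nmidk} that you want to adapt requires two inputs: the inclusion $G^p\subset F^p$ coming from the degree bound, and the \emph{symmetry} $\delta_p=\delta_{2k+1-p}$ of the auxiliary filtration, i.e.\ $\#\widetilde W^{\rr{mid}}_{d,k}=\#\widetilde W^{\rr{mid}}_{2k+1-d,k}$. For $\Sym^k\Kl_{n+1}$ with $\gcd(k,n+1)=1$ this symmetry is item~4 of Theorem~\ref{thm:mid-cohomology_classes}, which ultimately rests on the counting result Proposition~\ref{prop:counting}. But Theorem~\ref{thm:mid-cohomology_classes-kl3} makes no such symmetry claim: it only gives $\#\widetilde W^{\rr{mid}}_{d,k}$ for $d\le k$, and there is no a~priori reason the numbers are symmetric. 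Without this, the sandwich argument collapses. Furthermore, even the good compactification you anticipate constructing (this is Lemma~\ref{lem:Hodge-degenerate} in the paper) only yields the degree bound for $d\le k$, i.e.\ for $p\ge k+1$, so you cannot even establish $G^p\subset F^p$ on the full range.

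The paper's proof of the proposition avoids basis elements entirely. Its key move is a direct computation of $\rk\,\gr_F^p\widetilde M^{\rr{H}}$ from the monodromy of $\psi_{1/\tau}\widetilde M^{\rr{H}}$ at infinity: the Jordan block sizes from Proposition~\ref{prop:monodromy-nearby-0} force each primitive part of the Lefschetz decomposition to be one-dimensional Hodge--Tate, and Saito's compatibility between the Hodge and Kashiwara--Malgrange filtrations then gives the formula~\eqref{eq:graded quotients Hodge numbers}. The Hodge numbers of $\mathrm{H}^1(\bb{A}^1,j_{0\dagger+}\Sym^k\widetilde\Kl_3)^{\rr{H}}$ are then read off from the definition~\eqref{eq:emhs-fiber-of-fourier} as $\coker(N_k\colon\psi_{\tau,1}\widetilde M^{\rr{H}}\to\psi_{\tau,1}\widetilde M^{\rr{H}}(-1))$, splitting into the cases $3\nmid k$ (where $\tau=0$ is nonsingular and the cokernel is just $\psi_\tau\widetilde M^{\rr{H}}(-1)$) and $3\mid k$ (where $N^2=0$ on $\psi_{\tau,1}$ and the Lefschetz decomposition produces one extra Hodge--Tate class of type $(k+1,k+1)$). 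In the paper, Proposition~\ref{prop:hodgenumberstildekl3-3midk} is an \emph{input} to Proposition~\ref{prop:half-filtration} and Theorem~\ref{thm:hodgenumberskl3-3midk}, not a consequence of the basis/compactification machinery; your proposal reverses this dependence.
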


\begin{proof}We proceed in three steps.\\
\textbf{Step $1$:} We compute the rank of the Hodge filtration on $\widetilde M^H$. Using a similar argument as in the proof of Proposition~\ref{prop:monodromy-nearby-0}, the monodromy of the nearby cycle $\psi_{1/\tau}\widetilde M^{\mathrm{H}}$ at $\infty$ is unipotent, with one Jordan block of size $2k-4m$ for each $m=0,\ldots,\lfloor\frac{k}{2}\rfloor$. Therefore, the primitive parts $P_{2k-4m-1}$ in the Lefschetz decomposition have dimension one and are of Hodge--Tate type. Since $\widetilde M$ is pure of weight $2k$, the primitive part $P_{2k-4m-1}$ is pure of weight $4k-4m-2$. Hence, $P_{2k-4m-1}=\mathrm{gr}^W_{4k-4m-2}P_{2k-4m-1}$. Then by the Lefschetz decomposition, for each $\ell$ in $\bb{Z}$, the graded quotient $\mathrm{gr}^W_{2k+\ell-1}\psi_{1/\tau}\widetilde M^{\mathrm{H}}$ is Hodge--Tate
of dimension 
	\begin{equation*}
		\begin{split}
			\#\{m \mid 0\leq 4m \leq 2k -|\ell| -1\}
		\end{split}
	\end{equation*}
if $2\nmid \ell$, and $0$ if $2\mid \ell$.
By the compatibility of \cite[3.2.1]{Saito1988} between the Hodge filtration and the Kashiwara-Malgrange filtration of the filtered $\mathscr{D}$-modules underlying Hodge modules, we have 
	\begin{equation*}
		\begin{split}
			\rk\,\gr_F^p\widetilde M^{\mathrm{H}}=\dim \gr_F^p \psi_{1/\tau}\widetilde M^{\mathrm{H}}
		\end{split}
	\end{equation*}
in the case of smooth curves. From this formula, we conclude that
\begin{equation}\label{eq:graded quotients Hodge numbers}
\rk\, \mathrm{gr}_F^p\widetilde M^{\mathrm{H}}=\left\lfloor\frac{\min\{p,2k-1-p\}+2}{2}\right\rfloor.
\end{equation}
~\\
\textbf{Step $2$:} We compute the Hodge filtration on $\mathrm{H}^1(\bb{A}^1, j_{0\dagger+} \Sym^k\widetilde\Kl_3)^{\rr{H}}$.

If $3\nmid k$, by Proposition~\ref{prop:monodromy-nearby-0}, $0$ is not a singular point of $\widetilde M$. Therefore, the Hodge numbers can be computed using \cite[Cor.\,A31(ii)]{fresan2018hodge}. More precisely, the nilpotent part of the monodromy operator acts on $\psi_{\tau,1}\widetilde M^{\mathrm{H}}=\psi_{\tau}\widetilde M^{\mathrm{H}}$ by $0$. So 
\begin{equation*}
	\begin{split}
		\mathrm{H}^1(\bb{A}^1, j_{0\dagger+} \Sym^k\widetilde\Kl_3)^{\rr{H}}=\psi_{\tau}\widetilde M^{\mathrm{H}}(-1)
	\end{split}
\end{equation*} 
is pure of weight $2k+1$, which coincides with $\mathrm{H}^1_{\mathrm{mid}}(\bb{G}_m,\Sym^k\widetilde\Kl_3)^{\rr{H}}$. Its Hodge numbers $h^{p,2k+1-p}$ are the ranks of $\mathrm{gr}_F^{p-1}\widetilde M^{\mathrm{H}}$ given by \eqref{eq:graded quotients Hodge numbers}, which give the first line of the formula for the Hodge numbers when $3\nmid k$.

If $3\mid k$, the dimension of the formal regular component of $\Sym^k\widetilde \Kl_3$ at $\infty$ is $1$.  Therefore, $\tau=0$ is a singular point of $\widetilde M$, and the dimension of the vanishing cycle $\,\phi_{\tau,1}\widetilde M^{\mathrm{H}}$ is $1$. By \eqref{eq:emhs-fiber-of-fourier}, it suffices to study the monodromy on the nearby cycle $\psi_{\tau,1}\widetilde M^{\mathrm{H}}$. Since $\widetilde M^{\mathrm{H}}$ is an intermediate extension at $\tau =0$ and $\phi_{\tau,1}\widetilde M^{\mathrm{H}}$ has dimension $1$, the nilpotent part of the monodromy operator, denoted by $N$, acting on $\psi_{\tau,1}\widetilde M^{\mathrm{H}}$ satisfies $N^2=0$. We obtain that the primitive parts of the Lefschetz decomposition of $\gr^W\psi_{\tau,1}\widetilde M^{\mathrm{H}}$ are $P_1=\gr^W_{2k}\psi_{\tau,1}\widetilde M^{\mathrm{H}}$ and $P_0=\gr^W_{2k-1}\psi_{\tau,1}\widetilde M^{\mathrm{H}}$, where $\dim P_1=\dim N P_1=1$, and $\dim P_0=\rk\,\widetilde M^{\rr{H}}-2$. In conclusion, 
	\begin{equation*}
		\begin{split}
			\gr^W\mathrm{coker}N=P_0(-1)\oplus P_1(-1).
		\end{split}
	\end{equation*}
Hence, the direct summand $P_0(-1)$ is  $\mathrm{H}^1_{\mathrm{mid}}(\bb{G}_m,\Sym^k\widetilde \Kl_3)^{\rr{H}}$. Also, the direct summand $P_1(-1)$ is 
	$$\gr^W_{2k+2}\mathrm{H}^1(\bb{G}_m,\Sym^k\widetilde \Kl_3)^{\rr{H}},$$
whose rank is $1$. It is of Hodge--Tate type $(k+1,k+1)$. Then using the equality derived from the Lefschetz decomposition	
	\begin{equation*}
		\begin{split}
			\rk\,  \mathrm{gr}^{p-1}_F\widetilde{M}^{\mathrm{H}}=\rk\, \mathrm{gr}^{p-1}_F\psi_{\tau,1}\,\widetilde{M}^{\mathrm{H}}=\dim \mathrm{gr}^{p}_F(P_0(-1))+\mathrm{gr}^{p}_F(P_1(-1))+\mathrm{gr}^{p}_F(NP_1(-1)),
		\end{split}
	\end{equation*}
we proved the first line of the formula for Hodge numbers when $3\mid k$. Moreover, we show that $d(k,3)$ appears in the second line of the formula for the Hodge numbers.
~\\
\textbf{Step $3$:} We compute the Hodge filtration on $V^{\mathrm{H}}$. By Corollary~\ref{cor:extension-local-monodromy}, since the graded pieces of $\widetilde{M}^{'\mathrm{H}}$ with respect to the weight filtration have rank $1$, of weight $4k-4m$ for $m=0,1,\ldots,\lfloor\frac{k}{2}\rfloor$, they are all of Hodge--Tate type $(2k-2m,2k-2m)$. Thus, by \eqref{eq:hodgenumber-M'}, we have 
\begin{equation*}
	\begin{split}
		\dim \mathrm{gr}^{2k-2m+1}_{F}\mathrm{gr}^W_{4k-4m+2} V^{\mathrm{H}}=1
	\end{split}
\end{equation*}
for $m=0,\ldots,\lfloor\frac{k}{2}\rfloor$ and the remaining graded pieces are $0$. Therefore, we get the second and the third line of the formula for the Hodge numbers by using the exact sequence \eqref{eq:short-exact-mhs}.
\end{proof}

\subsubsection{Proof of Theorem~\ref{thm:hodge number-Kl-2}}\label{subsec:hodgenumberskl3-3midk}
Compared to the method in Section~\ref{sec:Hodge-symkkln+1}, we need to find another way to compute the irregular Hodge filtration when $3\mid k$ because the function $f_k$ is degenerate with respect to its Newton polytope $\Delta(f_k)$. We need to choose a compactification of $(\bb{G}_m^{nk+1},f_k)$ to calculate the irregular Hodge filtration, similar to the one in \cite[\S4.3.2]{fresan2018hodge}.

\paragraph{A compactification of \texorpdfstring{$(\bb{G}_m^{nk+1},f_k)$}{}}\label{par:nondegenerate-compactification}

\begin{defn}[{\cite[Def.\,2.6]{mochizuki2015GKZ}}]\label{defn:non-degenerate}
Let $K$ be a field of characteristic $0$. Let $U$ be a smooth quasi-projective variety over $K$ and $f\in \mathcal{O}_U(U)$. A \textit{non-degenerate compactification} of a pair $(U,f)$ is a compactification $X$ of $U$ such that
\begin{itemize}
	\item $D=X\backslash U$ is a strict normal crossing divisor,
	\item $f$ extends to a rational morphism $f\colon X\dashrightarrow \bb{P}^1$,
	\item \'etale locally or analytical locally near each point in the pole divisor $P$, there is a coordinate system $\{x_1,\ldots,x_r,y_1,\ldots,y_m,z_1\ldots,z_l\}$ of $X$ such that 
		\begin{equation*}
			\begin{split}
				D=V\biggl(\prod_{i=1}^r x_i\cdot \prod_{j=1}^m y_j\biggr), \quad \text{and} \quad  f=\dfrac{1}{\prod_{i=1}^r x_i^{e_i}} \text{ or } \dfrac{z_1}{\prod_{i=1}^r x_i^{e_i}}
			\end{split}
		\end{equation*}
	for some $e_i\in \bb{Z}_{>0}$.
\end{itemize}
\end{defn}

Let $\bb{G}_{m}^{2k}$ be the torus with coordinates $y_{i,j}$ for $1\leq i\leq k$ and $j=1,2$. We begin with the pair $\bigl(\bb{G}_m^{2k},g_k=\sum_{i=1}^k\bigl(y_{i,1}+y_{i,2}+\frac{1}{y_{i,1}y_{i,2}}\bigr)\bigr)$. Let $M=\bigoplus_{i=1}^k\q{ \bb{Z} y_{i,1}\oplus \bb{Z}y_{i,2}}$ be the lattice of monomials on $\bb{G}_m^{2k}$ and $N=\bigoplus_{i=1}^k\q{ \bb{Z} e_{i,1}\oplus \bb{Z}e_{i,2}}$ be the dual lattice where $e_{i,j}$ is dual to $y_{i,j}$. We consider the toric compactification $X$ of $\bb{G}_m^{2k}$ attached to the regular simplicial fan $F$ in $N_{\bb{R}}$ generated by the rays
	\begin{equation*}
		\begin{split}
			\bb{R}_{\geq 0}\cdot \sum_{i=1}^k\epsilon_{i}e_{i,1}+\eta_i e_{i,2}
		\end{split}
	\end{equation*}
where $\epsilon_{i}, \eta_i\in \{0,\pm1\}$ and $(\epsilon_{i},\eta_i)\neq (0,0)$ for at least one $i$. Each simplicial cone of maximal dimension $2k$ in $F$ is regular and provides an affine chart of $X$, which is isomorphic to $\bb{A}^{2k}$. On each chart, the function $g_k$ has the same structure. For example, we can consider the regular cone generated by
	\begin{equation*}
		\begin{split}
			\gamma_{i_0,1}:=\sum_{1\leq i\leq i_0-1,1\leq j\leq 2}e_{i,j} +e_{i_0,1} \quad \text{and} \quad  \gamma_{i_0,2}=\sum_{1\leq i\leq i_0,1\leq j\leq 2}e_{i,j}
		\end{split}
	\end{equation*}
for $1\leq i_0\leq k$, where the affine ring associated with the dual cone is the polynomial ring $\bb{Q}[u_i,v_i]$ such that $u_{i,j}=
	\begin{cases}
		y_{i,1}/y_{i,2} & j=1,\\
		y_{i,2}/y_{i+1,1} & i<k, j=2,\\
		y_{k,2} & i=k,j=2.
	\end{cases}$
In this chart, we rewrite $g_{k}$ as 
	$g_1/\bigl(u_{1,1}u_{1,2}^2\cdot \prod_{2\leq i\leq k,1\leq j\leq 2}u_{2,j}^2\bigr),$
where 
	\begin{equation*}
		\begin{split}
			g_1=1+\sum_{e=1}^{k-1}u_{1,1}u_{1,2}^2
			\cdot \prod_{\substack{2\leq i\leq e,1\leq j\leq 2}}u_{i,j}^2 
			\cdot  u_{e+1,1} 
			+u_{1,1}u_{1,2}^2\cdot \prod_{2\leq i\leq k,1\leq j\leq 2}u_{i,j}^2\cdot h
		\end{split}
	\end{equation*}
for a polynomial $h$. So the toric variety $X$ provides a non-degenerate compactification of $(\bb{G}_m^{2k},g_k)$, where the closure of the zero locus of $g_k$, and $X\backslash \bb{G}_m^{2k}$ form a strict normal crossing divisor. 

The product $\bb{P}^1_t\times X$ is a compactification of $(\bb{G}_m^{2k+1},t\cdot g_k)\simeq (\bb{G}_m^{2k+1},\widetilde {f}_k)$. Let $S_1,\ldots,S_N$ be the irreducible components of $X\backslash \bb{G}_m^{2k}$. We first do the blow-up along the intersection of $0\times X$ and $S_1$. Then, we do blow-ups along the intersection of the proper transform of $0\times X$ and the proper transform of $S_i$ for $2\leq i\leq N$ successively. The resulting variety $\bar X$ is a compactification of $\bb{G}_m^{2k+1}$. 

We can verify that $(\bar X,\tilde f_k)$ is a non-degenerate compactification of $(\bb{G}_m^{2k+1},t\cdot g_k)$ if $3\nmid k$. Otherwise, we need to do two more blow-ups. The first one is at each closed point of 
	\begin{equation*}
		\begin{split}
			\infty\times Z \Bigl(y_{i,1}^{3}-1,y_{i,2}^{3}-1,\sum_i y_{i,1}+y_{i,2}+\frac{1}{y_{i,1}y_{i,2}} \Bigr),
		\end{split}
	\end{equation*}
and the second one is on the intersection of the exceptional divisors from the previous step and the proper transform of $\infty\times X$. We denote by $E_1, E_2$ the exceptional divisors of the two steps. We denote by $\widetilde X$ the resulting variety. By a direct computation, we can verify that $\ord_{E_1}\tilde f_k=1$ and $\ord_{E_2}\tilde f_k=0$, and $(\widetilde X,\widetilde f_k)$ is a non-degenerate compactification of $(\bb{G}_m^{2k+1},\widetilde f_k)$.

\paragraph{A lemma}
Similar to \eqref{eq:explicite-cohomology-class}, we have morphisms of exponential mixed structures 
\begin{equation*}
	\mathrm{H}^1_{\mathrm{mid}}(\bb{G}_m,\Sym^k\Kl_{n+1})^{\rr{H}}\hookrightarrow \rr{H}^{nk+1}(\bb{G}_m^{nk+1},f_k)^{\rr{H}}\hookrightarrow \rr{H}^{nk+1}(\bb{G}_m^{nk+1},\tilde f_k)^{\rr{H}}.
\end{equation*}
By abuse of notation, for an element $w$ in the set $\widetilde W_{d,k}$ from Theorem~\ref{thm:cohomology_classes-kl3}, we denote by $w$ its image in $\mathrm{H}_{\mathrm{dR}}^{2k+1}(\bb{G}_m^{2k+1},\tilde f_k)$ via the above inclusion.

\begin{lem}\label{lem:Hodge-degenerate}
	Assume $3\mid k$. Then $\widetilde W_{d,k} \subset F^p\mathrm{H}^{2k+1}_{\mathrm{dR}}(\bb{G}_m^{2k+1},\tilde f_k)$ if $p\leq 2k+1-d$ and $k-d\geq 0$.
\end{lem}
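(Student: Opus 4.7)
The plan is to work on the explicit non-degenerate compactification $\widetilde X$ of $(\bb{G}_m^{2k+1},\widetilde f_k)$ constructed just above the lemma, and to apply the definition of the irregular Hodge filtration through the subcomplex $F^{2k+1-p}(\nabla)$: I will exhibit a top-degree representative of each $w \in \widetilde W_{d,k}$ and check that it lies in $\Omega^{2k+1}_{\widetilde X}(\log S)(\lfloor(2k+1-p)P\rfloor)$, where $S$ is the normal-crossing boundary of $\widetilde X$ and $P$ is the pole divisor of $\widetilde f_k$. The representative is the one obtained from \eqref{eq:explicite-form} with $t$ in place of $z$, namely
$$\omega_w = g(w)\,\frac{dt}{t}\,\prod_{i,j}\frac{dx_{i,j}}{x_{i,j}},$$
where every monomial appearing in $g(w)$ has degree $d$ with respect to the grading \eqref{eq:grading-t}.

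Concretely, for each irreducible component $D$ of $S$ I must verify the inequality
$$\mathrm{ord}_D(\omega_w) \;\geq\; -1 - \lfloor(2k+1-p)\,\mathrm{mult}_D(P)\rfloor.$$
The components fall into three families: (a) the proper transforms of the toric boundary divisors of $\bb{P}^1_t \times X$; (b) the exceptional divisors produced by the sequence of blow-ups along $\{0\}\times X \cap S_i$; and (c), present only when $3\mid k$, the two extra exceptional divisors $E_1, E_2$ introduced over $\{\infty\}\times Z$. For families (a) and (b), the verification is a direct analogue of Lemma~\ref{prop:hodge-number-non-degenerate-newton}: since all monomials in $g(w)$ share the same grading-degree $d$ and the pole multiplicity of $\widetilde f_k$ along such a $D$ is read off from the same linear functional (associated with the relevant ray of the fan $F$), the required inequality reduces to $d \leq 2k+1-p$, which is the hypothesis.

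The main obstacle, and the place where the assumption $d \leq k$ is genuinely used, is family (c). Using the affine charts on $\widetilde X$ built from the $u_{i,j}$-coordinates of the preceding paragraphs together with the recorded orders $\mathrm{ord}_{E_1}\widetilde f_k = 1$ and $\mathrm{ord}_{E_2}\widetilde f_k = 0$, I will compute $\mathrm{ord}_{E_1}(\omega_w)$ and $\mathrm{ord}_{E_2}(\omega_w)$ monomial by monomial. Since $E_1$ carries multiplicity $1$ in $P$ while $E_2$ has multiplicity $0$ and therefore allows only a logarithmic pole, the required bound for family (c) does not follow from the grading alone; one must exploit the extra constraint $d \leq k$, which is precisely what prevents the monomials of $g(w)$ from acquiring forbidden poles along the two exceptional divisors that resolve the Newton-polyhedron degeneracy of $\widetilde f_k$. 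Carrying out this local chart computation, and tracking the effect of the two successive blow-ups on $\omega_w$, is the technical heart of the argument; the rest of the proof is formal bookkeeping of orders.
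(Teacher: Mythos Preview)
Your plan is essentially the paper's own proof: use the explicit non-degenerate compactification $\widetilde X$ and check, component by component, that the representative form lies in $\Omega^{2k+1}(\log D)(\lfloor(2k+1-p)P\rfloor)$. One correction is needed, however. You write that ``$E_1$ carries multiplicity $1$ in $P$'', but $\ord_{E_1}\widetilde f_k=1$ means $\widetilde f_k$ has a \emph{zero} of order $1$ along $E_1$, not a pole; thus neither $E_1$ nor $E_2$ lies in the support of $P$, and both impose only the logarithmic-pole condition on $\omega_w$. The paper carries out the order computation you promise: after the change of variables $x_{i,j}=t\,y_{i,j}$ (needed because $\widetilde X$ is built from the $(t,y_{i,j})$ torus), one finds $\ord_{E_1}(\omega_w)=2k-d$ and $\ord_{E_2}(\omega_w)=2k-2d$ relative to the log form. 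The binding constraint is therefore $2k-2d\geq 0$, i.e.\ $d\leq k$, exactly as you anticipated; along $E_1$ the condition $2k-d\geq 0$ is automatic. Your handling of families (a) and (b) via the toric/Newton argument matches the paper's implicit treatment, where the pole order along the components of $P$ is $d$, giving the remaining inequality $d\leq 2k+1-p$.
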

\begin{proof}
Let $\widetilde X$ be the compactification as above and $D=\widetilde X\backslash \bb{G}_m^{2k+1}$ is the boundary divisor. Because the indeterminacy locus of the rational map $\tilde f_k\colon \widetilde X\dashrightarrow \bb{P}^1$ has codimension of at least $2$ in $\widetilde X$, we can take the pole divisor $P$ as the closure of $\widetilde{f_k}$. The exceptional divisors $E_1$ and $E_2$ are not contained in the support of the pole divisor because $\ord_{E_1}\tilde f_k=1$ and $\ord_{E_2}\tilde f_k=0$.

As in \eqref{eq:explicite-form}, the image of an element $t^a v_0^{I_0} v_1^{I_1}v_2^{I_2}$ of degree $d$ in 	
	\begin{equation*}
		\begin{split}
			\Hdr{2k+1}(\bb{G}_m^{2k+1},\tilde f_k)^{S_k}=\Hdr{2k+1}(\bb{G}_m^{2k+1},t\cdot g_k)^{S_k}
		\end{split}
	\end{equation*}
is of the form
	\begin{equation*}
		\begin{split}
			t^{d} \frac{1}{k!}\sum_{\sigma\in S_k}\prod_{i=I_1+1}^{k}y_{\sigma(i),1} \prod_{i=I_2+1}^k y_{\sigma(i),2}\frac{\mathrm{d}t}{t}\frac{\mathrm{d}y_{1,1}}{y_{1,1}}\frac{\mathrm{d}y_{1,2}}{y_{1,2}}\cdots\frac{\mathrm{d}y_{k,1}}{y_{k,1}}\frac{\mathrm{d}y_{k,2}}{y_{k,2}}
		\end{split}
	\end{equation*}
up to a nonzero constant (compared to \eqref{eq:explicite-form} we did a change of variable $x_{i,j}=ty_{i,j}$). By a direct computation, we find that 
\begin{equation*}
	\begin{split}
		\ord_{E_1}(t^a v_0^{I_0} v_1^{I_1}v_2^{I_2})=2k-d\quad  \text{and} \quad \ord_{E_2}(t^a v_0^{I_0} v_1^{I_1}v_2^{I_2})=2k-2d.
	\end{split}
\end{equation*}
Since each $w\in \widetilde W_{d,k}$ is a linear combination of $t^a v_0^{I_0} v_1^{I_1}v_2^{I_2}$ of degree $d$, we find 
	\begin{equation*}
		\begin{split}
			\widetilde W_{d,k}\subset \Gamma {(\widetilde X,\Omega^{2k+1}\q{\log D}\q{dP-\q{2k-d}E_1-(2k-2d)E_2})}.
		\end{split}
	\end{equation*}
We further have
	\begin{equation*}
		\begin{split}
			\widetilde W_{d,k}\subset  \Gamma{(\widetilde X,\Omega^{2k+1}\q{\log D}\q{\lfloor 2k+1-\eta\rfloor P})},
		\end{split}
	\end{equation*}
if $k-d\geq 0$ and $d \leq 2k+1-\eta$. In this case, there is a natural map
	\begin{equation*}
		\begin{split}
			\Gamma{(\widetilde X,\Omega^{2k+1}\q{\log D}\q{\lfloor 2k+1-\eta\rfloor P})}\to F^\eta \mathrm{H}^{2k+1}(\bb{G}^{2k+1}_m,\tilde f_k),
		\end{split}
	\end{equation*}
see \cite[\S4\,b)]{yu2012irregular} or \cite[\S\,4.3.3]{fresan2018hodge}, which completes the proof of the lemma.
\end{proof}

\begin{prop}\label{prop:half-filtration}
When $3\mid k$ and $p\geq k+1$, the vector space $F^p \Hdr{1}(\bb{G}_m,\Sym^k\widetilde \Kl_3)$ is 
	\begin{equation*}
		\begin{split}
			\mathrm{span}\q{\widetilde W_{d,k}\mid  p\leq 2k+1-d}.
		\end{split}
	\end{equation*}
\end{prop}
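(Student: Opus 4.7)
The plan is to prove the two inclusions separately, with the forward one being a direct consequence of Lemma~\ref{lem:Hodge-degenerate} and the reverse one obtained by a dimension count against the Hodge numbers of Proposition~\ref{prop:hodgenumberstildekl3-3midk}. For the forward inclusion, note that when $p\geq k+1$ the condition $p\leq 2k+1-d$ forces $d\leq 2k+1-p\leq k$, so both hypotheses $p\leq 2k+1-d$ and $k-d\geq 0$ of Lemma~\ref{lem:Hodge-degenerate} are met. Hence each $w\in \widetilde W_{d,k}$ with $d\leq 2k+1-p$ lies in $F^p\Hdr{2k+1}(\bb{G}_m^{2k+1},\widetilde f_k)$. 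Since the embedding of exponential mixed Hodge structures from \eqref{eq:explicite-cohomology-class} is a morphism of mixed Hodge structures and is therefore strict with respect to the Hodge filtration, this gives $\widetilde W_{d,k}\subset F^p\Hdr{1}(\bb{G}_m,\Sym^k\widetilde\Kl_3)$, and the span on the right-hand side of the proposition is contained in $F^p$.

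For the reverse inclusion, I would compare dimensions. By Theorem~\ref{thm:cohomology_classes-kl3}, $\bigcup_d\widetilde W_{d,k}$ is a basis of $\Hdr{1}(\bb{G}_m,\Sym^k\widetilde\Kl_3)$, and since $2k+1-p\leq k$ the subspace on the right-hand side has dimension $\sum_{d=0}^{2k+1-p}(\lfloor d/2\rfloor+1)$. On the other hand, $\dim F^p=\sum_{a\geq p,\,b}h^{a,b}$, where the Hodge numbers $h^{a,b}$ are supplied by Proposition~\ref{prop:hodgenumberstildekl3-3midk}. Starting from the vanishing $F^{2k+2}=0$ and telescoping, it suffices to match the graded increment
\[
\dim F^p-\dim F^{p+1}=\sum_b h^{p,b}
\]
with $\#\widetilde W_{2k+1-p,k}=\lfloor(2k+1-p)/2\rfloor+1$ for every $p\geq k+1$.

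The main step is this last matching, which I expect to be the main obstacle only in the sense of bookkeeping. For $k+2\leq p\leq 2k+1$ the only contributions are the middle-weight term $h^{p,2k+1-p}=\lfloor(2k+2-p)/2\rfloor$ and, when $p$ is odd, the Hodge--Tate summand $h^{p,p}=1$; a parity case analysis then yields the desired equality. For $p=k+1$, the correction $-d(k,3)=-1$ in $h^{k+1,k}=\lfloor(k+1)/2\rfloor-1$ is compensated by the extra contribution $h^{k+1,k+1}=2+2\lfloor k/2\rfloor-k$, and the identity reduces to the elementary $\lfloor(k+1)/2\rfloor+\lfloor k/2\rfloor=k$. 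The substantive input is therefore the precise Hodge-number formula of Proposition~\ref{prop:hodgenumberstildekl3-3midk}, whose derivation already accounts for the weight-$(2k+2)$ Hodge--Tate summand at $(k+1,k+1)$ forced by the inverse Fourier transform analysis of Section~\ref{subsec:inverse-fourier}.
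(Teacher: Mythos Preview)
Your proof is correct and follows essentially the same strategy as the paper: establish the inclusion $\mathrm{span}(\widetilde W_{d,k}\mid p\leq 2k+1-d)\subset F^p$ via Lemma~\ref{lem:Hodge-degenerate}, then conclude equality by matching the dimensions of the graded pieces against the Hodge numbers from Proposition~\ref{prop:hodgenumberstildekl3-3midk}. Your explicit case analysis verifying $\sum_b h^{p,b}=\lfloor(2k+1-p)/2\rfloor+1$ is exactly what the paper leaves implicit in the phrase ``which coincides with $d_p$ by Proposition~\ref{prop:hodgenumberstildekl3-3midk}.'' One minor remark: invoking strictness is slightly more than needed, since $\Hdr{1}(\bb{G}_m,\Sym^k\widetilde\Kl_3)$ is by definition the $(S_k,\chi_n)$-isotypic summand of $\Hdr{2k+1}(\bb{G}_m^{2k+1},\widetilde f_k)$, so its irregular Hodge filtration is the induced one.
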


\begin{proof}

We construct a filtration $G^\bullet $ on the de Rham cohomology $\Hdr{1}(\bb{G}_m,\Sym^k\widetilde \Kl_3)$ by 
	\begin{equation*}
		\begin{split}
			G^p\Hdr{1}(\bb{G}_m,\Sym^k\widetilde \Kl_3):=\rr{Span}(\widetilde W_{d,k} \mid p\leq 2k+1-d).
		\end{split}
	\end{equation*}
Using the map in \eqref{eq:explicite-cohomology-class} and Lemma~\ref{lem:Hodge-degenerate}, we deduce that 
	\begin{equation*}
		\begin{split}
			G^p\Hdr{1}(\bb{G}_m,\Sym^k\widetilde \Kl_3)\subset F^p\Hdr{1}(\bb{G}_m,\Sym^k\widetilde \Kl_3)
		\end{split}
	\end{equation*}
if $p\geq k+1$.

Let $d_p=\dim \mathrm{gr}_F^p$ and $\delta_p=\dim \mathrm{gr}_G^p$. Then 
	\begin{equation}\label{eq:dp-geq-deltap-3midk}
		\sum_{q\geq p}\delta_p \leq \sum_{q\geq p}d_p
	\end{equation}
and $d_p=\sum_q h^{p,q}$ if $p\geq k+1$, where $h^{p,q}$ is defined by Proposition~\ref{prop:hodgenumberstildekl3-3midk}. By Theorem~\ref{thm:cohomology_classes-kl3}, we know that the number $\delta_p$ is $\tilde n_{2k+1-p,k}=\lfloor\frac{2k+1-p}{2}\rfloor +1$ for any $p\geq k+1$, which coincides with $d_p$ by Proposition~\ref{prop:hodgenumberstildekl3-3midk}. We deduce that the filtration $G^\bullet$ coincides with the Hodge filtration $F^\bullet$ when $p\geq k+1$.
\end{proof}

\paragraph{The Hodge numbers}
\begin{thm}[Theorem~\ref{thm:hodge number-Kl-2}]\label{thm:hodgenumberskl3-3midk}
Assume that $3\mid k$. For $p\leq k$, the Hodge numbers of the mixed Hodge structure $\mathrm{H}^1_{\mathrm{mid}}(\bb{G}_m,\Sym^k\Kl_3)^{\rr{H}}$ are given by
		\begin{equation*}
			\begin{split}
				h^{p,2k+1-p}=h^{2k+1-p,p}=-\delta_{k,p}+
		\begin{cases}
			\lfloor\frac{p}{6}\rfloor & p\not\equiv 3,5\ (\mathrm{mod} \ 6);\\[4pt]
			\lfloor\frac{p}{6}\rfloor+1 & p\equiv 3,5\ (\mathrm{mod} \ 6).
			\end{cases}
			\end{split}
		\end{equation*}	
\end{thm}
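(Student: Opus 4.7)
The plan is to mimic the approach used for Theorem~\ref{thm:hodgenumberskl3-3nmidk}: construct an auxiliary filtration using the degree-$d$ bases of Theorem~\ref{thm:mid-cohomology_classes-kl3}, show it is contained in the Hodge filtration in the symmetric range $p\geq k+1$, and conclude by Hodge symmetry and dimension counting on the pure Hodge structure $\rr{H}^1_{\rr{mid}}(\bb{G}_m,\Sym^k\Kl_3)^{\rr{H}}$ of weight $2k+1$.

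First I would invoke Proposition~\ref{prop:half-filtration}, which pins down $F^p\rr{H}^1_{\rr{dR}}(\bb{G}_m,\Sym^k\widetilde\Kl_3)$ for $p\geq k+1$ as the span of the classes $\widetilde W_{d,k}$ with $d\leq 2k+1-p$. Since $\rr{H}^1_{\rr{dR}}(\bb{G}_m,\Sym^k\Kl_3)$ is the $\mu_3$-invariant part of $\rr{H}^1_{\rr{dR}}(\bb{G}_m,\Sym^k\widetilde\Kl_3)$, and the bases were chosen (Theorem~\ref{thm:cohomology_classes-kl3}) so that $W_{d,k}\subset \widetilde W_{d,k}$ consists exactly of the $\mu_3$-invariant elements in each degree, taking $\mu_3$-invariants transfers the identification: for $p\geq k+1$,
\[
F^p\rr{H}^1_{\rr{dR}}(\bb{G}_m,\Sym^k\Kl_3)=\rr{span}(W_{d,k}\mid d\leq 2k+1-p).
\]

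Next I would cut down to the middle cohomology. Define the auxiliary filtration
\[
G^p\rr{H}^1_{\rr{dR,mid}}(\bb{G}_m,\Sym^k\Kl_3):=\rr{span}(W^{\rr{mid}}_{d,k}\mid d\leq 2k+1-p).
\]
By construction of $W^{\rr{mid}}_{d,k}\subset W_{d,k}$ inside $\rr{H}^1_{\rr{dR,mid}}\subset\rr{H}^1_{\rr{dR}}$ from Theorem~\ref{thm:mid-cohomology_classes-kl3}, we have $G^p\subset F^p\rr{H}^1_{\rr{dR,mid}}(\bb{G}_m,\Sym^k\Kl_3)$ for $p\geq k+1$. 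Writing $\delta_p:=\dim\gr_G^p$ and $d_p:=\dim\gr_F^p$, we get $\sum_{q\geq p}\delta_q\leq \sum_{q\geq p}d_q$ for all $p\geq k+1$, with equality at $p=2k+1$ because both sides count $\#W^{\rr{mid}}_{0,k}=1$. The numbers $\delta_p=\#W^{\rr{mid}}_{2k+1-p,k}$ are explicit from Theorem~\ref{thm:mid-cohomology_classes-kl3}, and Hodge symmetry $d_p=d_{2k+1-p}$ combined with the symmetry $\#W^{\rr{mid}}_{d,k}=\#W^{\rr{mid}}_{2k+1-d,k}$ (built into our counting via Proposition~\ref{prop:counting} adapted to the $3\mid k$ setting) will allow the same sandwich argument as in Theorem~\ref{thm:hodgenumberskl3-3nmidk} to force $\delta_p=d_p$ for every $p\geq k+1$. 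Then Hodge symmetry transports the equality to the stated range $p\leq k$, and the closed formula follows by reading off $\#W^{\rr{mid}}_{p,k}$ from the explicit count in Theorem~\ref{thm:mid-cohomology_classes-kl3} (equivalently Example~\ref{exe:n=2-counting}, corrected by the $-\delta_{p,k}$ term reflecting the single class killed by the cokernel of $N_k$ and the $z^{k/3}v_0^k$ contribution at infinity identified in Remark~\ref{rek:inv-0-infty-kl3}).

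The main obstacle will be verifying the symmetry $\#W^{\rr{mid}}_{d,k}=\#W^{\rr{mid}}_{2k+1-d,k}$, since Proposition~\ref{prop:counting} is stated under $\gcd(k,n+1)=1$ and does not directly apply when $3\mid k$. I would handle this by subtracting the contributions from $\coker N_k$ (a copy of $\bb{C}$ in each even degree $0,2,\dots,2\lfloor k/2\rfloor$) and from $\bb{C}z^{k/3}v_0^k$ (a single class in degree $k$, which is precisely the source of the $-\delta_{p,k}$ correction) from the antisymmetric count of $\#\widetilde W_{d,k}^{\rm mid}$ controlled in Proposition~\ref{prop:hodgenumberstildekl3-3midk}, and taking $\mu_3$-invariants. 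This last bookkeeping step, rather than any new geometric input, is the delicate point of the argument.
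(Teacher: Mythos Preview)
Your sandwich argument has a genuine gap. The inclusion $G^p\subset F^p\rr{H}^1_{\rr{dR,mid}}$ is only available for $p\geq k+1$ (since Lemma~\ref{lem:Hodge-degenerate} requires $d\leq k$), and this is not enough to run the sandwich of Theorem~\ref{thm:hodgenumberskl3-3nmidk}: that argument needs the inequality $\sum_{q\geq p}\delta_q\leq\sum_{q\geq p}d_q$ at both $p$ and $2k+2-p$ simultaneously, but when $p>k+1$ the reflected index $2k+2-p$ is at most $k$, where you have no control. Even granting the symmetry $\delta_p=\delta_{2k+1-p}$, from $\sum_{q\geq k+1}\delta_q=\sum_{q\geq k+1}d_q$ together with $\sum_{q\geq p}\delta_q\leq\sum_{q\geq p}d_q$ for all $p\geq k+1$ you cannot deduce $\delta_p=d_p$ termwise (for instance $\delta=(2,0)$ versus $d=(1,1)$ on two adjacent degrees satisfies both constraints). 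So the ``main obstacle'' you identify is a red herring: even with that symmetry in hand, the argument would not close. (Incidentally, $\#W^{\rr{mid}}_{0,k}=0$, not $1$, and $W^{\rr{mid}}_{d,k}$ is not constructed as a subset of $W_{d,k}$.)

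The paper passes from the full to the middle cohomology by a different mechanism. Your first step is correct and is exactly what the paper uses: Proposition~\ref{prop:half-filtration} together with $W_{d,k}\subset\widetilde W_{d,k}$ shows that each $w\in W_{d,k}$ with $d\leq k$ is nonzero in $\gr_F^{2k+1-d}\rr{H}^1_{\rr{dR}}(\bb{G}_m,\Sym^k\Kl_3)$, giving the termwise bound $\dim\gr_F^{2k+1-d}(\text{full})\geq n_{d,k}$. The paper then \emph{subtracts} the known Hodge numbers of the quotient $\rr{H}^1_{\rr{dR}}/\rr{H}^1_{\rr{dR,mid}}$: by Remark~\ref{rek:inv-0-infty-kl3} this quotient agrees with the one for $\widetilde{\Kl}_3$, whose graded pieces are read off from Proposition~\ref{prop:hodgenumberstildekl3-3midk}. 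Strictness of the Hodge filtration on the short exact sequence then yields the termwise lower bound $h^{2k+1-d,d}\geq n^{\rr{mid}}_{d,k}$ for each $d\leq k$. Hodge symmetry doubles this to $h^{d,2k+1-d}\geq n^{\rr{mid}}_{d,k}$, and a direct check that $2\sum_{d=0}^{k}n^{\rr{mid}}_{d,k}=\dim\rr{H}^1_{\rr{dR,mid}}$ forces equality everywhere. No symmetry of $n^{\rr{mid}}_{d,k}$ is ever invoked.
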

\begin{proof}
The set $\bigcup_d  W_{d,k}$ in Theorem~\ref{thm:cohomology_classes} is a basis for $\mathrm{H}^1_{\mathrm{dR }}(\bb{G}_m,\Sym^k\Kl_3)$. We know that $w\in W_{d,k} $ is in $F^p\mathrm{gr}^W_{2k+1}\mathrm{H}^1_{\mathrm{dR}}(\bb{G}_m,\Sym^k \Kl_3)$ if 
	\begin{equation*}
		\begin{split}
			p\leq2k+1-d \quad \text{and} \quad k-d\geq 0
		\end{split}
	\end{equation*}
by Lemma~\ref{lem:Hodge-degenerate}. Since we chose $W_{d,k}$ as a subset of $\tilde{W}_{d,k}$ in Theorem~\ref{thm:cohomology_classes-kl3}, we deduce that $w$ is nonzero in the graded quotient 
	$\mathrm{gr}_F^{2k+1-d}\mathrm{H}^1_{\mathrm{dR }}(\bb{G}_m,\Sym^k \Kl_3)$
if $k-d\geq 0$ by Proposition~\ref{prop:half-filtration}. Hence, 
	$$\dim \mathrm{gr}_F^{2k+1-d}\mathrm{H}^1_{\mathrm{dR }}(\bb{G}_m,\Sym^k \Kl_3)\geq\#W_{d,k}= n_{d,k}.$$
 
 By Proposition~\ref{prop:hodgenumberstildekl3-3midk} and Remark~\ref{rek:hodge-number-nonpure-part}, we notice that 
	\begin{equation*}
		\begin{split}
		 	&\dim \mathrm{gr}_F^{2k+1-d}\mathrm{H}^1_{\mathrm{dR}}(\bb{G}_m,\Sym^k \Kl_3)/\mathrm{H}^1_{\mathrm{dR, mid}}(\bb{G}_m,\Sym^k  \Kl_3)\\
			=&\dim \mathrm{gr}_F^{2k+1-d}\mathrm{H}^1_{\mathrm{dR}}(\bb{G}_m,\Sym^k \tilde{\Kl}_3)/\mathrm{H}^1_{\mathrm{dR, mid}}(\bb{G}_m,\Sym^k  \tilde{\Kl}_3)\\
			=&\delta_{d,k}+(1+2\lfloor\tfrac{d}{2}\rfloor-d).
		\end{split}
	\end{equation*} 
So, the Hodge numbers 
	$h^{d,2k+1-d}=h^{2k+1-d,d}=\dim \mathrm{gr}_F^{2k+1-d}\mathrm{H}^1_{\mathrm{dR,mid}}(\bb{G}_m,\Sym^k\Kl_3)$
is at least
	\begin{equation*}
		\begin{split}
			n_{d,k}-\delta_{d,k}-(1+2\lfloor\tfrac{d}{2}\rfloor-d)=n^{\rr{mid}}_{2k+1-d,k}
		\end{split}
	\end{equation*}
if $k-d\geq 0$ by the formulas of $n_{d,k}$ and $n_{d,k}^{\rr{mid}}$ in Theorems~\ref{thm:cohomology_classes-kl3} and \ref{thm:mid-cohomology_classes-kl3}. Then, we deduce that
\begin{equation*}
	\begin{split}
		\dim \mathrm{H}^1_{\mathrm{dR,mid}}(\bb{G}_m,\Sym^k\Kl_3)
		&=\sum_{d=0}^{2k+1}h^{d,2k+1-d}\geq 2\sum_{d=0}^k n^{\rr{mid}}_{d,k}\\
		&\stackrel{(*)}{=}\dim \mathrm{H}^1_{\mathrm{dR,mid}}(\bb{G}_m,\Sym^k\Kl_3),
	\end{split}
\end{equation*}
where (*) can be checked directly\footnote{We can write $k=6\ell+r$ for $r=0$ or $3$ and check the identity in terms of  $\ell$ and $r$.}. Therefore, we have $h^{d,2k+1-d}=h^{2k+1-d,d}=n^{\rr{mid}}_{d,k}$ for $d\leq k$, which are exactly numbers stated in the proposition. 
\end{proof}

\subsection{The case of \texorpdfstring{$\Sym^k\rr{Ai}_{n}$}{SymkAin+1} when \texorpdfstring{$\gcd(k,n)=1$}{gcd(k, n)=1}}
We give the proof of Theorem~\ref{thm:hodge number-Ai} here. The Airy connection $\rr{Ai}_{n}$, as a $\bb{C}[z]$-module, equals to the cokernel of the complex
	\begin{equation*}
		\begin{split}
			\bb{C}[x,z]\mathrm{d}z
			\xrightarrow{\mathrm{d}+\partial_{x}f\mathrm{d}x\wedge}\bb{C}[x,z]\mathrm{d}z\wedge\mathrm{d}x,
		\end{split}
	\end{equation*}
where $f$ is the Laurent polynomial in \eqref{eq:f-Airy}. In this way, the classes $v_i$ are sent to 
	$x^{i-1} \mathrm{d}z\mathrm{d}x$
for $0\leq i\leq n-1$, and we have 
	$(z\partial_z)^{n}\mathrm{d}z\wedge \mathrm{d}x=z\cdot \mathrm{d}z\wedge \mathrm{d}x.$

Let $f_k$ be the Laurent polynomial in \eqref{eq:fk-Airy}. Then we have morphisms of exponential mixed Hodge structures
	\begin{equation}\label{eq:explicite-cohomology-class-Ai}
		\begin{split}
		\mathrm{H}^1_{\mathrm{mid}}(\bb{A}^1,\Sym^k\rr{Ai}_{n})^{\rr{H}} 
		&=\mathrm{H}^1(\bb{A}^1,\Sym^k\rr{Ai}_{n})^{\rr{H}}\\
		&= \Bigl(\rr{H}^{k+1}(\bb{A}^{k+1},f_k)^{\rr{H}}\Bigr)^{S_k,\chi}
		\hookrightarrow \rr{H}^{k+1}(\bb{A}^{k+1},f_k)^{\rr{H}}.
		\end{split}
	\end{equation}

Via the above maps, an element $z^jv^{\underline I}$ is sent to
	\begin{equation*}
		\frac{1}{k!}\sum_{\sigma\in S_k}
		\biggl(z^j \prod_{i=I_1+1}^{k}x_{\sigma(i)} 
		\prod_{i=I_2+1}^k x_{\sigma(i)}^2
		\cdots \prod_{i=I_{n-1}+1}^k x_{\sigma(i)}^{n-1}\biggr)
		\mathrm{d}z\,\mathrm{d}x_{1}\cdots\mathrm{d}x_{k}
	\end{equation*}
in $\Hdr{k+1}(\bb{A}^{k+1},f_k)^{S_k,\chi}$, similar to \eqref{eq:explicite-form}. So each element $w\in W_{d,k}$ from Theorem~\ref{thm:cohomology_classes-Ai} is sent to 
	\begin{equation}\label{eq:g(w)-Airy}
		\begin{split}
			g(w)\mathrm{d}z\,\mathrm{d}x_{1}\cdots\mathrm{d}x_{k}
		\end{split}
	\end{equation}
for a polynomial $g(w)$ in $z, x_{i}$ such that each term of $g(w)$ has degree $d$ with respect to the degree \eqref{eq:grading}. By abuse of notation, we still denote by $w$ its image under \eqref{eq:explicite-cohomology-class-Ai}.

\begin{lem}\label{prop:hodge-number-non-degenerate-newton-Ai}
	Assume that $\gcd(k,n)=1$. Then $ W_{d,k}$ lies in  $F^p\Hdr{k+1}(\bb{A}^{k+1},f_k)$ if $p\leq \frac{nk+1-d}{n+1}$.
\end{lem}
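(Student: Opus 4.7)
The plan is to mirror the proof of Lemma~\ref{prop:hodge-number-non-degenerate-newton} (the Kloosterman analogue), adapted to the affine setting $\bb{A}^{k+1}$. Since $\bb{A}^{k+1}$ is not a torus, Yu's theorem on the Newton monomial filtration does not apply verbatim; instead, I would construct a smooth toric compactification of $(\bb{A}^{k+1}, f_k)$ and compute the irregular Hodge filtration via pole orders along its boundary.

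All monomials of $f_k$ lie on the affine hyperplane $\{n\alpha + \sum_{i=1}^k \beta_i = n+1\}$, where $\alpha$ is the $z$-coordinate and $\beta_i$ the $x_i$-coordinates; hence the Newton polytope $\Delta(f_k) := \mathrm{conv}(\{0\}\cup\mathrm{supp}(f_k))$ has a unique facet $F_0$ not containing the origin, lying on this hyperplane. Non-degeneracy of $f_k$ with respect to $\Delta(f_k)$ then reduces to the single condition that $f_k$ has no critical points in $\bb{G}_m^{k+1}$: the equations $x_i^n = z$ and $\sum_i x_i = 0$ become $\sum_{i=1}^k \zeta_i = 0$ with each $\zeta_i^n = 1$, which has no solution when $\gcd(k,n)=1$, by the same argument as in Lemma~\ref{prop:hodge-number-non-degenerate-newton}.

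I would then choose a smooth projective simplicial toric compactification $X$ of $\bb{A}^{k+1}$ refining the normal fan of $\Delta(f_k)$, so that the boundary $S = X\setminus \bb{A}^{k+1}$ is a simple normal crossing divisor and $f_k$ extends to a morphism $\bar f_k\colon X \to \bb{P}^1$ whose pole divisor equals $P = (n+1)D_{F_0}$, supported on the $T$-invariant divisor $D_{F_0}$ corresponding to the inner-normal ray $\bb{R}_{\geq 0}\cdot(-(n,1,\ldots,1))$ of $F_0$. By \cite[Thm.\,4.6]{yu2012irregular}, the irregular Hodge filtration is then computed through pole orders along $P$: a class represented by a form $\omega$ lies in $F^p\Hdr{k+1}(\bb{A}^{k+1},f_k)$ whenever $\omega \in \Omega^{k+1}(\log S)(\lfloor(k+1-p)P\rfloor)$, which in our situation reads $\mathrm{ord}_{D_{F_0}}(\omega) \geq -\lfloor(k+1-p)(n+1)\rfloor$.

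For $w \in W_{d,k}$, the image in $\Hdr{k+1}(\bb{A}^{k+1},f_k)$ is, up to symmetrization, a sum of monomial forms $z^a(\prod_i x_i^{b_i})\cdot \omega_0$ with $\omega_0 = \mathrm{d}z\wedge\mathrm{d}x_1\wedge\cdots\wedge\mathrm{d}x_k$ and $na+\sum_i b_i = d$ (the grading~\eqref{eq:grading} adapted to Airy). Rewriting $\omega_0 = z\prod_i x_i \cdot \frac{\mathrm{d}z}{z}\prod_i \frac{\mathrm{d}x_i}{x_i}$ and noting that the logarithmic volume form has order zero along the $T$-invariant divisor $D_{F_0}$, one finds that each such monomial form has pole order exactly $d + n + k$ at $D_{F_0}$. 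The containment $\omega \in F^p$ therefore reduces to $d + n + k \leq (k+1-p)(n+1)$, which rearranges to $p \leq \frac{nk+1-d}{n+1}$. The principal technical obstacle is the construction and verification of the smooth toric compactification, and in particular the justification that Yu's theorem carries over from the torus setting to this affine-to-toric compactification setting, accounting for the nontrivial pole-order contribution $n+k$ of the volume form $\omega_0$ at $D_{F_0}$ (which is ultimately responsible for the denominator $n+1$ in the bound).
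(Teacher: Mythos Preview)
Your computation of the pole orders and the resulting bound $p\leq \frac{nk+1-d}{n+1}$ is correct and matches the paper's. The difference lies in how you handle the fact that $U=\bb{A}^{k+1}$ is not a torus. You propose to construct a good compactification of $(\bb{A}^{k+1},f_k)$ directly and then invoke the definition of the irregular Hodge filtration via pole orders; you correctly flag this as the ``principal technical obstacle,'' since Yu's identification of the irregular Hodge filtration with the Newton polyhedron filtration is stated for tori.

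The paper sidesteps this obstacle entirely with a one-line trick: it uses the injection of exponential mixed Hodge structures
\[
\rr{H}^{k+1}(\bb{A}^{k+1},f_k)^{\rr{H}}\hookrightarrow \rr{H}^{k+1}(\bb{G}_m^{k+1},f_k)^{\rr{H}}
\]
coming from the open embedding $\bb{G}_m^{k+1}\hookrightarrow \bb{A}^{k+1}$. Since this map is strict for the irregular Hodge filtration, it suffices to prove $W_{d,k}\subset F^p\Hdr{k+1}(\bb{G}_m^{k+1},f_k)$, and now one is on a torus where Yu's theorem applies verbatim. From there the paper proceeds exactly as you do: it writes down an explicit regular fan, checks non-degeneracy (which, as you note, reduces to $f_k$ having no critical points in $\bb{G}_m^{k+1}$ when $\gcd(k,n)=1$), and computes $\xi(g(w)\cdot z\prod_i x_i)=(d+n+k)/(n+1)$, where the $n+k$ comes from rewriting $\rr{d}z\wedge\prod_i\rr{d}x_i$ in logarithmic form. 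Your direct approach would also work, but the reduction to the torus makes the argument both shorter and technically unproblematic.
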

\begin{proof}
Thanks to the morphism of exponential mixed Hodge structures
	\begin{equation*}
		\begin{split}
			\rr{H}^{k+1}(\bb{A}^{k+1},f_k)^{\rr{H}}\hookrightarrow  \rr{H}^{k+1}(\bb{G}_m^{k+1},f_k)^{\rr{H}},
		\end{split}
	\end{equation*}
 it suffices to show that $W_{d,k} \subset F^p\Hdr{k+1}(\bb{G}_m^{k+1},f_k)$ if $p\leq \frac{nk+1-d}{n+1}$.

The Newton polytope $\Delta(f_k)$ defined by $f_k$ has only one facet that does not contain the origin, which is lying on the hyperplane $n\cdot \alpha+\sum_{i}\beta_{i}=1$. We can check that $f_k$ is non-degenerate with respect to $\Delta(f_k)$ when $\gcd(k,n)=1$. So the irregular Hodge filtration on $\mathrm{H}^{k+1}_{\mathrm{dR}}(\bb{G}_m^{k+1},f_k)$ can be computed via the Newton filtration on monomials in $\bb{R}_{\geq 0}\Delta(f_k)$. 

The cone $\bb{R}_{\geq 0}\Delta(f_k)$ is given by inequalities
	\begin{equation*}
		\begin{split}
			-\alpha+\sum_{i=1}^k \beta_i\geq 0,\quad  \alpha\geq 0, \quad \text{and}\quad \beta_i\geq 0.
		\end{split}
	\end{equation*}
We take the fan $F$ generated by rays
	\begin{equation*}
		\begin{split}
			\bb{R}_{\geq 0}\cdot \pm\Bigl(-\alpha+\sum_{i=1}^k \beta_i\Bigr), \quad \bb{R}_{\geq 0}\cdot \pm \alpha, \quad  \ \bb{R}_{\geq 0}\cdot \pm \beta_i \quad \text{and} \quad \bb{R}_{\geq 0}\cdot \pm \Bigl(n\alpha+\sum_i \beta_i\Bigr).
		\end{split}
	\end{equation*}
We can verify that the simplicial polytopal fan $F$ is regular. So the corresponding toric variety $X_{\tor}$ is smooth projective. In particular, each irreducible component of the pole divisor $P$ of $f_k$ has multiplicity $1$.

As in the proof of Lemma~\ref{prop:hodge-number-non-degenerate-newton}, the class of $z^j\prod_i x_i^{a_i}
\frac{\mathrm{d}z}{z}\frac{\mathrm{d}x_{1}}{x_{1}}\cdots\frac{\mathrm{d}x_{k}}{x_{k}}$ is in $F^{p}\mathrm{H}^{k+1}_{\dR}(\bb{G}^{k+1}_m, f_k)$ if it belongs to $\Omega^{k+1}(\log S)(\lfloor(k+1-p)P\rfloor)$, which is equivalent to
		\begin{equation*}
		\mathrm{ord}_{D}\Bigl(z^a\prod_i x_i^{b_i}\Bigr)\geq -(k+1-p)
		\end{equation*}
for all irreducible components $D$ of $P$. By \cite[p.61]{fulton1993introduction}, this condition is again equivalent to 
		\begin{equation*}
			\begin{split}
				-\xi\Bigl(z^a\prod_i x_i^{b_i}\Bigr)\geq -(k+1-p),
			\end{split}
		\end{equation*}
where $\xi(z^a\prod_i x_i^{b_i})=(n\cdot a+\sum_{i}b_i)/(n+1)$.

	For $w\in W_{d,k}$, the value of $\xi$ at each terms of $g(w)$ in \eqref{eq:g(w)-Airy} is $(d+n+k)/(n+1)$. Therefore, $w\in F^p\Hdr{k+1}(\bb{G}_m^{k+1},f_k)$ if $p\leq \frac{nk+1-d}{n+1}$.
\end{proof}

\begin{thm}[Theorem~\ref{thm:hodge number-Ai}]
Assume that $\gcd(k,n)=1$. The possible jumps of the Hodge filtration of $\mathrm{H}^1_{\mathrm{mid}}(\bb{A}^1,\Sym^k\rr{Ai}_{n})^{\rr{H}}$ are $\frac{p+n+k}{n+1}$ for $0\leq p\leq nk-n-k+1$. The Hodge numbers $h^{\frac{p+n+k}{n+1},\frac{nk+1-p}{n+1}}$ are $\#W_{p,k},$ where $W_{p,k}$ are sets from Theorem~\ref{thm:cohomology_classes-Ai}.
\end{thm}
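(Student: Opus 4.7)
The plan is to mimic the proof of Theorem~\ref{thm:hodgenumberskl3-3nmidk} in the Airy setting. Because $\gcd(k,n)=1$, the de Rham cohomology equals the middle de Rham cohomology by Proposition~\ref{prop:dimensionSymk-Ai}, and Theorem~\ref{thm:cohomology_classes-Ai} furnishes finite subsets $W_{d,k}\subset \Sym^k\mathrm{Ai}_n$ for $0\leq d\leq nk-n-k+1$ whose images under \eqref{eq:explicite-cohomology-class-Ai} span $\mathrm{H}^1_{\mathrm{mid}}(\bb{A}^1,\Sym^k\mathrm{Ai}_n)$. Combined with the dimension count $\tfrac{1}{n}\binom{k+n-1}{n-1}$ from Proposition~\ref{prop:dimensionSymk-Ai} and the computation of $\sum_{d}\#W_{d,k}$ via the generating series $h(t,x)$, these elements already form a basis.

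First, I would use Lemma~\ref{prop:hodge-number-non-degenerate-newton-Ai} to define an auxiliary (decreasing, $\bb{Q}$-indexed) filtration $G^\bullet$ on $\mathrm{H}^1_{\mathrm{mid}}(\bb{A}^1,\Sym^k\mathrm{Ai}_n)$ by
\begin{equation*}
G^\alpha\mathrm{H}^1_{\mathrm{mid}}(\bb{A}^1,\Sym^k\mathrm{Ai}_n):=\mathrm{span}\Bigl(w\in W_{d,k}\,\Big|\, \alpha\leq \tfrac{nk+1-d}{n+1}\Bigr).
\end{equation*}
Since each $w\in W_{d,k}$ is homogeneous of degree $d$ and its image \eqref{eq:g(w)-Airy} lies in $F^{(nk+1-d)/(n+1)}\mathrm{H}^{k+1}_{\mathrm{dR}}(\bb{A}^{k+1},f_k)$, the inclusion \eqref{eq:explicite-cohomology-class-Ai} yields $G^\alpha\subseteq F^\alpha$ for every $\alpha\in \bb{Q}$. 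In particular the jumps of $G^\bullet$, and hence of $F^\bullet$, occur only at $\alpha=\frac{p+n+k}{n+1}$ for $0\leq p\leq nk-n-k+1$.

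Second, I would compare dimensions of the graded pieces. Writing $d_\alpha=\dim\mathrm{gr}_F^\alpha$ and $\delta_\alpha=\dim\mathrm{gr}_G^\alpha$, the inclusion $G^\bullet\subseteq F^\bullet$ gives
\begin{equation*}
\sum_{\beta\geq \alpha}\delta_\beta \leq \sum_{\beta\geq \alpha}d_\beta,
\end{equation*}
with equality for $\alpha$ sufficiently small (both sides compute the total dimension). By construction $\delta_{(p+n+k)/(n+1)}=\#W_{p,k}$, and the symmetry property $\#W_{p,k}=\#W_{nk-n-k+1-p,k}$ from Theorem~\ref{thm:cohomology_classes-Ai} translates exactly into $\delta_\alpha=\delta_{k+1-\alpha}$. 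On the other hand, the Hodge symmetry $d_\alpha=d_{k+1-\alpha}$ holds because $\mathrm{H}^1_{\mathrm{mid}}(\bb{A}^1,\Sym^k\mathrm{Ai}_n)^{\rr{H}}$ is a pure polarised (finite monodromic) Hodge structure of weight $k+1$, see the last bullet in Section~\ref{sec:EMHS}. Combining these two symmetries with the inequality above in the same way as in the proof of Theorem~\ref{thm:hodgenumberskl3-3nmidk},
\begin{equation*}
\sum_{\beta\geq \alpha}\delta_\beta\leq \sum_{\beta\geq \alpha}d_\beta=\sum_{\beta\leq k+1-\alpha}d_\beta\leq \sum_{\beta\leq k+1-\alpha}\delta_\beta=\sum_{\beta\geq \alpha}\delta_\beta,
\end{equation*}
forces $d_\alpha=\delta_\alpha$ for every $\alpha$, which is exactly the claimed formula for the Hodge numbers.

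I do not anticipate a serious obstacle beyond routine bookkeeping: all the inputs (the basis with prescribed degrees in Theorem~\ref{thm:cohomology_classes-Ai}, the non-degeneracy estimate in Lemma~\ref{prop:hodge-number-non-degenerate-newton-Ai}, and the Hodge symmetry from the perfect pairing) are in place. The one point requiring care is that the Hodge filtration is $\bb{Q}$-indexed rather than $\bb{Z}$-indexed, so the symmetry $d_\alpha=d_{k+1-\alpha}$ must be invoked at the level of finite monodromic mixed Hodge structures; this is already guaranteed by \cite[Thm.\,6.17]{sabbah2021airy} cited in Section~\ref{sec:EMHS}, and once granted, the argument above closes without modification.
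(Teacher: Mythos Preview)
Your proposal is correct and follows essentially the same approach as the paper: define the auxiliary filtration $G^\bullet$ using the degree of the basis elements from Theorem~\ref{thm:cohomology_classes-Ai}, use Lemma~\ref{prop:hodge-number-non-degenerate-newton-Ai} to get $G^\alpha\subseteq F^\alpha$, and then combine the symmetry $\#W_{p,k}=\#W_{nk-n-k+1-p,k}$ with Hodge symmetry to force $d_\alpha=\delta_\alpha$. The only quibble is expository: the inference ``and hence of $F^\bullet$'' for the location of the jumps is not justified by $G^\alpha\subseteq F^\alpha$ alone, but it follows once you have shown $d_\alpha=\delta_\alpha$, so there is no actual gap.
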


\begin{proof}
	The proof is similar to that of Theorem~\ref{thm:hodgenumberskl3-3nmidk}. We construct an auxiliary filtration $G^\bullet $ on $\mathrm{H}^1_{\mathrm{dR}}(\bb{A}^1,\Sym^k\rr{Ai}_{n})$, by letting the subspace $G^p\mathrm{H}^1_{\mathrm{dR}}(\bb{A}^1,\Sym^k\rr{Ai}_{n})$ be generated by elements $ w\in W_{d,k}$ such that $p\leq k+1-\frac{d+n+k}{n+1}=\frac{nk+1-d}{n+1}$. We deduce from Lemma~\ref{prop:hodge-number-non-degenerate-newton-Ai} that 	
		\begin{equation*}
			\begin{split}
				G^p\mathrm{H}^1_{\mathrm{dR}}(\bb{A}^1,\Sym^k\rr{Ai}_{n})\subset F^p\mathrm{H}^1_{\mathrm{dR}}(\bb{A}^1,\Sym^k\rr{Ai}_{}).
			\end{split}
		\end{equation*}

Let $d_p=\dim \mathrm{gr}_F^p\mathrm{H}^1_{\mathrm{dR}}(\bb{A}^1,\Sym^k\rr{Ai}_{n})$ and $\delta_p=\dim \mathrm{gr}_G^p\mathrm{H}^1_{\mathrm{dR}}(\bb{A}^1,\Sym^k\rr{Ai}_{n})$ be the dimensions of the graded quotients with respect to $F^\bullet$ and $G^\bullet$. Then for $0\leq q \leq k+1$ we have
	\begin{equation}\label{eq:dp-geq-deltap-Ai}
		\sum_{q\geq p}\delta_p \leq \sum_{q\geq p}d_p,
	\end{equation}
where the equality holds if $q=0$ and $q=k+1$. By the Hodge symmetry, we have $d_p=d_{k+1-p}$. Since $\delta_p$ are the numbers $n_{nk-n-k+1-p}$ in Theorem~\ref{thm:cohomology_classes-Ai}, we have $\delta_p=\delta_{k+1-p}$.

Combining \eqref{eq:dp-geq-deltap-Ai} and the symmetric properties of $d_p$ and $\delta_p$, we find
	\begin{equation*}
		\begin{split}
			\sum_{q\geq p}\delta_p\leq \sum_{q\geq p}d_p=\sum_{k+1-q\leq p}d_p\leq \sum_{k+1-q\leq p}\delta_p=\sum_{q\geq p}\delta_p,
		\end{split}
	\end{equation*}
which implies that $\delta_p=d_p$ for each $p$. 
\end{proof}

\begin{exe}
	Assume that $3\nmid k$. The Hodge numbers of $\mathrm{H}^1(\bb{A}^1,\Sym^k\rr{Ai}_{3})^{\rr{H}}$ are given by
			\begin{equation*}
				\begin{split}
					h^{\frac{p+3+k}{4},\frac{3k+1-p}{4}}=\begin{cases}
						\lfloor\frac{p}{6}\rfloor & p\not\equiv 3,5 (\rr{mod}\,6);\\[4pt]
						\lfloor\frac{p}{6}\rfloor+1 & \text{else},
						\end{cases}
				\end{split}
			\end{equation*}
	if $p\leq k$, and $h^{\frac{p+3+k}{4},\frac{3k+1-p}{4}}=h^{\frac{3k+1-p}{4},\frac{p+3+k}{4}}$ if $k>p$.
\end{exe}

\subsection{The case of \texorpdfstring{ $\mathrm{Kl}_{\rr{SL}_3}^{V_{2,1}}$}{Kl3V21}}

Let $f\colon \mathbb{G}_m^{9}\to \bb{A}^1$ be the Laurent polynomial
	\begin{equation*}
		\begin{split}
			(x_{i},y_i,z)\mapsto \sum_{i=1}^4 \Bigl((x_i+y_i+\frac{z}{x_iy_i}\Bigr).
		\end{split}
	\end{equation*}
The symmetric group $S_4$ acts on the sub-indices of the coordinates. As in \cite[Cor.\,2.15]{fresan2018hodge}, we have
\begin{equation*}
	\begin{split}
		\mathrm{H}^1_{\mathrm{dR,mid}}\Bigl(\bb{G}_m,\mathrm{Kl}_{\rr{SL}_3}^{V_{2,1}}\Bigr)\simeq \mathrm{H}^{9}_{\mathrm{dR}}(\bb{G}_m^9,f)^{P\times Q,\chi}.
	\end{split}
	\end{equation*} 
We define the associated exponential mixed Hodge structure as 
\begin{equation*}
	\begin{split}
		\mathrm{H}^1_{\mathrm{mid}}\Bigl(\bb{G}_m,\mathrm{Kl}_{\rr{SL}_3}^{V_{2,1}}\Bigr)^{\rr{H}}:= (\mathrm{H}^{9}_{\rr{mid}}(\bb{G}_m^9,f)^{\rr{H}})^{P\times Q,\chi}.
	\end{split}
	\end{equation*} 
By considering an analogue of the inclusion \eqref{eq:explicite-cohomology-class}, the elements in $W_4^{\rr{mid}}$ and $W_5^{\rr{mid}}$ are mapped to
	\begin{equation*}
		\begin{split}
			g\frac{\mathrm{d}z}{z}\prod_{i,j}\frac{\mathrm{d}x_{i,j}}{x_{i,j}}\ \text{and}\ h \frac{\mathrm{d}z}{z}\prod_{i,j}\frac{\mathrm{d}x_{i,j}}{x_{i,j}}
		\end{split}
	\end{equation*}
in $\mathrm{H}^{9}_{\mathrm{dR}}(\bb{G}_m^9,f)$, for some polynomials in $z,x_i,y_i$ of degree $4$ and $5$ respectively. 

Because $f$ is non-degenerate with respect to $\Delta(f)$, we can use a similar argument as in the proves of Lemma~\ref{prop:hodge-number-non-degenerate-newton} and Theorem~\ref{thm:hodgenumberskl3-3nmidk} to get the Hodge numbers as follows:
\begin{prop}\label{prop:hodge-numbers-Kl3V21}
	We have 
		\begin{equation*}
			\begin{split}
				W_5^{\rr{mid}}\subset \mathrm{gr}_F^4\mathrm{H}^1_{\mathrm{dR,mid}}\Bigl(\bb{G}_m,\mathrm{Kl}_{\rr{SL}_3}^{V_{2,1}}\Bigr) \quad \text{and}\quad W_4^{\rr{mid}}\subset \mathrm{gr}_F^{5}\mathrm{H}^1_{\mathrm{dR,mid}}\Bigl(\bb{G}_m,\mathrm{Kl}_{\rr{SL}_3}^{V_{2,1}}\Bigr).
			\end{split}
		\end{equation*}
	In particular, the nonzero Hodge numbers of the pure Hodge structure $\mathrm{H}^1_{\mathrm{mid}}\Bigl(\bb{G}_m,\mathrm{Kl}_{\rr{SL}_3}^{V_{2,1}}\Bigr)^{\rr{H}}$ are $h^{4,5}=h^{5,4}=1$.
\end{prop}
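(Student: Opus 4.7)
The plan is to mirror the argument of Section~\ref{sec:Hodge-symkkln+1}. The underlying Laurent polynomial
\[
f(x_i,y_i,z)=\sum_{i=1}^4\Bigl(x_i+y_i+\frac{z}{x_iy_i}\Bigr)
\]
on $\mathbb{G}_m^9$ is precisely the one used for $\Sym^4\Kl_3$, and since $\gcd(4,3)=1$, it is non-degenerate with respect to its Newton polytope $\Delta(f)$ by the same verification as in the proof of Lemma~\ref{prop:hodge-number-non-degenerate-newton}. Hence, the irregular Hodge filtration on $\mathrm{H}^9_{\mathrm{dR}}(\mathbb{G}_m^9,f)$ coincides with the Newton polyhedron filtration, and the computation in that same lemma shows that any class represented by a form $g\,\tfrac{\mathrm{d}z}{z}\prod_{i,j}\tfrac{\mathrm{d}x_{i,j}}{x_{i,j}}$ with $g$ a polynomial whose monomials all have degree $d$ (with respect to the grading \eqref{eq:grading}) lies in $F^p$ whenever $p\leq 9-d$.

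Next I would apply this to the inclusion
\[
\mathrm{H}^1_{\mathrm{mid}}\bigl(\mathbb{G}_m,\mathrm{Kl}_{\mathrm{SL}_3}^{V_{2,1}}\bigr)^{\mathrm{H}}\hookrightarrow \mathrm{H}^9(\mathbb{G}_m^9,f)^{\mathrm{H}},
\]
namely the analogue of \eqref{eq:explicite-cohomology-class}, replacing the $(S_k,\chi_n)$-isotypic projector by the $(P\times Q,\chi)$-isotypic one coming from the Weyl construction $V_{2,1}=(V^{\otimes 4})^{P\times Q,\chi}$. By the statement of Proposition~\ref{prop:coh-classes-V21}, the images of elements of $W_4^{\mathrm{mid}}$ and $W_5^{\mathrm{mid}}$ are represented by forms $g\,\tfrac{\mathrm{d}z}{z}\prod_{i,j}\tfrac{\mathrm{d}x_{i,j}}{x_{i,j}}$ with $g$ of degree $4$ and $5$ respectively. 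Combining with the previous paragraph yields
\[
W_d^{\mathrm{mid}}\subset F^{9-d}\mathrm{H}^1_{\mathrm{dR,mid}}\bigl(\mathbb{G}_m,\mathrm{Kl}_{\mathrm{SL}_3}^{V_{2,1}}\bigr),\quad d=4,5,
\]
so $\dim F^5\geq 1$ and $\dim F^4\geq 2$.

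Finally, the precise Hodge numbers are obtained by the Hodge symmetry closure argument from the proof of Theorem~\ref{thm:hodgenumberskl3-3nmidk}. Proposition~\ref{prop:coh-classes-V21} gives $\dim\mathrm{H}^1_{\mathrm{dR,mid}}=2$ with basis $W_4^{\mathrm{mid}}\cup W_5^{\mathrm{mid}}$. Since the pure Hodge structure has odd weight $9$, the symmetry $h^{p,9-p}=h^{9-p,p}$ pairs Hodge numbers for $p\geq 5$ with those for $p\leq 4$, so $\sum_{p\geq 5}h^{p,9-p}=1$. Combined with $\dim F^5\geq 1$ and $\dim F^4\geq 2$, this forces $h^{5,4}=1$, $h^{p,9-p}=0$ for $p\geq 6$, and by symmetry $h^{4,5}=1$ with all other Hodge numbers zero. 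In particular the images of $W_5^{\mathrm{mid}}$ and $W_4^{\mathrm{mid}}$ are nonzero in $\mathrm{gr}_F^4$ and $\mathrm{gr}_F^5$ respectively.

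The main obstacle is essentially bookkeeping: confirming that the non-degeneracy check and the degree calculation carry over verbatim from the case of $\Sym^4\Kl_3$. Both do, because the ambient variety $\mathbb{G}_m^9$, the function $f$, and the grading are the same; only the group action taken for isotypic components changes, and this affects neither the non-degeneracy of $f$ nor the degree of representatives of $W_d^{\mathrm{mid}}$.
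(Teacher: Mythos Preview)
Your proposal is correct and follows essentially the same approach as the paper. The paper's own proof is a one-line reference to the arguments of Lemma~\ref{prop:hodge-number-non-degenerate-newton} and Theorem~\ref{thm:hodgenumberskl3-3nmidk}, and you have spelled out precisely those steps: non-degeneracy of $f$ (inherited from the $\Sym^4\Kl_3$ case since $\gcd(4,3)=1$), the Newton-filtration bound $W_d^{\mathrm{mid}}\subset F^{9-d}$, and the Hodge-symmetry closure on a two-dimensional space of weight $9$.
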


\bibliography{Hodge}
\bibliographystyle{abbrv}

\end{document}